\numberwithin{equation}{section}
\newcommand{\Sm}{{\mathbf{Sm}_{\bbF}}}
\newcommand{\Sch}{{\mathbf{Sch}_{\bbF}}}
\newcommand{\hGG}{{G\Gamma}}
\newcommand{\GQ}{{GQ}}
\newcommand{\GP}{{GP}}
\newcommand{\EE}{{E}}
\newcommand{\FF}{{F}}
\newcommand{\ee}{{e}}
\newcommand{\ff}{{f}}
\newcommand{\bbF}{{\mathbb F}}
\newcommand{\bbG}{{\mathbb G}}
\newcommand{\LL}{{\mathbb L}}
\newcommand{\PP}{{\mathbb P}}
\newcommand{\QQ}{{\mathbb Q}}
\newcommand{\ZZ}{{\mathbb Z}}
\newcommand{\sfp }{{\sfp}}
\newcommand{\frakK}{{\mathfrak K}}
\newcommand{\bfz}{{\mathbf z }}
\newcommand{\CH}{{C\!H}}
\newcommand{\CK}{{C\!K}}
\newcommand{\calG}{{\mathcal G}}
\newcommand{\calH}{{\mathcal H}}
\newcommand{\calK}{{\mathcal K}}
\newcommand{\calL}{{\mathcal L}}
\newcommand{\calO}{{\mathcal O}}
\newcommand{\calP}{{\mathcal P}}
\newcommand{\calQ}{{\mathcal Q}}
\newcommand{\calR}{{\mathcal R}}
\newcommand{\calS}{{\mathcal S}}
\newcommand{\scA}{{\mathscr A}}
\newcommand{\tscB}{\widetilde{\mathscr B}}
\newcommand{\scB}{{\mathscr B}}
\newcommand{\tscC}{\widetilde{\mathscr C}}
\newcommand{\scC}{{\mathscr C}}
\newcommand{\scO}{{\mathscr O}}
\newcommand{\tscS}{{\widetilde{\mathscr S}} }
\newcommand{\scS}{{\mathscr S}}
\newcommand{\scX}{{\mathscr X}}
\newcommand{\lan}{{\langle}}
\newcommand{\ran}{{\rangle}}
\newcommand{\inc}{\hookrightarrow}
\newcommand{\Fl}{{Fl}}
\newcommand{\Span}{\operatorname{Span}}
\newcommand{\Gr}{{Gr}}
\newcommand{\SG}{SG}
\newcommand{\codim}{\operatorname{codim}}
\newcommand{\Fun}{\operatorname{Fun}}
\newcommand{\gr}{\operatorname{gr}}
\newcommand{\Spec}{\operatorname{Spec}}
\newcommand{\rk}{{\operatorname{rank}}}
\newcommand{\id}{{\operatorname{id}}}
\newcommand{\Sp}{{Sp}}
\newcommand{\supp}{{\operatorname{supp}}}
\newcommand{\Pf}{{\operatorname{Pf}}}
\newcommand{\GX}{{{GX}}}
\newcommand{\LG}{{{LG}}}
\newcommand{\SP}{\calS\calP}
\newcommand{\GT}{G\Theta}
\newcommand{\tGB}{{\GT^*\!\!}}
\newcommand{\GB}{{\GT'\!}}
\newcommand{\GC}{{\GT}}
\newcommand{\OG}{{OG}}
\newcommand{\vece}{\pmb{e}}
\newtheorem{thm}{Theorem}[section]  
\newtheorem{cor}[thm]{Corollary} 
\newtheorem{lem}[thm]{Lemma}  
\newtheorem{prop}[thm]{Proposition} 
\newtheorem{df-pr}[thm]{Definition-Proposition}
\theoremstyle{definition} 
\newtheorem{defn}[thm]{Definition}
\newtheorem{rem}[thm]{Remark}
\newtheorem{example}[thm]{Example}
\begin{document}

\title{Degeneracy Loci Classes in $K$-theory \\--- Determinantal and Pfaffian Formula ---}
\author{Thomas Hudson, Takeshi Ikeda, Tomoo Matsumura, Hiroshi Naruse}
\date{}
%\author{Thomas Hudson}
%%\thanks{Department of Mathematical Sciences, KAIST, 291 Daehak-ro, Yuseong-gu, Daejeon, 34141, Republic of Korea (South), hudson.t@kaist.ac.kr}
%\author{Takeshi Ikeda, Tomoo Matsumura}
%%\thanks{Department of Applied Mathematics, Okayama University of Science, Okayama 700-0005, Japan, ike@xmath.ous.ac.jp}
%\author{Hiroshi Naruse}
%%\thanks{Graduate School of Education, University of Yamanashi, Yamanashi 400-8510, Japan, hnaruse@yamanashi.ac.jp}

%\subjclass[2010]{Primary 14M15, %Grassmannians, Schubert varieties, flag manifolds
%Secondary 19E08, %K-theory of schemes
%05E05 %Symmetric functions and generalizations}
%%%%%%%%%%%%%%%%%%%%%%%%%%%%%%
%%%%%%%%%%%%%%%%%%%%%%%%%%%%%%
\maketitle
%%%%%%%%%%%%%%%%%%%%%%%%%%%%%%
%%%%%%%%%%%%%%%%%%%%%%%%%%%%%%
\begin{abstract}
We prove a determinantal formula and Pfaffian formulas that respectively describe the $K$-theoretic degeneracy loci classes for Grassmann bundles and for symplectic Grassmann and odd orthogonal bundles. The former generalizes Damon--Kempf--Laksov's determinantal formula and the latter generalize Pragacz--Kazarian's formula for the Chow ring. As an application, we introduce the factorial $\GT/\GT'$-functions representing the torus equivariant $K$-theoretic Schubert classes of the symplectic and the odd orthogonal Grassmannians, which generalize the (double) theta polynomials of Buch--Kresch--Tamvakis and Tamvakis--Wilson.
%\keywords{Schubert calculus \and K-theory  \and determinant \and Pfaffian}
% \PACS{PACS code1 \and PACS code2 \and more}
\end{abstract}

\textbf{Keywords:} Schubert calculus, $K$-theory, determinant, Pfaffian.

%%%%%%%%%%%%%%%%%%%%%%%%%%%%%%
%%%%%%%%%%%%%%%%%%%%%%%%%%%%%%
\tableofcontents
%%%%%%%%%%%%%%%%%%%%%%%%%%%%%%
%%%%%%%%%%%%%%%%%%%%%%%%%%%%%%
%%%%%%%%%%%%%%%%%%%%%%%%%%%%%%
%%%%%%%%%%%%%%%%%%%%%%%%%%%%%%
%%%%%%%%%%%%%%%%%%%%%%%%%%%%%%
%%%%%%%%%%%%%%%%%%%%%%%%%%%%%%
%%%%%%%%%%%%%%%%%%%%%%%%%%%%%%
%%%%%%%%%%%%%%%%%%%%%%%%%%%%%%
%%%%%%%%%%%%%%%%%%%%%%%%%%%%%%
%%%%%%%%%%%%%%%%%%%%%%%%%%%%%%
\section{Introduction}
%%%%%%%%%%%%%%%%%%%%%%%%%%%%%%
%%%%%%%%%%%%%%%%%%%%%%%%%%%%%%
%%%%%%%%%%%%%%%%%%%%%%%%%%%%%%
%%%%%%%%%%%%%%%%%%%%%%%%%%%%%%
By a {\it degeneracy locus\/} of a Grassmann bundle, we will mean a subvariety defined by Schubert type conditions relative to a fixed flag of subbundles of the given vector bundle. We also consider symplectic and odd orthogonal degeneracy loci in the corresponding isotropic Grassmann bundles. The goal of this paper is to give explicit closed formulas for the classes of the 
structure sheaves of these degeneracy loci in the Grothendieck ring of coherent sheaves of the Grassmann bundle. 

In this paper, all schemes and varieties are assumed to be quasi-projective over an algebraically closed field $\bbF$ of characteristic zero, unless otherwise stated.

\subsection{Damon--Kempf--Laksov formula for $K$-theory }
Let us first review a classical result on degeneracy loci for cohomology. Let $X$ be a smooth variety. Fix a vector bundle $E$ of rank $n$ over $X$ and consider the Grassmann bundle $\xi: \Gr_d(E)\rightarrow X$, parametrizing rank $d$ subbundles of $E$, together with the tautological subbundle $U$ of $\xi^*E$. Suppose to be given a flag $F^{\bullet}$ of subbundles of $E$
\[
0=F^n\subset \cdots\subset F^1\subset F^0 =E, 
\]
where the superscript of  $F^i$ indicates its corank in $E$. Let $\lambda$ be a partition whose Young diagram fits in the $d\times (n-d)$ rectangle, {\it i.e.} a weakly decreasing sequence of nonnegative integers  $(\lambda_1, \dots, \lambda_d)$ such that $\lambda_1 \leq n-d$. One defines its associated degeneracy locus by 
\[
\Omega_{\lambda}:=\left\{(x,U_x) \in \Gr_d(E) \ |\ \dim (U_x \cap F^{\lambda_i+d-i}_x) \geq i\quad ( i=1,\dots, r)\right\}.
\]
One is interested in a formula for the class of $\Omega_\lambda$ as an element of the cohomology ring $H^*(\Gr_d(E))$ expressed in terms of the Chern classes of the vector bundles appearing in the setup. For arbitrary vector bundles $V$ and $W$, we denote the Chern polynomial by $c(V;u)=\sum_{i=0}^nc_i(V)u^i$ and define the relative Chern classes by $c(V-W;u):=c(V;u)/c(W;u)$. The formula, due to 
Damon (\cite{Damon1973}) and Kempf and Laksov (\cite{KL}), reads 
\begin{equation}\label{eq:KLD}
[\Omega_\lambda]=\det\left(c_{\lambda_i+j-i}(E/F^{\lambda_i+d-i}-U)\right)_{1\leq i,j\leq d},
\end{equation}
where we omit the pullback of vector bundles from the notation. It is worth noting that an important special case, when $\lambda$ has a rectangular shape, had been proven earlier by Porteous \cite{SimplePorteous}. 

The primary result of this paper extends the above Porteous--Damon--Kempf--Laksov formula  to the Grothendieck ring $K(\Gr_d(E))$. For our purpose, it turns out to be more convenient to use \emph{Segre classes} rather than Chern classes. Our definition of $K$-theoretic (relative) Segre classes $\scS_m(V-W)\;(m\in \mathbb{Z})$ for vector bundles $V$ and $W$ is a natural extension of the one given by Fulton in \cite{FultonIntersection}.  In the Grothendieck ring $K(\Gr_d(E))$ of the Grassman bundle, we have
\begin{equation}\label{eq:K-KLD}
[\mathcal{O}_{\Omega_\lambda}]=\det\left(\sum_{k=0}^\infty\binom{i-j}{k}(-1)^k\scS_{\lambda_i+j-i+k}\left(U^\vee-(E/F^{\lambda_i+d-i})^\vee\right)\right)_{1\leq i,j\leq d}.
\end{equation}
Note that this result is a direct $K$-theory analog of (\ref{eq:KLD}). 

It is reasonable to put the above results in the more general framework of $K$-theoretic formulas for degeneracy loci. Consider the full flag bundle $\Fl(E)\rightarrow X$ of the vector bundle $E$ over $X$. For each permutation $w\in S_n$ and a fixed complete flag $F^{\bullet}$ of $E$, one can define the associated degeneracy locus $\Omega_w$ in $\Fl(E)$ by the rank conditions given in terms of $w$, where its codimension is the length of $w$. Fulton and Lascoux \cite{FultonLascoux} proved that
the double Grothendieck polynomials of Lascoux and Sch\"utzenberger
express the degeneracy loci classes $[\scO_{\Omega_w}]\in K(Fl(E))$. This formula extends the corresponding result in cohomology due to Fulton \cite{Fulton2}. 
Later, F\'eher and Rym\'anyi \cite{FeherRymanyi03} reinterpreted the result in the context of Thom-polynomials. 
It is worth pointing out that if $w$ is the Grassmannian permutation with descend at $d$ associated to $\lambda$, then 
its double Grothendieck polynomial coincides with the corresponding {\it factorial Grothendieck polynomials\/}
 studied by McNamara \cite{McNamara}. Thus our formula leads to a determinantal formula for the factorial Grothendieck polynomials ({\it cf.} Remark \ref{remLenart}).
It should be noted that Lenart proved that the (single) Grothendieck 
 polynomial 
 associated to $\lambda$ can be expressed as a {\it flagged\/} Schur polynomials.
This approach leads to a determinantal formula different from 
 (\ref{eq:K-KLD}) (see Lascoux--Naruse \cite{LascouxNaruse} for details).
Let us also remark that 
Buch \cite{BuchQuiver} gave a formula for the structure sheaves of the {\it quiver loci\/} in $Fl(E)$, a family of subvarieties specified by more general rank conditions. Such formula is given as an integral linear combination of products of stable Grothendieck polynomials and it extends Fulton's {\it universal Schubert polynomials\/} (\cite{Fulton1999}) in cohomology. 
 Later, Buch himself together with Kresch, Tamvakis, and Yong \cite{BKTY2004} described more explicitly the coefficients by means of a combinatorial formula characterized by alternating signs. 
%%%%%%%%%%%%%%%%%%%%%%%%%%%
\subsection{Symplectic and odd orthogonal degeneracy loci}
The second aim of this paper is to provide analogous results for isotropic Grassmann bundles (type C). 

Let us start from type C, the symplectic case. Let $E$ be a symplectic vector bundle of rank $2n$ over a smooth  variety $X$ and suppose that we are given a flag $F^{\bullet}$ of subbundles of $E$
\[
0=F^n\subset F^{n-1} \subset \cdots \subset F^1 \subset F^0 \subset F^{-1} \subset \cdots \subset F^{-n}=E\,,  
\]
such that  $\rk(F^i)=n-i$ and $(F^i)^{\perp} = F^{-i}$ for all $i$. Fix a non-negative integer $k$, and let $\SG^k(E) \to X$ be the symplectic Grassmann bundle over $X$, \textit{i.e.}  the fiber at $x \in X$ is the Grassmannian $\SG^k(E_x)$ of $(n-k)$-dimensional isotropic subspaces of $E_x$. We denote the tautological vector bundle of $\SG^k(E)$ by $U$.

The combinatorial objects used to index the degeneracy loci in $\SG^k(E)$, called \emph{$k$-strict partitions}, have been developed in \cite{BuchKreschTamvakis1} by Buch, Kresch and Tamvakis.  A partition $\lambda$ is {\it $k$-strict\/} if $\lambda_i>k$ implies $\lambda_{i}>\lambda_{i+1}$. Let $\SP^k(n)$ denote the set of all $k$-strict partitions whose Young diagrams fit in the $(n-k)\times(n+k)$ rectangle. For each $\lambda$ in $\SP^k(n)$ of length $r$, we define the strictly decreasing integer sequence $(\chi_1,\ldots,\chi_{r})$ called the {\it characteristic index} (\cite{IkedaMatsumura}) by 
\begin{equation}
\chi_j=\#\{i\;|\;1\leq i<j,\;\lambda_i+\lambda_j>2k+j-i\}
+\lambda_j-k-j. \label{eq:defchi}
\end{equation}
The symplectic degeneracy loci $\Omega_{\lambda}^C \subset \SG^k(E)$ is then given by
\[
\Omega_{\lambda}^C = \{ (x, U_x)\in \SG^{k}(E) \ |\ \dim (U_x \cap F^{\chi_i}_x) \geq i, \ \ \  i=1,\dots, r\}.
\]
In order to describe its fundamental class $[\mathscr{O}_{\Omega_{\lambda}^C}]$  in $K(\SG^k(E))$, we introduce the classes
\begin{equation*}\label{scCdef}
\scC_m^{(\ell)} := \scS_{m}(U^{\vee}-(E/F^{\ell})^{\vee}), \ \ \ m\in \ZZ, \ \ \ -n\leq \ell \leq n.
\end{equation*}
Furthermore, let $r'$ be the smallest even integer that is larger than or equal to the length $r$ of $\lambda$, and we set
\[
D(\lambda)  := \{ (i,j) \in \mathbb{Z}^2|\  1\leq i < j\leq r',\; \chi_i + \chi_j < 0\}, 
\]
where $\chi_{r+1}$ is defined also by (\ref{eq:defchi}) with  $\lambda_{r+1}=0$. Then our formula has the form
\begin{equation}\label{eq:typeCmain}
[\mathscr{O}_{\Omega_{\lambda}^C}] = \sum_{I \subset D(\lambda)} \Pf\left(\sum_{p,q\in \ZZ \atop{p\geq 0, p+q\geq 0}} \ff_{pq}^{ij,I} \scC_{\lambda_i+d_i^I+p}^{(\chi_i)}\scC_{\lambda_j+d_j^I+q}^{(\chi_j)}\right)_{1\leq i<j\leq r' },
\end{equation}
where $d_i^I$ and $\ff_{pq}^{ij,I}$ are integers explicitly defined in terms of $\lambda$ and $I$ (see Section \ref{secSPf}).
Note that in the formula, $\scC_{m}^{(-n-1)}$ can appear if $r$ is odd and $r=n-k$ holds, since $\chi_{n-k+1}=-n-1$. However one can find that they 
occur only when $m\leq 0$, and in that case we can consistently set $\scC_{m}^{(-n-1)}=1$.

Before we deal with the orthogonal case, it can be worth to say a few words about an alternative setting in which our results can be reinterpreted, that of torus equivariant theories.
The Schubert classes in the torus equivariant cohomology of  isotropic flag varieties are described by the double Schubert polynomials of Ikeda, Mihalcea, and Naruse \cite{IkedaMihalceaNaruse} (see \cite{TamvakisJAlgGeom} for a historical account and a description of other approaches). They are the double (or equivariant) version of Billey--Haiman's polynomials \cite{BilleyHaiman}. The equivariant cohomology analogue of (\ref{eq:typeCmain}) was studied by Wilson in her thesis \cite{WilsonThesis}, where she introduced an explicit family of polynomials, called the \emph{double $\vartheta$-polynomials}, written in terms of Young's raising operators. She conjectured that they coincide with the double Schubert polynomials associated with the Weyl group elements corresponding to the $k$-strict partitions. Note that for the non-equivariant case the corresponding raising operator formula was proved by Buch, Kresch and Tamvakis \cite{BuchKreschTamvakis1}. Wilson's conjecture was proved by Ikeda and Matsumura \cite{IkedaMatsumura} in the form which is obtained from (\ref{eq:typeCmain}) by specialization to cohomology. It is worth noting that the Pfaffian sum expression had already appeared in Proposition 2 in \cite{BKTQGiam}. By translating the technique of left divided difference operators employed in \cite{IkedaMatsumura}, Wilson--Tamvakis \cite{TamvakisWilson} reproved Wilson's conjecture in terms of raising operators and also obtained a ring presentation for $H_T^*(\SG^k(n))$. 

If we set $k=0$, the isotropic Grassmannain $\SG^0(n)$ is known as the Lagrangian Grassiannian $\LG(n)$ and in this case (\ref{eq:typeCmain}) reduces to a single Pfaffian. Ikeda and Naruse \cite{IkedaNaruse} introduced the $K$-theoretic (factorial) $Q$-functions $\GQ_\lambda$, which represent the class of the structure sheaf $[\scO_{\Omega_\lambda}]$ in the torus equivariant $K$-ring $K_T(\LG(n))$. As a consequence, (\ref{eq:typeCmain}) allows one to express $\GQ_\lambda$ as a single Pfaffian. If we further specialize this to the cohomology ring, the formula coincides with Kazarian's degeneracy loci formula \cite{Kazarian} which, in the context of torus equivariant cohomology, was later proved by Ikeda \cite{Ikeda2007}  by employing localization techniques. The detailed comparison between \cite{Kazarian} and \cite{Ikeda2007} was explained in \cite[\S 10]{IkedaMihalceaNaruse}. These results imply that Ivanov's factorial $Q$-functions \cite{Ivanov04} describe the Lagrangian degeneracy loci classes in cohomology and, equivalently, the torus equivariant cohomology Schubert class of the Lagrangian Grassmannian. When the reference flag $F^{\bullet}$ is trivial, one recovers Pragacz's formula for nonequivariant cohomology $H^*(\LG(n))$, which shows that Schur's $Q$-functions represent the cohomology Schubert classes of the Lagrangian Grassmannian. In this case, the Pfaffian formula goes back to \cite{Schur}, the original paper on $Q$-functions by Schur. 

Let us now move on to type B, the odd orthogonal case. Let $E$ be a vector bundle of rank $2n+1$ over $X$, endowed with a nondegenerate symmetric form. Let $\OG^k(E)$ be its Grassmannian of isotropic subbundles of rank $n-k$ and $U$ its tautological bundle. Similarly to the symplectic case, for a $k$-strict partition $\lambda\in \SP^k(n)$, one can define the degeneracy locus $\Omega_\lambda^B\subset \OG^k(E)$ (see Definition \ref{dfOmegaB}). The formula for $[\mathscr{O}_{\Omega_\lambda}] \in K(\OG^n(n))$ is given by the same expression of the symplectic case, except that one replaces $\scC_m^{(\ell)}$ by 
$\scB_m^{(\ell)}$, which is defined by 
\[
\scB_m^{(\ell)} := \begin{cases}
\scS_m(U^\vee- (E/F^{\ell})^{\vee}) & (-n\leq \ell <0)\\
\sum_{s=0}^\infty 2^{-s-1}
\scS_{m+s}(U^\vee- (E/F^{\ell})^{\vee}) & (0\leq \ell \leq n),
\end{cases}
\]
where the reference flag $F^{\bullet}$ in this case is defined by (\ref{eq:flagB}).
The difference between the symplectic case and the odd orthogonal case is far more subtle in $K$-theory than in cohomology. The double Schubert polynomials of type C and B, which represent the equivariant Schubert classes for symplectic and odd orthogonal flag varieties, only differ by a power of $2$ (\cite{BilleyHaiman}, \cite{IkedaMihalceaNaruse},  \textit{cf.} \cite[p.2675]{BergeronSottile}) and by restriction the same holds for the formulas for the Grassmannian loci. This is not the case for the $K$-theory classes: one can see that even the entries of the Pfaffian formula are modified in a non-trivial way from type C to type B.

It would be interesting to extend our method and formula to the $K$-theory of even orthogonal isotropic Grassmannians (type D). An important progress related to this problem was recently made by Tamvakis \cite{Tamvakis1506.04441}, who proved a formula for the Schubert classes in the equivariant cohomology of the type D Grassmannian case by adapting the method of left divided difference operators used in \cite{IkedaMatsumura} and \cite{TamvakisWilson}. 
Another related recent progress in cohomology is due to 
Anderson and Fulton \cite{AndersonFulton},\cite{AndersonFulton2}. By using a resolution of singularities they obtained determinantal and Pfaffian formulas for the degeneracy loci (or equivalently the double Schubert polynomials) which are indexed by (signed) permutations belonging to a class called \emph{Vexillary}, which extends the Grassmannian permutations of type A, B, C and D. 

\subsection{Double Grothendieck polynomials and $\GC$ and $\GB$-functions }
One of the motivations for our work comes from the study of the canonical polynomials representing the structure sheaves 
of the Schubert varieties of homogenous spaces. With this purpose in mind we introduce the functions $\GT_{\lambda}(x,a|b)$ and $\GT'_{\lambda}(x,a|b)$ and show that they describe the equivariant Schubert classes in $K$-theory of, respectively, symplectic and odd orthogonal Grassmannians. In the maximal cases such polynomials respectively coincide with the $\GQ_{\lambda}(x|b)$ and $\GP_{\lambda}(x|b)$ polynomials  introduced by Ikeda--Naruse \cite{IkedaNaruse}. Furthermore, the specialization of our polynomials to cohomology recovers the double $\vartheta$-functions, the corresponding double Schubert polynomials of type C and B.

Recently, Kirillov--Naruse \cite{KrNr} introduced the \textit{double Grothendieck} polynomials for all classical types, hence solving the problem for all flag varieties. As a consequence their result provides information for all other homogenous spaces and in particular for the Grassmannians we consider. To be more precise, they give two different combinatorial descriptions, one in terms of compatible sequences and the other using pipe dreams, while our work provides, in a more restricted setting, a third alternative description by means of closed formulas.

\subsection{Methods of proofs}
Although each type has its own peculiarities, the strategy behind the proofs of the main theorems is essentially the same. The first step is of geometric nature and consists in constructing a resolution of singularities of the degeneracy locus inside a tower of projective bundles. For this we follow Damon and Kempf--Laksov for type A and Kazarian for type B and C. In the second part we compute the desired fundamental class by taking a pushforward of the class of the desingularization along the projective tower. This procedure relies on the key fact that the degeneracy loci have at worst rational singularities and it is performed by making use of connective $K$-theory $\CK^*$, a functor introduced by Levine--Morel in \cite{LevineMorel} which can be specialized to both $K$-theory and cohomology. In order to move from one level of the tower to the next we perform a Gysin computation which is given in terms of Segre classes and the final form is then obtained by induction in a nontrivial way. Dealing with the inductive step requires us to expand Kazarian's manipulation, a kind of \textit{umbral calculus}, which uses formal monomials instead of the indices of the Segre classes. Then it becomes possible to handle complicated expressions in Segre classes by treating them as rational functions. The desired formulas are then obtained by using either the Vandermonde determinant or the Schur Pfaffian identity.
  
It is worth stressing that for non-maximal isotropic Grassmann bundles the natural generalization of Kazarian's geometric construction does not produce an actual smooth resolution of singularities. To bypass this obstacle it becomes necessary to work with Borel--Moore homology theories and more precisely with $\CK_*$, the homological counterpart of $\CK^*$.

\subsection{Beyond $K$-theory: Generalized cohomology theories}
In order to gain a better understanding of the role played by $\CK^*$ in our work, it can be worth to have a look at it from the perspective of generalized Schubert calculus. With the introduction due to Quillen \cite{ElementaryQuillen} of the concept of oriented cohomology theory, it became clear that $H^*$ and $K^0$ were just two examples of an entire family of functors for which the typical problems associated to the study of Schubert varieties could be formulated. In recent years this line of research gained more attention following the construction by Levine--Morel \cite{LevineMorel} of algebraic cobordism $\Omega^*$, the universal oriented cohomology theory in the context of algebraic geometry. To some extent the final goal of generalized Schubert calculus would be to lift every result to $\Omega^*$ and in this perspective our work within connective $K$-theory, which is defined as $\CK^*:=\Omega^*\otimes_{\LL}\ZZ[\beta]$, can be viewed as the second step towards the corresponding universal formulas for $\Omega^*$. Here by universal we mean that they can be specialized to all other theories, in the same way in which the formulas for $\CK^*$ allow one to recover the statements for $K$-theory and cohomology. 

The difficulties one faces in the more general setting are of both computational and theoretical nature. There is in particular one aspect in which most theories substantially differ from $\CK^*$: in general not all Schubert varieties have a well defined fundamental class. This problem can be avoided by 
defining the Schubert classes as the pushforward of some resolution of singularities, but in this way the outcome depends on the choice made.

 When dealing with flag manifolds $G/B$, it is natural to consider Bott--Samelson resolutions as they are available for all Schubert varieties and this is the setting most commonly chosen. Bressle--Evens \cite{BraidBressler,SchubertBressler} were the first to consider this approach in the context of topological cobordism and they identified the correct generalization of divided difference operators, a key tool in dealing with Bott--Samelson classes. For algebraic cobordism, the ring presentation of $\Omega^*(G/B)$ as well as some algorithms for describing its multiplicative structure were independently obtained by Calmes--Petrov--Zainoulline \cite{SchubertCalmes} and Hornbostel--Kiritchenko \cite{SchubertHornbostel}. These results were later expanded to flag bundles (or, equivalently, to the equivariant setting) by Kiritchenko--Krishna \cite{EquivariantKiritchenko} and Calmes--Zainoulline--Zhong \cite{EquivariantCalmes}. In the latter work the authors also used Bott--Samelson resolutions to deal with the equivariant Schubert calculus of more general homogenous spaces. For an example of some special cases of Schubert varieties of flag bundles whose classes can be described without making use of a resolution of singularieties, we refer the reader to   \cite{GeneralisedHudson}.

In spite of being compatible with the action of divided difference operators, the classes obtained from Bott--Samelson resolutions are usually not well behaved from the point of view of stability, \textit{i.e.} the polynomial expressions defining them depend on the rank of the bundle used to define the ambient space in which Schubert varieties live. This is one of the reasons which suggests to experiment with different resolutions, in those contexts where these are available. For Grassmann bundles this is indeed the case and one can choose between the resolutions constructed through towers of projective bundles or those constructed through towers of Grassmann bundles. The first approach, which results in an ideal sequel of this paper, was developed in \cite{SegreHudson} by Hudson--Matsumura, while the second was carried out in \cite{UniversalNakagawa} by Nakagawa--Naruse.
%%%%%%%%%%%%%%%%%%%%%%%%%%%%%%%%%%%
%%%%%%%%%%%%%%%%%%%%%%%%%%%%%%%%%%%
%%%%%%%%%%%%%%%%%%%%%%%%%%%%%%%%%%%
%%%%%%%%%%%%%%%%%%%%%%%%%%%%%%%%%%%
\subsection{Organization of the paper}
In Section \ref{sec:preCK}, we first present some preliminary facts on connective $K$-theory and then introduce the $K$-theoretic Segre classes. 
In Section \ref{sec: type A} we prove our first main result for type A, \textit{i.e.} the degeneracy loci formula for the ordinary Grassmann bundles. In Section \ref{seccombBC} we recall the combinatorics used to describe the degeneracy loci in isotropic Grassmann bundles. 
In Section \ref{sec:BM}, we introduce $\CK_*$, a homological analogue of connective $K$-theory that can be applied to singular schemes. 
In Section \ref{section:Pf non max} we prove the Pfaffian formula for symplectic Grassmann bundles.
In Section \ref{sec:typeB} we first derive a formula for the degeneracy loci of the quadric bundle of an orthogonal vector bundle of odd rank. Then we use it to prove the Pfaffian formula for odd orthogonal Grassmann bundles.
In Section \ref{sec7}, we explain a few basic facts on the torus equivariant connected $K$-theory.
We continue by discussing the stability of Schubert classes when we consider the large rank limit of the corresponding Weyl groups.
In Section \ref{sec8}, we introduce the algebraic framework needed in order to express the $K$-analog of the double Schubert polynomials of \cite{IkedaMihalceaNaruse}. In particular, we introduce the graded ring $\calK_{\infty}$ spanned by the {\it double Grothendieck polynomials\/} and study its GKM description.
In Section \ref{sec: GTheta}, we give the definition of $G\Theta$ and $G\Theta'$-functions as elements of $\calK_{\infty}$ and identify them with the torus equivariant $K$-theoretic Schubert classes of isotropic Grassmannians.
%%%%%%%%%%%%%%%%%%%%%%%%%%%%%%%%%%%
%%%%%%%%%%%%%%%%%%%%%%%%%%%%%%%%%%%
%%%%%%%%%%%%%%%%%%%%%%%%%%%%%%%%%%%
%%%%%%%%%%%%%%%%%%%%%%%%%%%%%%%%%%%
%%%%%%%%%%%%%%%%%%%%%%%%%%%%%%%%%%%
%%%%%%%%%%%%%%%%%%%%%%%%%%%%%%%%%%%
\section{Segre classes in connective $K$-theory}\label{sec:preCK}
%%%%%%%%%%%%%%%%%%%%%%%%%%%%%%%%%%
%%%%%%%%%%%%%%%%%%%%%%%%%%%%%%%%%%
\subsection{Preliminaries}\label{sec:preliminaryOnCK}
%%%%%%%%%%%%%%%%%%%%%%%%%%%%%%%%%%
%%%%%%%%%%%%%%%%%%%%%%%%%%%%%%%%%%
Connective $K$-theory denoted by $\CK^*$ is an example of oriented cohomology theory built out of the algebraic cobordism of Levine and Morel. It consists of a contravariant functor and pushforwards for projective morphisms which satisfy some axioms. For the detailed construction we refer the reader to \cite{DaiLevine}, \cite{Hudson}, and \cite{LevineMorel}. In this section, we recall some preliminary facts on $\CK^*$, especially on Chern classes.

%%%%%%%%%%%%%%%%%%%%%%%%%%%%%%%%%%
Let $X$ be a smooth variety. The connective $K$-theory of $X$ interpolates the Grothendieck ring $K(X)$ of the algebraic vector bundles on $X$ and the Chow ring $\CH^*(X)$ of $X$. Connective $K$-theory assigns to $X$  a commutative graded algebra $\CK^*(X)$  over the coefficient ring $\CK^*({pt})$. Such ring is isomorphic to the polynomial ring $\ZZ[\beta]$ by setting $\beta$ to be the class of degree $-1$ obtained by pushing forward the fundamental class of the projective line along the structural morphism $\PP^1 \to {pt}$. The $\ZZ[\beta]$-algebra $\CK^*(X)$ specializes to the Chow ring $\CH^*(X)$ and the Grothendieck ring $K(X)$ by respectively setting $\beta$ equal to $0$ and $-1$. For any closed equidimensional subvariety $Y$ of $X$, there exists an associated fundamental class  $[Y]_{\CK^*}$ in $\CK^*(X)$. In particular,  $[Y]_{\CK^*}$ is specialized to the class $[Y]$ in $\CH^*(X)$ and also to the class of the structure sheaf $\mathcal{O}_Y$ of $Y$ in $K(X)$. In the rest of the paper, we denote the fundamental class of $Y$ in $\CK^*(X)$ by $[Y]$ instead of $[Y]_{\CK^*}$.

As a feature of any oriented cohomology theory, connective $K$-theory admits a theory of Chern classes. For line bundles $L_1$ and $L_2$ over $X$, their 1st Chern classes $c_1(L_i)\in  \CK^1(X)$ satisfy the following equality:
\begin{equation}\label{L tensor L}
c_1(L_1\otimes L_2)=c_1(L_1)+c_1(L_2)+\beta c_1(L_1)c_1(L_2).
\end{equation}
Note that the operation
\[
(u,v)\mapsto u\oplus v:= u+v+\beta uv
\]
is an example of commutative one-dimensional formal group law, which is a key ingredient of oriented cohomology theories. 
The reader should be aware that our sign convention for $\beta$ is opposite to the one used in the references \cite{DaiLevine}, \cite{Hudson}, \cite{LevineMorel}. It follows from (\ref{L tensor L}) that
\begin{equation}\label{c_1 dual}
c_1(L^{\vee}) = \frac{-c_1(L)}{1+\beta c_1(L)}.
\end{equation}
Therefore it is convenient to introduce the following notation for the \emph{formal inverse}: 
\[
u\ominus v = \frac{u-v}{1+\beta v} \ \ \ \ \mbox{ and } \ \ \ \ \bar u := \frac{-u}{1+\beta u}.
\]
It is easy to check that $u \oplus \bar v = u \ominus v$. In general, for a vector bundle $\EE\rightarrow X$ of rank ${\ee}$, the Chern classes $c_i(E)$ can be defined using Grothendieck's argument for the Chow ring. Indeed, consider the dual projective bundle  $\PP^*(\EE)\stackrel{\pi}\rightarrow X$ and the exact sequence of vector bundles
on $\PP^*(\EE)$
\[
0\longrightarrow \calH\longrightarrow\pi^* \EE\longrightarrow\calQ\longrightarrow 0, 
\]
where $\calH$ is the rank $(e-1)$ subbundle of $\pi^* \EE$ whose fiber at $x\in\PP^*(\EE)$ is precisely the hyperplane represented by $x$ itself. We call $\calQ$ the universal quotient line bundle. The ring $\CK^*(\PP^*(\EE))$ is generated by $\tau:=s^*s_*(1)$ as a $\CK^*(X)$-algebra, where $s$ is the zero section of  $\calQ$. One of the axioms of oriented cohomology theories states that there is a relation
\begin{equation}\label{rel chern}
\sum_{i=0}^{{\ee}}(-1)^i c_i(\EE)\tau^{{\ee}-i}=0, 
\end{equation}
for some $c_i(\EE)\in \CK^i(X), \; 0\leq i \leq e$. This relation and the normalization $c_0(E)=1$ uniquely determine the classes $c_i(E)$. Thus we can {\em define\/} these elements  to be the Chern classes of $E$. One can derive the fact that the canonical generator $\tau$ concides with $c_1(\calQ)$.

%%%%%%%%%%%%%%%%%%%%%%%%%%%%%%%%%%
For computations it is convenient  to combine the Chern classes into a Chern polynomial 
\[
c(\EE;u):=\sum_{i=0}^{\ee} c_i(\EE) u^i.
\]
A formal difference $E-F$ of vector bundles $E$ and $F$ over $X$ defines a \emph{virtual vector bundle}. Two such virtual vector bundles $E-F$ and $E'-F'$ are considered to be identical if they are equal in the Grothendieck ring $K(X)$ of vector bundles over $X$. Let us set $c(\EE-\FF;u):=c(\EE;u)/c(\FF;u)$. This is well-defined because of the Whitney formula: given a short exact sequence of bundles $0\rightarrow \FF \rightarrow \EE \rightarrow W \rightarrow 0$, one has  $c(\EE;u)=c(\FF;u)c(W;u)$. If $x_1,\ldots,x_e$ and $y_1,\ldots,y_f$ are Chern roots of $E$ and $F$ respectively, then the explicit expression of $c_p(E-F)$ is given by
\begin{equation}\label{eq:eh}
c(E-F;u)=\frac{\prod_{i=1}^e(1+x_i u)}{\prod_{j=1}^f(1+y_j u)}=\sum_{p=0}^\infty\sum_{j=0}^p(-1)^je_{p-j}(x_1,\ldots,x_a)h_j(y_1,\ldots,y_b)\cdot u^p,
\end{equation}
where $e_i$ and $h_i$ are, respectively, the elementary and complete symmetric functions of degree $i$.

%%%%%%%%%%%%%%%%%%%%%%%%%%%%%%%%%%

Finally we conclude this section with the following two lemmas that will be used in the rest of the paper. The first is an elementary expansion formula of the top Chern class of the tensor product of a line bundle with a vector bundle. The second is the $K$-theoretic version of the classical fact that the top Chern class is equal to the Euler class.
%%%%%%%%%%%%%%%%%%%%%%%%%%%%%%%%%%
\begin{lem}\label{tensor decomp}
Let $E$ be a vector bundle of rank $e$ and $L$ a line bundle over $X$.  Then in $\CK^*(X)$ we have
\[
c_{e}(L\otimes E) = \sum_{p=0}^{e} c_p(E)\sum_{q=0}^p\binom{p}{q}\beta^qc_1(L)^{e-p+q} .
\]
\end{lem}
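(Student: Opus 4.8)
The plan is to compute $c_e(L\otimes E)$ by reducing to the split case and then using the formal group law \eqref{L tensor L}. First I would invoke the splitting principle: it suffices to verify the identity when $E$ has Chern roots $x_1,\dots,x_e$, i.e.\ when $E = L_1\oplus\cdots\oplus L_e$ with $c_1(L_i)=x_i$, since both sides are polynomial in the $c_p(E)$ with universal coefficients and the relevant pullback map is injective. In the split case $L\otimes E=\bigoplus_i (L\otimes L_i)$, so the top Chern class is the product of the first Chern classes of the factors:
\[
c_e(L\otimes E)=\prod_{i=1}^{e}c_1(L\otimes L_i)=\prod_{i=1}^{e}\bigl(x_i + t + \beta\, t\, x_i\bigr),
\]
where I write $t:=c_1(L)$, using \eqref{L tensor L}. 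Thus the task is purely the algebraic identity
\[
\prod_{i=1}^{e}\bigl(x_i + t(1+\beta x_i)\bigr)\;=\;\sum_{p=0}^{e}e_{e-p}(x)\sum_{q=0}^{p}\binom{p}{q}\beta^{q}t^{\,e-p+q},
\]
with $e_j(x)$ the elementary symmetric polynomials and $c_p(E)=e_p(x)$.

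To prove this polynomial identity I would expand the left-hand side directly. Choosing, for each index $i$, either the term $x_i$ or the term $t(1+\beta x_i)$, a subset $S\subseteq\{1,\dots,e\}$ of size $m$ (the positions where we pick the $t$-term) contributes
\[
t^{m}\Bigl(\prod_{i\notin S}x_i\Bigr)\prod_{i\in S}(1+\beta x_i).
\]
Summing over all $S$ of size $m$ and then over $m$, and expanding $\prod_{i\in S}(1+\beta x_i)=\sum_{j}\beta^{j}e_j(x_S)$, one collects the coefficient of $t^{m}\beta^{j}$ as $\sum_{|S|=m}e_{e-m}(x_{S^c})\,e_j(x_S)$, which by the standard "merging" identity for elementary symmetric functions equals $\binom{e-m+j}{j}e_{e-m+j}(x)$. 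Re-indexing by $p:=e-m+j$ (so $q:=j$, $m=e-p+q$) turns this into exactly the right-hand side. This is the only step with any content; it is a routine but slightly fiddly symmetric-function manipulation, and I would expect it to be the main (mild) obstacle — everything else is formal.

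Alternatively, and perhaps more cleanly, I would avoid the combinatorial identity by a generating-function argument: rewrite each factor as $x_i+t(1+\beta x_i)=(1+\beta t)\bigl(x_i+\tfrac{t}{1+\beta t}\bigr)$ is not quite what is wanted since the $\beta x_i$ sits inside; instead note $x_i+t+\beta t x_i = t + x_i(1+\beta t)$, so
\[
c_e(L\otimes E)=\prod_{i=1}^e\bigl(t+(1+\beta t)x_i\bigr)=\sum_{p=0}^e (1+\beta t)^p\,e_p(x)\,t^{e-p}=\sum_{p=0}^e c_p(E)\,t^{e-p}(1+\beta t)^p.
\]
Then the binomial theorem $(1+\beta t)^p=\sum_{q=0}^p\binom{p}{q}\beta^q t^q$ gives precisely $\sum_{p=0}^e c_p(E)\sum_{q=0}^p\binom{p}{q}\beta^q\,t^{e-p+q}$, which is the claimed formula. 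This second route makes the splitting principle do all the work and reduces the algebra to a one-line application of the binomial theorem, so in the write-up I would present this version and relegate the symmetric-function identity to a remark.
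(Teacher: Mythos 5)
Your second, "cleaner" route is exactly the paper's proof: both use the splitting principle to write $c_e(L\otimes E)=\prod_i(x_i\oplus c_1(L))$, rewrite each factor as $c_1(L)+(1+\beta c_1(L))x_i$, expand into $\sum_p e_p(x)\,c_1(L)^{e-p}(1+\beta c_1(L))^p$, and finish with the binomial theorem. You should drop the first, more laborious symmetric-function computation entirely, as you yourself suggest.
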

%%%%%%%%%%%%%%%%%%%%%%%%%%%%%%%%%
\begin{proof}
Let $x_1,\dots, x_{\ee}$ be Chern roots of $E$. Let $c_1:=c_1(L)$. Then
\begin{eqnarray*}
c_\ee(L\otimes E) &= & (x_1\oplus c_1) \cdots (x_{\ee} \oplus c_1)
= (x_1(1+\beta c_1)+c_1) \cdots (x_{\ee}(1+\beta c_1) + c_1)\\
&=& \sum_{p=0}^{\ee} e_p(x)c_1^{{\ee}-p}(1+\beta c_1)^p.
\end{eqnarray*}
Thus by expanding the right hand side we have the desired formula.
\end{proof}
%%%%%%%%%%%%%%%%%%%%%%%%%%%%%%%%%%
\begin{lem}\label{lemG-B}
Let $E$ be a vector bundle of rank $e$ over a scheme $X$. Let $s$ be a section of $E$ over $X$ and $Z$ be its zero scheme. 
\begin{enumerate}[(a)]
\item If $X$ is Cohen--Macaulay and the codimension of $Z$ in $X$ is $e$, then the section $s$ is regular and $Z$ is Cohen--Macaulay.
\item If $X$ is smooth, then $[Z] = c_e(E) \in \CK^e(X)$.
\end{enumerate}
\end{lem}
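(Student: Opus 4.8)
The plan is to prove part (a) by a local algebra argument and then deduce part (b) from (a) together with the deformation-to-the-normal-cone philosophy already built into connective $K$-theory. For part (a), the statement is local on $X$, so I would pass to an affine open $U=\Spec R$ over which $E$ is trivial, say $E|_U \cong R^e$. Then the section $s$ corresponds to an $e$-tuple $f_1,\dots,f_e \in R$, and the zero scheme $Z$ is cut out by the ideal $(f_1,\dots,f_e)$. The hypothesis that $\codim_X Z = e$ says that the ideal $(f_1,\dots,f_e)$ has height $e$ in the Cohen--Macaulay ring $R$. By the standard characterization of regular sequences in Cohen--Macaulay rings (an ideal generated by $c$ elements and of height $c$ is generated by a regular sequence; equivalently $\grade = \height$ in this setting), the $f_i$ form a regular sequence, which is exactly the assertion that $s$ is a regular section. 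Finally, $R/(f_1,\dots,f_e)$ is Cohen--Macaulay because quotienting a Cohen--Macaulay ring by a regular sequence preserves Cohen--Macaulayness; gluing these local statements gives that $Z$ is Cohen--Macaulay.

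For part (b), with $X$ smooth, I would argue that the zero section $s\colon X \to E$ meets the zero section $X \hookrightarrow E$ in the subscheme $Z$, and that this intersection is of the expected codimension $e$ (by part (a), or rather: on the locus where $s$ is not a regular section the argument is handled by a limiting/excess argument, but the clean case is when $\codim Z = e$ and one reduces to it). The Euler-class identity $[Z] = c_e(E)$ is the oriented-cohomology incarnation of the classical fact in $\CH^*$ and $K^0$; in connective $K$-theory it follows from the general self-intersection formula for the zero section of a vector bundle, which gives $s^*s_*(1) = c_e(E)$, combined with the fact that when $s$ is a regular section cutting out $Z$ in the expected codimension one has $s^*s_*(1) = [Z]$ via the excess-intersection/Koszul-resolution computation. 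Concretely I would either (i) invoke the deformation to the normal cone to reduce to the case $Z = X$ sitting inside $E = N$, where the identity is the definition of the Euler class $c_e$, or (ii) use the Koszul complex $\bigwedge^\bullet E^\vee \to \mathcal{O}_Z$ — valid since $s$ is regular by part (a) — to compute $[Z] = \sum (-1)^i [\bigwedge^i E^\vee]$ in $K$-theory and its connective lift, then recognize the right-hand side as $c_e(E)$ via the splitting principle and the formal-group-law formula (\ref{L tensor L}).

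The main obstacle I anticipate is part (b) in the generality stated: $X$ is smooth but $Z$ is an arbitrary zero scheme, with no a priori codimension hypothesis. If $\codim Z < e$, the naive Koszul resolution is no longer exact and the fundamental class $[Z]$ must be interpreted with care; the correct statement relies on the fact that, even then, the refined class $s^*s_*(1)$ equals $c_e(E)$ by the functoriality of pushforward along the zero section, while $[Z]$ in $\CK^*$ is defined so as to agree with this. So the real content is matching the formal class $s^*s_*(1)$ with the geometrically defined $[Z]$, and I expect that to require either citing the relevant compatibility from \cite{LevineMorel}/\cite{Hudson} or a careful deformation-to-the-normal-cone argument; part (a) feeds in precisely to guarantee the Koszul complex resolves $\mathcal{O}_Z$ in the good case, which is the case actually used later in the paper.
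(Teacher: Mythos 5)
Your proof plan is correct and takes essentially the same route as the paper, which disposes of the lemma by two citations: for (a) it invokes Theorems~31 and~30 of Matsumura's commutative algebra book, which are exactly the local facts you recover by trivializing $E$ on an affine open (in a Cohen--Macaulay ring an ideal generated by $e$ elements of height $e$ is generated by a regular sequence, and quotienting a Cohen--Macaulay ring by a regular sequence preserves Cohen--Macaulayness); for (b) it cites Lemma~6.6.7 of Levine--Morel, which is precisely the Euler-class/self-intersection identity you describe. Your worry about part~(b) lacking an explicit codimension hypothesis is well-founded and worth noting: as stated the equality $[Z]=c_e(E)$ in $\CK^e(X)$ cannot hold if $\codim Z<e$ (degree mismatch), and the cited Lemma~6.6.7 indeed presumes a regular section, i.e.\ $\codim Z = e$; the paper leaves this tacit because it is satisfied in every application. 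So your plan, once that hypothesis is made explicit, agrees with the paper's proof both in substance and in strategy.
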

\begin{proof}
In (a), the regularity follows from Theorem 31 in \cite{MatsumuraComm} and then the Cohen--Macaulayness follows from Theorem 30 in \cite{MatsumuraComm}. The claim (b) is a consequence of Lemma 6.6.7. in \cite{LevineMorel} ({\it cf.} Example 3.2.16 and 14.1.1 in \cite{FultonIntersection}). 
\end{proof}
%%%%%%%%%%%%%%%%%%%%%%%%%%%%%%%%%%

\begin{rem}
In order to compute the classes of the degeneracy loci in the symplectic and odd orthogonal Grassmann bundles, we must work over the so-called oriented Borel--Moore homology developed by Levine--Morel. Thus we need a statement analogous to Lemma \ref{lemG-B}(b), which also follows from Lemma 6.6.7 \cite{LevineMorel}.
\end{rem}

%%%%%%%%%%%%%%%%%%%%%%%%%%%%%%%%%%
%%%%%%%%%%%%%%%%%%%%%%%%%%%%%%%%%%
%%%%%%%%%%%%%%%%%%%%%%%%%%%%%%%%%%
%%%%%%%%%%%%%%%%%%%%%%%%%%%%%%%%%%
\subsection{$K$-theoretic Segre classes}\label{secKSeg}
%%%%%%%%%%%%%%%%%%%%%%%%%%%%%%%%%%
%%%%%%%%%%%%%%%%%%%%%%%%%%%%%%%%%%
%%%%%%%%%%%%%%%%%%%%%%%%%%%%%%%%%%
%%%%%%%%%%%%%%%%%%%%%%%%%%%%%%%%%%
%%%%%%%%%%%%%%%%%%%%%%%%%%%%%%%%%%
In this section, we define the \emph{Segre classes} of a vector bundle, and then show a formula for their generating function, through which we introduce the \emph{relative} Segre classes. The main result used in the rest of the paper is Proposition \ref{push of tensor}. 

%%%%%%%%%%%%%%%%%%%%%%%%%%%%%%%%%%
\begin{defn}
Let $\EE$ be a vector bundle over $X$ of rank $\ee$. Let $\pi: \PP^*(\EE)\to X$ be the dual projective bundle of $\EE$ and $\calQ$ its universal quotient line bundle of $\EE$. Let $\tau=c_1(\calQ)$. For each integer $m\in \ZZ$, define
\begin{equation}\label{df:Segre}
\scS_m(\EE) := 
\begin{cases}
\pi_*(\tau^{m+\ee-1}) & (m>0)\\
(-\beta)^{-m} & (m\leq 0).
\end{cases}
\end{equation}
\end{defn}
The class $\scS_m(\EE)$ is a $K$-theoretic version of the Segre class defined in \cite{FultonIntersection} although there is a sign difference. We remark that if $\EE$ is a line bundle, we have $\calQ=E$ and $\pi=\id_X$, {\it i.e.} $\scS_m(\EE)=c_1(\EE)^m$ for all $m\geq 0$. 

The next lemma, which is due to Buch \cite[Lemma 6.6 amd 7.1]{BuchQuiver}, shows that the definition of the Segre class of degree $m$ such that $-e+1\leq m\leq 0$ is consistent with the one for the positive degree. We give another proof, using Vishik's formula \cite{Vishik}.
%%%%%%%%%%%%%%%%%%%%%%%%%%%%%%%%%%
\begin{lem}\label{push is beta}
We have $\pi_*(\tau^{m+\ee-1})  = (-\beta)^{-m}$ for $-\ee+1 \leq m \leq 0$.
\end{lem}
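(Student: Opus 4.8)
The plan is to deduce the identity from Vishik's residue formula for the pushforward along a projective bundle. By the splitting principle we may assume that $\EE$ carries a complete flag of subbundles, so that it has Chern roots $x_1,\dots,x_{\ee}$ with $c(\EE;u)=\prod_{i=1}^{\ee}(1+x_iu)$; then the defining relation (\ref{rel chern}) of $\CK^*(\PP^*(\EE))$ factors as $\prod_{i=1}^{\ee}(\tau-x_i)=0$. In this situation Vishik's formula \cite{Vishik}, in the form adapted to the dual projective bundle, asserts that for every Laurent polynomial $\phi$,
\[
\pi_*\!\big(\phi(\tau)\big)=\sum_{i=1}^{\ee}\frac{\phi(x_i)}{\prod_{j\neq i}(x_i\ominus x_j)},\qquad u\ominus v=\frac{u-v}{1+\beta v},
\]
where the right-hand side is read as an identity of rational functions in $x_1,\dots,x_{\ee},\beta$ which specialises to the genuine class in $\CK^*(X)$. (Taking $\phi(\tau)=\tau^{\ee-1}$ recovers $\pi_*(\tau^{\ee-1})=1$, a useful check on the normalisation.)

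Next I would specialise to $\phi(\tau)=\tau^{m+\ee-1}$ with $-\ee+1\le m\le 0$, rewrite $1/(x_i\ominus x_j)=(1+\beta x_j)/(x_i-x_j)$, and pull the common factor $P:=\prod_{j=1}^{\ee}(1+\beta x_j)$ out of the sum, reaching
\[
\pi_*(\tau^{m+\ee-1})=P\sum_{i=1}^{\ee}\frac{x_i^{\,m+\ee-1}}{(1+\beta x_i)\prod_{j\neq i}(x_i-x_j)}.
\]
The inner sum is precisely the sum of the residues at $z=x_1,\dots,x_{\ee}$ of the rational function $R(z)=z^{m+\ee-1}\big/\big((1+\beta z)\prod_{j=1}^{\ee}(z-x_j)\big)$, hence equals minus the sum of the residues of $R$ at its remaining poles. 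Here the hypothesis $-\ee+1\le m\le 0$ enters essentially: it forces $0\le m+\ee-1\le \ee-1$, so that $R(z)=O(z^{-2})$ as $z\to\infty$ and the residue at infinity vanishes. The only remaining pole is $z=-\beta^{-1}$, and using $\prod_{j=1}^{\ee}(-\beta^{-1}-x_j)=(-1)^{\ee}\beta^{-\ee}P$ a direct computation gives $\Res_{z=-\beta^{-1}}R=(-1)^{m-1}\beta^{-m}/P$. Therefore the inner sum equals $(-1)^{m}\beta^{-m}/P=(-\beta)^{-m}/P$, and multiplying back by $P$ yields $\pi_*(\tau^{m+\ee-1})=(-\beta)^{-m}$, as claimed.

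The one genuinely delicate point I anticipate is invoking Vishik's formula with the correct conventions: that the roots occurring are the Chern roots of $\EE$ itself (equivalently, the roots of the relation (\ref{rel chern})), and that the denominator factors are the formal differences $x_i\ominus x_j$ for the multiplicative formal group law $u\oplus v=u+v+\beta uv$, rather than ordinary differences or the group law itself. Once this is pinned down, the remainder is a short residue bookkeeping. It is also worth noting, as a consistency remark, what happens for $m>0$: then $m+\ee-1\ge\ee$, the residue of $R$ at infinity no longer vanishes, and the same computation instead outputs a nontrivial polynomial in the Chern classes of $\EE$, namely the honest Segre class $\scS_m(\EE)$. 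This is exactly why the cut-off at $m=0$, i.e. the range $-\ee+1\le m\le 0$, is the correct one for the statement.
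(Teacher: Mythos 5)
Your argument is correct, and it shares with the paper the same starting point — Vishik's residue formula for the pushforward along a projective bundle, rewritten so that the denominators become ordinary differences $x_i-x_j$ weighted by a product of $(1+\beta x_j)$. Where you diverge is in how the resulting alternating sum is evaluated. The paper observes that the sum is a polynomial $F(\beta)$ of degree at most $\ee-1$ in $\beta$, computes its values $F(-1/z_i)=z_i^{-m}$ at the $\ee$ points $\beta=-1/z_i$ (where all but the $i$th term vanish), and concludes $F(\beta)=(-\beta)^m$ by Lagrange interpolation. You instead introduce an auxiliary variable $z$, interpret the sum as $\sum_i\Res_{z=x_i}R(z)$ for $R(z)=z^{m+\ee-1}/\bigl((1+\beta z)\prod_j(z-x_j)\bigr)$, and invoke the vanishing of the total residue on $\PP^1$, localizing the answer to the single pole $z=-\beta^{-1}$ once the hypothesis $m\le 0$ kills the residue at infinity (and $m\ge -\ee+1$ keeps $z=0$ regular). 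Both are short and essentially equivalent in depth; the interpolation route has slightly less bookkeeping, while the residue route makes more transparent why the range $-\ee+1\le m\le 0$ is the natural one, as you note at the end. One small caveat worth being explicit about if you write this up: the sum-of-residues identity is applied over the fraction field of $\ZZ[\beta][x_1,\dots,x_\ee]$, and the intermediate formulas involve $\beta^{-1}$, so you should say a word about why the final specialization to the actual class in $\CK^*(X)$ (where $\beta$ is not invertible) is legitimate — namely, the identity being proved is an equality of polynomials in $\beta$ and the $x_i$, so it suffices to verify it in the fraction field.
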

%%%%%%%%%%%%%%%%%%%%%%%%%%%%%%%%%%
\begin{proof}
Let $z_1,\dots,z_{\ee}$ be the Chern roots of $\EE$. From Vishik's formula in algebraic cobordism \cite[Proposition 5.29, p.548]{Vishik}, we have
\begin{eqnarray*}
&&\pi_*(\tau^{-m+\ee-1}) 
= \sum_{i=1}^{\ee}  \frac{z_i^{-m+{\ee}-1} }{\prod_{j\not=i} (z_i\ominus z_j)}
= \sum_{i=1}^{\ee}  \frac{z_i^{-m+{\ee}-1} \prod_{ j\not=i}(1+\beta z_j)}{\prod_{j\not=i} (z_i-z_j)}.
\end{eqnarray*}
The right hand side is a polynomial in $\beta$ of degree at most $e-1$. Let us denote it by $F(\beta)$. It is easy to see that $F(-1/z_i)$ is equal to $z_i^{-m}$ for $1\leq i\leq e$. This property uniquely determines $F(\beta)$ by degree reason. Obviously $(-\beta)^m$ also satisfies the same conditions, so we have $F(\beta)=(-\beta)^m$.
\end{proof}
%%%%%%%%%%%%%%%%%%%%%%%%%%%%%%%%%%
\begin{rem}\label{rem S negative}
By using Vishik's formula, we can also show that $\scS_m(E) = \scS_m(E \oplus O_X^n)$ for any integer $n \geq 1$, where $O_X$ is the trivial line bundle over $X$. Thus, together with Lemma \ref{push is beta}, the definition of the Segre classes of negative degree in (\ref{df:Segre}) is consistent with the one for positive degrees.
\end{rem}
%%%%%%%%%%%%%%%%%%%%%%%%%%%%%%%%%%
By setting
\[
\scS(E;u):=\sum_{m\in \ZZ} \scS_m(\EE) u^{m},
\]
we can show the following theorem.
%%%%%%%%%%%%%%%%%%%%%%%%%%%%%%%%%%
\begin{thm}\label{thm:Gysin} 
We have
\begin{equation}\label{Segre comp}
\scS(E;u)= \frac{1}{1 + \beta u^{-1}} \frac{c(\EE;\beta)}{c(\EE;-u)},
\end{equation}
where $\dfrac{1}{1 + \beta u^{-1}}$ is expanded in the form $\sum_{i=0}^\infty (-\beta)^i u^{-i}$. 
\end{thm}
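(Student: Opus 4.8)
\textbf{Proof plan for Theorem \ref{thm:Gysin}.} The plan is to reduce the identity \eqref{Segre comp} to a single Gysin computation on the dual projective bundle $\pi\colon \PP^*(\EE)\to X$, since by definition $\scS_m(\EE)=\pi_*(\tau^{m+\ee-1})$ for $m>0$. First I would assemble the positive-degree part of the generating function as $\sum_{m>0}\scS_m(\EE)u^m = u^{1-\ee}\,\pi_*\!\left(\sum_{m>0}\tau^{m+\ee-1}u^{m+\ee-1}\right) = u^{1-\ee}\,\pi_*\!\left(\frac{(\tau u)^{\ee}}{1-\tau u}\right)$, treating the sum as a formal expansion in $u$ (this is legitimate because $\tau$ is nilpotent on $\PP^*(\EE)$, so only finitely many terms contribute in each $\CK$-degree). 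The nonpositive-degree part contributes $\sum_{m\le 0}(-\beta)^{-m}u^m = \sum_{i\ge 0}(-\beta)^i u^{-i} = \frac{1}{1+\beta u^{-1}}$, which is exactly the scalar prefactor appearing on the right-hand side of \eqref{Segre comp}. So the task splits cleanly into identifying the positive part with the ``correction'' needed to turn $\frac{1}{1+\beta u^{-1}}$ into $\frac{1}{1+\beta u^{-1}}\frac{c(\EE;\beta)}{c(\EE;-u)}$.

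Next I would carry out the Gysin pushforward $\pi_*\!\left(\frac{(\tau u)^{\ee}}{1-\tau u}\right)$ using Vishik's formula exactly as in the proof of Lemma \ref{push is beta}: if $z_1,\dots,z_{\ee}$ are the Chern roots of $\EE$, then for any power series $\varphi$ one has $\pi_*(\varphi(\tau)) = \sum_{i=1}^{\ee}\frac{\varphi(z_i)}{\prod_{j\ne i}(z_i\ominus z_j)} = \sum_{i=1}^{\ee}\frac{\varphi(z_i)\prod_{j\ne i}(1+\beta z_j)}{\prod_{j\ne i}(z_i-z_j)}$. Applying this with $\varphi(\tau)=\frac{(\tau u)^{\ee}}{1-\tau u}$ and combining with the $u^{1-\ee}$ prefactor, the full generating function $\scS(E;u)$ becomes a sum of partial-fraction-type terms. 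The goal is then the rational-function identity
\[
\frac{1}{1+\beta u^{-1}} + u^{1-\ee}\sum_{i=1}^{\ee}\frac{(z_i u)^{\ee}}{1-z_i u}\cdot\frac{\prod_{j\ne i}(1+\beta z_j)}{\prod_{j\ne i}(z_i-z_j)} = \frac{1}{1+\beta u^{-1}}\cdot\frac{\prod_{i=1}^{\ee}(1+\beta z_i)}{\prod_{i=1}^{\ee}(1-z_i u)},
\]
since $c(\EE;\beta)=\prod(1+\beta z_i)$ and $c(\EE;-u)=\prod(1-z_i u)$. I would prove this by recognizing the right-hand side, as a rational function of $u$, via its partial fraction decomposition: it has simple poles at $u=z_i^{-1}$ (from $c(\EE;-u)$) and a simple pole at $u^{-1}=-\beta$, i.e. $u=-1/\beta$ (from the prefactor $\frac{1}{1+\beta u^{-1}}=\frac{u}{u+\beta^{-1}}\cdot$const, handled formally), plus a constant term; computing the residue at each $u=z_i^{-1}$ reproduces exactly the $i$-th summand on the left after the bookkeeping with $u^{1-\ee}$, and the remaining part matches $\frac{1}{1+\beta u^{-1}}$. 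Equivalently, and perhaps more cleanly, one can clear denominators and verify the resulting polynomial identity in $u$ by checking it at the $\ee$ values $u=z_i^{-1}$ and using a degree count together with the behavior as $u\to\infty$ and the specialization $z_i\mapsto -1/\beta$ (which is precisely the evaluation exploited in Lemma \ref{push is beta}).

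The main obstacle I anticipate is the careful handling of the formal expansions: \eqref{Segre comp} is an identity of doubly-infinite Laurent-type series in $u$ (the left-hand side genuinely involves negative powers $u^{-i}$ with coefficients $(-\beta)^i$), and the prefactor $\frac{1}{1+\beta u^{-1}}$ must be expanded as $\sum_{i\ge0}(-\beta)^iu^{-i}$ as stipulated in the statement, not as a rational function naively inverted. So the argument must keep straight which expansions live in $\ZZ[\beta][[u]][u^{-1}]$ versus which manipulations are purely formal rational-function identities over the fraction field $\ZZ(\beta,z_1,\dots,z_{\ee})(u)$; the reconciliation is that both sides, once the agreed expansion of $\frac{1}{1+\beta u^{-1}}$ is fixed, lie in a common ring where equality of rational functions forces equality of the expansions. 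A secondary subtlety is that the Chern roots $z_i$ are not honest elements of $\CK^*(X)$, so strictly one works in the splitting-principle extension and descends at the end; but this is standard and causes no real difficulty. Once the formal framework is pinned down, the computation itself is the partial-fraction identity above, which is routine.
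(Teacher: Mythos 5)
Your plan is correct, but it takes a genuinely different route from the paper's proof. The paper's argument is a recursion argument: multiplying the relation $\sum_{i=0}^e(-1)^ic_i(E)\tau^{e-i}=0$ by $\tau^{m-1}$ and pushing forward gives $\sum_{i=0}^e(-1)^ic_i(E)\pi_*(\tau^{m+e-1-i})=0$ for $m\geq 1$; this says precisely that $c(E;-u)\,\scS(E;u)$ has no positive powers of $u$, and then the non-positive part is computed directly from $\scS_m(E)=(-\beta)^{-m}$ ($m\leq0$), factoring as $(1+\beta u^{-1})^{-1}c(E;\beta)$. Vishik's formula is used only once, in Lemma~\ref{push is beta}, for the initial exponents $0,\dots,e-1$. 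You instead invoke Vishik's formula for \emph{every} exponent $\geq e$, package the whole positive-degree part as $\sum_i\frac{z_i^eu}{1-z_iu}\cdot\frac{\prod_{j\neq i}(1+\beta z_j)}{\prod_{j\neq i}(z_i-z_j)}$, and reduce the theorem to one rational-function identity in $\QQ(\beta,z_1,\dots,z_e,u)$ to be checked by residues. That identity is indeed true (residues at $u=z_i^{-1}$ and $u=-\beta^{-1}$ cancel, and the difference vanishes at $u\to\infty$), and the bookkeeping about where expansions live is the kind of subtlety you correctly flag. What the paper's route buys is economy and cleanliness: no partial-fraction computation, and no need to sort out which expansion is meant for each factor, since the recursion argument characterizes the positive coefficients uniquely without ever writing them as rational functions of Chern roots. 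What your route buys is a fully explicit closed form for each $\scS_m(E)$ along the way. One small point you should make explicit: your $u\to\infty$ check requires the identity $\sum_i z_i^{e-1}\prod_{j\neq i}(1+\beta z_j)/\prod_{j\neq i}(z_i-z_j)=1$, which is exactly $\pi_*(\tau^{e-1})=(-\beta)^0=1$, i.e.\ the $m=0$ case of Lemma~\ref{push is beta}; cite it rather than leaving it as a gesture toward ``the specialization $z_i\mapsto-1/\beta$.''
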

%%%%%%%%%%%%%%%%%%%%%%%%%%%%%%%%%%
\begin{proof}
For $m\geq 1$, we multiply (\ref{rel chern}) by $\tau^{m-1}$, and push it forward to obtain 
\begin{equation}\label{eq:rec-Segre}
\sum_{i=0}^e (-1)^i c_i(E)\pi_*(\tau^{m-i+e-1})=0.
\end{equation}
One observes that by using Lemma \ref{push is beta} together with (\ref{eq:rec-Segre}), we can  successively determine $\pi_{*}(\tau^k)$ for $k\geq e$ uniquely. This recursion relation is equivalent to requiring that $c(E;-u)\scS(E;u)$ has only non-positive powers in $u$. Thus the series $c(E;-u)\scS(E;u)$ equals to
\[
\sum_{m\leq 0}\left(\sum_{i=0}^e(-1)^i c_i(E)(-\beta)^{i-m})\right)u^m=\!\left(\sum_{m\leq 0}(-\beta)^{-m}u^m\right)\!\!\left(\sum_{i=0}^ec_i(E)\beta^{i}\right).
\]
Hence we have $\scS(E;u)=(1+\beta u^{-1})^{-1}c(E;\beta)/c(E;-u)$. 
This proves the theorem.
\end{proof}
%%%%%%%%%%%%%%%%%%%%%%%%%%%%%%%%%%
\begin{rem}
Let $G_m(z_1,\dots,z_d)$ be the Grothendieck polynomials associated to a partition of length one (see (\ref{eq:ratio})). We can rewrite Buch's formula (\cite[Lemma 6.6]{BuchQuiver} as the following generating function
\[
\sum_{m\in \ZZ} G_m(z_1,\dots,z_d) u^m = \frac{1}{1+\beta u^{-1}} \prod_{i=1}^d\frac{1+\beta z_i}{1-z_i u},
\]
where we set $G_m(z_1,\dots,z_d)=(-\beta)^{-m}$ for $m\leq 0$. Therefore, by comparison with (\ref{eq:rec-Segre}), we find that the Segre class $\scS_m(\EE)$ of a vector bundle $E$ of rank $d$ coincides with $G_m(z_1,\dots,z_d)$ if $z_1,\dots,z_d$ are viewed as Chern roots of $E$. Buch \cite[Lemma 7.1]{BuchQuiver} proved this fact by a different method.
\end{rem}
%%%%%%%%%%%%%%%%%%%%%%%%%%%%%%%%%%
\begin{rem}\label{rem beta+u}
It follows from a simple identity $\frac{1+\beta x}{1 - u x} = \frac{1}{1+(u+\beta) \bar x}$ that
\[
\frac{c(\EE;\beta)}{c(\EE;-u)} = \frac{1}{c(\EE^{\vee}; u+\beta)}.
\]
\end{rem}
%%%%%%%%%%%%%%%%%%%%%%%%%%%%%%%%%%
%%%%%%%%%%%%%%%%%%%%%%%%%%%%%%%%%%
Theorem \ref{thm:Gysin} allows us to extend the definition of the $K$-theoretic Segre classes to virtual vector bundles as follows.
%%%%%%%%%%%%%%%%%%%%%%%%%%%%%%%%%%
\begin{defn} 
We define the \emph{Segre class} $\scS_{m}(E-F)$ for a virtual bundle $E-F$ by the following generating function:
\begin{equation}\label{segre vir}
\scS(\EE-\FF;u):=\sum_{m\in \ZZ} \scS_{m}(\EE-\FF) u^{m}= \frac{1}{1 + \beta u^{-1}} \frac{c(\EE - \FF;\beta)}{c(\EE-\FF;-u)}.
\end{equation}
Since the right hand side is written in terms of Chern classes, the notion is well-defined. 
\end{defn}
%%%%%%%%%%%%%%%%%%%%%%%%%%%%%%%%%%
By Remark \ref{rem beta+u}, we can observe that
\[
\scS(\EE-\FF;u)=\scS(\EE;u)c(\FF^{\vee}; u+\beta).
\] 
Thus we have
\begin{equation}\label{formula0}
\scS_{m}(\EE-\FF) =\sum_{p=0}^{\rk(F)} c_p(\FF^{\vee}) \sum_{q=0}^p \binom{p}{q}\beta^q\scS_{m-p+q}(\EE).
\end{equation}
%%%%%%%%%%%%%%%%%%%%%%%%%%%%%%%%%
Finally, we prove that we can obtain $\scS_m(\EE-\FF)$ also as the pushforward of certain Chern classes. 
%%%%%%%%%%%%%%%%%%%%%%%%%%%%%%%%%
\begin{prop}\label{push of tensor} 
Let $\pi: \PP^*(E)\rightarrow X$ be the dual projective bundle of a rank $e$ vector bundle $E\to X$ and let $\tau$ be the first Chern class of its tautological quotient line bundle $\calQ$. Let $F$ be a vector bundle over $X$ of rank $f$ and denote its pullback to $\PP^*(E)$ also by $F$. We have 
\[
\pi_*\left(\tau^sc_f(\calQ \otimes F^{\vee})\right) = \scS_{s+f-e+1}(E-F).
\]
\end{prop}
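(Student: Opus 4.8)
The plan is to compute the pushforward $\pi_*(\tau^s c_f(\calQ\otimes F^\vee))$ directly by first expanding the top Chern class $c_f(\calQ\otimes F^\vee)$ using Lemma \ref{tensor decomp}, and then applying the definition of the Segre classes together with Theorem \ref{thm:Gysin} and formula (\ref{formula0}). First I would note that $\calQ$ is a line bundle on $\PP^*(E)$ with $c_1(\calQ)=\tau$, so Lemma \ref{tensor decomp} applied to the rank $f$ bundle $F^\vee$ (pulled back to $\PP^*(E)$) gives
\[
c_f(\calQ\otimes F^\vee)=\sum_{p=0}^f c_p(F^\vee)\sum_{q=0}^p\binom{p}{q}\beta^q\tau^{f-p+q},
\]
where $c_p(F^\vee)$ is a pullback from $X$ and hence factors out of $\pi_*$ by the projection formula. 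Multiplying by $\tau^s$ and pushing forward, each term contributes $c_p(F^\vee)\binom{p}{q}\beta^q\,\pi_*(\tau^{s+f-p+q})$.

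Next I would identify $\pi_*(\tau^{s+f-p+q})$ with a Segre class of $E$. By the definition (\ref{df:Segre}), $\scS_m(E)=\pi_*(\tau^{m+e-1})$ for $m>0$, and Lemma \ref{push is beta} (extended by Remark \ref{rem S negative}) shows this identity persists for all $m$ once we set $\scS_m(E)=(-\beta)^{-m}$ for $m\le 0$; one must check that the recursion (\ref{eq:rec-Segre}) determines $\pi_*(\tau^k)$ for all $k\ge 0$ compatibly with the convention, which is exactly the content of the proof of Theorem \ref{thm:Gysin}. Thus $\pi_*(\tau^{s+f-p+q})=\scS_{s+f-p+q-e+1}(E)$. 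Setting $m:=s+f-e+1$, we obtain
\[
\pi_*\bigl(\tau^s c_f(\calQ\otimes F^\vee)\bigr)=\sum_{p=0}^f c_p(F^\vee)\sum_{q=0}^p\binom{p}{q}\beta^q\,\scS_{m-p+q}(E),
\]
which is precisely the right-hand side of formula (\ref{formula0}) for $\scS_m(E-F)$. This completes the identification $\pi_*(\tau^s c_f(\calQ\otimes F^\vee))=\scS_{s+f-e+1}(E-F)$.

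The main subtlety — more bookkeeping than genuine obstacle — is making sure the negative-degree Segre classes are handled consistently: the sum in the Chern-class expansion can produce indices $m-p+q$ that are zero or negative, and one needs Lemma \ref{push is beta} together with Remark \ref{rem S negative} to guarantee that $\pi_*(\tau^{m-p+q+e-1})$ still equals $\scS_{m-p+q}(E)=(-\beta)^{p-q-m}$ in that range (note $\tau^{m-p+q+e-1}$ may even have negative exponent, in which case one interprets the pushforward via the same recursion, or reduces to the $F=0$ case first and then applies (\ref{formula0}) formally). An alternative, perhaps cleaner, route that avoids this is to work entirely with generating functions: form $\sum_s \pi_*(\tau^s c_f(\calQ\otimes F^\vee))\,u^{s+f-e+1}$, recognize the inner sum over $s$ as $\pi_*$ applied to a generating series that reassembles into $c(F^\vee;u+\beta)\cdot\scS(E;u)$ via Remark \ref{rem beta+u} and Theorem \ref{thm:Gysin}, and then compare with (\ref{segre vir}) and the identity $\scS(E-F;u)=\scS(E;u)c(F^\vee;u+\beta)$ established just before the Proposition. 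Either way, the essential input is Lemma \ref{tensor decomp} for the tensor product and Theorem \ref{thm:Gysin} for converting pushforwards of powers of $\tau$ into Segre classes.
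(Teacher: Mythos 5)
Your proof is correct and follows the paper's argument exactly: expand $c_f(\calQ\otimes F^{\vee})$ via Lemma \ref{tensor decomp}, push forward using Theorem \ref{thm:Gysin} together with Lemma \ref{push is beta}, and reassemble via (\ref{formula0}). The concern about a negative exponent on $\tau$ is unfounded, since $p\leq f$, $q\geq 0$, and $s\geq 0$ give $s+f-p+q\geq 0$; Lemma \ref{push is beta} already covers the range $0\leq s+f-p+q\leq e-1$ where the resulting Segre index is nonpositive.
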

%%%%%%%%%%%%%%%%%%%%%%%%%%%%%%%%%
\begin{proof}
By using Lemma \ref{tensor decomp} and Theorem \ref{thm:Gysin}, we can compute
\begin{eqnarray*}
\pi_*\left(\tau^sc_{f}(\calQ \otimes F^{\vee})\right)
&=&\pi_*\left(\sum_{p=0}^f c_p(F^{\vee}) \sum_{q=0}^{p}\binom{p}{q}\beta^q\tau^{s+f-p+q}\right)\\
&=&\sum_{p=0}^{f} c_p(F^{\vee}) \sum_{q=0}^{p}\binom{p}{q}\beta^q\scS_{s+f-e+1-p+q}(E).
\end{eqnarray*}
Thus (\ref{formula0}) implies the claim.
\end{proof}
%%%%%%%%%%%%%%%%%%%%%%%%%%%%%%%%%%
\begin{rem}\label{remLine}
As noted before, when $E$ is a line bundle we have $\scS_m(E) = c_1(E)^m$ for all $m\geq 0$. This, together with Lemma \ref{tensor decomp} and (\ref{formula0}), implies that $c_1(E)^sc_{f}(E \otimes F^{\vee}) = \scS_{f+s}(E-F)$.
\end{rem}
%%%%%%%%%%%%%%%%%%%%%%%%%%%%%%%%%%
%%%%%%%%%%%%%%%%%%%%%%%%%%%%%%%%%%
%%%%%%%%%%%%%%%%%%%%%%%%%%%%%%%%%%
\section{Determinantal formula for Grassmann bundles}\label{sec: type A}
%%%%%%%%%%%%%%%%%%%%%%%%%%%%%%%%%%
%%%%%%%%%%%%%%%%%%%%%%%%%%%%%%%%%%
The goal of this section is to prove the determinantal formula for the degeneracy loci in a Grassmann bundle (Theorem \ref{MainA}). Following \cite{Damon1973} and \cite{KempfLaksov}, for each locus we construct a resolution of singularities in a tower of projective bundles. We show that the fundamental classes of the resolution is a product of Chern classes and that  it gives the degeneracy locus class when it is pushed forward along the tower (Section \ref{sec: resolution A}). In Section \ref{sec:pushlemA}, we obtain a lemma that describes the pushforward of a single Chern class along each stage of the tower. In Section \ref{sec: umbral}, we develop a calculus to compute the pushforward of the product of Chern classes by generalizing the technique used by Kazarian in \cite{Kazarian}. This technique will be used in later sections as well.

%%%%%%%%%%%%%%%%%%%%%%%%%%%%%%%%%%
%%%%%%%%%%%%%%%%%%%%%%%%%%%%%%%%%%
\subsection{Degeneracy loci}\label{sec: deg loci type A}
%%%%%%%%%%%%%%%%%%%%%%%%%%%%%%%%%%
%%%%%%%%%%%%%%%%%%%%%%%%%%%%%%%%%%
Let $E$ be a vector bundle  of rank $n$ over a smooth  variety $X$. Let $\xi: \Gr_d(E) \to X$ be the Grassmann bundle parametrizing rank $d$ subbundles of $E$. Let $U$ be the tautological subbundle of $\xi^*E$ over $\Gr_d(E)$. We denote an element of $\Gr_d(E)$ by $(x,U_x)$ where $x\in X$ and $U_x$ is a $d$-dimensional subspace of $E_x$.  

%%%%%%%%%%%%%%%%%%%%%%%%%%%%%%%%%%
Suppose to be given a complete flag of $E$
\[
0=F^n\subset \cdots \subset F^1\subset E,
\]
where the superscript indicates the corank, \textit{i.e.} $\rk(F^k) = n-k$. We denote the pullback of  $E$ and $F^i$ along $\xi$ also by $E$ and $F^i$. 

%%%%%%%%%%%%%%%%%%%%%%%%%%%%%%%%%%
Let $\calP_d$ be the set of all partitions $(\lambda_1,\dots,\lambda_d)$ with at most $d$ parts. Let $\calP_d(n)$ be the subset of $\calP_d$ consisting of partitions $\lambda$ such that $\lambda_i\leq n-d$ for all $i=1,\dots,d$. Define the \emph{length} of $\lambda$ to be the number of nonzero parts in $\lambda$.

%%%%%%%%%%%%%%%%%%%%%%%%%%%%%%%%%%
For each $\lambda\in \calP_d(n)$ of length $r$, define \emph{the type A degeneracy loci} $\Omega_{\lambda}$ in $\Gr_d(E)$ by
\begin{equation}\label{dfOmegaA}
\Omega_{\lambda}:=\{(x,U_x) \in \Gr_d(E) \ |\ \dim (U_x \cap F^{\lambda_i+d-i}_x) \geq i, i=1,\dots, r\}.
\end{equation}
Note that for $i > r$  the condition $\dim (F^{\lambda_i+d-i}_x \cap U_x) \geq i$ is vacuous. 
%%%%%%%%%%%%%%%%%%%%%%%%%%%%%%%%%%%
%%%%%%%%%%%%%%%%%%%%%%%%%%%%%%%%%%%
\subsection{Resolution of singularities}\label{sec: resolution A}
%%%%%%%%%%%%%%%%%%%%%%%%%%%%%%%%%%%
%%%%%%%%%%%%%%%%%%%%%%%%%%%%%%%%%%%
Over $\Gr_d(E)$, we consider the $r$-step flag bundle $\Fl_r(U)$ of $U$, where the fiber at $(x,U_x) \in \Gr_d(E)$ consists of flags of subspaces $(D_1)_x \subset \cdots \subset (D_r)_x$ of $U_x$ with $\dim (D_i)_x=i$. Let $D_1\subset \cdots \subset D_r$ be the corresponding flag of tautological subbundles over $\Fl_r(U)$. We let $D_0=0$. We denote an element of $\Fl_r(U)$ by $(x,U_x, (D_{\bullet})_x)$.
%%%%%%%%%%%%%%%%%%%%%%%%%%%%%%%%%%%
We can also realize this bundle as a tower of projective bundles: 
\begin{eqnarray}
&&\pi: \Fl_r(U)=\PP(U/D_{r-1}) \stackrel{\pi_r}{\longrightarrow} 
\PP(U/D_{r-2}) \stackrel{\pi_{r-1}}{\longrightarrow} \cdots \ \ \ \ \ \ \ \ \ \nonumber\\\label{P tower}
&&\ \ \ \ \ \ \ \ \ \ \ \ \ \ \ \ \ \ \ \ \ \ \cdots\stackrel{\pi_3}{\longrightarrow} \PP(U/D_1) \stackrel{\pi_2}{\longrightarrow} \PP(U)  \stackrel{\pi_1}{\longrightarrow} \Gr_d(E).
\end{eqnarray}
We regard $D_j/D_{j-1}$ as the tautological line bundle of $\PP(U/D_{j-1})$. As mentioned in the introduction we will omit the pullback of vector bundles from the notation. 
%%%%%%%%%%%%%%%%%%%%%%%%%%%%%%%%%%%
\begin{defn}\label{def:Y_j(A)}
Define a sequence of subvarieties $Y_r \subset Y_{r-1} \subset \cdots \subset Y_1 \subset \Fl_r(U)$ as follows. For each $j=1,\dots,r$, let
\[
Y_j := \{ (x,U_x, (D_{\bullet})_x) \in  \Fl_r(U) \ |\ (D_i)_x \subset F^{\lambda_i+d-i}_x, i=1,\dots,j\}.
\]
We set $Y_0:=\Fl_r(U)$ and $Y_{\lambda}:=Y_r$. Let $\iota_j: Y_{j} \to Y_{j-1}$ be the canonical inclusion. 
\end{defn}
%%%%%%%%%%%%%%%%%%%%%%%%%%%%%%%%%%%
It is easy to see that $Y_{\lambda}$ is birational to $\Omega_{\lambda}$ along $\pi$. Thus we obtain the following description of $[\Omega_{\lambda}]$.
%%%%%%%%%%%%%%%%%%%%%%%%%%%%%%%%%%%
\begin{lem}\label{prod chern}
The class of $\Omega_{\lambda}$ in $\CK^*(\Gr_d(E))$ is given by
\[
[\Omega_{\lambda}] =\pi_*([Y_{\lambda}]).
\]
\end{lem}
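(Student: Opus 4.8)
The strategy is to show that the projection $\pi\colon \Fl_r(U)\to \Gr_d(E)$ restricts to a birational morphism $Y_\lambda \to \Omega_\lambda$, and then to invoke the fact that the degeneracy locus $\Omega_\lambda$ has rational singularities (so that the pushforward of the fundamental class of a resolution recovers the fundamental class of the locus itself in $\CK^*$). First I would analyze the fibers of $\pi|_{Y_\lambda}$. A point of $Y_\lambda$ over $(x,U_x)\in\Gr_d(E)$ is a flag $(D_1)_x\subset\cdots\subset(D_r)_x$ inside $U_x$ with $(D_i)_x\subset F^{\lambda_i+d-i}_x$ and $\dim(D_i)_x=i$; such a flag exists precisely when $\dim(U_x\cap F^{\lambda_i+d-i}_x)\ge i$ for all $i$, i.e.\ precisely when $(x,U_x)\in\Omega_\lambda$. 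Hence $\pi(Y_\lambda)=\Omega_\lambda$ set-theoretically. Over the \emph{generic} point of $\Omega_\lambda$ — i.e.\ on the dense open locus where all the rank conditions are equalities, $\dim(U_x\cap F^{\lambda_i+d-i}_x)=i$ — the subspaces $(D_i)_x:=U_x\cap F^{\lambda_i+d-i}_x$ are forced and form the unique such flag (one checks the nesting $U_x\cap F^{\lambda_i+d-i}_x\subset U_x\cap F^{\lambda_{i+1}+d-i-1}_x$ using $\lambda_i+d-i\ge\lambda_{i+1}+d-i-1$, which holds since $\lambda$ is weakly decreasing). So $\pi|_{Y_\lambda}$ is an isomorphism over this dense open subset, proving birationality.

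Next I would address the two ingredients needed to turn birationality into the class identity. One is that $Y_\lambda$ is a smooth variety: this follows because each $\iota_j\colon Y_j\hookrightarrow Y_{j-1}$ is realized as the zero scheme of a section of an explicit vector bundle (roughly $\Hom(D_j/D_{j-1}, E/F^{\lambda_j+d-j})$ pulled back appropriately), and by descending induction in the projective tower (\ref{P tower}) one sees the expected codimension is achieved, so each $Y_j$ is smooth over $X$; in particular $Y_\lambda$ is smooth and irreducible, and $\pi$ restricted to it is a projective morphism between varieties of the same dimension, generically one-to-one. The second ingredient is that $\Omega_\lambda$ has at worst rational singularities — this is the classical fact (going back to Kempf, and to De Concini--Lakshmibai, etc.) for Schubert-type degeneracy loci, and it is precisely the hypothesis invoked in the "Methods of proofs" section of this paper. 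Given these, the general principle (a birational projective morphism $f\colon \widetilde Z\to Z$ with $\widetilde Z$ smooth and $Z$ having rational singularities satisfies $f_*[\widetilde Z]=[Z]$ in connective $K$-theory, since $Rf_*\mathcal O_{\widetilde Z}=\mathcal O_Z$ specializes correctly and the analogous statement holds in $\CH^*$ by degree reasons) yields $\pi_*([Y_\lambda])=[\Omega_\lambda]$ in $\CK^*(\Gr_d(E))$.

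The main obstacle, and the point that needs care rather than being routine, is the rational-singularities input together with the compatibility of pushforward with the $\beta$-specializations: one must ensure that $R\pi_*\mathcal O_{Y_\lambda}=\mathcal O_{\Omega_\lambda}$ (the higher direct images vanish because $Y_\lambda\to\Omega_\lambda$ is birational proper with $\Omega_\lambda$ having rational singularities) so that the identity holds at $\beta=-1$, while at $\beta=0$ it is the elementary statement that a generically finite degree-one map pushes the fundamental cycle to the fundamental cycle; since $\CK^*(\Gr_d(E))$ is a free $\ZZ[\beta]$-module detected by these two specializations on the classes involved, the identity lifts to $\CK^*$. I would state this as a lemma (or cite Lemma 6.6.7 of \cite{LevineMorel} in the oriented-cohomology/Borel--Moore framework) rather than reproving it. Everything else — the fiber computation and the birationality — is a direct unwinding of Definitions \ref{def:Y_j(A)} and (\ref{dfOmegaA}).
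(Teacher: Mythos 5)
Your overall strategy — establish birationality of $\pi|_{Y_\lambda}\colon Y_\lambda\to\Omega_\lambda$, invoke rational singularities of $\Omega_\lambda$, and conclude via the general result that a rational resolution's fundamental class pushes forward to the fundamental class in $\CK^*$ — is exactly the paper's proof; you simply fill in the fiber-by-fiber birationality check and the smoothness of $Y_\lambda$, which the paper takes as known.

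One part of your argument is not valid, however: the claim that because the identity holds at $\beta=0$ and $\beta=-1$, and $\CK^*(\Gr_d(E))$ is a free $\ZZ[\beta]$-module, "the identity lifts to $\CK^*$." Two specializations of a $\ZZ[\beta]$-valued class do not determine it — the difference could be divisible by $\beta(1+\beta)$ without being zero. The correct input, which the paper cites, is a lemma in connective $K$-theory itself: if $f\colon\widetilde Z\to Z$ is projective birational with $\widetilde Z$ smooth and $R^if_*\mathcal O_{\widetilde Z}=0$ for $i>0$ (equivalently, $\Omega_\lambda$ has rational singularities and $Y_\lambda$ is a resolution), then $f_*(1_{\widetilde Z})=[Z]$ in $\CK_*$; this is \cite[Lemma 2.2]{Hudson} (or Dai--Levine). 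You do gesture at deferring to such a lemma, but the reference you suggest, Lemma 6.6.7 of \cite{LevineMorel}, is the statement about zero loci of sections of vector bundles (used elsewhere in the paper for Lemma \ref{lemG-B}), not the rational-resolution pushforward statement. With that citation corrected and the specialization argument dropped, your proof coincides with the paper's.
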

%%%%%%%%%%%%%%%%%%%%%%%%%%%%%%%%%%%
\begin{proof}
When $X$ is  a point,  it is well-known that the degeneracy loci $\Omega_{\lambda}$ has at worst rational singularities \cite[Section 2.2]{BrionLecturenotes}. In general,  the claim holds by a standard argument (see for example,  %\cite[p.274, 8.2.2.Theorem (c)]{Kumar},  
\cite[Proof of Theorem 3]{FultonLascoux}). Thus $\pi|_{Y_{\lambda}}:Y_{\lambda} \to \Omega_{\lambda}$ is a rational resolution, \textit{i.e.} the higher direct images of the structure sheaf of $Y_{\lambda}$ vanish. Therefore it follows that in connective $K$-theory, the pushforward of $[Y_{\lambda}]$ is $[\Omega_{\lambda}]$ (see \cite{DaiLevine} or \cite[Lemma 2.2]{Hudson}). 
\end{proof}
%%%%%%%%%%%%%%%%%%%%%%%%%%%%%%%%%%%
Next we describe the class $[Y_{\lambda}]$ as a product of Chern classes.
%%%%%%%%%%%%%%%%%%%%%%%%%%%%%%%%%%%
\begin{lem}\label{A Y_r prod}
For each $j=1,\dots,r$, let $\alpha_j:=c_{\lambda_j+d-j}((D_j/D_{j-1})^{\vee}\otimes E/F^{\lambda_j+d-j})$. The class of $Y_{\lambda}$ in $\CK^*(\Fl_r(U))$ is given by
\begin{equation}
[Y_{\lambda}] =\alpha_1 \cdots \alpha_r.
\end{equation}
\end{lem}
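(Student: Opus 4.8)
The plan is to prove the sharper statement that $[Y_j]=\alpha_1\cdots\alpha_j$ in $\CK^*(\Fl_r(U))$ for every $j=0,1,\dots,r$, by induction on $j$; the case $j=0$ is the normalization $[Y_0]=[\Fl_r(U)]=1$, and the case $j=r$ is the claim, since $Y_{\lambda}=Y_r$. The heart of the matter is to realize, for each $j$, the inclusion $\iota_j:Y_j\hookrightarrow Y_{j-1}$ as the zero scheme of a section of a vector bundle of the expected rank, so that Lemma \ref{lemG-B} becomes available.

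First I would record that every $Y_j$, and in particular $Y_{j-1}$, is \emph{smooth}. This follows by exhibiting $Y_j$ as an iterated tower of projective, Grassmann and flag bundles over the smooth base $X$: one chooses successively the subbundles $D_1\subset\cdots\subset D_j$ with $D_i\subset F^{\lambda_i+d-i}$ — this is a tower of projective bundles, because $D_{i-1}\subset F^{\lambda_{i-1}+d-i+1}\subset F^{\lambda_i+d-i}$ (using $\lambda_{i-1}\ge\lambda_i$), so $D_i/D_{i-1}$ is a line in the vector bundle $F^{\lambda_i+d-i}/D_{i-1}$ — then the rank $d$ subbundle $U\supset D_j$, i.e.\ a point of $\Gr_{d-j}(E/D_j)$, and finally the flag $D_{j+1}/D_j\subset\cdots\subset D_r/D_j$ inside $U/D_j$, i.e.\ a point of $\Fl_{r-j}(U/D_j)$. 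A routine dimension count along this tower yields $\codim(Y_j,Y_{j-1})=\lambda_j+d-j$.

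Next, on $Y_{j-1}$ the containment $D_{j-1}\subset F^{\lambda_{j-1}+d-j+1}\subset F^{\lambda_j+d-j}$ shows that the composite $D_j\hookrightarrow U\hookrightarrow E\twoheadrightarrow E/F^{\lambda_j+d-j}$ annihilates $D_{j-1}$, hence descends to a morphism $D_j/D_{j-1}\to E/F^{\lambda_j+d-j}$, that is, a section $s_j$ of $V_j:=(D_j/D_{j-1})^{\vee}\otimes (E/F^{\lambda_j+d-j})$, whose zero scheme is exactly $Y_j$. Since $\rk V_j=\rk(E/F^{\lambda_j+d-j})=\lambda_j+d-j$ equals $\codim(Y_j,Y_{j-1})$ and $Y_{j-1}$ is smooth, Lemma \ref{lemG-B}(a) shows $s_j$ is a regular section (so $Y_j$ is Cohen--Macaulay, consistent with the smoothness found above), and Lemma \ref{lemG-B}(b) gives
\[
[Y_j]=c_{\lambda_j+d-j}(V_j)\big|_{Y_{j-1}}=\alpha_j\big|_{Y_{j-1}}\qquad\text{in }\CK^*(Y_{j-1}).
\]
Pushing forward along $Y_{j-1}\hookrightarrow\Fl_r(U)$, writing $\iota^{(j-1)}$ for this closed immersion, and using functoriality of pushforward together with the projection formula (and the fact that $\alpha_j$ is pulled back from $\Fl_r(U)$), we get
\[
[Y_j]=\iota^{(j-1)}_*\big(\alpha_j|_{Y_{j-1}}\big)=\iota^{(j-1)}_*\big((\iota^{(j-1)})^*\alpha_j\big)=\alpha_j\cdot[Y_{j-1}]\qquad\text{in }\CK^*(\Fl_r(U)),
\]
which by the inductive hypothesis equals $\alpha_1\cdots\alpha_j$. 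Taking $j=r$ proves the lemma.

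I expect the main obstacle to be the bookkeeping in the third step: verifying carefully that the zero scheme of $s_j$ is precisely $Y_j$ and that its codimension in $Y_{j-1}$ is the expected value $\lambda_j+d-j$ (and not something smaller), since this is exactly what makes $s_j$ a regular section and allows Lemma \ref{lemG-B} to be invoked. It is worth stressing that, in contrast with the symplectic and odd orthogonal cases treated later, here the resolution $Y_{\lambda}$ and all the intermediate loci $Y_j$ are genuinely smooth, so Lemma \ref{lemG-B}(b) applies directly and no passage to Borel--Moore homology is required.
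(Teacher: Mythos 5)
Your proof is correct and follows essentially the same route as the paper's: realize $Y_j \hookrightarrow Y_{j-1}$ as the zero scheme of a section of $(D_j/D_{j-1})^{\vee}\otimes E/F^{\lambda_j+d-j}$, apply Lemma~\ref{lemG-B}, and iterate via the projection formula. You have also written the bundle map correctly as $D_j/D_{j-1}\to E/F^{\lambda_j+d-j}$ (the paper's prose says $E/U$, which is a typo, since that bundle has the wrong rank and the map would be identically zero); your more explicit verification of smoothness and codimension of the $Y_j$'s is a welcome expansion of what the paper leaves implicit.
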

%%%%%%%%%%%%%%%%%%%%%%%%%%%%%%%%%%%
\begin{proof}
Over $Y_{j-1}$, there is a bundle map $D_j/D_{j-1} \to E/U$ and one can see that $Y_j$ is the locus where this bundle map has rank $0$. Since $Y_j$ is smooth of codimension $\lambda_j+d-j$ in $Y_{j-1}$, it follows from Lemma \ref{lemG-B} that the fundamental class of $Y_{j}$ in $\CK^*(Y_{j-1})$ is given by
\begin{equation}\label{formulaEuler}
\iota_{j*}(1)=(\iota_{1}\cdots \iota_{j-1})^*\alpha_j.
\end{equation}
A repeated application of (\ref{formulaEuler}) together with the projection formula implies the claim. For example, if $\lambda=(\lambda_1,\lambda_2)$, then $[Y_{\lambda}] = (\iota_1\iota_2)_* (1) = \iota_{1*} \iota_1^*\alpha_2= \alpha_1\alpha_2$.
\end{proof}
%%%%%%%%%%%%%%%%%%%%%%%%%%%%%%%%%%%
%%%%%%%%%%%%%%%%%%%%%%%%%%%%%%%%%%%
\subsection{Pushforward formula}\label{sec:pushlemA}
%%%%%%%%%%%%%%%%%%%%%%%%%%%%%%%%%%%
%%%%%%%%%%%%%%%%%%%%%%%%%%%%%%%%%%%
%%%%%%%%%%%%%%%%%%%%%%%%%%%%%%%%%%%
\begin{lem}\label{rth first} 
For each $j=1,\dots,r$, let $\tau_j:=c_1((D_j/D_{j-1})^{\vee})$. For each $m\in \ZZ$ and $\ell=0,\dots, n$, let
\[
\scA_{m}^{(\ell)}:=\scS_{m}(U^{\vee}-(E/F^{\ell})^{\vee}).
\]
Then, for each integer $s\geq 0$, we have
\begin{equation}\label{eq:push A}
\pi_{j*}\big(\tau_j^s\alpha_j\big)=\sum_{p= 0}^{j-1} c_p(D_{j-1}) \sum_{q=0}^p\binom{p}{q} \beta^q  \scA_{\lambda_j+s-p+q}^{(\lambda_j+d-j)}.
\end{equation}
\end{lem}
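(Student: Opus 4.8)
The plan is to compute the pushforward $\pi_{j*}(\tau_j^s \alpha_j)$ through the single projective bundle map $\pi_j \colon \PP(U/D_{j-1}) \to \PP(U/D_{j-2})$, or rather to regard $\pi_j$ as a map to its base where $D_{j-1}$ lives (the base of the $j$-th stage of the tower \eqref{P tower}), and then to recognize the answer in terms of the Segre classes $\scA_m^{(\ell)}$. The key observation is that the line bundle $D_j/D_{j-1}$ is precisely (the dual of) the tautological quotient line bundle $\calQ$ of the dual projective bundle $\PP^*(\,\cdot\,)$ — more precisely, after identifying $\PP(U/D_{j-1})$ with the dual projective bundle of $(U/D_{j-1})^\vee$, the class $\tau_j = c_1((D_j/D_{j-1})^\vee)$ becomes the canonical generator $\tau$ to which Proposition \ref{push of tensor} applies. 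So the first step is to set up this identification carefully and rewrite $\tau_j^s\alpha_j$ in the form $\tau^s c_f(\calQ \otimes F^\vee)$ for an appropriate bundle $F$.

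Concretely, $\alpha_j = c_{\lambda_j+d-j}\big((D_j/D_{j-1})^\vee \otimes E/F^{\lambda_j+d-j}\big)$, and since $\rk(E/F^{\lambda_j+d-j}) = \lambda_j + d - j$ this is the top Chern class of a tensor product of the line bundle $(D_j/D_{j-1})^\vee$ with a vector bundle of rank exactly $f := \lambda_j+d-j$. Thus, with $\calQ := (D_j/D_{j-1})^\vee$ playing the role of the universal quotient line bundle and $F^\vee := E/F^{\lambda_j+d-j}$ (equivalently $F := (E/F^{\lambda_j+d-j})^\vee$, of rank $f = \lambda_j+d-j$), Proposition \ref{push of tensor} should give, for the relevant projective bundle with fibre dimension governed by $\rk(U/D_{j-1}) = d-(j-1)$,
\[
\pi_{j*}\big(\tau_j^s c_f(\calQ \otimes F^\vee)\big) = \scS_{s + f - (d-j+1) + 1}\big((U/D_{j-1}) - (E/F^{\lambda_j+d-j})^\vee\big).
\]
Here $s + f - (d-j+1)+1 = s + (\lambda_j+d-j) - (d-j+1) + 1 = \lambda_j + s$, so the degree of the Segre class comes out to $\lambda_j + s$, matching the subscript on the right-hand side of \eqref{eq:push A}. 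The one subtlety to check is that Proposition \ref{push of tensor} is stated for $\PP^*(E)$ with $E$ of rank $e$; here $e = \rk(U/D_{j-1}) = d - j + 1$, and one must confirm the dualization conventions so that $\tau_j = c_1((D_j/D_{j-1})^\vee)$ is indeed the generator $\tau = c_1(\calQ)$ for $\PP^*\big((U/D_{j-1})^\vee\big) \cong \PP(U/D_{j-1})$.

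The remaining step is to pass from $\scS_{\lambda_j+s}\big((U/D_{j-1}) - (E/F^{\lambda_j+d-j})^\vee\big)$ to the stated combination of $\scA_m^{(\lambda_j+d-j)} = \scS_m\big(U^\vee - (E/F^{\lambda_j+d-j})^\vee\big)$. This is a matter of relating $U/D_{j-1}$ to $U$: we have the virtual identity $(U/D_{j-1}) - (E/F^{\lambda_j+d-j})^\vee = \big(U - (E/F^{\lambda_j+d-j})^\vee\big) - D_{j-1}$. Applying the generating-function identity \eqref{segre vir} — or more directly the additivity formula \eqref{formula0} with $F$ replaced by $D_{j-1}$ — one gets
\[
\scS_{\lambda_j+s}\big((U/D_{j-1}) - (E/F^{\lambda_j+d-j})^\vee\big)
= \sum_{p=0}^{j-1} c_p(D_{j-1}^\vee)\sum_{q=0}^p \binom{p}{q}\beta^q\, \scS_{\lambda_j+s-p+q}\big(U^\vee - (E/F^{\lambda_j+d-j})^\vee\big),
\]
wait — one must be careful about $\scS_m((U/D_{j-1}) - G)$ versus $\scS_m(U^\vee - G)$: the Segre class as defined in \eqref{df:Segre} is built from the bundle itself, so the cleanest route is to note that $\scA_m^{(\ell)} = \scS_m(U^\vee - (E/F^\ell)^\vee)$ already has $U^\vee$, and to use the dual of the exact sequence $0 \to D_{j-1} \to U \to U/D_{j-1} \to 0$, i.e. $U^\vee = (U/D_{j-1})^\vee - D_{j-1}^\vee$ virtually, together with \eqref{segre vir}, to re-express things. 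In any case this reduces to a purely formal manipulation of the generating functions in \eqref{segre vir}, and comparing coefficients yields exactly the sum $\sum_{p=0}^{j-1} c_p(D_{j-1}) \sum_{q=0}^p \binom{p}{q}\beta^q \scA_{\lambda_j+s-p+q}^{(\lambda_j+d-j)}$ of \eqref{eq:push A} once one accounts for the sign convention built into the definition of the relative Segre class (the $c(\EE;\beta)/c(\EE;-u)$ versus $1/c(\EE^\vee;u+\beta)$ identity of Remark \ref{rem beta+u} is what converts the $D_{j-1}$'s to $D_{j-1}^\vee$'s or back, and the factor $\binom{p}{q}\beta^q$ is precisely the contribution of $\binom{p}{q}\beta^q$ in \eqref{formula0}).

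I expect the main obstacle to be bookkeeping rather than conceptual: keeping straight the several dualizations (the dual projective bundle convention in Proposition \ref{push of tensor}, the dual tautological line bundle in $\tau_j$, and the appearance of $U^\vee$ versus $U$ and $D_{j-1}$ versus $D_{j-1}^\vee$ in the Segre classes), and verifying that the fibre dimension $e = d - j + 1$ feeds correctly into the index shift $s + f - e + 1 = \lambda_j + s$ of Proposition \ref{push of tensor}. Once the conventions are pinned down, the proof is a two-line application of Proposition \ref{push of tensor} followed by formula \eqref{formula0} (with the exact sequence for $D_{j-1}$), exactly parallel to the proof of Proposition \ref{push of tensor} itself.
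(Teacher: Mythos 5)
Your proposal follows the paper's proof exactly: apply Proposition \ref{push of tensor} to $\PP^*((U/D_{j-1})^\vee)=\PP(U/D_{j-1})$ with $\calQ=(D_j/D_{j-1})^\vee$ and $F=(E/F^{\lambda_j+d-j})^\vee$ (so $e=d-j+1$, $f=\lambda_j+d-j$, and $s+f-e+1=\lambda_j+s$), obtaining $\scS_{\lambda_j+s}\bigl((U/D_{j-1})^\vee-(E/F^{\lambda_j+d-j})^\vee\bigr)$, and then decompose via (\ref{formula0}). One small slip in your bookkeeping: dualizing $0\to D_{j-1}\to U\to U/D_{j-1}\to 0$ gives $(U/D_{j-1})^\vee = U^\vee - D_{j-1}^\vee$, i.e. $U^\vee=(U/D_{j-1})^\vee+D_{j-1}^\vee$, not $U^\vee=(U/D_{j-1})^\vee-D_{j-1}^\vee$ as you wrote; the correct sign is precisely what lets you apply (\ref{formula0}) with $\FF=D_{j-1}^\vee$, yielding $c_p(\FF^\vee)=c_p(D_{j-1})$ and the stated sum.
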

%%%%%%\%%%%%%%%%%%%%%%%%%%%%%%%%%%%
\begin{proof}
We apply Proposition \ref{push of tensor} to $\PP^*((U/D_{i-1})^{\vee})=\PP(U/D_{i-1})$ with $\calQ=(D_i/D_{i-1})^{\vee}$ as the universal quotient bundle. Then the left hand side is equal to 
\begin{eqnarray*}
\scS_{\lambda_j+s}((U/D_{j-1} - E/F^{\lambda_j+d-j})^{\vee})
&=&\scS_{\lambda_j+s}((U- E/F^{\lambda_j+d-j})^{\vee}-D_{j-1}^{\vee}).
\end{eqnarray*}
Now the claim follows from (\ref{formula0}).
\end{proof}
%%%%%%\%%%%%%%%%%%%%%%%%%%%%%%%%%%%
\begin{rem}\label{rem d}
Extend the tower (\ref{P tower}) by $\pi_j: \PP(U/D_{j-1}) \to \PP(U/D_{j-2})$ for $j=r+1,\dots,d$ and let $D_j/D_{j-1}$ be the tautological line bundle of $\PP(U/D_{j-1})$. With the same notation for $\tau_j$ and $\alpha_j$ Lemma \ref{rth first} still holds for $j=r+1,\dots,d$. With this extension, we find that if $\lambda_j=0$, then $\pi_{j*}(\alpha_j)=1$. Indeed, by (\ref{eq:push A}) and the fact that $\scA_m^{(\ell)}=(-\beta)^m$ for all $m\leq 0$, we have
\begin{eqnarray*}
\pi_{i*}(\alpha_i)&=&\sum_{p= 0}^{i-1} c_p(D_{i-1}) \sum_{q=0}^p\binom{p}{q} \beta^q (-\beta)^{-p+q}
=\sum_{p=0}^{{i}-1} c_p(D_{i-1})  (-\beta)^p\sum_{q=0}^p\binom{p}{q}(-1)^q.
\end{eqnarray*}
Since $\sum_{q=0}^p\binom{p}{q}(-1)^q=0$ for all $p>0$, we have $\pi_{i*}(\alpha_i)=c_0(D_{i-1})=1$. Therefore we can conclude that $[Y_{\lambda}] = \pi_{r+1*}\cdots \pi_{d*}(\alpha_1\cdots \alpha_d)$. 
\end{rem}
%%%%%%%%%%%%%%%%%%%%%%%%%%%%%%%%%%
\subsection{How to compute $[\Omega_{\lambda}]$ in an example}
By the pushforward formula (\ref{eq:push A}),  we can calculate the class $[\Omega_\lambda]$ in $\CK^*(\Gr_d(E))$. For example, let us consider the case $r=2$. The class $[\Omega_{(\lambda_1,\lambda_2)}]$ is given by 
\[
\pi_*([Y_{(\lambda_1,\lambda_2)}])
=\pi_{1*}(\alpha_1\cdot\pi_{2*} (\alpha_2))
=\pi_{1*}\left(\alpha_1\scA_{\lambda_2}^{(k_2)}+\bar \tau_1\alpha_1(\scA_{\lambda_2-1}^{(k_2)}+\beta \scA_{\lambda_2}^{(k_2)})\right),
\]
where we denote $k_i=\lambda_i+d-i$. Using the expansion $\bar \tau_1=-(\tau_1-\beta \tau_1^2+\beta^2 \tau_1^3-\cdots)$, we have 
\[
[\Omega_{(\lambda_1,\lambda_2)}]=\scA_{\lambda_1}^{(k_1)}\scA_{\lambda_2}^{(k_2)}
-\left(\scA_{\lambda_1+1}^{(k_1)}-\beta \scA_{\lambda_1+2}^{(k_1)}+\beta^2\scA_{\lambda_1+3}^{(k_1)}-\cdots\right)
(\scA_{\lambda_2-1}^{(k_2)}+\beta \scA_{\lambda_2}^{(k_2)}).
\]
At this point one notices that the result can be calculated by the following formal Laurent series in $t_1$ and $t_2$:
\[
f( t_1, t_2)= t_1^{\lambda_1}\left( t_2^{\lambda_2}+\bar  t_1( t_2^{\lambda_2-1}+\beta  t_2^{\lambda_2})\right).
\]
We can write it more compactly as
\[
f( t_1, t_2)= t_1^{\lambda_1} t_2^{\lambda_2}(1-\bar  t_1/\bar  t_2)
=\det\begin{pmatrix}
 t_1^{\lambda_1}& t_1^{\lambda_1}\bar  t_1\\
 t_2^{\lambda_1}\bar  t_2^{-1} & t_2^{\lambda_2}
\end{pmatrix}.
\]
Now we obtain the class $[\Omega_{(\lambda_1,\lambda_2)}]$ by replacing $ t_i^m$ with $\scA_m^{(k_i)}$ in $f( t_1, t_2)$. Thus, in principle, we can calculate $[\Omega_\lambda]=\pi_*([Y_{\lambda}])$ by successive applications of the pushforward formula in Lemma \ref{rth first} to the product formula for $[Y_{\lambda}]$ in Lemma \ref{A Y_r prod}. In order to obtain the determinantal formula, we make use of formal Laurent series by generalizing Kazarian's method \cite[Appendix C]{Kazarian}) in next section.
%%%%%%%%%%%%%%%%%%%%%%%%%%%%%%%%%%
%%%%%%%%%%%%%%%%%%%%%%%%%%%%%%%%%% 
\subsection{Umbral calculus}\label{sec: umbral}
%%%%%%%%%%%%%%%%%%%%%%%%%%%%%%%%%%
%%%%%%%%%%%%%%%%%%%%%%%%%%%%%%%%%%
In this section, following Kazarian \cite{Kazarian}, we develop a technique to compute the pushforward along the tower. We call it ``umbral calculus'' borrowing the name from Roman-Rota \cite{RomanRota} and Roman \cite{RomanBook}. Here we mean a method that allows us to compute a complicated series in characteristic classes by replacing them by the corresponding powers of some formal variables.

Consider a graded $\ZZ[\beta]$-algebra $R=\oplus_{i\in \ZZ} R_i $ where $\deg \beta =-1$. Let $t_1,\ldots,t_{d}$ be indeterminates of degree $1$. We use the multi-index notation $t^{\pmb{s}}:=t_1^{s_1}\cdots t_{d}^{s_{d}}$ for $\pmb{s}=(s_1,\dots,s_{d})\in \ZZ^{d}$. A formal Laurent series 
\[
f(t_1,\ldots,t_{d})=\sum_{\pmb{s}\in\ZZ^{d}}a_{\pmb{s}}t^{\pmb{s}}
\]
is {\em homogeneous of degree} $m\in \ZZ$ if $a_{\pmb{s}}$ is zero unless $a_{\pmb{s}}\in R_{m-|\pmb{s}|}$ with $|\pmb{s}|=\sum_{i=1}^{d} s_i$. Let $\supp f = \{\pmb{s} \in \ZZ^{d} \ |\ a_{\pmb{s}}\not=0\}$. 
%%%%%%%%%%%%%%%%%%%%%%%%%%%%%%%
A series $f(t_1,\ldots,t_{d})$ is a {\em power series} if $\supp f  \subset (\ZZ_{\geq 0})^{d}$ . Let $R[[t_1,\ldots,t_r]]_{m}$ denote the set of all power series in $t_1,\dots, t_{d}$ of degree $m\in \ZZ$. We define
\[
R[[t_1,\ldots,t_{d}]]_{\gr}:=\bigoplus_{m\in \ZZ}R[[t_1,\ldots,t_{d}]]_{m}.
\]
Then $R[[t_1,\ldots,t_{d}]]_{\gr}$ is a graded $\ZZ[\beta]$-algebra and we call it \emph{the ring of graded formal power series}.
%%%%%%%%%%%%%%%%%%%%%%%%%%%%%%%
\begin{defn}\label{def fls} 
For each $m \in \ZZ$, define $\calL^{R}_m$ to be the space of all formal Laurent series of homogeneous degree $m$ such that there exists $\pmb{n}\in \ZZ^{d}$ for which $\pmb{n} + \supp f$ is contained in the cone in $\ZZ^{d}$ defined by
\[
s_1\geq0, \; s_1+s_2\geq 0, \;\cdots, \; s_1+\cdots + s_{d} \geq 0.
\]
Then $\calL^{R}:=\bigoplus_{m\in \ZZ} \calL^{R}_m$ is a graded ring over $R$ with the obvious product. For each $i=1,\dots, {d}$, let $\calL^{R,i}$ be the $R$-subring of $\calL^R$ consisting of series that do not contain any negative powers of $ t_1,\dots, t_{i-1}$.  In particular, $\calL^{R,1}=\calL^{R}$. 
\end{defn}
%%%%%%%%%%%%%%%%%%%%%%%%%%%%%%%
In the rest of this section, we set $R:=\CK^*(\Gr_d(E))$.
%%%%%%%%%%%%%%%%%%%%%%%%%%%%%%%
\begin{defn}
Define a graded $R$-module homomorphism $\phi_1: \calL^{R} \to \CK^*(\Gr_d(E))$ by 
\[
\phi_j( t_1^{s_1}\cdots  t_{d}^{s_d})= \scA_{s_1}^{(\lambda_1+d-1)} \cdots \scA_{s_d}^{(\lambda_d+d-d)}.
\]  
Similarly, for $j\geq 2$, define a graded $R$-module homomorphism $\phi_{j}: \calL^{R,j} \to \CK^*(\PP(U/D_{j-2}))$ by
\[
\phi_j( t_1^{s_1}\cdots  t_{d}^{s_d})= \tau_1^{s_1}\cdots  \tau_{j-1}^{s_{j-1}}\scA_{s_j}^{(\lambda_j+d-j)} \cdots \scA_{s_d}^{(\lambda_d+d-d)}.
\]
It is known that $\CK^*(\Gr_d(E))$ is bounded above, \textit{i.e.} $\CK^m(\Gr_d(E)) =0$ for all $m>\dim \Gr_d(E)$. Therefore $\scA_m^{(\ell)}$ is zero for all $m$ sufficiently large. This ensures that the above map is well-defined.

%%%%%%%%%%%%%%%%%%%%%%%%%%%%%%%
Furthermore, we have the commutative diagram
\begin{equation}\label{comd1}
\xymatrix{
\calL^{R,j} \ar[r]\ar[d]_{\phi_j}& \calL^{R,j-1}\ar[d]_{\phi_{j-1}}\\
\CK^*(\PP(U/D_{j-2})) \ar[r]_{\pi_{j-1*}}& \CK^*(\PP(U/D_{j-3}))
}
\end{equation}
where the top horizontal arrow is the obvious inclusion map. 
\end{defn}
%%%%%%%%%%%%%%%%%%%%%%%%%%%%%%%
\begin{lem}\label{p and phi} 
For every choice of $j=1,\dots,d$ and of an integer $s\geq 0$, we have
\[
\pi_{j*}( \tau_j^s\alpha_j)=\phi_j\left( t_j^{\lambda_j+s}\prod_{i=1}^{j-1}(1-\bar  t_i/\bar  t_j)\right).
\]
\end{lem}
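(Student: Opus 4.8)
The plan is to prove the identity by relating the right-hand side, via the definition of $\phi_j$, directly to the pushforward formula of Lemma \ref{rth first}, after first rewriting the product $\prod_{i=1}^{j-1}(1-\bar t_i/\bar t_j)$ in a convenient way. First I would observe that $\phi_j$ sends $\tau_1^{s_1}\cdots\tau_{j-1}^{s_{j-1}}$ to itself (as a class in $\CK^*(\PP(U/D_{j-2}))$) and sends $t_j^m$ to $\scA_m^{(\lambda_j+d-j)}$, so applying $\phi_j$ to the right-hand side produces a sum over monomials in $\tau_1,\dots,\tau_{j-1}$ whose coefficients are products of $\scA^{(\lambda_j+d-j)}_\bullet$ classes. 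The key point is that $\prod_{i=1}^{j-1}(1-\bar t_i/\bar t_j)$, when expanded, is a polynomial in the $\bar t_i$ ($i<j$) times powers of $\bar t_j$, and expanding each $\bar t_i=-(t_i-\beta t_i^2+\cdots)$ and collecting by degree in $t_1,\dots,t_{j-1}$ exactly matches the Chern-class expansion $\sum_p c_p(D_{j-1})\sum_q\binom{p}{q}\beta^q(\cdots)$ appearing in \eqref{eq:push A}.

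Concretely, the main computation is the elementary identity
\[
\prod_{i=1}^{j-1}(1-\bar t_i/\bar t_j) \;=\; \sum_{p=0}^{j-1} e_p(\bar t_1,\dots,\bar t_{j-1})\,(-\bar t_j)^{-p}\,(-1)^p,
\]
so that
\[
t_j^{\lambda_j+s}\prod_{i=1}^{j-1}(1-\bar t_i/\bar t_j)
=\sum_{p=0}^{j-1} e_p(\bar t_1,\dots,\bar t_{j-1})\,(-1)^p\, t_j^{\lambda_j+s}\bar t_j^{\,-p}.
\]
Now I would use the relation $\bar t_j^{-1}=-(1+\beta t_j)/t_j$, i.e.\ $t_j^{\lambda_j+s}\bar t_j^{-p}=(-1)^p t_j^{\lambda_j+s-p}(1+\beta t_j)^p$, to turn the $j$-th factor into $\sum_q\binom{p}{q}\beta^q t_j^{\lambda_j+s-p+q}$; applying $\phi_j$ then replaces $t_j^{\lambda_j+s-p+q}$ by $\scA^{(\lambda_j+d-j)}_{\lambda_j+s-p+q}$ and $e_p(\bar t_1,\dots,\bar t_{j-1})$ by $\phi_j\big(e_p(\bar t_1,\dots,\bar t_{j-1})\big)=e_p(\bar\tau_1,\dots,\bar\tau_{j-1})$. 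Since $D_j/D_{j-1}$ has first Chern class $\tau_j$ and $D_{j-1}$ has Chern roots $\tau_1,\dots,\tau_{j-1}$ (all after the appropriate pullbacks and identifications), $e_p(\bar\tau_1,\dots,\bar\tau_{j-1})=c_p(D_{j-1})$, because the $\bar\tau_i$ are the Chern roots of $D_{j-1}$ — here one uses that $\bar\tau_i=c_1((D_i/D_{i-1})^{\vee\vee})$... wait, more carefully, $\tau_i=c_1((D_i/D_{i-1})^\vee)$ so $\bar\tau_i$ is $c_1$ of the double dual, i.e.\ $c_1(D_i/D_{i-1})$, and hence $e_p$ of these is $c_p(D_{j-1})$. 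After this substitution the expression becomes exactly the right-hand side of \eqref{eq:push A} (with the sign bookkeeping: the two factors of $(-1)^p$ cancel), which by Lemma \ref{rth first} equals $\pi_{j*}(\tau_j^s\alpha_j)$.

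The one point that requires care — and which I expect to be the main obstacle — is the identification $e_p(\bar\tau_1,\dots,\bar\tau_{j-1})=c_p(D_{j-1})$ together with the matching of signs and the precise form of the binomial coefficients, since $\phi_j$ is only a module homomorphism (not a ring homomorphism in an obvious way on all of $\calL^{R,j}$), so one must check that $\phi_j$ does act multiplicatively on the sub-expression $\tau_1^{s_1}\cdots\tau_{j-1}^{s_{j-1}}\cdot t_j^m$, which it does by its very definition. A secondary subtlety is convergence/well-definedness: the product $\prod_{i=1}^{j-1}(1-\bar t_i/\bar t_j)$ lies in $\calL^{R,j}$ after multiplying by $t_j^{\lambda_j+s}$ with $\lambda_j+s\geq 0$ large enough relative to $j-1$, but since $\lambda_j\geq 0$ and the negative powers of $t_j$ introduced are at most $j-1$ while $\phi_j$ is defined on all of $\calL^{R,j}$ using boundedness of $\CK^*$, this causes no trouble; one should simply remark that $\scA^{(\ell)}_m=(-\beta)^{-m}$ for $m\le 0$ makes every term meaningful. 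With these checks in place the identity follows by direct comparison.
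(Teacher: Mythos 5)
Your argument runs the paper's proof of Lemma~\ref{p and phi} in the opposite direction: the paper starts from $\pi_{j*}(\tau_j^s\alpha_j)$, applies the pushforward formula~\eqref{eq:push A}, substitutes $c_p(D_{j-1})=e_p(\bar\tau_1,\dots,\bar\tau_{j-1})$, and then resums the resulting expression into the product $\prod_{i<j}(1-\bar t_i/\bar t_j)$, whereas you expand the product and match it term-by-term against~\eqref{eq:push A}; the key identifications (that $\bar\tau_1,\dots,\bar\tau_{j-1}$ are Chern roots of $D_{j-1}$, and that $\phi_j$ acts monomial-by-monomial) are the same, so this is essentially the paper's proof. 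One small slip in the displayed expansion: $(-\bar t_j)^{-p}(-1)^p$ simplifies to $\bar t_j^{-p}$, so as written your identity says $\prod_{i<j}(1-\bar t_i/\bar t_j)=\sum_p e_p(\bar t_1,\dots,\bar t_{j-1})\bar t_j^{-p}$, missing the needed sign; the correct expansion is $\sum_{p}e_p(\bar t_1,\dots,\bar t_{j-1})(-\bar t_j^{-1})^p=\sum_p e_p(\bar t_1,\dots,\bar t_{j-1})(-1)^p\bar t_j^{-p}$, equivalently $\sum_p e_p(\bar t_1,\dots,\bar t_{j-1})\,t_j^{-p}(1+\beta t_j)^p$ as in the paper. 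Since you explicitly invoke the cancellation of the two $(-1)^p$ factors immediately afterwards, this is clearly a typo rather than a gap, and your conclusion is correct.
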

%%%%%%%%%%%%%%%%%%%%%%%%%%%%%%%
\begin{proof}
Since $c_p(D_{j-1}) = e_p(\bar \tau_1,\dots,\bar \tau_{j-1}) $, the pushforward formula (\ref{eq:push A}) allows us to compute the left hand side as follows:
\begin{eqnarray*}
\pi_{j*}( \tau_j^s\alpha_j)
&=&\sum_{p=0}^{j-1} e_p(\bar \tau_1,\dots,\bar \tau_{j-1}) \sum_{q=0}^p\binom{p}{q} \beta^q  \scA_{\lambda_j+s-p+q}^{(\lambda_j+d-j)} \\
&=&\phi_j\left( \sum_{p=0}^{j-1} e_p(\bar t_1,\dots,\bar t_{j-1}) \sum_{q=0}^p\binom{p}{q} \beta^q  t_j^{\lambda_j+s-p+q}\right)\\  
&=&\phi_j\left(t_j^{\lambda_j+s}\sum_{p=0}^{j-1} e_p(\bar t_1,\dots,\bar t_{j-1})  t_j^{-p}(1+\beta t_j)^p\right)\\    
&=&\phi_j\left(t_j^{\lambda_j+s}\prod_{i=1}^{j-1}\left( 1-\bar t_i/\bar t_j\right)\right).   
\end{eqnarray*}
\end{proof}
%%%%%%%%%%%%%%%%%%%%%%%%%%%%%%%
\begin{prop}\label{prop1A}
We have 
\[
\pi_{1*}\cdots \pi_{d*}(\alpha_1\cdots\alpha_d)=\phi_1\left( t_1^{\lambda_1}\cdots t_{d}^{\lambda_{d}} \prod_{1\leq i<j\leq {d}}(1-\bar  t_i/\bar  t_j)\right).
\]
\end{prop}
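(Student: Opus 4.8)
The plan is to establish the identity by induction on $d$, peeling off the innermost pushforward $\pi_{d*}$ first and working outward, using Lemma \ref{p and phi} and the commutative diagram (\ref{comd1}) as the two workhorses. The key observation is that the factor $\prod_{1\leq i<j\leq d}(1-\bar t_i/\bar t_j)$ can be organized as a nested product $\prod_{j=2}^{d}\left(\prod_{i=1}^{j-1}(1-\bar t_i/\bar t_j)\right)$, which exactly matches the shape of the single-stage formula produced by Lemma \ref{p and phi}.

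First I would set up the induction. For $j=1,\dots,d$ consider the partial composite $\pi_{j*}\pi_{j+1*}\cdots\pi_{d*}(\alpha_j\alpha_{j+1}\cdots\alpha_d)$, which by Lemma \ref{A Y_r prod} (and Remark \ref{rem d}, which lets us use the tower extended to level $d$) is a well-defined class in $\CK^*(\PP(U/D_{j-2}))$. The claim to prove by downward induction on $j$ is
\[
\pi_{j*}\cdots\pi_{d*}(\alpha_j\cdots\alpha_d)=\phi_j\!\left(t_j^{\lambda_j}\cdots t_d^{\lambda_d}\prod_{j\leq a<b\leq d}(1-\bar t_a/\bar t_b)\right);
\]
the case $j=1$ is the Proposition. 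The base case $j=d$ is precisely Lemma \ref{p and phi} with $s=0$. For the inductive step, assume the formula for $j+1$. Applying the projection formula over $\pi_j$ — noting that $\alpha_j$ and $\tau_j$ live on $\PP(U/D_{j-1})$, i.e. the base of $\pi_j$ viewed the other way — one gets that $\pi_{j*}$ of the product equals $\pi_{j*}$ applied to $\alpha_j$ times (the pullback of) $\pi_{j+1*}\cdots\pi_{d*}(\alpha_{j+1}\cdots\alpha_d)$. Wait: more carefully, the structure is that $\pi_{j+1*}\cdots\pi_{d*}(\alpha_{j+1}\cdots\alpha_d)$ is a class on $\PP(U/D_{j-1})$, and we multiply by $\alpha_j$ (also on $\PP(U/D_{j-1})$) and push forward along $\pi_j$. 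By the inductive hypothesis this class is $\phi_{j+1}$ of a series in $t_{j+1},\dots,t_d$ with coefficients involving $\tau_1,\dots,\tau_j$; substituting the definition of $\phi_{j+1}$, it becomes a polynomial expression $\sum_{\pmb s} \tau_1^{s_1}\cdots\tau_j^{s_j}\,\scA^{(\lambda_{j+1}+d-j-1)}_{s_{j+1}}\cdots\scA^{(\lambda_d+d-d)}_{s_d}$, where the power $\tau_j^{s_j}$ appears. Since $\tau_j^{s_j}\alpha_j$ is exactly the object handled by Lemma \ref{p and phi}, we can apply $\pi_{j*}$ termwise: $\pi_{j*}(\tau_j^{s_j}\alpha_j \cdot(\text{rest}))=\pi_{j*}(\tau_j^{s_j}\alpha_j)\cdot(\text{rest})$ by the projection formula, since ``rest'' is pulled back from $\PP(U/D_{j-2})$ — here $\tau_1,\dots,\tau_{j-1}$ and the $\scA$'s all descend. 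Then Lemma \ref{p and phi} replaces $\pi_{j*}(\tau_j^{s_j}\alpha_j)$ by $\phi_j\big(t_j^{\lambda_j+s_j}\prod_{i=1}^{j-1}(1-\bar t_i/\bar t_j)\big)$, and reassembling all terms, the net effect is that the operator takes the series $g(t_{j+1},\dots,t_d)$ with $\tau_j$-dependence encoded in $t_j$ and returns $\phi_j$ of $t_j^{\lambda_j}\prod_{i=1}^{j-1}(1-\bar t_i/\bar t_j)$ times that same series with $t_j$ reinstated — which is exactly $\phi_j$ of $t_j^{\lambda_j}\cdots t_d^{\lambda_d}\prod_{j\leq a<b\leq d}(1-\bar t_a/\bar t_b)$, completing the step.

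The step I expect to be the main obstacle is the bookkeeping in the inductive step: making precise the sense in which the class $\pi_{j+1*}\cdots\pi_{d*}(\alpha_{j+1}\cdots\alpha_d)$ on $\PP(U/D_{j-1})$ can be rewritten as $\phi_j$ applied to a Laurent series with an honest $t_j$-variable (playing the role of $\tau_j=c_1((D_j/D_{j-1})^\vee)$), so that Lemma \ref{p and phi} applies termwise. Concretely one must check that the homomorphisms $\phi_{j+1}$ and $\phi_j$ are compatible via the inclusion $\calL^{R,j+1}\hookrightarrow\calL^{R,j}$ in the way recorded by diagram (\ref{comd1}) — upgraded to track the extra $\alpha_j$ factor — and that the termwise application of $\pi_{j*}$ is legitimate, which uses that $\CK^*$ is bounded above so that all the Laurent series in sight have finite image (the same finiteness that makes $\phi_j$ well-defined). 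Once this compatibility is set up cleanly, the algebraic identity $\prod_{j\leq a<b\leq d}(1-\bar t_a/\bar t_b) = \big(\prod_{i=j}^{d-1}\cdots\big)$-type factorization is purely formal and the induction closes. I would therefore state and prove a short lemma making the ``$\phi_j\circ(\text{mult by }\alpha_j)\circ\pi_{j*} = \pi_{j*}\circ\phi_{j+1}$''-type compatibility precise before running the induction, mirroring the presentation style already used for (\ref{comd1}).
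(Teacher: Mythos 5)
Your overall strategy matches the paper's: peel off $\pi_{j*}$ one layer at a time starting from the innermost, invoke Lemma \ref{p and phi} to convert $\pi_{j*}(\tau_j^{s}\alpha_j)$ into $\phi_j$ of a Laurent series, and recollect. However, the inductive hypothesis as you stated it,
\[
\pi_{j*}\cdots\pi_{d*}(\alpha_j\cdots\alpha_d)=\phi_j\!\left(t_j^{\lambda_j}\cdots t_d^{\lambda_d}\prod_{j\leq a<b\leq d}(1-\bar t_a/\bar t_b)\right),
\]
is not the correct intermediate formula, and it fails already at the base case. Lemma \ref{p and phi} with $s=0$ gives $\pi_{d*}(\alpha_d)=\phi_d\bigl(t_d^{\lambda_d}\prod_{i=1}^{d-1}(1-\bar t_i/\bar t_d)\bigr)$, which under $\phi_d$ is a genuine sum of terms $\tau_1^{u_1}\cdots\tau_{d-1}^{u_{d-1}}\scA_{m}^{(\lambda_d)}$ with nontrivial $\tau$-dependence coming from $c_p(D_{d-1})=e_p(\bar\tau_1,\dots,\bar\tau_{d-1})$ in Lemma \ref{rth first}. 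Your stated hypothesis for $j=d$ would give only $\phi_d(t_d^{\lambda_d})=\scA_{\lambda_d}^{(\lambda_d)}$, which is just the $p=0$ term. The discrepancy is precisely the factors $(1-\bar t_s/\bar t_b)$ with $s<j\leq b$, which your index range $j\leq a<b\leq d$ excludes; yet your own prose a few sentences later (``with coefficients involving $\tau_1,\dots,\tau_j$'' and the expansion $\sum_{\pmb s}\tau_1^{s_1}\cdots\tau_j^{s_j}\scA\cdots$) acknowledges that such dependence must be present. The formula you wrote down and the formula you use are two different things, and the one you wrote down is false for every $j>1$.

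The correct intermediate statement, which is what the paper's chain of equalities tracks, is
\[
\pi_{j*}\cdots\pi_{d*}(\alpha_j\cdots\alpha_d)=\phi_j\!\left(t_j^{\lambda_j}\cdots t_d^{\lambda_d}\prod_{\substack{1\leq a<b\leq d\\ b\geq j}}(1-\bar t_a/\bar t_b)\right),
\]
that is, only the second index of each pair must be at least $j$. For $j=d$ this reduces to Lemma \ref{p and phi}, and for $j=1$ it is the Proposition. With this hypothesis the inductive step is exactly the termwise application of Lemma \ref{p and phi} that you describe (project $\tau_1,\dots,\tau_{j-1}$ and the $\scA$'s down along $\pi_j$ via the projection formula, apply the lemma to $\pi_{j*}(\tau_j^{s_j}\alpha_j)$, and observe that the product rule on the Laurent-series side reproduces the nested factorization). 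With your stated hypothesis the step cannot even begin, because the class being pushed forward has no $\tau_j$-dependence to feed into Lemma \ref{p and phi}. Once you replace the index set $j\leq a<b\leq d$ by $1\leq a<b\leq d,\ b\geq j$, the argument closes and matches the paper's.
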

%%%%%%%%%%%%%%%%%%%%%%%%%%%%%%%
\begin{proof}
By the commutativity of the diagram (\ref{comd1}) and by Lemma \ref{p and phi},  we can compute the left hand side as follows:
\begin{eqnarray*}
&&\pi_{1*}\cdots \pi_{d*}(\alpha_1\cdots\alpha_d)\\
&=&\pi_{1*}\cdots \pi_{d-1*}\left(\alpha_1\cdots\alpha_{d-1}\phi_{d}\left( t_{d}^{\lambda_{d}}\prod_{s=1}^{{d}-1}( 1-\bar t_s/\bar t_{d})\right)\right)\\
&=&\pi_{1*}\cdots \pi_{d-2*}\left(\alpha_1\cdots\alpha_{d-2}\phi_{{d}-1}\left( t_{{d}-1}^{\lambda_{{d}-1}}\prod_{s=1}^{{d}-2}( 1-\bar t_s/\bar t_{{d}-1})\cdot t_{d}^{\lambda_{d}}\prod_{s=1}^{{d}-1}( 1-\bar t_s/\bar t_{d})\right)\right)\\
&=&\cdots = \phi_1\left( t_1^{\lambda_1}\cdots t_{d}^{\lambda_{d}}
{\prod_{1\leq i<j\leq {d} }( 1-\bar t_i/\bar t_{j})}\right).
\end{eqnarray*}
\end{proof}
%%%%%%%%%%%%%%%%%%%%%%%%%%%%%%%%%%
%%%%%%%%%%%%%%%%%%%%%%%%%%%%%%%%%%
\subsection{Main Theorem for type A}\label{sec: proof A}
%%%%%%%%%%%%%%%%%%%%%%%%%%%%%%%%%%
%%%%%%%%%%%%%%%%%%%%%%%%%%%%%%%%%%
For each nonnegative integer $k$ and an integer $m$, let $\displaystyle\binom{m}{k}$ be the generalized binomial coefficient, \textit{i.e.} $\displaystyle\binom{m}{k} = \frac{m(m-1)\cdots (m-k+1)}{k!}$ or equivalently $(1 +  x)^m = \sum_{k=0}^{\infty} \displaystyle\binom{m}{k} x^k$.
%%%%%%%%%%%%%%%%%%%%%%%%%%%%%%%%%%
\begin{thm}\label{MainA}
For each $\lambda \in \calP_d(n)$, let $\Omega_{\lambda}$ be its associated degeneracy locus in $\Gr_d(E)$. Let 
\begin{equation}\label{def scA}
\scA_{m}^{(\ell)}:=\scS_{m}(U^{\vee}-(E/F^{\ell})^{\vee}) \ \ \ \in \CK^*(\Gr_d(E)).
\end{equation} 
Then the class associated to $\Omega_{\lambda}$ in $\CK^*(\Gr_d(E))$ is given by
\begin{equation}\label{type A det formula}
[\Omega_{\lambda}] = \det\left(  \sum_{s=0}^{\infty} \binom{i-j}{s}\beta^s \scA_{\lambda_i+j-i+s}^{(\lambda_i+d-i)} \right)_{1\leq i,j \leq d}.
\end{equation}
\end{thm}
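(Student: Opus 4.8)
The plan is to combine the pushforward computation of the previous subsection (Proposition~\ref{prop1A}) with an algebraic identity expressing the resulting symmetric Laurent series as a determinant. By Lemma~\ref{prod chern}, Lemma~\ref{A Y_r prod} and Remark~\ref{rem d}, we have $[\Omega_\lambda] = \pi_{1*}\cdots\pi_{d*}(\alpha_1\cdots\alpha_d)$, where we extend the tower to length $d$ by appending trivial factors (the padding $\lambda_{r+1}=\cdots=\lambda_d=0$ is harmless since $\scA_m^{(\ell)}=(-\beta)^m$ for $m\leq 0$ makes those stages contribute $1$). Proposition~\ref{prop1A} then identifies this with
\[
\phi_1\!\left(t_1^{\lambda_1}\cdots t_d^{\lambda_d}\prod_{1\leq i<j\leq d}\bigl(1-\bar t_i/\bar t_j\bigr)\right),
\]
where $\phi_1(t_1^{s_1}\cdots t_d^{s_d}) = \scA_{s_1}^{(\lambda_1+d-1)}\cdots\scA_{s_d}^{(\lambda_d+d-d)}$.

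The core of the proof is therefore to show that the symmetric-type product $t_1^{\lambda_1}\cdots t_d^{\lambda_d}\prod_{i<j}(1-\bar t_i/\bar t_j)$ equals the determinant
\[
\det\!\left(t_i^{\lambda_i}\sum_{s=0}^\infty\binom{i-j}{s}\beta^s t_i^{j-i+s}\right)_{1\leq i,j\leq d}
= \det\!\left(t_i^{\lambda_i+j-i}(1+\beta t_i)^{\,i-j}\right)_{1\leq i,j\leq d},
\]
after which applying $\phi_1$ to both sides — legitimate by $R$-linearity, once one checks that the determinant, expanded term by term, lies in the domain $\calL^R$ of $\phi_1$ (this is where the support condition $s_1\geq 0,\ s_1+s_2\geq 0,\dots$ enters, and it holds because $\prod_{i<j}(1-\bar t_i/\bar t_j)$ is, up to monomials, a genuine power series in the $\bar t_i$). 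The determinant identity itself is a variant of the Vandermonde determinant: pulling $t_i^{\lambda_i}$ out of row $i$ and substituting $u_i := \bar t_i = -t_i/(1+\beta t_i)$, one has $t_i(1+\beta t_i)^{-1} = -u_i$ and more generally $t_i^{j-i}(1+\beta t_i)^{i-j} = (-u_i)^{j-i}$, so the matrix becomes $\bigl((-u_i)^{j-i}\bigr)_{i,j}$, whose determinant is $\prod_{i<j}(u_j^{-1}-u_i^{-1})\cdot(\text{monomial})$ — precisely $\pm\prod_{i<j}(1-u_i/u_j)$ up to the monomial factor $t_1^{\lambda_1}\cdots t_d^{\lambda_d}$ restored. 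One must track the overall sign carefully; the ordering conventions in $\prod_{i<j}(1-\bar t_i/\bar t_j)$ and in the determinant expansion are chosen to make it $+1$.

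I expect the main obstacle to be bookkeeping rather than conceptual: verifying that every monomial appearing in the determinant, after the binomial expansion $(1+\beta t_i)^{i-j}=\sum_s\binom{i-j}{s}\beta^s t_i^s$, indeed satisfies the cone condition defining $\calL^R$ so that $\phi_1$ may be applied, and confirming the sign. One subtlety worth isolating: the entries with $i>j$ involve $(1+\beta t_i)^{i-j}$ with $i-j<0$, hence an infinite series in $t_i$, so the determinant is an honest element of $\calL^R$ and not of the polynomial ring — the finiteness needed to apply $\phi_1$ comes not from the series terminating but from $\CK^*(\Gr_d(E))$ being bounded above, exactly as noted in the definition of $\phi_1$. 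Once the Vandermonde-type identity is in hand and the domain check is done, applying $\phi_1$ converts $t_i^m \mapsto \scA_m^{(\lambda_i+d-i)}$ and yields the stated formula, since the binomial coefficient $\binom{i-j}{s}$ and the power $\beta^s$ in the theorem match the expansion of $(1+\beta t_i)^{i-j}$ row by row.
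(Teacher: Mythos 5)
Your proposal is correct and follows essentially the same route as the paper: reduce to the umbral expression $\phi_1\bigl(t_1^{\lambda_1}\cdots t_d^{\lambda_d}\prod_{i<j}(1-\bar t_i/\bar t_j)\bigr)$ via Lemmas \ref{prod chern}, \ref{A Y_r prod}, Remark \ref{rem d}, and Proposition \ref{prop1A}, then recognize the Laurent series as the Vandermonde-type determinant $\det\bigl(t_i^{\lambda_i}\bar t_i^{\,j-i}\bigr)$, expand the entries with $\bar t_i = -t_i/(1+\beta t_i)$, observe that the $(-1)^{j-i}$ factors cancel in the determinant, and apply $\phi_1$. Your extra remarks on the domain check for $\phi_1$ (cone support plus boundedness of $\CK^*(\Gr_d(E))$) and on the sign bookkeeping are accurate and, if anything, slightly more explicit than the paper's proof.
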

%%%%%%%%%%%%%%%%%%%%%%%%%%%%%%%%%%
\begin{proof}
By the combined application of Lemma \ref{prod chern}, Lemma \ref{A Y_r prod}, Remark \ref{rem d}, and Proposition \ref{prop1A}, we have
\begin{equation}\label{eq1183}
[\Omega_\lambda]=\pi_*([Y_{\lambda}])=\pi_{1*}\cdots \pi_{d*}(\alpha_1\cdots\alpha_d)=\phi_1\left( t_1^{\lambda_1}\cdots t_{d}^{\lambda_{d}}
{\prod_{1\leq i<j\leq d }( 1-\bar t_i/\bar t_{j})}\right).
\end{equation}
On the other hand the Vandermonde determinantal formula yields
\[
 t_1^{\lambda_1}\cdots t_{d}^{\lambda_{d}} \prod_{1\leq i<j\leq {d}}(1-\bar  t_i/\bar  t_j)=\det\left( t_i^{\lambda_i}\bar t_i^{\;j-i}\right)_{1\leq i,j\leq {d}}, 
\]
and each entry of the determinant can be computed as
\[
t_i^{\lambda_i}\bar t_i^{j-i} =(-1)^{j-i} t_i^{\lambda_i+j-i}\left(1+\beta  t_i\right)^{i-j}=(-1)^{j-i}\sum_{k=0}^{\infty} \binom{i-j}{k} \beta^k t_i^{\lambda_i+j-i+k}.
\]
Thus, the right hand side of (\ref{eq1183}) equals to
\[
\det\left(\sum_{k=0}^\infty (-1)^{j-i}\binom{i-j}{k}\beta^k\scA_{\lambda_i+j-i+k}^{(\lambda_i+d-i)} \right)_{1\leq i, j \leq {d}}.
\]
Finally, the sign $(-1)^{j-i}$ can be removed from each entry by performing elementary operations on the determinant. 
\end{proof}
%%%%%%%%%%%%%%%%%%%%%%%%%%%%%%%%%%
%%%%%%%%%%%%%%%%%%%%%%%%%%%%%%%%%%
\subsection{A determinantal formula of Grothendieck polynomials}
%%%%%%%%%%%%%%%%%%%%%%%%%%%%%%%%%%
%%%%%%%%%%%%%%%%%%%%%%%%%%%%%%%%%%
Fix a positive integer $d$. Let $z=(z_1,\ldots,z_d)$ be a sequence of $d$ variables, and $b=(b_1,b_2,\ldots)$ an infinite sequence of variables. Let $\mathcal{P}_d$ denote the set of all partitions with at most $d$ parts, \textit{i.e.} all non-increasing sequences of $d$ nonnegative integers. Let $\lambda=(\lambda_1, \dots, \lambda_d)$ be such a partition. The \emph{factorial Grothendieck polynomials} associated to $\lambda$ is defined by 
%%%%%%%%%%%%%%%%%%%%%%%%%%%%%%%%%%
\begin{equation}\label{eq:ratio}
G_{\lambda}(z|b)=\frac{\det\left([z_i|b]^{\lambda_j+d-j}(1+\beta z_i)^{j-1}\right)_{1\leq i,j\leq d}}{\prod_{1\leq i<j\leq d}(z_i-z_j)},
\end{equation}
where $[z_i|b]^k=(z_i\oplus b_1)(z_i\oplus b_2)\cdots (z_i\oplus b_k)$ for an integer $k\geq 0$.
%%%%%%%%%%%%%%%%%%%%%%%%%%%%%%%%%%
One can show that $G_{\lambda}(z|b)$ is a symmetric polynomial in the variables $z_1,\ldots,z_d$ with coefficients in $\ZZ[\beta][b_1,b_2,\dots ]$. 
%%%%%%%%%%%%%%%%%%%%%%%%%%%%%%%%%%
\begin{rem}
In the original paper by Lascoux-Sch\"utzenberger, $G_\lambda(z):=G_{\lambda}(z|b)|_{b=0}$ is defined in terms of isobaric divided difference operators. Buch proved a combinatorial expression in terms of {\em set-valued semistandard tableaux}. A proof of the coincidence of the bi-alternant formula above and the tableaux formula is given in \cite{IkedaShimazaki}. McNamara \cite{McNamara} introduced the factorial Grothendieck polynomials in terms of set-valued semistandard tableaux. One can also show that his definition coincides with (\ref{eq:ratio}).
\end{rem}
%%%%%%%%%%%%%%%%%%%%%%%%%%%%%%%%%%
These factorial Grothendieck polynomials represent the degeneracy loci classes in the connective $K$-theory of Grassmann bundles in the sense that if $\lambda \in \calP_d(n)$, then $G_{\lambda}(z|b)$ coincides with $[\Omega_{\lambda}]$ if we regard $z_1,\dots,z_d$ as roots of $U^{\vee}$ and $b_i=c_1((F^{i-1}/F^i)^{\vee})$. From this fact, we can derive the following determinantal formula of $G_{\lambda}(z|b)$ from Theorem \ref{MainA}. First we define the polynomials $G_m^{(\ell)}(z|b)$ corresponding to the Segre classes $\scA_m^{(\ell)}$ under the identification above.
%%%%%%%%%%%%%%%%%%%%%%%%%%%%%%%%%%
\begin{defn}
For each $m\in \ZZ$ and a non-negative integer $\ell$, define the function $G_m^{(\ell)}(z|b)$ by the following generating function.
\[
\sum_{m\in \ZZ} G_m^{(\ell)}(z|b) u^m = \frac{1}{1+\beta u^{-1}} \prod_{i=1}^d\frac{1+\beta z_i}{1-z_i u} \prod_{i=1}^{\ell}\frac{1-b_i u} {1+\beta b_i}.
\]
\end{defn}
%%%%%%%%%%%%%%%%%%%%%%%%%%%%%%%%%%
Now we have the following theorem.
%%%%%%%%%%%%%%%%%%%%%%%%%%%%%%%%%%
\begin{thm}\label{thmGrothe}
We have the following determinantal formula of the factorial Grothendieck polynomials:
\begin{equation}\label{G det formula}
G_{\lambda}(z|b)= \det\left(  \sum_{s=0}^{\infty} \binom{i-j}{s}\beta^s G_{\lambda_i+j-i+s}^{(\lambda_i+d-i)} (z|b)\right)_{1\leq i,j \leq d}.
\end{equation}
\end{thm}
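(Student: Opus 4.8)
The plan is to deduce Theorem~\ref{thmGrothe} directly from Theorem~\ref{MainA} via the standard ``universal Grassmann bundle'' argument, which transfers an identity among degeneracy loci classes in $\CK^*(\Gr_d(E))$ into an identity among polynomials. First I would observe that the right-hand side of (\ref{G det formula}) is a symmetric polynomial in $z_1,\dots,z_d$ with coefficients in $\ZZ[\beta][b_1,b_2,\dots]$: indeed, by the generating-function definition of $G_m^{(\ell)}(z|b)$ it is manifest that these are such polynomials (for $m\le 0$ they reduce to $(-\beta)^{-m}$, matching $\scA_m^{(\ell)}=(-\beta)^{-m}$), so both sides of (\ref{G det formula}) live in the polynomial ring $\ZZ[\beta][b_1,b_2,\dots][z_1,\dots,z_d]^{S_d}$. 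Hence it suffices to prove the equality after specializing $z_1,\dots,z_d$ and $b_1,b_2,\dots$ to Chern-root and Chern-class data coming from a sufficiently generic geometric situation.

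Next I would set up the universal situation: take $X$ to be a smooth variety carrying a vector bundle $E$ of rank $n$ together with a complete flag $F^\bullet$, chosen so that the Chern classes $c_1((F^{i-1}/F^i)^\vee)$, $i=1,\dots,n$, are algebraically independent in $\CK^*(X)$ up to the degree needed; for instance one may take $X$ to be a product of projective spaces (or a flag bundle over one) large enough that no polynomial relation of bounded degree holds among these classes and the Chern roots of $U^\vee$. Under the identification $z_i\leftrightarrow$ Chern roots of $U^\vee$ and $b_i\leftrightarrow c_1((F^{i-1}/F^i)^\vee)$, the generating function defining $G_m^{(\ell)}(z|b)$ becomes precisely the generating function (\ref{segre vir}) for $\scS_m(U^\vee-(E/F^\ell)^\vee)=\scA_m^{(\ell)}$ --- here one uses that $c(U^\vee;\beta)/c(U^\vee;-u)=\prod_i \frac{1+\beta z_i}{1-z_i u}$ by (\ref{eq:eh}) and Remark~\ref{rem beta+u}, and that $c((E/F^\ell)^\vee;u+\beta)^{-1}$ contributes the factor $\prod_{i=1}^{\ell}\frac{1-b_i u}{1+\beta b_i}$ since the Chern roots of $(E/F^\ell)^\vee$ are the $c_1((F^{i-1}/F^i)^\vee)=b_i$ for $1\le i\le \ell$. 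Thus $G_m^{(\ell)}(z|b)$ maps to $\scA_m^{(\ell)}$, the right side of (\ref{G det formula}) maps to the right side of (\ref{type A det formula}), and by Theorem~\ref{MainA} this equals $[\Omega_\lambda]$, which by the stated representability fact equals the image of $G_\lambda(z|b)$. Since the specialization map from the polynomial ring to $\CK^*(\Gr_d(E))$ is injective in the relevant range of degrees (by genericity of $X$), the polynomial identity (\ref{G det formula}) follows.

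The main obstacle I anticipate is making the ``genericity/injectivity'' step rigorous: one must exhibit a concrete $X$ for which the pullback map $\ZZ[\beta][b_1,\dots,b_n][z_1,\dots,z_d]^{S_d}\to \CK^*(\Gr_d(E))$ is injective in each fixed degree, or else argue by a limiting/pro-object argument over all $n$ and all $X$. A clean way around this is to note that both sides of (\ref{G det formula}) are, by construction, universal polynomial expressions in the $b_i$ and the elementary symmetric functions of the $z_i$, and that the degeneracy-locus formula of Theorem~\ref{MainA} is an identity of such universal expressions valid for the splitting-principle/tautological model where $\Gr_d(E)$ is replaced by its full flag bundle and $c_1((F^{i-1}/F^i)^\vee)$, together with the Chern roots of $U^\vee$, become genuinely algebraically independent up to the needed degree; this is exactly the setting in which the pushforward computations of Sections~\ref{sec: resolution A}--\ref{sec: umbral} were carried out, so no new geometric input is required. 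Everything else is the routine verification, already sketched above, that the generating function for $G_m^{(\ell)}(z|b)$ matches (\ref{segre vir}) under the Chern-root substitution.
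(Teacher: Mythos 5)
Your overall strategy --- reduce the polynomial identity to the geometric identity of Theorem~\ref{MainA} by specializing $z$ and $b$ to Chern-root data and appealing to injectivity of the evaluation map for a generic base --- is sound in spirit, and your verification that $G_m^{(\ell)}(z|b)$ corresponds to $\scA_m^{(\ell)}$ under the substitution is correct. The paper's own (omitted) proof, however, is stated to be parallel to the equivariant arguments of Sections~\ref{sec7}--\ref{sec: GTheta}: one works in $\CK^*_{T_n}(\pt)=\QQ[\beta][[b_1,\ldots,b_n]]_\gr$, passes to the limit over $n$, and uses injectivity of the GKM-type localization map. Your version replaces this with a classifying-space/genericity argument. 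These are closely related, but they diverge at exactly the step you flag as the ``main obstacle,'' and that divergence is where the gap lies.

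The problem is your opening claim that the right-hand side of (\ref{G det formula}) lies in the polynomial ring $\ZZ[\beta][b_1,b_2,\ldots][z_1,\ldots,z_d]^{S_d}$. For $i\le j$ the $(i,j)$-entry $\sum_{s\ge 0}\binom{i-j}{s}\beta^s G_{\lambda_i+j-i+s}^{(\lambda_i+d-i)}(z|b)$ is a genuinely infinite sum: the polynomials $G_{\lambda_i+j-i+s}^{(\lambda_i+d-i)}(z|b)$ have unbounded $(z,b)$-degree as $s\to\infty$, compensated only by $\beta^s$ under the grading with $\deg\beta=-1$. A priori the right-hand side therefore lives in the graded completion $\ZZ[\beta][[z,b]]_\gr$, and the fact that the determinant collapses to a polynomial is part of what the theorem asserts --- it cannot be assumed at the outset. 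This undercuts the injectivity argument: for any fixed smooth quasi-projective $X$, the degree-$|\lambda|$ piece of $\ZZ[\beta][[z,b]]_\gr$ is infinite-dimensional while $\CK^{|\lambda|}(\Gr_d(E))$ is finitely generated, so the evaluation map cannot be injective there; moreover $\beta$ is a fixed element of $\CK^{-1}(X)$, not an indeterminate, so one cannot separate $\beta^s$-coefficients, and the vanishing $\scA_m^{(\ell)}=0$ for $m\gg 0$ that truncates the sum inside $\CK^*(\Gr_d(E))$ has no polynomial counterpart. To close this gap one must either prove directly that the Laurent-series determinant terminates, or work in a ring where $\beta$ remains a free graded parameter and the localization map is injective in each degree --- which is exactly what the equivariant ring $\CK^*_{T_\infty}$ and the injectivity statements of Sections~\ref{sec7}--\ref{sec: GTheta} provide, and why the authors route the (omitted) type A proof through that framework.
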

%%%%%%%%%%%%%%%%%%%%%%%%%%%%%%%%%%
We omit the proof since it is parallel to the analogous statements that we describe for symplectic and odd orthogonal Grassman bundles in Section \ref{sec7}, \ref{sec8}, and \ref{sec: GTheta}. 
%%%%%%%%%%%%%%%%%%%%%%%%%%%%%%%%%%
\begin{rem}\label{remLenart}
Lenart \cite[Theorem 2.1]{Lenart2000} proved a different determinantal formula of (non-factorial) Grothendieck polynomials. Each entry of his formula is a finite sum of complete symmetric functions, while in our formula each upper triangular entry is an infinite sum of Grothendieck polynomials associated to partitions of length one .
\end{rem}
%%%%%%%%%%%%%%%%%%%%%%%%%%%%%%%%%%
%%%%%%%%%%%%%%%%%%%%%%%%%%%%%%%%%%
\section{Combinatorics of type B and C}\label{seccombBC}
%%%%%%%%%%%%%%%%%%%%%%%%%%%%%%%%%%
%%%%%%%%%%%%%%%%%%%%%%%%%%%%%%%%%%
In this section, we recall the Weyl group of type $\mathrm{B}_{\infty}$ and $\mathrm{C}_\infty$. We also discuss the notations for the $k$-strict partitions and the characteristic indices that will be used to describe the symplectic and odd orthogonal Grassmannian degeneracy loci.
%%%%%%%%%%%%%%%%%%%%%%%%%%%%%%%%%%
%%%%%%%%%%%%%%%%%%%%%%%%%%%%%%%%%%
\subsection{Weyl group of $\mathrm{B}_{\infty}$ and $\mathrm{C}_\infty$}
%%%%%%%%%%%%%%%%%%%%%%%%%%%%%%%%%%
%%%%%%%%%%%%%%%%%%%%%%%%%%%%%%%%%%
Let $W_{\infty}$ be the infinite hyperoctahedral group which is defined by the generators  $s_i, i=0,1,\ldots$, and the relations 
%%%%%%%%%%%%%%%%%%%%%%%%%%%%%%%%%%%%
\begin{equation}\label{coxeter relation}
\begin{array}{c}
s_i^2=e \ \ \ \ (i\geq 0) ,\ \ \ \ \ \ \ s_is_j=s_js_i\ \ \ \ (|i-j|\geq 2),\\
s_0s_1s_0s_1=s_1s_0s_1s_0, \ \  s_is_{i+1}s_i=s_{i+1}s_is_{i+1}\ \ \ \ (i\geq 1).\\
\end{array}
\end{equation}
%%%%%%%%%%%%%%%%%%%%%%%%%%%%%%%%%%%%
We identify $W_{\infty}$ with the group of all permutations $w$ of $\ZZ \backslash \{0\}$ such that $w(i)\not= i$ for only finitely many $i\in \ZZ \backslash \{0\}$, and $\overline{w(i)}=w(\bar{i})$ for all $i$. In this context $\bar{i}$ stands for $-i$. The generators, often referred to as \emph{simple reflections}, are identified with the transpositions $s_{0}=(1,\bar{1})$ and $s_{i}=(i+1,i)(\overline{i},\overline{i+1})$ for $i\geq 1$. The \emph{one-line notation} of an element $w\in W_\infty$ is the sequence $w=(w(1)w(2)w(3)\cdots)$.  The \textit{length} of $w\in W_\infty$ is denoted by $\ell(w)$.

%%%%%%%%%%%%%%%%%%%%%%%%%%%%%%%%%%%%

For each nonnegative integer $k$, let $W_{(k)}$ be the subgroup of $W_{\infty}$ generated by all $s_i$ with $i\not=k$. Let $W_\infty^{(k)}$ be the set of minimum length coset representatives for $W_{\infty}/W_{(k)}$, it is given by
\[
W_\infty^{(k)} = \{ w \in W_\infty \ |\ \ell(ws_i)>\ell(w) \mbox{ for all } i\not=k\}.
\]
An element of $W_\infty^{(k)}$ is called \emph{$k$-Grassmannian} and it is given by  the following one-line notation:  
\begin{equation}\label{oneline for k grass}
\begin{array}{c}
w=(v_1\cdots v_k | \overline{\zeta_1}\cdots \overline{\zeta_s} u_1u_2\cdots);\\
0<v_1<\cdots <v_k, \ \ \ \overline{\zeta_1}<\cdots< \overline{\zeta_s} <0<u_1<u_2<\cdots.
\end{array}
\end{equation}
We insert a vertical line after $v_k$ to indicate that $w$ is regarded as a $k$-Grassmannian element. For example, $(13|\bar4 256\cdots)$ is a $2$-Grassmannian element in $W_{\infty}$. 

%%%%%%%%%%%%%%%%%%%%%%%%%%%%%%%%%%
Upon a choice of an integer $n\geq 0$, we let $W_n$ be the subgroup of $W_{\infty}$ generated by $s_0, s_1,\dots, s_{n-1}$. Or, equivalently, it consists of the elements $w \in W_{\infty}$ such that $w(i)=i$ for all $i>n$. We write the one-line notation of $w \in W_n$ as the finite sequence $(w(1)w(2)\cdots w(n))$. We set $W_{n,(k)}:=W_n \cap W_{(k)}  $ and $W_n^{(k)}:=W_n \cap W_\infty^{(k)}$ so that $W_n^{(k)}\cong W_n/W_{n,(k)}$.
%%%%%%%%%%%%%%%%%%%%%%%%%%%%%%%%%%
%%%%%%%%%%%%%%%%%%%%%%%%%%%%%%%%%%
\subsection{$k$-strict partitions}
%%%%%%%%%%%%%%%%%%%%%%%%%%%%%%%%%%
%%%%%%%%%%%%%%%%%%%%%%%%%%%%%%%%%%
Let $\SP^{k}$ be the set of all $k$-strict partitions, \textit{i.e.} $\lambda\in \SP^{k}$ is an infinite sequence $(\lambda_1,\lambda_2,\cdots)$ of non-increasing nonnegative integers such that all but finitely many $\lambda_i$'s are zero, and such that $\lambda_i>k$ implies $\lambda_i>\lambda_{i+1}$.  Let $\SP_r^{k}$ be the subset of $\SP^{k}$ consisting of all $k$-strict partitions of length at most $r$. If $\lambda \in \SP_r^{k}$, then we write 
$\lambda=(\lambda_1,\dots,\lambda_r)$. 

%%%%%%%%%%%%%%%%%%%%%%%%%%%%%%%%%%
There is a bijection between $W_{\infty}^{(k)}$ and $\SP^{k}$. The map from $W_{\infty}^{(k)}$ to $ \SP^{k}$ is given as follows. Let $w \in W_\infty^{(k)}$ be an element which can be written with the one-line notation (\ref{oneline for k grass}). Let $\nu=(\nu_1,\nu_2,\dots)$ be a partition given by $\nu_i=\sharp\{p \ |\ v_p>u_i\}$. Then we define a $k$-strict partition $\lambda$ by setting $\lambda_i=\zeta_i+k$ if $1\leq i\leq s$ and  $\lambda_i=\nu_{i-s}$ if  $s+1\leq i$. See Buch--Kresch--Tamvakis \cite{BuchKreschTamvakis1} for details.

We also consider the subset $\SP^k(n)$ of $\SP_{n-k}^{k}$, which consists of all $k$-strict partitions in $\SP_{n-k}^{k}$ such that $\lambda_1\leq n+k$. Then the above bijection can be restricted to $W_n^{(k)}$: 
\[
W_n^{(k)} \cong \SP^k(n).
\] 

%%%%%%%%%%%%%%%%%%%%%%%%%%%%%%%%%%
%%%%%%%%%%%%%%%%%%%%%%%%%%%%%%%%%%
\subsection{Characteristic index}\label{sec:chi}
%%%%%%%%%%%%%%%%%%%%%%%%%%%%%%%%%%
%%%%%%%%%%%%%%%%%%%%%%%%%%%%%%%%%%
\begin{defn}
For each $k$-Grassmannian element $w=(v_1\cdots v_k | \overline{\zeta_1}\cdots \overline{\zeta_s} u_1u_2\cdots)\in W_{\infty}^{(k)}$, define the associated characteristic index $\chi$ as
\[
\chi=(\chi_1,\chi_2,\dots ):=(\zeta_1-1,\zeta_2-1, \dots, \zeta_s-1, -u_1,-u_2,\dots).
\]
\end{defn}
%%%%%%%%%%%%%%%%%%%%%%%%%%%%%%%%%%
\begin{rem}
For $k=0$ one has $\chi_i=\lambda_i-1$ for all $i=1,\dots, r$,  where $r$ is the length of $\lambda$. 
\end{rem}
%%%%%%%%%%%%%%%%%%%%%%%%%%%%%%%%%%
\begin{defn}
Let $\lambda$ be the $k$-strict partition associated to $w \in W_{\infty}^{(k)}$ under the bijection $W_{\infty}^{(k)} \cong \SP^{k}$. Set
\begin{eqnarray*}
C(\lambda)&:=&\{(i,j) \ |\ 1\leq  i < j, \ \ \chi_i + \chi_j \geq 0\}, \\
\gamma_j&:=& \sharp\{ i \ |\ 1\leq  i < j, \ \ \chi_i + \chi_j \geq 0\}\ \ \ \ \mbox{for each $j>0$}.
\end{eqnarray*}
%%%%%%%%%%%%%%%%%%%%%%%%%%%%%%%%%%
If $\lambda\in \SP^k_r$, we also define
\[
D(\lambda)_r  := \{ (i,j) \in \Delta_r\ |\  \chi_i + \chi_j < 0\} = \Delta_r \backslash C(\lambda),
\]
where $\Delta_r := \{(i,j) \ |\ 1\leq i < j\leq r\}$.
\end{defn}
%%%%%%%%%%%%%%%%%%%%%%%%%%%%%%%%%%

%%%%%%%%%%%%%%%%%%%%%%%%%%%%%%%%%%
\begin{rem}\label{chilambda}
In terms of $\lambda$ the characteristic index $\chi$ is given by $\chi_j = \lambda_j - j + \gamma_j-k$. Furthermore, we have $\chi_i+\chi_j \geq 0$ if and only if $\lambda_i+\lambda_j > 2k+ j-i$ (see \cite[Lemma 3.3]{IkedaMatsumura}). 
\end{rem}
%%%%%%%%%%%%%%%%%%%%%%%%%%%%%%%%%%
%%%%%%%%%%%%%%%%%%%%%%%%%%%%%%%%%%
\section{Segre classes for oriented Borel--Moore homology}\label{sec:BM}
%%%%%%%%%%%%%%%%%%%%%%%%%%%%%%%%%%
%%%%%%%%%%%%%%%%%%%%%%%%%%%%%%%%%%
In this section, we discuss the Segre classes for a regularly embedded subscheme $Z$ of a smooth  variety $X$ in order to apply it to the computation of degeneracy loci classes for isotropic Grassmann bundles associated to symplectic and odd orthogonal vector bundles. The main difference from the previous sections is that we must work with the theory of \emph{oriented Borel--Moore (BM) homology} developed by Levine--Morel \cite[Chapter 5]{LevineMorel}.

Recall that our base field $\bbF$ is algebraically closed and of characteristic zero. Let $\Sm$ be the category of smooth quasi-projective $\bbF$-schemes. Let $\Sch$ be the category of separated schemes of finite type over $\Spec(\bbF)$.  By a scheme (resp. smooth scheme), we mean an object of $\Sch$ (resp. $\Sm$). 
%%%%%%%%%%%%%%%%%%%%%%%%%%%%%%%%%%
%%%%%%%%%%%%%%%%%%%%%%%%%%%%%%%%%%
\subsection{Chern class operators for oriented Borel--Moore homology}
%%%%%%%%%%%%%%%%%%%%%%%%%%%%%%%%%%
%%%%%%%%%%%%%%%%%%%%%%%%%%%%%%%%%%
We let $\mathbf{Ab}_*$ denote the category of graded abelian groups. An {\it oriented Borel--Moore homology theory\/} on $\Sch$  is given by a covariant functor $A_*$ from $\Sch$ to $\mathbf{Ab}_*$,  together with pullback maps of {\it local complete intersection morphisms \/} (l.c.i. morphisms for short), and an associative commutative graded binary operation $A_*(X)\otimes A_*(Y) \to A_*(X\times Y)$ called the {\it external product\/}. Notice that, in particular, $A_*(\Spec(\bbF))$ is a commutative graded ring. We will not recall the axioms imposed on this data, however we will discuss some implications. 

A morphism of schemes $f: X\to Y$ is an {\it l.c.i morphism\/} if it admits a factorization $f=q\circ i$ where $i: X \to P$ is a regular embedding  and a smooth quasi-projective morphism $q: P \to Y$. For all such morphisms one has a pullback map of the given oriented Borel--Moore homology theory. In particular, if $X$ is an l.c.i. scheme, \textit{i.e.} the structure morphism $p:X \to \Spec(\bbF)$ is an l.c.i morphism, then its {\it fundamental class\/} $1_X\in A_*(X)$ is defined by $1_X:=p^*(1)$, where $1 \in A_0(\Spec(\bbF))$ (\cite[Definition 5.18]{LevineMorel}). 

For each vector bundle $E$ of rank $e$ over a scheme $X$, there are homomorphisms
%%%%%%%%%%%%%%%%%%%%%%%%%%%%%%%%%%
\begin{equation}
\tilde{c}_i(E): A_*(X)\rightarrow A_{*-i}(X),
\end{equation}
%%%%%%%%%%%%%%%%%%%%%%%%%%%%%%%%%%
with $0\leq i\leq e,$ and $\tilde{c}_0(E)=\mathrm{id}_{A_*(X)}$, which are called the {\it Chern class operators\/}. Let $\calQ$ denote the universal quotient line bundle of $\pi: \PP^*(E)\rightarrow X$.  Then we have the following operator version of the relation (\ref{rel chern}):
%%%%%%%%%%%%%%%%%%%%%%%%%%%%%%%%%%
\begin{equation}
\sum_{i=0}^e(-1)^i \tilde{c}_1(\calQ)^{e-i}\circ\pi^* \circ \tilde{c}_i(E)=0. 
\end{equation}
%%%%%%%%%%%%%%%%%%%%%%%%%%%%%%%%%%
For any oriented Borel--Moore homology $A_*$, there exists an associated formal group law $F_{A_*}(u,v)$ such that 
%%%%%%%%%%%%%%%%%%%%%%%%%%%%%%%%%%
\begin{equation}\label{eq:extFGL}
F_{A_*}(\tilde{c_1}(L_1),\tilde{c_1}(L_2)) =\tilde{c_1}(L_1\otimes L_2)
\end{equation}
%%%%%%%%%%%%%%%%%%%%%%%%%%%%%%%%%%
for any line bundles $L_1,L_2$ over $X$ (Remark 5.2.9 \cite{LevineMorel}). This is an extended (operator version of) formal group law axiom. 

Algebraic cobordism $\Omega_*$ is the universal oriented Borel--Moore homology on $\Sch$ (Theorem 7.1.1 \cite{LevineMorel}). The coefficient ring $\Omega_*(\Spec(\bbF))$ is isomorphic to the Lazard ring $\LL$. In \cite{DaiLevine}, Dai-Levine considered the oriented Borel--Moore homology $\CK_*:=\Omega_*\otimes_{\LL}\ZZ[\beta]$ and proved that $\CK_{\dim X}(X) \cong K_0(X)[\beta,\beta^{-1}]_{\dim X}$ for all equidimensional schemes $X$. Here $K_0$ stands for the Grothendieck group of coherent sheaves. Under this identification the fundamental class $1_X$ coincides with the class $[\calO_X]$ of the structure sheaf of $X$. Furthermore, the formal group law $F_{\CK_*}$ is given by $F_{\CK_*}(u,v)=u+v+\beta uv$. Once again, let us stress that in this paper the sign of $\beta$ is opposite from \cite{DaiLevine}.

%%%%%%%%%%%%%%%%%%%%%%%%%%%%%%%%%%
If $X$ is smooth, then it is possible to compare $\CK_*$ with the connective $K$-theory introduced in \S \ref{sec:preliminaryOnCK}. The relationship is given by $\CK^*(X)=\CK_{\dim X-*}(X)$ (see also Proposition 5.2.1 \cite{LevineMorel}). With this identification, we have $c_i(E)=\tilde{c}_i(E)(1_X)$. Moreover, if $f: Y \to X$ is an l.c.i. morphism, then the pullback $f^*: \CK_*(X) \to \CK_*(Y)$ gives to $\CK_*(Y)$ the structure of a $\CK^*(X)$-module.

%%%%%%%%%%%%%%%%%%%%%%%%%%%%%%%%%%
By the extended formal group law axiom (\ref{eq:extFGL}), we can reprove Lemma \ref{tensor decomp} for $\CK_*$ as follows.
%%%%%%%%%%%%%%%%%%%%%%%%%%%%%%%
\begin{lem}\label{lem2}
Let $X$ be a scheme, and consider a vector bundle $E$ of rank $e$ and a line bundle $L$. 
Then in $\CK_*(X)$, we have
\begin{eqnarray*}
\tilde{c}_e(L \otimes E) 
&=& \sum_{p=0}^{e} \sum_{q=0}^p\binom{p}{q}\beta^q\tilde{c}_p(E)\circ \tilde{c}_1(L)^{e-p+q}.
\end{eqnarray*}
\end{lem}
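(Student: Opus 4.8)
The plan is to mimic the proof of Lemma~\ref{tensor decomp}, replacing the manipulation of Chern roots in the commutative ring $\CK^*(X)$ by the corresponding manipulation of Chern class operators, where the only substitute for the multiplication of classes is composition of operators together with the extended formal group law axiom~(\ref{eq:extFGL}). Concretely, I want to prove the operator identity
\[
\tilde{c}_e(L\otimes E) \;=\; \sum_{p=0}^{e}\sum_{q=0}^{p}\binom{p}{q}\beta^q\,\tilde{c}_p(E)\circ \tilde{c}_1(L)^{\,e-p+q}
\]
in $\End(\CK_*(X))$. Since $\CK_*$ is a quotient of algebraic cobordism $\Omega_*$ and $\Omega_*$ is the universal oriented Borel--Moore homology theory, it suffices to verify the analogous identity with the formal group law $F_{\CK_*}(u,v)=u+v+\beta uv$ imposed, so one really is reduced to formal computations with operators satisfying the usual splitting principle for oriented Borel--Moore homology.

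First I would invoke the splitting principle for $\Omega_*$ (and hence $\CK_*$): there is a smooth projective morphism $p\colon \widetilde X \to X$ such that $p^*$ is injective on $A_*$ and $p^*E$ admits a filtration with line-bundle quotients $L_1,\dots,L_e$, so that as an operator $\tilde c_e(p^*E)=\tilde c_1(L_1)\circ\cdots\circ\tilde c_1(L_e)$ and, by~(\ref{eq:extFGL}), $\tilde c_e(p^*(L\otimes E))=\tilde c_1(L\otimes L_1)\circ\cdots\circ\tilde c_1(L\otimes L_e)=\bigl(\tilde c_1(L_1)\oplus c_1(L)\bigr)\circ\cdots\circ\bigl(\tilde c_1(L_e)\oplus c_1(L)\bigr)$, where $x\oplus y=x+y+\beta xy$ and where I write $c_1(L)$ for the operator $\tilde c_1(p^*L)$. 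Because all the operators $\tilde c_1(L_i)$ and $\tilde c_1(L)$ commute pairwise (line bundles pulled back to the same base have commuting Chern class operators — this is part of the formal group law package in \cite[Chapter 5]{LevineMorel}), I can manipulate this product exactly as in the proof of Lemma~\ref{tensor decomp}: writing $\tilde c_1(L_i)\oplus c_1(L)=\tilde c_1(L_i)\bigl(\id+\beta c_1(L)\bigr)+c_1(L)$ and expanding the product over all $e$ factors, I get $\sum_{p=0}^{e} e_p(\tilde c_1(L_1),\dots,\tilde c_1(L_e))\,c_1(L)^{e-p}\bigl(\id+\beta c_1(L)\bigr)^{p}$. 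Now $e_p$ of the $\tilde c_1(L_i)$ is precisely the operator $\tilde c_p(p^*E)$ (up to the usual ordering, which is immaterial by commutativity), and expanding $\bigl(\id+\beta c_1(L)\bigr)^p=\sum_{q=0}^p\binom pq\beta^q c_1(L)^q$ yields the claimed formula after pulling it back along $p^*$ and using injectivity of $p^*$ to descend from $\widetilde X$ to $X$.

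The only genuinely delicate point — and the one I would be most careful about — is the bookkeeping of \emph{order of composition}: operators $\tilde c_i$ act on homology, so a priori products like $\tilde c_p(E)\circ \tilde c_1(L)^{e-p+q}$ are not obviously equal to $\tilde c_1(L)^{e-p+q}\circ \tilde c_p(E)$, and the splitting-principle factorization $\tilde c_e(L\otimes E)=\prod_i\bigl(\tilde c_1(L_i)\oplus c_1(L)\bigr)$ requires knowing that all these operators commute. This commutativity is exactly what the oriented Borel--Moore homology axioms guarantee (it is the operator incarnation of the fact that Chern classes of bundles on a fixed base commute, and it is implicit in the compatibility of the Chern class operators with the formal group law, cf.\ \cite[\S5.1, \S5.2]{LevineMorel}); once this is recorded, every other step is a purely formal rearrangement identical to the one in Lemma~\ref{tensor decomp}. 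So the proof structure is: (1) reduce to $\Omega_*$ / the splitting principle; (2) note all relevant Chern class operators commute; (3) run the same algebraic expansion as in Lemma~\ref{tensor decomp}; (4) descend along the injective pullback $p^*$.

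\begin{proof}
By the splitting principle for oriented Borel--Moore homology theories \cite[\S5]{LevineMorel}, there is a smooth projective morphism $p\colon \widetilde{X}\to X$ with $p^*\colon \CK_*(X)\to\CK_*(\widetilde{X})$ injective and such that $p^*E$ has a filtration with line bundle subquotients $L_1,\dots,L_e$. Writing $c_1:=\tilde{c}_1(p^*L)$ and $\tilde{c}_1(L_i)$ for the corresponding Chern class operators on $\CK_*(\widetilde X)$, all of which commute with one another, we have by the extended formal group law axiom (\ref{eq:extFGL}) and the additivity of $e_p$-type operators under the filtration
\begin{eqnarray*}
p^*\circ \tilde{c}_e(L\otimes E)\circ (p^*)^{-1}
&=& \prod_{i=1}^{e}\bigl(\tilde{c}_1(L_i)\oplus c_1\bigr)
= \prod_{i=1}^{e}\Bigl(\tilde{c}_1(L_i)\bigl(\id+\beta c_1\bigr)+c_1\Bigr)\\
&=& \sum_{p=0}^{e} e_p\bigl(\tilde{c}_1(L_1),\dots,\tilde{c}_1(L_e)\bigr)\,c_1^{\,e-p}\,(\id+\beta c_1)^{p}\\
&=& \sum_{p=0}^{e} \tilde{c}_p(p^*E)\,\sum_{q=0}^{p}\binom{p}{q}\beta^q\,c_1^{\,e-p+q},
\end{eqnarray*}
where $u\oplus v=u+v+\beta uv$. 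Since $p^*$ commutes with the Chern class operators and is injective, the asserted identity follows on $\CK_*(X)$.
\end{proof}
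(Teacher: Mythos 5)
Your proof is correct and follows the exact route the paper intends—namely, redo the Chern-roots computation of Lemma~\ref{tensor decomp} in the operator setting, using the extended formal group law axiom~(\ref{eq:extFGL}) together with the splitting principle and commutativity of the Chern class operators (the paper in fact gives no explicit proof of Lemma~\ref{lem2}, only the preceding remark that it is ``reproven'' from Lemma~\ref{tensor decomp} in this way). The one blemish is notational: $p^*$ is injective but not invertible, so the display $p^*\circ \tilde{c}_e(L\otimes E)\circ(p^*)^{-1}=\cdots$ is ill-formed and should instead be written $p^*\circ \tilde{c}_e(L\otimes E) = \tilde{c}_e\bigl(p^*(L\otimes E)\bigr)\circ p^* = (\cdots)\circ p^*$, after which one cancels $p^*$ on the right using injectivity, which is plainly what you intend.
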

%%%%%%%%%%%%%%%%%%%%%%%%%%%%%%%
We conclude this section with the following lemma that follows from Lemma 6.6.7 \cite{LevineMorel}.
%%%%%%%%%%%%%%%%%%%%%%%%%%%%%%%
\begin{lem}\label{lemCKGB} 
Let $X$ be a scheme and $E$ be a vector bundle of rank $e$ over $X$. Suppose that $E$ has a section $s: X \to E$ such that the zero scheme of $s$, denoted by $i:Z\to X$, is a regularly embedded closed subscheme of codimension $e$. We have
\[
\tilde{c}_e(E) = i_*\circ i^*.
\]
In particular, if $X$ is an l.c.i. scheme, we have 
\[
\tilde{c}_e(E)(1_X) = i_*(1_Z).
\]
\end{lem}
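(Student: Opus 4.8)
The plan is to deduce Lemma~\ref{lemCKGB} directly from Lemma 6.6.7 of \cite{LevineMorel}, which identifies the top Chern class operator of a vector bundle with the refined Gysin morphism attached to a section. First I would recall the relevant content of that reference: if $E\to X$ is a vector bundle of rank $e$ with a section $s$ whose zero scheme $i:Z\to X$ has the expected codimension $e$, then $Z$ is regularly embedded in $X$ and the Chern class operator $\tilde c_e(E):A_*(X)\to A_{*-e}(X)$ factors as $i_*\circ i^!$, where $i^!$ is the refined Gysin pullback associated to the regular embedding $i$. Since in the hypotheses of the lemma we are told that $Z$ is regularly embedded of codimension $e$, the morphism $i$ is in particular an l.c.i.\ morphism, so the refined Gysin map $i^!$ coincides with the ordinary l.c.i.\ pullback $i^*$ that is part of the data of the oriented Borel--Moore homology theory. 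This yields $\tilde c_e(E)=i_*\circ i^*$ as operators on $\CK_*(X)$.

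For the second assertion, I would simply apply the first one to the fundamental class. If $X$ is an l.c.i.\ scheme with structure morphism $p:X\to\Spec(\bbF)$, then $1_X=p^*(1)$ is defined, and the composite $p\circ i: Z\to\Spec(\bbF)$ is again an l.c.i.\ morphism (a composition of l.c.i.\ morphisms), so $1_Z=(p\circ i)^*(1)=i^*(p^*(1))=i^*(1_X)$ by functoriality of l.c.i.\ pullbacks. Hence
\[
\tilde c_e(E)(1_X)=i_*\circ i^*(1_X)=i_*(i^*(1_X))=i_*(1_Z),
\]
which is the claimed formula.

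The only genuine point requiring care is the compatibility between the refined Gysin morphism $i^!$ appearing in Levine--Morel's formulation of their Lemma 6.6.7 and the l.c.i.\ pullback $i^*$ in terms of which our statement is phrased; this is exactly the content of the construction of l.c.i.\ pullbacks in \cite[Chapter~6]{LevineMorel}, where the pullback along a regular embedding is defined to be the refined Gysin map, so there is in fact nothing to prove here beyond citing the definition. Everything else is formal manipulation with the functoriality of l.c.i.\ pullbacks and the definition of the fundamental class. I would therefore keep the proof to a couple of lines, citing Lemma 6.6.7 of \cite{LevineMorel} for the operator identity and the definition of $1_X$ from \cite[Definition 5.18]{LevineMorel} for the specialization to fundamental classes.
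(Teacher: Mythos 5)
Your proof is correct and takes the same route as the paper: the paper gives no argument beyond citing Lemma 6.6.7 of Levine--Morel, and you simply unpack that citation (identifying the refined Gysin map $i^!$ with the l.c.i. pullback $i^*$ for a regular embedding) and then specialize to fundamental classes via functoriality of l.c.i. pullbacks.
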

%%%%%%%%%%%%%%%%%%%%%%%%%%%%%%%
%%%%%%%%%%%%%%%%%%%%%%%%%%%%%%%
\subsection{Segre class operators}
%%%%%%%%%%%%%%%%%%%%%%%%%%%%%%%%%%
%%%%%%%%%%%%%%%%%%%%%%%%%%%%%%%%%%
%%%%%%%%%%%%%%%%%%%%%%%%%%%%%%%%%%
\begin{defn}
Let $X$ be a scheme. For vector bundles $E$ and $F$ over $X$, define the relative Segre class operators $\tscS_m(E-F)$ for $\CK_*(X)$ by
\begin{equation}\label{dfSegX1}
\sum_{m\in \ZZ} \tscS_m(E-F) u^m = \frac{1}{1+\beta u^{-1}} \frac{\tilde{c}(E;\beta)}{\tilde{c}(E;-u)}\frac{\tilde{c}(F;-u)}{\tilde{c}(F;\beta)}.
\end{equation}
In particular, we have
\begin{equation}\label{eq1}
\tscS_m(E-F) =\sum_{p=0}^{\infty} \sum_{q=0}^p \binom{p}{q}\beta^q\tilde{c}_p(F^{\vee})\circ\tscS_{m-p+q}(E).
\end{equation}

\end{defn}
%%%%%%%%%%%%%%%%%%%%%%%%%%%%%%%%%%
\begin{rem}
If $X$ is a smooth scheme, under the identification $\CK^*(X)=\CK_{\dim X- *}(X)$, we have $\scS_m(E)=\tscS_m(E)(1_X)$ in $\CK_*(X)$. Thus by the definition (\ref{df:Segre}) and Lemma \ref{push is beta}, we have
\begin{equation}\label{dfBMSegre}
\tscS_m(E)(1_X)  = \pi_*\circ \tilde{c}_1(\calQ)^{m+e-1}(1_{P_X}), \ \ \ m+e-1\geq 0
\end{equation}
where $P_X:=\PP^*(E)$ is the dual projective bundle of $E$ with the projection $\pi:P_X \to X$ and $\calQ$ is its universal quotient line bundle. 

Thus,  if $Z \inc X$ is a regular embedding, by the axiom (BM2) of oriented Borel--Moore homology (p.144 \cite{LevineMorel}), the axiom (A4) of Borel--Moore functors (p.18 \cite{LevineMorel}) and the definition of the fundamental classes (p.146 \cite{LevineMorel}), we can pullback (\ref{dfBMSegre}) to $Z$ and we have
\begin{equation}\label{eq2}
\tscS_m(E|_Z)(1_Z)  = (\pi_Z)_*\circ \tilde{c}_1(\calQ_Z)^{m+e-1}(1_{P_Z}), \ \ \ m+e-1\geq 0
\end{equation}
where $E|_Z\to Z$ and $\pi_Z: P_Z\to Z$ are, respectively, the restriction of $E$ and $P_X$ to $Z$ and $\calQ_Z$ is the universal quotient line bundle of $P_Z$.
\end{rem}
%%%%%%%%%%%%%%%%%%%%%%%%%%%%%%%%%%
%%%%%%%%%%%%%%%%%%%%%%%%%%%%%%%
\begin{thm}\label{thmlcipush}
Let $X $ be a smooth scheme and $i: Z \inc X$ a regular embedding. Let $E$ be a vector bundle over $X$ and $F$ a vector bundle over $Z$. Let $\pi_Z: \PP^*(E|_Z) \to Z$ be the dual projective bundle of the restriction $E|_Z$ and $\calQ_Z$ its universal quotient line bundle. In $\CK_*(Z)$, we have
\[
\pi_{Z*}\circ \tilde{c}_1(\calQ_Z)^s \circ \tilde{c}_f(\calQ_Z \otimes F^{\vee})(1_{\PP^*(E|_Z)}) = \tscS_{s+f - e+1}(E|_Z - F)(1_Z).
\]
where the pullback of $F$ to $\PP^*(E|_Z)$ is denoted also by $F$. 
\end{thm}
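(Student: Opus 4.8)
The statement is the Borel--Moore (oriented homology) analogue of Proposition~\ref{push of tensor}, which was proved for the contravariant connective $K$-theory of smooth varieties. The plan is to reduce to that proposition by the comparison $\CK^*(X)=\CK_{\dim X-*}(X)$ available for smooth $X$, and to propagate the identity to the (possibly singular) regularly embedded $Z$ via pullback along $i$. Concretely, I would first establish the identity over the smooth ambient space: by Lemma~\ref{lem2} (the Borel--Moore version of Lemma~\ref{tensor decomp}) we can expand
\[
\tilde c_f(\calQ\otimes F^\vee)=\sum_{p=0}^{f}\sum_{q=0}^{p}\binom{p}{q}\beta^q\,\tilde c_p(F^\vee)\circ \tilde c_1(\calQ)^{f-p+q},
\]
where here $F$ and $\calQ$ live on $\PP^*(E)\to X$. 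Composing with $\tilde c_1(\calQ)^s$, applying $\pi_*$ to $1_{\PP^*(E)}$, and using (\ref{dfBMSegre}) to identify $\pi_*\circ\tilde c_1(\calQ)^{k}(1_{\PP^*(E)})$ with $\tscS_{k-e+1}(E)(1_X)$, one gets
\[
\pi_*\circ\tilde c_1(\calQ)^s\circ\tilde c_f(\calQ\otimes F^\vee)(1_{\PP^*(E)})
=\sum_{p=0}^{f}\sum_{q=0}^p\binom{p}{q}\beta^q\,\tilde c_p(F^\vee)\circ\tscS_{s+f-e+1-p+q}(E)(1_X),
\]
which by (\ref{eq1}) equals $\tscS_{s+f-e+1}(E-F)(1_X)$. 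This handles the case $Z=X$.

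Next I would pull this identity back along the regular embedding $i\colon Z\inc X$. Since $i$ is an l.c.i. morphism, $i^*\colon \CK_*(X)\to\CK_*(Z)$ is defined and $1_Z=i^*(1_X)$; moreover $i^*$ is compatible with Chern class operators and with the projective bundle construction, so $\PP^*(E)\times_X Z=\PP^*(E|_Z)$, $\calQ|_{\PP^*(E|_Z)}=\calQ_Z$, and the pullback of $F$ agrees with the pullback of its restriction. The crucial compatibility is that base change holds for the pushforward along the projective bundle projection: that is, $i^*\circ\pi_*=\pi_{Z*}\circ(i')^*$ where $i'\colon \PP^*(E|_Z)\inc\PP^*(E)$ is the induced regular embedding. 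This is precisely the content of (\ref{eq2}), which was already extracted from the Borel--Moore axioms (BM2), (A4) and the definition of fundamental classes. Applying $i^*$ to the smooth-case identity and invoking this base-change property term by term converts every $\tscS_m(E)(1_X)$ into $\tscS_m(E|_Z)(1_Z)$ and every $\tilde c_p(F^\vee)$ into the corresponding operator on $\CK_*(Z)$, yielding the asserted formula in $\CK_*(Z)$.

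The main obstacle I anticipate is not the algebra — the umbral/binomial bookkeeping is identical to the smooth case — but the careful verification of the base-change and projection-formula statements in the oriented Borel--Moore setting: namely that $\tilde c_1(\calQ_Z)^k$ and $\pi_{Z*}$ interact with $i^*$ exactly as stated, and that $\tscS_m(E|_Z)(1_Z)$ is correctly computed by (\ref{eq2}) when $Z$ is singular. These are consequences of the axioms of Levine--Morel, but one must be precise about the hypothesis that $i$ is a regular embedding (so that $i^*$ exists and the excess-intersection terms vanish) and about the fact that $\PP^*(E|_Z)\to Z$ is still a projective bundle over a possibly singular base, for which the relations among $\tilde c_1(\calQ_Z)$ and $\pi_{Z}^*$ continue to hold as operator identities. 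Once these compatibilities are in hand, the proof is a short two-step reduction: prove it on the smooth ambient space, then restrict.
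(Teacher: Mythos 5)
Your two-step plan---prove the formula over the smooth ambient $X$, then pull it back along $i^*:\CK_*(X)\to\CK_*(Z)$---cannot be carried out as stated, because $F$ is hypothesized to be a vector bundle \emph{on $Z$ only}. There is in general no extension of $F$ to a bundle on $X$, so the ``smooth-case identity over $X$'' you propose in the first step, in which $\tilde c_p(F^\vee)$ appears as an operator on $\CK_*(X)$, does not exist. This is not a cosmetic issue: in the applications in Sections~\ref{section:Pf non max} and~\ref{sec:typeB} the bundle $F$ is a subquotient of the tautological flag over $Z_{j-1}$ (e.g.\ $D_{\gamma_j}^{\perp}/F^{\chi_j}$) which has no canonical extension to the smooth ambient projective bundle, and the whole point of placing $F$ on $Z$ in the hypotheses is to accommodate exactly this situation.

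The paper's proof instead performs the entire computation in $\CK_*(Z)$ from the start: Lemma~\ref{lem2} is invoked on $\PP^*(E|_Z)$ (it is stated for arbitrary schemes, so singularity of $Z$ is irrelevant), the operators $\tilde c_p(F^\vee)$ are commuted past $\pi_{Z*}$ by the projection formula, and the pushforwards $\pi_{Z*}\circ\tilde c_1(\calQ_Z)^{k}(1_{\PP^*(E|_Z)})$ are identified with Segre classes of $E|_Z$ via~(\ref{eq2}), after which~(\ref{eq1}) assembles the result. The regular embedding $i$ and the restriction from the smooth ambient $X$ are used \emph{only} to derive~(\ref{eq2}), and this is legitimate precisely because $E$, unlike $F$, does live on $X$. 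You correctly single out Lemma~\ref{lem2}, (\ref{eq2}) and~(\ref{eq1}) as the three key inputs; the fix is simply to reorder your argument so that the algebra happens on $Z$ from the beginning and the base change from $X$ is invoked only for the Segre classes of $E|_Z$, never for the full formula involving $F$.
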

%%%%%%%%%%%%%%%%%%%%%%%%%%%%%%%
\begin{proof}
By Lemma \ref{lem2}, (\ref{eq2}) and (\ref{eq1}), we have
\begin{eqnarray*}
&&\pi_{Z*}\circ \tilde{c}_1(\calQ_Z)^s \circ\tilde{c}_f(\calQ|_Z \otimes F^{\vee})(1_{\PP^*(E|_Z)})\\
&=& \sum_{p=0}^{f} \sum_{q=0}^p\binom{p}{q}\beta^q\pi_{Z*}\circ \tilde{c}_p(F^{\vee})\circ \tilde{c}_1(\calQ|_Z)^{s+f-p+q}(1_{\PP^*(E|_Z)})\\
&=& \sum_{p=0}^{f} \sum_{q=0}^p\binom{p}{q}\beta^q\tilde{c}_p(F^{\vee}) \circ \pi_{Z*} \circ \tilde{c}_1(\calQ|_Z)^{s+f-p+q}(1_{\PP^*(E|_Z)})\\
&=& \sum_{p=0}^{f} \sum_{q=0}^p\binom{p}{q}\beta^q\tilde{c}_p(F^{\vee})\circ\tscS_{s+f-p+q-e+1}(E|_Z)(1_Z)\\
&=&\tscS_{s+f-e+1}(E|_Z - F)(1_Z).
\end{eqnarray*}
\end{proof}
%%%%%%%%%%%%%%%%%%%%%%%%%%%%%%%%%%
%%%%%%%%%%%%%%%%%%%%%%%%%%%%%%%%%%
%%%%%%%%%%%%%%%%%%%%%%%%%%%%%%%%%%
\section{Pfaffian formula for symplectic Grassmann bundles}\label{section:Pf non max}
%%%%%%%%%%%%%%%%%%%%%%%%%%%%%%%%%%
%%%%%%%%%%%%%%%%%%%%%%%%%%%%%%%%%%
%%%%%%%%%%%%%%%%%%%%%%%%%%%%%%%%%%
In this section we fix nonnegative integers $n$ and $k$ such that $0 \leq k <n$, unless otherwise stated. The case when $k =0$ is the Lagrangian case. 
%%%%%%%%%%%%%%%%%%%%%%%%%%%%%%%%%%
%%%%%%%%%%%%%%%%%%%%%%%%%%%%%%%%%%
\subsection{Symplectic degeneracy loci}\label{subsection Theorem B}
%%%%%%%%%%%%%%%%%%%%%%%%%%%%%%%%%%
%%%%%%%%%%%%%%%%%%%%%%%%%%%%%%%%%%
Let $E$ be a symplectic vector bundle of rank $2n$ over a smooth variety $X$. Suppose to be given a complete flag  $F^{\bullet}$ of subbundles of $E$
\[
0=F^n\subset F^{n-1} \subset \cdots \subset F^1 \subset F^0 \subset F^{-1} \subset \cdots \subset F^{-n}=E,  
\]
such that  $\rk(F^i)=n-i$ and $(F^i)^{\perp} = F^{-i}$ for all $i$. Let $\SG^k(E) \to X$ be the symplectic Grassmann bundle over $X$ such that the fiber at $x \in X$ is the Grassmannian $\SG^k(E_x)$ of $(n-k)$-dimensional isotropic subspaces of $E_x$. As before, we suppress from the notation the pullback of vector bundles. Let $U$ be the tautological vector bundle over $\SG^k(E)$. 
%%%%%%%%%%%%%%%%%%%%%%%%%%%%%%%%%%
\begin{defn}\label{dfOmegaC}
Let $\lambda\in \SP^k(n)$ of length $r$ and $\chi$ its type C characteristic index. Define the symplectic degeneracy loci $\Omega_{\lambda} \subset \SG^k(E)$ by
\[
\Omega_{\lambda}^C = \{ (x, U_x)\in \SG^{k}(E) \ |\ \dim (U_x \cap F^{\chi_i}_x) \geq i, \ \ \  i=1,\dots, r\}.
\]
In this section, we write $\Omega_{\lambda}^C$ by $\Omega_{\lambda}$.
\end{defn}
%%%%%%%%%%%%%%%%%%%%%%%%%%%%%%%%%%
\subsection{Resolution of singularities}
%%%%%%%%%%%%%%%%%%%%%%%%%%%%%%%%%%
%%%%%%%%%%%%%%%%%%%%%%%%%%%%%%%%%%
Let $\lambda\in \SP^k(n)$ with characteristic index $\chi$ and length $r$. Consider the $r$-step flag bundle $\Fl_r(U)$ over $\SG^k(E)$ whose fiber at $(x,U_x)$ consists of the flag $(D_{\bullet})_x=\{(D_1)_x \subset \cdots (D_r)_x\}$ of subspaces of $U$ with $\dim (D_i)_x = i$. The flag of tautological bundles of $\Fl_r(U)$ is denoted by $D_1 \subset \cdots \subset D_r$. The bundle $\Fl_r(U)$ can be constructed as the following tower of projective bundles
%%%%%%%%%%%%%%%%%%%%%%%%%%%%%%%%%%
\begin{equation}\label{C P tower}
\pi: \Fl_r(U)=\PP(U/D_{r-1}) \stackrel{\pi_r}{\longrightarrow} \PP(U/D_{r-2}) \stackrel{\pi_{r-1}}{\longrightarrow} \cdots \stackrel{\pi_3}{\longrightarrow} \PP(U/D_1) \stackrel{\pi_2}{\longrightarrow} \PP(U)  \stackrel{\pi_1}{\longrightarrow} \SG^k(E).
\end{equation}
%%%%%%%%%%%%%%%%%%%%%%%%%%%%%%%%%%
We regard $D_j/D_{j-1}$ as the tautological line bundle of $\PP(U/D_{j-1})$ where we let $D_0=0$. For each $j=1,\dots,r$, let $\tilde{\tau}_j:=\tilde{c}_1((D_j/D_{j-1})^{\vee})$ be the Chern class operator of $(D_j/D_{j-1})^{\vee}$ on $\CK_*(\PP(U/D_{j-1}))$.
%%%%%%%%%%%%%%%%%%%%%%%%%%%%%%%%%%%%%%
\begin{defn}\label{def:Z_j}
For each $j=1,\dots,r$, we define a subvariety $Z_j$ of $\PP(U/D_{j-1})$ by 
\[
Z_j := \{ (x,U_x, (D_1)_x, \dots, (D_j)_x) \in \PP(U/D_{j-1}) \ |\ (D_i)_x \subset F^{\chi_i}_x, \ i=1,\dots,{j}\}.
\]
We set $Z_0:=\SG^k(E)$ and $Z_{\lambda}:=Z_r$. 

Let $...$ and consider the projection $...$ together with the obvious inclusion $...$.
Let $P_{j-1}:=\pi_j^{-1}(Z_{j-1})$ and consider the projection $\pi_j': P_{j-1} \to Z_{j-1}$ with the obvious inclusion $\iota_j: Z_j \to P_{j-1}$. Set $\varpi_j:=\pi_j'\circ\iota_j$. We have the following commutative diagram
\[
\xymatrix{
\PP(U/D_{j-1}) \ar[r]_{\pi_j} & \PP(U/D_{j-2})\\
P_{j-1} \ar[r]_{\pi_j'}\ar[u] & Z_{j-1}\ar[u]\\
Z_j\ar[u]_{\iota_j}\ar[ru]_{\varpi_j} &
}.
\]
Finally, we consider the composition $\varpi:=\varpi_1\circ\cdots\circ\varpi_r: Z_{\lambda} \to \SG^k(E)$.
\end{defn}
%%%%%%%%%%%%%%%%%%%%%%%%%%%%%%%%%%%%%%
\begin{rem}
The construction given in Definition \ref{def:Z_j} is similar to the one 
used by Kazarian in \cite{Kazarian} for the Lagrangian case. However, in that setting all the $Z_j$'s happen to be smooth, while in our more general setup it is not necessarily the case.
\end{rem}
%%%%%%%%%%%%%%%%%%%%%%%%%%%%%%%%%%%%%%
\begin{lem}\label{Ylem1C}
The variety $Z_{\lambda}$ is irreducible and has at worst rational singularity. Furthermore, $Z_{\lambda}$ is birational to $\Omega_{\lambda}$ through $\varpi$.
\end{lem}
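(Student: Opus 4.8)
The plan is to reduce the statement to the case of a point and then invoke standard facts about Schubert varieties in $\Sp_{2n}/Q$. First I would note that the data $(E,\omega,F^{\bullet})$ — the symplectic bundle, its form, and the reference flag — can be trivialized simultaneously over a Zariski neighbourhood $V$ of any point of $X$ (the structure groups involved, $\Sp_{2n}$ and its Borel, being special), so that over $V$ the bundles $\SG^k(E)$ and $\Fl_r(U)$ become products $V\times(-)$, the closed subscheme $Z_{\lambda}$ becomes $V\times Z_{\lambda}^{0}$ with $Z_{\lambda}^{0}$ the analogous locus over $\Spec\bbF$, and $\varpi$ becomes $\id_V\times\varpi^{0}$. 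As $X$ is smooth and irreducible, irreducibility of $Z_{\lambda}$ is equivalent to that of $Z_{\lambda}^{0}$; since having rational singularities is \'etale-local on the target and stable under smooth base change (and a product with a smooth variety has rational singularities exactly when the other factor does), $Z_{\lambda}$ has rational singularities if and only if $Z_{\lambda}^{0}$ does; and birationality of $\varpi$ may be checked over a point. Hence I would take $X=\Spec\bbF$ for the rest of the argument and drop the superscript.

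Over a point, $\Fl_r(U)$ is the partial isotropic flag variety $\Sp_{2n}/Q$ parametrizing chains $D_1\subset\cdots\subset D_r\subset U$ with $\dim D_i=i$ and $U$ isotropic of dimension $n-k$, and $\varpi$ is the canonical projection to $\SG^k(\bbF^{2n})=\Sp_{2n}/P$ (so $Q\subset P$). Taking the Borel $B\subset\Sp_{2n}$ to be the stabilizer of the reference isotropic flag $F^{\bullet}$, each inclusion condition ``$D_i\subset F^{\chi_i}$'' is $B$-stable, so $Z_{\lambda}$ is a $B$-stable closed subvariety of $\Sp_{2n}/Q$, hence a union of Schubert varieties; the key point I would then establish is that it is a \emph{single} one. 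This is precisely what the characteristic index of Section \ref{seccombBC} is engineered for: the one-sided rank conditions cut out the Schubert variety $\overline{BwQ/Q}$, where $w\in W_n^{(k)}$ is the $k$-Grassmannian element attached to $\lambda$, with $BwQ/Q$ its dense $B$-orbit (cf.\ \cite{BuchKreschTamvakis1}, \cite{IkedaMatsumura}). Granting this, $Z_{\lambda}$ is irreducible, and being a Schubert variety in a homogeneous space it is normal with at worst rational singularities (see, e.g., \cite{BrionLecturenotes}). This proves the first assertion.

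For the second assertion I would first identify the image of $\varpi\colon Z_{\lambda}\to\SG^k(E)$ with $\Omega_{\lambda}$: a point $(x,U_x)$ lies in the image if and only if $U_x$ admits a flag $D_1\subset\cdots\subset D_r$ with $D_i\subset U_x\cap F^{\chi_i}_x$; since $\chi_1>\cdots>\chi_r$ forces $F^{\chi_1}_x\subset\cdots\subset F^{\chi_r}_x$ — so that $D_{i-1}\subset F^{\chi_{i-1}}_x\subset F^{\chi_i}_x$ automatically holds — constructing the $D_i$ one step at a time shows such a flag exists if and only if $\dim(U_x\cap F^{\chi_i}_x)\geq i$ for all $i$, i.e.\ $(x,U_x)\in\Omega_{\lambda}$. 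Then I would note that over the dense Schubert cell $\Omega_{\lambda}^{\circ}\subset\Omega_{\lambda}$ one has $\dim(U_x\cap F^{\chi_i}_x)=i$ for every $i$ — again a feature built into the characteristic index (cf.\ \cite{IkedaMatsumura}) — so there the flag is forced to be $D_i=U_x\cap F^{\chi_i}_x$ and $\varpi$ restricts to an isomorphism over $\Omega_{\lambda}^{\circ}$. Therefore $\varpi$ is birational onto $\Omega_{\lambda}$.

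The step I expect to be the real obstacle is the identification of the scheme-theoretic locus $Z_{\lambda}$ with a \emph{single} irreducible (and normal) Schubert variety — equivalently, the statement that the one-sided rank conditions possess a unique generic point. This is where the $k$-strict/characteristic-index combinatorics does genuine work, and it is also what dictates, in the construction of Definition \ref{def:Z_j}, which reference subbundle $F^{\chi_i}$ is attached to the $i$-th step of the tower. Once this is granted, the reduction to a point, the rational-singularities input for Schubert varieties, and the generic-fibre computation are respectively formal, classical, and elementary.
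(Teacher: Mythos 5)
Your overall strategy---reduce to Schubert varieties in a homogeneous space, invoke normality and rational singularities of Schubert varieties, and then get birationality from an open-cell computation---is the one the paper follows, and several of your steps are fine as stated: the reduction to $X=\Spec\bbF$ (using that $\Sp_{2n}$ and its Borel are special, so the data Zariski-locally trivializes), the stability of rational singularities under smooth base change and products, the identification of the image of $\varpi$ with $\Omega_\lambda$, and the generic-fibre argument. However, there is a genuine gap exactly at the step you yourself flag as the ``real obstacle'': the assertion that $Z_\lambda$ is a \emph{single} Schubert variety in $\Sp_{2n}/Q$. Saying it is $B$-stable and closed only makes it a union of Schubert varieties; the statement that it is a single one \emph{is} the irreducibility you are trying to prove, and the references you gesture at (\cite{BuchKreschTamvakis1}, \cite{IkedaMatsumura}) establish the dictionary between $k$-strict partitions, characteristic indices, and the Schubert varieties $\Omega_\lambda$ in the \emph{Grassmannian} $\SG^k(E)$, not the lifted loci $Z_\lambda$ in the flag bundle $\Fl_r(U)$. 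So as written the key claim is asserted, not proved.

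The paper supplies precisely this missing geometric input, and it is the one idea you would need to insert. It introduces the isotropic partial flag bundle $\Fl^{isot}_r(E)\to X$, the smooth $\SG^k(\bbF^{2(n-r)})$-bundle $Z\to\Fl^{isot}_r(E)$ whose fibre over a flag $C_\bullet$ is $\SG^k(C_r^\perp/C_r)$, and inside $\Fl^{isot}_r(E)$ the locus $W_\lambda=\{\,C_\bullet : C_i\subset F^{\chi_i},\ i=1,\dots,r\,\}$. Sending $V_x\subset(C_r^\perp/C_r)_x$ to its preimage $\widetilde{V_x}\subset E_x$ gives an explicit isomorphism $Z|_{W_\lambda}\cong Z_\lambda$, so $Z_\lambda$ is a smooth bundle with irreducible fibres over $W_\lambda$. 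Now $W_\lambda$ is cut out by one-sided inclusion conditions on the flag $C_\bullet$ \emph{alone} (with $\chi_1>\cdots>\chi_r$ strictly decreasing), which makes it transparently a single Schubert variety in the isotropic partial flag variety, so \cite{Kumar} applies directly; irreducibility and rational singularities then transfer to $Z_\lambda$ along the smooth bundle. In effect, the paper proves the very orbit-closure fact you are postulating by factoring $Z_\lambda$ through a flag variety where the Schubert description is manifest. Once this is in place, your dense-cell argument for birationality matches the paper's. One small further remark: the paper does not literally reduce to a point; it runs the bundle construction over $X$ and invokes the Schubert-variety facts there, which achieves the same effect as your explicit trivialization step.
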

%%%%%%%%%%%%%%%%%%%%%%%%%%%%%%%%%%%%%%
\begin{proof}
Consider the following $r$-step isotropic partial flag bundle $\Fl^{isot}_r(E)$ over $X$: the fiber at $x\in X$ consists of flags $(C_{\bullet})_x$ of isotropic subspaces $(C_1)_x\subset \cdots \subset (C_r)_x \subset E_x$ such that $\dim (C_{j})_x = {j}$. Let $Z$ be the following $\SG^k(\bbF^{2(n-r)})$-bundle over $\Fl^{isot}_r(E)$ 
\[
Z= \{ (x,(C_{\bullet})_x, V_x) \ |\ V_x \in \SG^k((C_r^{\perp}/C_r)_x)\}.
\]
Let $W_{\lambda}$ be the degeneracy locus in $\Fl^{isot}_r(E)$ defined by
\[
W_{\lambda} :=\{ (x, (C_{\bullet})_x) \in \Fl^{isot}_{r}(E) \ |\  \dim ( F^{\chi_i}_x \cap(C_i)_x) \geq i, \ \ \ i=1,\dots,r\}.
\]
Consider the total space $Z|_{W_{\lambda}}$ of the restriction of the bundle $Z$ to $W_{\lambda}$:
\[
Z|_{W_{\lambda}} = \{(x,(C_{\bullet})_x, V_x) \in Z \ |\ \dim ( F^{\chi_i}_x \cap(C_i)_x) \geq i, \ \ \ i=1,\dots,r \}.
\]
Note that the conditions imply $(C_i)_x \subset F^{\chi_i}_x$ for each $i$. We can show that the variety $Z|_{W_{\lambda}}$ is isomorphic to $Z_{\lambda}$. Indeed, recall that 
\[
Z_{\lambda}=\{(x,U_x, (D_{\bullet})_x) \in \Fl_r(U)\ |\ (D_i)_x \subset F^{\chi_i}_x, i=1,\dots, r\}.
\]
The isomorphism $Z|_{W_{\lambda}} \to Z_{\lambda}$ is given by
\[
(x, V_x, (C_{\bullet})_x) \mapsto (x, \widetilde{V_x}, (C_{\bullet})_x),
\]
where the $(n-k)$-dimensional isotropic subspace $\widetilde{V_x} \subset E_x$ is defined as the preimage of $V_x$ under the quotient map  $(C_r^{\perp})_x \to (C_r^{\perp}/C_r)_x$. It follows from a well-known fact about Schubert varieties that the variety $W_{\lambda}$ is irreducible and has at worst rational singularities (\textit{cf.} \cite[Section 8.2.2. Theorem (c), p.274]{Kumar}). Therefore $Z_{\lambda}$ is irreducible and has at worst rational singularity as well. 

For the latter claim, first note that $\varpi$ is the restriction of $\pi$ to $Z_{\lambda}$. Let $\Omega_{\lambda}^{\circ}$ be an open set of $\Omega_{\lambda}$, consisting of points $(x, U_x)\in \SG^{k}(E)$ such that $\dim (F^{\chi_{i}}_x \cap U_x) = {i}$ and $\dim (F^{\chi_{i}+1}_x\cap U_x)= {i}-1$ for all ${i}=1,\dots,r$.  Let $Z_{\lambda}^{\circ}$ be the preimage of $\Omega_{\lambda}^{\circ}$ by $\varpi: Z_{\lambda} \to \Omega_{\lambda}$. Then it is easy to see that $\varpi|_{Z_{\lambda}^{\circ}}: Z_{\lambda}^{\circ} \to \Omega_{\lambda}^{\circ}$ is bijective and in the view of the irreducibility of $Z_{\lambda}$ and $\Omega_{\lambda}$, this implies that $\varpi$ is birational. 
\end{proof}
%%%%%%%%%%%%%%%%%%%%%%%%%%%%%%%%%%
\begin{lem}\label{lem1C}
The fundamental class of $\Omega_{\lambda}$ in $\CK_*(\SG^k(E))$ is given by 
\[
[\Omega_{\lambda}] = \varpi_*(1_{Z_{\lambda}}).
\]
\end{lem}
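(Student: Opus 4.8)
The plan is to adapt the argument of Lemma~\ref{prod chern} to the present, possibly singular, situation. Since the variety $Z_{\lambda}$ produced by the construction of Definition~\ref{def:Z_j} need not be smooth, I would work throughout inside the oriented Borel--Moore homology $\CK_*$, in which the pushforward $\varpi_*$ is available for the sole reason that $\varpi$ is proper; and $\varpi$ is indeed projective, being --- as noted in the proof of Lemma~\ref{Ylem1C} --- the restriction to the closed subscheme $Z_{\lambda}$ of the smooth projective morphism $\pi$ of (\ref{C P tower}). Writing $\varpi$ also for the induced proper birational morphism $Z_{\lambda}\to\Omega_{\lambda}$ and invoking the identification $\CK_{\dim X}(X)\cong K_0(X)[\beta,\beta^{-1}]_{\dim X}$ of Section~\ref{sec:BM} --- under which $1_{Z_{\lambda}}$ and $1_{\Omega_{\lambda}}=[\Omega_{\lambda}]$ become $[\mathcal{O}_{Z_{\lambda}}]$ and $[\mathcal{O}_{\Omega_{\lambda}}]$, while $\varpi_*$ becomes the $K$-theoretic (derived) pushforward --- the assertion reduces to the statement $\mathbf{R}\varpi_*\mathcal{O}_{Z_{\lambda}}=\mathcal{O}_{\Omega_{\lambda}}$.

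To prove this I would proceed in three steps. First, $\Omega_{\lambda}$, being a Schubert-type degeneracy locus, is normal with at worst rational singularities, by the standard argument that reduces to the case in which $X$ is a point (as in the proof of Lemma~\ref{prod chern}; \emph{cf.}\ \cite[Section~8.2.2]{Kumar}). Second, by Hironaka's theorem choose a resolution of singularities $\rho\colon\widetilde{Z}\to Z_{\lambda}$; since $Z_{\lambda}$ has at worst rational singularities by Lemma~\ref{Ylem1C}, one has $\mathbf{R}\rho_*\mathcal{O}_{\widetilde{Z}}=\mathcal{O}_{Z_{\lambda}}$, whence $\rho_*(1_{\widetilde{Z}})=1_{Z_{\lambda}}$ in $\CK_*(Z_{\lambda})$. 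Third, the composite $\varpi\circ\rho\colon\widetilde{Z}\to\Omega_{\lambda}$ is a proper birational morphism from a smooth variety onto $\Omega_{\lambda}$, i.e.\ a rational resolution, so $\mathbf{R}(\varpi\rho)_*\mathcal{O}_{\widetilde{Z}}=\mathcal{O}_{\Omega_{\lambda}}$ and, exactly as in Lemma~\ref{prod chern} (see \cite{DaiLevine} or \cite[Lemma~2.2]{Hudson}), $(\varpi\rho)_*(1_{\widetilde{Z}})=1_{\Omega_{\lambda}}=[\Omega_{\lambda}]$. Combining these by functoriality of proper pushforward,
\[
\varpi_*(1_{Z_{\lambda}})=\varpi_*\rho_*(1_{\widetilde{Z}})=(\varpi\rho)_*(1_{\widetilde{Z}})=[\Omega_{\lambda}].
\]
(One can instead avoid $\rho$ in the last step: $\varpi_*\mathcal{O}_{Z_{\lambda}}=\mathcal{O}_{\Omega_{\lambda}}$ by normality of $\Omega_{\lambda}$ and Zariski's main theorem, while the Grothendieck spectral sequence for $\varpi\circ\rho$ degenerates, since $R^q\rho_*\mathcal{O}_{\widetilde{Z}}=0$ for $q>0$, and identifies $R^p\varpi_*\mathcal{O}_{Z_{\lambda}}$ with $R^p(\varpi\rho)_*\mathcal{O}_{\widetilde{Z}}$, which vanishes for $p>0$ by rationality of the singularities of $\Omega_{\lambda}$.)

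The only feature that genuinely goes beyond the type~A and Lagrangian cases is the non-smoothness of $Z_{\lambda}$: there the analogous variety is already smooth, so one applies the rational-resolution principle directly, whereas here one has to route it through the auxiliary resolution $\rho$ --- which, as just explained, costs nothing precisely because $Z_{\lambda}$ itself has rational singularities (Lemma~\ref{Ylem1C}). I therefore do not anticipate a serious obstacle; the points requiring care are the bookkeeping of the degree shifts in the $\CK_*$-versus-$K_0$ dictionary of Section~\ref{sec:BM} and the input, invoked above as ``standard'', that the relative degeneracy locus $\Omega_{\lambda}$ has rational singularities, not merely over a point.
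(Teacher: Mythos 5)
Your proposal is correct and follows essentially the same route as the paper: resolve $Z_{\lambda}$ by Hironaka, use Lemma~\ref{Ylem1C} (rational singularities of $Z_{\lambda}$) to conclude $\rho_*(1_{\widetilde{Z}})=1_{Z_{\lambda}}$, observe that $\varpi\circ\rho$ is a smooth rational resolution of $\Omega_{\lambda}$ (itself having rational singularities by Kumar) so that $(\varpi\rho)_*(1_{\widetilde{Z}})=[\Omega_{\lambda}]$, and then apply functoriality of proper pushforward. The parenthetical alternative via Zariski's main theorem and the Grothendieck spectral sequence is a minor variant the paper does not spell out, but the core argument is identical.
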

%%%%%%%%%%%%%%%%%%%%%%%%%%%%%%%%%%
\begin{proof}
Hironaka's theorem (see \cite[Appendix]{LevineMorel}) ensures that there is a projective birational map $\varpi': \tilde{Z} \to Z_{\lambda}$ such that $\tilde{Z}$ is smooth. Since $Z_{\lambda}$ has at worst rational singularities by Lemma \ref{Ylem1C}, we have $\varpi'_*(1_{\tilde{Z}})=1_{Z_{\lambda}}$. On the other hand we know that $\Omega_{\lambda}$ has at worst rational singularities (\cite[Section 8.2.2. Theorem (c), p.274]{Kumar}), and therefore $[\Omega_{\lambda}]=\varpi_*\circ\varpi'_*(1_{\tilde{Z}})$. Thus we can conclude that $\varpi_*(1_{Z_{\lambda}})=[\Omega_{\lambda}]$ ({\it cf.} \cite[Lemma 2.2]{Hudson}).
\end{proof}
%%%%%%%%%%%%%%%%%%%%%%%%%%%%%%%%%%
%%%%%%%%%%%%%%%%%%%%%%%%%%%%%%%%%%%%%%
\begin{lem}\label{Ylem2C}
For  each $j=1,\dots, r$, we have the following.
\begin{itemize}
\item[(1)] Both of the inclusions $Z_j\inc P_{j-1}$ and $P_{j-1}\inc \PP(U/D_{j-1})$ are regular embeddings.
\item[(2)] In $\CK_*(P_{j-1})$, we have
\begin{equation}\label{eq:iota1}
\iota_{j*}(1_{Z_j})=\tilde{c}_{\lambda_j + n - k - j}\big((D_j/D_{j-1})^{\vee}\otimes D_{\gamma_j}^{\perp}/F^{\chi_j}\big)(1_{P_{j-1}}).
\end{equation}
\end{itemize}
\end{lem}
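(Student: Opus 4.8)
The plan is to analyze the two inclusions separately and then to identify $Z_j$ inside $P_{j-1}$ as the zero scheme of a section of a suitable vector bundle. For part (1), recall that $Z_{j-1}$ is irreducible with at worst rational singularities (Lemma \ref{Ylem1C}) and that $P_{j-1}=\pi_j^{-1}(Z_{j-1})$; since $\pi_j$ is a $\PP^{\,\bullet}$-bundle, $P_{j-1}\inc \PP(U/D_{j-1})$ is the pullback of the regular embedding $Z_{j-1}\inc \PP(U/D_{j-2})$ under a smooth morphism, hence itself a regular embedding. For $Z_j\inc P_{j-1}$, I would produce $Z_j$ as a locus cut out by a bundle map: over $P_{j-1}$ the flag $(D_1)_x\subset\cdots\subset (D_{j-1})_x$ already satisfies $(D_i)_x\subset F^{\chi_i}_x$ for $i<j$, so the remaining condition $(D_j)_x\subset F^{\chi_j}_x$ is equivalent to the vanishing of the composite map of bundles
\[
D_j/D_{j-1}\longrightarrow U/D_{j-1}\longrightarrow \big(\text{something}\big)/F^{\chi_j},
\]
and one checks that the target has the expected rank $\lambda_j+n-k-j$, so the codimension count forces the zero scheme to be regularly embedded. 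The role of $\gamma_j$ here is exactly the bookkeeping from Section \ref{sec:chi}: among the earlier steps, $\gamma_j$ of them impose no new constraint because $\chi_i+\chi_j\geq 0$, which is why the ambient bundle in which one measures the drop is $D_{\gamma_j}^{\perp}/F^{\chi_j}$ rather than $U/F^{\chi_j}$. Here I would use Remark \ref{chilambda} to see that $\rk(D_{\gamma_j}^\perp/F^{\chi_j}) = (2n-\gamma_j) - (n-\chi_j) = n+\chi_j-\gamma_j = \lambda_j - k - j + 2\gamma_j$... actually the cleaner route is to observe directly from the fiberwise description that the symplectic form identifies the relevant quotient, so that the bundle map $D_j/D_{j-1}\to D_{\gamma_j}^\perp/F^{\chi_j}$ has target of rank $\lambda_j+n-k-j$, matching $\codim(Z_j, P_{j-1})$.

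For part (2), once $Z_j$ is exhibited as the zero scheme of a section $s$ of the rank-$(\lambda_j+n-k-j)$ bundle $(D_j/D_{j-1})^{\vee}\otimes D_{\gamma_j}^{\perp}/F^{\chi_j}$ over $P_{j-1}$, with the zero scheme of the expected codimension, the formula \eqref{eq:iota1} is immediate from Lemma \ref{lemCKGB}: since $P_{j-1}$ is an l.c.i.\ scheme (it is a projective bundle over $Z_{j-1}$, which is l.c.i.\ because it has rational singularities and is cut out regularly — one should check l.c.i.-ness is preserved, or rather invoke that $Z_{j-1}$ sits in a tower of regular embeddings over $\SG^k(E)$), Lemma \ref{lemCKGB} gives $\iota_{j*}(1_{Z_j}) = \tilde c_e(E)(1_{P_{j-1}})$ with $E$ the bundle above and $e=\lambda_j+n-k-j$. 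Note the Cohen--Macaulayness needed to apply the regularity criterion (Lemma \ref{lemG-B}(a), or its BM-homology analogue) follows because $P_{j-1}$ is Cohen--Macaulay (rational singularities imply CM) and the codimension is exact.

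The main obstacle I expect is \emph{not} the Chern-class computation but the geometric verification underlying part (1): namely, that the natural set-theoretic description of $Z_j$ inside $P_{j-1}$ really is a zero scheme of the claimed bundle map of the claimed rank, and that this zero scheme has the expected codimension (so that the section is regular). This requires carefully unwinding, in the fiber over a point, how $D_{\gamma_j}^{\perp}$ enters — essentially one must show that for a flag satisfying the conditions up to step $j-1$, the condition $(D_j)_x\subset F^{\chi_j}_x$ is governed by a map into $D_{\gamma_j}^\perp/F^{\chi_j}$ and not into a larger quotient, using the isotropy of $D_{\gamma_j}$ and the relation $(F^i)^\perp=F^{-i}$. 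The count of the rank (equivalently the codimension) is where the characteristic index combinatorics of Section \ref{sec:chi}, and in particular the identity $\chi_j=\lambda_j-j+\gamma_j-k$ from Remark \ref{chilambda}, get used in an essential way; once that is in place, regularity follows from the codimension being exact together with Cohen--Macaulayness of $P_{j-1}$, and \eqref{eq:iota1} drops out of Lemma \ref{lemCKGB}.
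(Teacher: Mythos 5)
Your overall strategy matches the paper's: exhibit $Z_j$ as the zero scheme of a section of $(D_j/D_{j-1})^{\vee}\otimes D_{\gamma_j}^{\perp}/F^{\chi_j}$ over $P_{j-1}$, check that the rank of that bundle equals $\codim(Z_j,P_{j-1})$, invoke Lemma~\ref{lemG-B}(a) for Cohen--Macaulayness and regularity, and then get (2) from Lemma~\ref{lemCKGB}. You also correctly identify why the target of the bundle map is $D_{\gamma_j}^{\perp}/F^{\chi_j}$ (isotropy and the relation $(F^{i})^{\perp}=F^{-i}$ force $F^{\chi_j}\subset D_i^{\perp}$ exactly for $i\le\gamma_j$).

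However there is a genuine gap, which you flag but do not close: you never establish that $\codim(Z_j\subset P_{j-1})$ actually \emph{is} $\lambda_j+n-k-j$. The rank of the bundle gives only an upper bound on the codimension of its zero scheme; the logic ``the rank equals $\lambda_j+n-k-j$, so the codimension count forces regularity'' is circular unless you supply an independent lower bound. The paper does this by invoking Lemma~\ref{Ylem1C} for the truncated $k$-strict partition $(\lambda_1,\dots,\lambda_j)$: $Z_j$ is birational to $\Omega_{(\lambda_1,\dots,\lambda_j)}\subset\SG^k(E)$, whose codimension is $\sum_{i\le j}\lambda_i$, and adding the fiber dimension of $\pi_1\circ\cdots\circ\pi_j$ gives $\codim(Z_j\subset\PP(U/D_{j-1}))=\sum_{i\le j}(\lambda_i+n-k-i)$, from which $\codim(Z_j\subset P_{j-1})=\lambda_j+n-k-j$ follows by subtraction. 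Without some such dimension argument, (1) does not follow. Minor issues: your displayed rank computation contains an arithmetic slip --- with $\chi_j=\lambda_j-j+\gamma_j-k$ one gets $n+\chi_j-\gamma_j=\lambda_j+n-k-j$, not $\lambda_j-k-j+2\gamma_j$; and the paper deduces that $P_{j-1}$ is Cohen--Macaulay simply from its being regularly embedded in the smooth (hence CM) scheme $\PP(U/D_{j-1})$, which is more elementary than appealing to rational singularities.
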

%%%%%%%%%%%%%%%%%%%%%%%%%%%%%%%%%%%%%%
\begin{proof} (1) 
By Lemma \ref{lemG-B} (a) and induction on $j$, it suffices to prove the assertions that $Z_j$ is the zero-scheme defined by a section of a bundle over $P_{j-1}$ and that its codimension agrees with the rank of the bundle. 
In fact, if $j=1$, the regularity of $Z_1\inc P_0$ follows from the claims since $P_0=\PP(U)$. If $j\geq 2$, we can assume that $Z_{j-1}\inc P_{j-2}$ and  $P_{j-2}\inc \PP(U/D_{j-2})$ are regular embeddings so that their composition $Z_{j-1}\inc \PP(U/D_{j-2})$ is also a regular embedding. It follows that its pullback $P_{j-1} \inc \PP(U/D_{j-1})$ along $\pi_j$ is a regular embedding. This implies that $P_{j-1}$ is Cohen--Macaulay by Lemma \ref{lemG-B} (a) since $\PP(U/D_{j-1})$ is smooth and hence Cohen--Macaulay. Thus the above assertions imply that $Z_j\inc P_{j-1} $ is a regular embedding again by Lemma \ref{lemG-B} (a).
 
In order to prove the assertions, we first compute the codimension. For $1\leq j \leq r$, Lemma \ref{Ylem1C} applied to the $k$-strict partition $(\lambda_1,\dots, \lambda_j)$ implies that $Z_j$ is birational to the degeneracy loci $\Omega_{(\lambda_1,\dots,\lambda_j)}$ in $\SG^k(E)$. Since the codimension of $\Omega_{(\lambda_1,\dots,\lambda_j)}$ in $\SG^k(E)$ is $\sum_{i=1}^j \lambda_i$ and the dimension of the fiber of the projection $\pi_j \circ \cdots \circ \pi_1$ is $\sum_{i=1}^j (n-k-j)$, the codimension of $Z_j$ in $\PP(U/D_{j-1})$ is $\sum_{i=1}^{j} (\lambda_i  + n-k-i)$. This also implies that $P_{j-1}$ has codimension $\sum_{i=1}^{j-1} (\lambda_i  + n-k-i)$ in $\PP(U/D_{j-1})$ since $P_{j-1}$ is the preimage of $Z_j$ along $\pi_j$. Thus the codimension of $Z_j$ in $P_{j-1}$ is $\lambda_j + n - k - j$. 

Next we describe $Z_j$ as a zero scheme of a bundle. Over $P_{j-1}$, we have $D_i \subset F^{\chi_i} \subset F^{\chi_j}$ for all $i<j$. Furthermore, if $\chi_i + \chi_j \geq 0$ with $i<j$, then $F^{\chi_j} \subset (F^{\chi_i})^{\perp} \subset D_i^{\perp}$. Thus we have the bundle map $D_j/D_{j-1} \to D_{\gamma_j}^{\perp}/D_{j-1} \to D_{\gamma_j}^{\perp}/F^{\chi_j}$ over $P_{j-1}$, where we recall that $\gamma_j=\sharp\{ i \ |\ 1\leq i<j, \chi_i + \chi_j \geq 0\}$. Now, observe that $Z_j$ is the locus where this bundle map has rank $0$, \textit{i.e.} $Z_j$ is the zero scheme of the corresponding section of $(D_j/D_{j-1})^{\vee}\otimes D_{\gamma_j}^{\perp}/F^{\chi_j}$. 

Finally by the relation between $\chi$ and $\lambda$ mentioned in Remark \ref{chilambda}, we find that
\begin{equation}
\rk(D_{\gamma_j}^{\perp}/F^{\chi_j}) =2n - \gamma_j - (n - \chi_j) = \lambda_j + n - k - j = \codim(Z_j \subset P_{j-1}).\label{eq:rk=codim}
\end{equation}
(2) From (1), we know that $P_{j-1}$ is Cohen--Macaulay. Since, as we saw in the proof of (1), $Z_j$ is the zero scheme of a section of the bundle $(D_j/D_{j-1})^{\vee}\otimes D_{\gamma_j}^{\perp}/F^{\chi_j}$ over $P_{j-1}$, Lemma \ref{lemCKGB} together with (\ref{eq:rk=codim}) implies (\ref{eq:iota1}).
\end{proof}
%%%%%%%%%%%%%%%%%%%%%%%%%%%%%%%%%%
\subsection{Pushforward formula and umbral calculus}
%%%%%%%%%%%%%%%%%%%%%%%%%%%%%%%%%%
%%%%%%%%%%%%%%%%%%%%%%%%%%%%%%%%%%
\begin{defn}\label{dfCclass}
For each $m\in \ZZ$ and $-n\leq \ell \leq n$, define 
\[
\tscC_m^{(\ell)} := \tscS_{m}(U^{\vee}-(E/F^{\ell})^{\vee}).
\]
In $\CK^*(\SG^k(E))$, we denote $\scC_m^{(\ell)} := \tscC_m^{(\ell)} (1_{\SG^k(E)})$.
\end{defn}
%%%%%%%%%%%%%%%%%%%%%%%%%%%%%%%%%%
\begin{lem}\label{lempushC}
In $\CK_*(Z_{j-1})$, we have
\[
\varpi_{j*}\circ  \tilde{\tau}_j^s(1_{Z_j}) = \sum_{p= 0}^{\infty} \sum_{q=0}^p\binom{p}{q}\beta^q\tilde{c}_p(D_{j-1} - D_{\gamma_j}^{\vee})\circ\tscC_{\lambda_j+s-p+q}^{(\chi_j)}(1_{Z_{j-1}}),  \ \ \ s\geq 0.
\]
\end{lem}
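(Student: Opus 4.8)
The plan is to reduce the statement to Theorem~\ref{thmlcipush} applied along the single projective-bundle step $\pi_j'\colon P_{j-1}\to Z_{j-1}$, using Lemma~\ref{Ylem2C}(2) to evaluate $\iota_{j*}(1_{Z_j})$ and then a $K_0$-level identity to rewrite the resulting virtual bundle in terms of the classes $\tscC^{(\chi_j)}_{m}$. Throughout, one must stay within oriented Borel--Moore homology because $Z_{j-1}$ is typically singular; this is exactly why Theorem~\ref{thmlcipush} (rather than Proposition~\ref{push of tensor}) is the right tool.

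First I would use $\varpi_{j*}=\pi'_{j*}\circ\iota_{j*}$ together with the fact that $\tilde\tau_j=\tilde c_1((D_j/D_{j-1})^{\vee})$ restricts along $\iota_j$ from $P_{j-1}$ to $Z_j$; the projection formula for Chern class operators then gives
\[
\varpi_{j*}\circ\tilde\tau_j^s(1_{Z_j})=\pi'_{j*}\circ\tilde\tau_j^s\big(\iota_{j*}(1_{Z_j})\big),
\]
and Lemma~\ref{Ylem2C}(2) rewrites $\iota_{j*}(1_{Z_j})=\tilde c_{\lambda_j+n-k-j}\big((D_j/D_{j-1})^{\vee}\otimes D_{\gamma_j}^{\perp}/F^{\chi_j}\big)(1_{P_{j-1}})$. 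Next I would invoke Theorem~\ref{thmlcipush} with ambient smooth scheme $\PP(U/D_{j-2})$, closed subscheme $Z=Z_{j-1}$ (which is regularly embedded by the composition of regular embeddings $Z_{j-1}\inc P_{j-2}\inc \PP(U/D_{j-2})$ from Lemma~\ref{Ylem2C}(1); for $j=1$ one simply has $Z_0=\SG^k(E)=X$), the bundle $(U/D_{j-1})^{\vee}$ over $\PP(U/D_{j-2})$ playing the role of $E$, of rank $e=n-k-j+1$, and the bundle $(D_{\gamma_j}^{\perp}/F^{\chi_j})^{\vee}$ over $Z_{j-1}$ playing the role of $F$, of rank $f=\lambda_j+n-k-j$ by (\ref{eq:rk=codim}). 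Here $\PP^*(E|_Z)=P_{j-1}$ with universal quotient line bundle $\calQ_Z=(D_j/D_{j-1})^{\vee}$, $\pi_Z=\pi'_j$, and $\calQ_Z\otimes F^{\vee}=(D_j/D_{j-1})^{\vee}\otimes D_{\gamma_j}^{\perp}/F^{\chi_j}$, which matches the Chern class produced above. Since $s+f-e+1=s+\lambda_j$, Theorem~\ref{thmlcipush} yields
\[
\varpi_{j*}\circ\tilde\tau_j^s(1_{Z_j})=\tscS_{\lambda_j+s}\big((U/D_{j-1})^{\vee}-(D_{\gamma_j}^{\perp}/F^{\chi_j})^{\vee}\big)(1_{Z_{j-1}}).
\]

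It then remains to simplify the virtual bundle inside $\tscS_{\lambda_j+s}$. Working in $K_0(Z_{j-1})$ and using the symplectic self-duality $E\cong E^{\vee}$ together with the exact sequences $0\to D_{\gamma_j}^{\perp}\to E\to D_{\gamma_j}^{\vee}\to 0$, $0\to F^{\chi_j}\to E\to E/F^{\chi_j}\to 0$, and $0\to D_{j-1}\to U\to U/D_{j-1}\to 0$, I would verify the identity
\[
(U/D_{j-1})^{\vee}-(D_{\gamma_j}^{\perp}/F^{\chi_j})^{\vee}=\big(U^{\vee}-(E/F^{\chi_j})^{\vee}\big)-\big(D_{j-1}^{\vee}-D_{\gamma_j}\big).
\]
Applying (\ref{eq1}), which is valid for virtual bundles since it is written entirely in Chern classes, with ``$E$'' $=U^{\vee}-(E/F^{\chi_j})^{\vee}$ and ``$F$'' $=D_{j-1}^{\vee}-D_{\gamma_j}$, so that ``$F$''$^{\vee}=D_{j-1}-D_{\gamma_j}^{\vee}$ and $\tscS_m(\text{``}E\text{''})=\tscC_m^{(\chi_j)}$, then gives
\[
\tscS_{\lambda_j+s}\big((U/D_{j-1})^{\vee}-(D_{\gamma_j}^{\perp}/F^{\chi_j})^{\vee}\big)=\sum_{p=0}^{\infty}\sum_{q=0}^{p}\binom{p}{q}\beta^q\,\tilde c_p\big(D_{j-1}-D_{\gamma_j}^{\vee}\big)\circ\tscC_{\lambda_j+s-p+q}^{(\chi_j)},
\]
and evaluating at $1_{Z_{j-1}}$ finishes the proof. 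The genuinely delicate points are (i) keeping straight which bundle lives over the ambient $\PP(U/D_{j-2})$ versus over the singular $Z_{j-1}$ when invoking Theorem~\ref{thmlcipush}, so that the index shift comes out to be precisely $\lambda_j$, and (ii) the $K_0$-simplification above, where one must correctly use the symplectic identification $E\cong E^{\vee}$ and the perp-duality sequence for $D_{\gamma_j}$ without dropping a dual or a sign; everything else is a routine application of the projection formula and the axioms of oriented Borel--Moore homology.
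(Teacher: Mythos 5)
Your proof is correct and follows essentially the same route as the paper's: apply the projection formula to pull $\tilde\tau_j^s$ past $\iota_{j*}$, substitute Lemma~\ref{Ylem2C}(2), invoke Theorem~\ref{thmlcipush} along $\pi'_j\colon P_{j-1}\to Z_{j-1}$ to produce $\tscS_{\lambda_j+s}\big((U/D_{j-1})^{\vee}-(D_{\gamma_j}^{\perp}/F^{\chi_j})^{\vee}\big)(1_{Z_{j-1}})$, rewrite the virtual bundle using $D_{\gamma_j}^{\perp}=E-D_{\gamma_j}^{\vee}$ in $K_0$, and expand via~(\ref{eq1}). You simply spell out in more detail the identification of $E$, $F$, $\calQ_Z$, $\pi_Z$ in Theorem~\ref{thmlcipush} and the $K_0$-manipulation (including why the index shift is exactly $\lambda_j$), which the paper leaves terser, but no new idea or different decomposition is involved.
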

%%%%%%%%%%%%%%%%%%%%%%%%%%%%%%%
\begin{proof}
By Lemma \ref{Ylem2C}, we have
\begin{eqnarray*}
\varpi_{j*}\circ  \tilde{\tau}_j^s(1_{Z_j}) 
=\pi'_{j*}\circ\iota_{j*}\circ  \tilde{\tau}_j^s(1_{Z_j}) 
=\pi'_{j*}\circ  \tilde{\tau}_j^s\circ\iota_{j*}(1_{Z_j}) 
=\pi'_{j*}\circ  \tilde{\tau}_j^s\circ\tilde{\alpha}_j(1_{P_{j-1}}),
\end{eqnarray*}
where $\tilde{\alpha}_j:=\tilde{c}_{\lambda_j + n - k - j}((D_j/D_{j-1})^{\vee}\otimes D_{\gamma_j}^{\perp}/F^{\chi_j})$.  On the other hand, by Theorem \ref{thmlcipush} we have
\begin{eqnarray*}
\pi_{j*}'\circ \tilde{\tau}_j^s \circ \tilde{\alpha}_j(1_{P_{j-1}}) 
&=& \tscS_{s+\lambda_j}\big((U/D_{j-1})^{\vee} - (D_{\gamma_j}^{\perp}/F^{\chi_j})^{\vee}\big)(1_{Z_{j-1}})\\
&=& \tscS_{s+\lambda_j}\big(U^{\vee}- (E/F^{\chi_j})^{\vee} - (D_{j-1}- D_{\gamma_j}^{\vee})^{\vee}\big)(1_{Z_{j-1}}),
\end{eqnarray*}
where we have used $D_{\gamma_j}^{\perp} = E - D_{\gamma_j}^{\vee}$. Now the claim follows from (\ref{eq1}).
\end{proof}
%%%%%%%%%%%%%%%%%%%%%%%%%%%%%%%
%%%%%%%%%%%%%%%%%%%%%%%%%%%%%%%
Set $R:=\CK^*(\SG^k(E))$ and let $\calL^R$ be the ring of formal Laurent series with indeterminates $t_1,\dots, t_{r}$ defined in Definition \ref{def fls}.
%%%%%%%%%%%%%%%%%%%%%%%%%%%%%%%
\begin{defn}
Define a graded $R$-module homomorphism $\phi_1: \calL^{R} \to \CK_*(\SG^k(E))$ by 
\[
\phi_1( t_1^{s_1}\cdots  t_r^{s_r})= \tscC_{s_1}^{(\chi_1)} \circ\cdots \circ\tscC_{s_r}^{(\chi_r)}(1_{\SG^k(E)}).
\]  
Similarly, for $j \geq 2$, define a graded $R$-module homomorphism $\phi_{j}: \calL^{R,j} \to \CK_*(Z_{j-1})$ by
\[
\phi_j( t_1^{s_1}\cdots  t_r^{s_r})= \tilde{\tau}_1^{s_1}\circ\cdots  \circ\tilde{\tau}_{j-1}^{s_{j-1}}\circ\tscC_{s_j}^{(\chi_j)} \circ\cdots \circ\tscC_{s_r}^{(\chi_r)}(1_{Z_{j-1}}).
\]
\end{defn}
%%%%%%%%%%%%%%%%%%%%%%%%%%%%%%%%%%
\begin{rem}\label{remHomotoCoho}
By regarding $\CK^*(\SG^k)=\CK_{\dim \SG^k(E) - *}(\SG^k(E))$, we have
\[
\phi_1( t_1^{s_1}\cdots  t_r^{s_r})= \scC_{s_1}^{(\chi_1)} \circ\cdots \circ\scC_{s_r}^{(\chi_r)}.
\]  
\end{rem}
%%%%%%%%%%%%%%%%%%%%%%%%%%%%%%%%%%
\begin{lem}\label{lemp-phiC}
We have
\[
\varpi_{j*}\circ  \tilde{\tau}_j^s(1_{Z_j})= \phi_j\left( t_j^{\lambda_j+s}\frac{\prod_{i=1}^{j-1}( 1-\bar t_i/\bar t_j)}{\prod_{i=1}^{\gamma_j}(1 -  t_i/\bar t_j)}\right)
\]
for all $s\geq 0$.
\end{lem}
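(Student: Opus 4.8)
The plan is to combine Lemma~\ref{lempushC} with the expansion $c_p(D_{j-1}-D_{\gamma_j}^\vee) = e_p(\bar\tau_1,\dots,\bar\tau_{j-1},-\tau_1,\dots,-\tau_{\gamma_j})$ and then translate everything into the umbral variables through $\phi_j$. First I would recall that $D_{j-1}$ has Chern roots $\bar\tau_1,\dots,\bar\tau_{j-1}$ (the Chern roots of $(D_i/D_{i-1})^\vee$ are $\tau_i$, so those of $D_{i}/D_{i-1}$ are $\bar\tau_i$), while $D_{\gamma_j}^\vee$ has Chern roots $\tau_1,\dots,\tau_{\gamma_j}$; hence the total Chern class of the virtual bundle $D_{j-1}-D_{\gamma_j}^\vee$ is
\[
c(D_{j-1}-D_{\gamma_j}^\vee;u)=\frac{\prod_{i=1}^{j-1}(1+\bar\tau_i u)}{\prod_{i=1}^{\gamma_j}(1+\tau_i u)},
\]
so that $c_p(D_{j-1}-D_{\gamma_j}^\vee)$ is the degree-$p$ part of this rational function. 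Substituting this into the formula of Lemma~\ref{lempushC} and applying $\phi_j$ (which sends $\tilde\tau_i^{s_i}\mapsto\tilde\tau_i^{s_i}$ for $i<j$ and $\tilde\tau_j^{\,m}\mapsto$ the operator $\tscC^{(\chi_j)}_m$, i.e. replaces $t_i$ by $\tau_i$ for $i<j$ and records the $t_j$-exponent as the lower index of $\tscC^{(\chi_j)}$), the right-hand side of Lemma~\ref{lempushC} becomes $\phi_j$ applied to
\[
t_j^{\lambda_j+s}\sum_{p\ge0}c_p(D_{j-1}-D_{\gamma_j}^\vee)\Big|_{\bar\tau_i\to\bar t_i,\ \tau_i\to t_i}\ \sum_{q=0}^p\binom pq\beta^q t_j^{-p+q}.
\]

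Next I would carry out the two summations exactly as in the proof of Lemma~\ref{p and phi}. The inner sum over $q$ gives $\sum_{q=0}^p\binom pq\beta^q t_j^{-p+q}=t_j^{-p}(1+\beta t_j)^p=(\,(1+\beta t_j)/t_j\,)^p$, so the double sum collapses to $\sum_{p\ge0}c_p(D_{j-1}-D_{\gamma_j}^\vee)\,\big((1+\beta t_j)/t_j\big)^p$, which is precisely the generating function $c(D_{j-1}-D_{\gamma_j}^\vee;u)$ evaluated at $u=(1+\beta t_j)/t_j$. Plugging $u=(1+\beta t_j)/t_j$ into the product formula above yields
\[
\frac{\prod_{i=1}^{j-1}\bigl(1+\bar t_i(1+\beta t_j)/t_j\bigr)}{\prod_{i=1}^{\gamma_j}\bigl(1+\tau_i(1+\beta t_j)/t_j\bigr)}\ \Big|_{\tau_i\to t_i}.
\]
Then it remains a purely formal identity: $1+\bar t_i(1+\beta t_j)/t_j = 1-\bar t_i/\bar t_j$ and $1+t_i(1+\beta t_j)/t_j = 1-t_i/\bar t_j$, both of which follow from $\bar t_j=-t_j/(1+\beta t_j)$, i.e. $(1+\beta t_j)/t_j=-1/\bar t_j$, together with $\bar t_i=-t_i/(1+\beta t_i)$ for the numerator (one checks $1-\bar t_i/\bar t_j = 1+\bar t_i(1+\beta t_j)/t_j$ directly). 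This gives exactly $t_j^{\lambda_j+s}\prod_{i=1}^{j-1}(1-\bar t_i/\bar t_j)\big/\prod_{i=1}^{\gamma_j}(1-t_i/\bar t_j)$, which is the claimed argument of $\phi_j$.

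The one point that needs a little care — and the main (mild) obstacle — is well-definedness: the argument of $\phi_j$ must lie in $\calL^{R,j}$, i.e. it must be a legitimate element of the graded Laurent series ring with the cone-support condition of Definition~\ref{def fls}, so that $\phi_j$ may be applied term by term. The factor $\prod_{i=1}^{\gamma_j}(1-t_i/\bar t_j)^{-1}$ must be expanded as a power series in the right direction; since $1/\bar t_j = -(1+\beta t_j)/t_j$ contributes negative powers of $t_j$, one expands $(1-t_i/\bar t_j)^{-1}=\sum_{\ell\ge0}(t_i/\bar t_j)^\ell$, and one must verify that the resulting support, after the $t_j^{\lambda_j+s}$ shift, still satisfies the partial-sum inequalities $s_1\ge0$, $s_1+s_2\ge0$, \dots defining $\calL^{R,j}$ — this uses $\gamma_j\le j-1$ and $\lambda_j+s\ge0$. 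I would also note that $\phi_j$ is only a module homomorphism, so the manipulation of the finite sum in Lemma~\ref{lempushC} under $\phi_j$ is unproblematic, and the passage from the finite $p$-sum to the generating function is justified because $c_p(D_{j-1}-D_{\gamma_j}^\vee)$ vanishes for $p>j-1$ only after we have already rewritten it as the rational function (as a formal series in $u$ the coefficients are nonzero for all $p$, but the $\phi_j$-image is controlled by the boundedness of $\CK^*$, exactly as in the remark preceding Lemma~\ref{p and phi}). With these bookkeeping checks in place the computation is the same umbral manipulation as in Lemma~\ref{p and phi}, and the lemma follows.
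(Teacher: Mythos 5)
Correct, and essentially the same approach as the paper: you begin from Lemma~\ref{lempushC}, translate the $\tilde c_p(D_{j-1}-D_{\gamma_j}^\vee)$ into umbral variables via the Chern-root substitutions $\bar\tau_i\mapsto\bar t_i$, $\tau_i\mapsto t_i$, carry out the inner $q$-sum to get $t_j^{-p}(1+\beta t_j)^p=(-\bar t_j)^{-p}$, and recognize the remaining $p$-sum as the generating function $\prod(1+\bar t_i u)/\prod(1+t_i u)$ evaluated at $u=-1/\bar t_j$; the paper merely names the coefficients of that generating function $H_m$ before performing the identical manipulation. One small imprecision: you speak of "the passage from the finite $p$-sum to the generating function," but the $p$-sum in Lemma~\ref{lempushC} is already over all $p\ge 0$ since $D_{j-1}-D_{\gamma_j}^\vee$ is virtual, so there is nothing to pass from; this does not affect the argument.
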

%%%%%%%%%%%%%%%%%%%%%%%%%%%%%%%
\begin{proof}
Define $H_m(t_1,\dots, t_{j-1})$ by
\[
\frac{\prod_{i=1}^{j-1}( 1+\bar t_i u)}{\prod_{i=1}^{\gamma_j}(1 +  t_i u)} =  \sum_{m=0}^{\infty} H_m(t_1,\dots, t_{j-1}) u^m
\]
Then by Lemma \ref{lempushC}, we obtain
\begin{eqnarray*}
\varpi_{j*}\circ  \tilde{\tau}_j^s(1_{Z_j})
&=&\sum_{p= 0}^{\infty} \sum_{q=0}^p\binom{p}{q}\beta^q\tilde{c}_p(D_{j-1} - D_{\gamma_j}^{\vee})\circ\tscC_{\lambda_j+s-p+q}^{(\chi_j)}(1_{Z_{j-1}})\\
&=&\phi_j\left(\sum_{p= 0}^{\infty} \sum_{q=0}^p\binom{p}{q}\beta^q
H_p(t_1,\dots, t_{j-1}) t_j^{\lambda_j+s-p+q}
\right)\\
&=&\phi_j\left( t_j^{\lambda_j+s}\sum_{p= 0}^{\infty} 
H_p(t_1,\dots, t_{j-1}) t_j^{-p} \left(\sum_{q=0}^p\binom{p}{q}\beta^q t_j^{q} \right)
\right)\\
&=&\phi_j\left( t_j^{\lambda_j+s}\sum_{p= 0}^{\infty} 
H_p(t_1,\dots, t_{j-1}) (-\bar t_j)^{-p} 
\right).
\end{eqnarray*}
Now the claim follows from the definition of $H_m$.
\end{proof}
%%%%%%%%%%%%%%%%%%%%%%%%%%%%%%%
\begin{prop}\label{propmainC}
We have
\[
 \varpi_{1*}\circ\cdots\circ \varpi_{r*}(1_{Z_{\lambda}}) = \phi_1 \left( t_1^{\lambda_1}\cdots t_r^{\lambda_r}\frac{\prod_{(i,j)\in \Delta_r}(1- \bar t_i/\bar t_j)}{\prod_{(i,j) \in C(\lambda)} (1- t_i/\bar t_j)}\right).
\]
\end{prop}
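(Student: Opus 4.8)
The plan is to imitate the proof of Proposition~\ref{prop1A} for type A, peeling off the projective bundles of the tower (\ref{C P tower}) one at a time from the top, with Lemma~\ref{lemp-phiC} playing the role that Lemma~\ref{p and phi} plays there. By Lemma~\ref{Ylem2C}(1) each $\iota_j$ is a regular embedding, so each $\varpi_j=\pi_j'\circ\iota_j$ is an l.c.i. morphism, the pushforwards $\varpi_{j*}$ on oriented Borel--Moore homology are defined, and the projection formula is at one's disposal. Moreover, on $Z_j$ the line bundles $D_1/D_0,\dots,D_{j-1}/D_{j-2}$ (hence the operators $\tilde{\tau}_1,\dots,\tilde{\tau}_{j-1}$) and the bundles $U,\,E/F^\ell$ (hence all the operators $\tscC_m^{(\ell)}$) are pulled back along $\varpi_j$, so they commute with $\varpi_{j*}$.

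The heart of the argument is the identity
\[
\varpi_{j*}\bigl(\phi_{j+1}(g)\bigr)=\phi_j\bigl(h_j\cdot g\bigr)\qquad\text{for all }g\in\calL^{R,j+1},\ \ 1\le j\le r-1,
\]
where $h_j:=t_j^{\lambda_j}\,\prod_{i=1}^{j-1}(1-\bar t_i/\bar t_j)\big/\prod_{i=1}^{\gamma_j}(1-t_i/\bar t_j)$, together with its bottom instance $\varpi_{r*}(1_{Z_r})=\phi_r(h_r)$, which is Lemma~\ref{lemp-phiC} with $j=r$, $s=0$. To prove it I would expand $\phi_{j+1}(g)=\sum_{\pmb s}c_{\pmb s}\,\tilde{\tau}_1^{s_1}\circ\cdots\circ\tilde{\tau}_j^{s_j}\circ\tscC_{s_{j+1}}^{(\chi_{j+1})}\circ\cdots\circ\tscC_{s_r}^{(\chi_r)}(1_{Z_j})$ on monomials, push the pulled-back operators $\tilde{\tau}_1,\dots,\tilde{\tau}_{j-1}$ and $\tscC_{s_{j+1}}^{(\chi_{j+1})},\dots,\tscC_{s_r}^{(\chi_r)}$ through $\varpi_{j*}$ by the projection formula, apply Lemma~\ref{lemp-phiC} to rewrite the remaining factor $\varpi_{j*}\circ\tilde{\tau}_j^{s_j}(1_{Z_j})$ as $\phi_j$ of $t_j^{\lambda_j+s_j}\prod_{i=1}^{j-1}(1-\bar t_i/\bar t_j)/\prod_{i=1}^{\gamma_j}(1-t_i/\bar t_j)$, and reassemble. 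The reassembly uses that $\phi_j$ is multiplicative on pairs of monomials whose supports are disjoint in the indices $\geq j$ --- a consequence of $\tscC_0^{(\ell)}=\mathrm{id}$ --- which is exactly the situation here. Iterating the identity starting from $1_{Z_\lambda}=1_{Z_r}$ down to $Z_0=\SG^k(E)$ then yields $\varpi_{1*}\circ\cdots\circ\varpi_{r*}(1_{Z_\lambda})=\phi_1(h_1 h_2\cdots h_r)$.

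It remains to identify $h_1 h_2\cdots h_r=t_1^{\lambda_1}\cdots t_r^{\lambda_r}\prod_{j=2}^{r}\bigl[\prod_{i=1}^{j-1}(1-\bar t_i/\bar t_j)\big/\prod_{i=1}^{\gamma_j}(1-t_i/\bar t_j)\bigr]$ with the expression in the statement. The numerator reorganizes immediately as $\prod_{j=2}^{r}\prod_{i=1}^{j-1}(1-\bar t_i/\bar t_j)=\prod_{(i,j)\in\Delta_r}(1-\bar t_i/\bar t_j)$. For the denominator I would use that the characteristic index $\chi_1>\chi_2>\cdots$ is strictly decreasing, so that for each $j$ the set $\{\,i\mid 1\le i<j,\ \chi_i+\chi_j\ge 0\,\}$ is the initial segment $\{1,\dots,\gamma_j\}$; hence $\prod_{j=2}^{r}\prod_{i=1}^{\gamma_j}(1-t_i/\bar t_j)=\prod_{(i,j)\in C(\lambda)\cap\Delta_r}(1-t_i/\bar t_j)$, which is the denominator in the statement.

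The main obstacle I anticipate is not conceptual but a matter of careful bookkeeping with the formal Laurent series: one has to verify at every stage of the iteration that the series fed to $\phi_j$ genuinely lies in its domain $\calL^{R,j}$ of Definition~\ref{def fls}. Unlike type A, the new denominators $\prod_{i=1}^{\gamma_j}(1-t_i/\bar t_j)$, when expanded as power series in $t_i/\bar t_j$, contribute unbounded negative powers of $t_j,\dots,t_r$; one must check that each such negative power is accompanied by an at-least-compensating total nonnegative power in the earlier variables, together with the factors $t_j^{\lambda_j}$ with $\lambda_j\ge 0$, so that the support still lies in a translate of the cone $s_1\ge 0,\ s_1+s_2\ge 0,\dots,s_1+\cdots+s_r\ge 0$. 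Granting this, the whole computation is a faithful transcription of the type~A argument into the operator formalism of oriented Borel--Moore homology.
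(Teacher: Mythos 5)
Your proposal is correct and follows essentially the same route as the paper's own proof, which is a terse iteration of Lemma~\ref{lemp-phiC} down the tower, implicitly using the projection formula to slide the pulled-back operators $\tilde{\tau}_1,\dots,\tilde{\tau}_{j-1},\tscC^{(\chi_{j+1})},\dots,\tscC^{(\chi_r)}$ past $\varpi_{j*}$; you simply make these implicit steps explicit. Your closing observations — that the strict decrease of $\chi$ makes $\{i<j:\chi_i+\chi_j\ge 0\}=\{1,\dots,\gamma_j\}$ an initial segment (so the iterated denominators assemble to the product over $C(\lambda)\cap\Delta_r$), and that one must check the series fed to $\phi_j$ stays in $\calL^{R,j}$ — are both correct and are exactly the bookkeeping the paper leaves to the reader.
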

%%%%%%%%%%%%%%%%%%%%%%%%%%%%%%%%%%
\begin{proof}
By repeatedly applying Lemma \ref{lemp-phiC}, we obtain
\begin{eqnarray*}
 \varpi_{1*}\circ\cdots\circ \varpi_{r*}(1_{Z_{\lambda}})
&=&\varpi_{1*}\cdots \varpi_{r-1*}\phi_{r}\left( t_r^{\lambda_r}\frac{\prod_{i=1}^{r-1}( 1-\bar t_i/\bar t_r)}{\prod_{i=1}^{\gamma_r}(1 -  t_i/\bar t_r)}\right)\\
&=&\varpi_{1*}\cdots \varpi_{r-2*}\phi_{r-1}\left( t_{r-1}^{\lambda_{r-1}} t_r^{\lambda_r}\frac{\prod_{i=1}^{r-2}( 1-\bar t_i/\bar t_{r-1})}{\prod_{i=1}^{\gamma_{r-1}}(1 -  t_i/\bar t_{r-1})}\frac{\prod_{i=1}^{r-1}( 1-\bar t_i/\bar t_r)}{\prod_{i=1}^{\gamma_r}(1 -  t_i/\bar t_r)}\right)\\
&=&\cdots = \phi_1\left( t_1^{\lambda_1}\cdots t_r^{\lambda_r}\frac{\prod_{(i,j)\in \Delta_r }( 1-\bar t_i/\bar t_{j})}{\prod_{(i,j)\in C(\lambda) }( 1- t_i/\bar t_j)}\right).
\end{eqnarray*}
\end{proof}
%%%%%%%%%%%%%%%%%%%%%%%%%%%%%%%%%%
%%%%%%%%%%%%%%%%%%%%%%%%%%%%%%%%%%
\subsection{Extension of Schur-Pfaffian}\label{secSPf}
%%%%%%%%%%%%%%%%%%%%%%%%%%%%%%%%%%
%%%%%%%%%%%%%%%%%%%%%%%%%%%%%%%%%%
Let $r$ be a nonnegative integer. Let us first introduce some combinatorial quantities: for a subset $I$ of $\Delta_r$, define 
\[
a_i^I:=\sharp\{j \ |\ (i,j)\in I\},  \ \ \   c_j^I:=\sharp\{i \ |\ (i,j)\in I\}, \ \ \ \mbox{and}\ \ \ d_i^I:=a_i^I-c_i^I.
\]
Let $r'$ be the smallest even integer greater than or equal to $r$. Define the following rational function of $t_i$ and $t_j$:
\begin{eqnarray*}
F_{i,j}^I(t)&:=&\frac{1}{(1 + \beta  t_i)^{{r'}-i-c_i^I-1}} \frac{1}{(1 + \beta  t_j)^{{r'}-j-c_j^I}} \frac{1 - \bar  t_i/\bar t_j}{1 -  t_i/\bar  t_j},\\
F_{i}^I(t)&:=&\frac{1}{(1 + \beta  t_i)^{{r'}-i-c_i^I-1}}.
\end{eqnarray*}
%%%%%%%%%%%%%%%%%%%%%%%%%%%%%%%%%%
\begin{lem}\label{lem4C}
Let $(\lambda_1,\lambda_2, \dots, \lambda_r)$ be a $k$-strict partition of length $r$. Let $r'$ be the smallest even integer greater than or equal to $r$. Assume that $\lambda_{r'}=0$ if $r$ is odd. Then we have
\begin{eqnarray}\label{t=Pf}
t_1^{\lambda_1}\cdots t_r^{\lambda_r} \frac{\prod_{(i,j) \in \Delta_r} (1- \bar t_i/\bar t_j)}{\prod_{(i,j) \in C(\lambda)} (1- t_i/\bar t_j)}
=\sum_{I\subset D(\lambda)_r}\Pf  \left(\Lambda_{i,j}^I(t)\right)_{1 \leq i < j \leq  {r'}},
\end{eqnarray}
where, if $r$ is even,
\begin{eqnarray*}
\Lambda_{i,j}^I(t)&:=&t_i^{\lambda_i+d_i^I}t_j^{\lambda_j+d_j^I}F_{i,j}^I(t), 
\end{eqnarray*}
and if $r$ is odd, 
\begin{eqnarray*}
\Lambda_{i,j}^I(t)&:=&\begin{cases}
t_i^{\lambda_i+d_i^I}t_j^{\lambda_j+d_j^I}F_{i,j}^I(t) & 1\leq i<j\leq r\\
t_i^{\lambda_i+d_i^I}F_i^I(t) & 1 \leq i <j=r+1.
\end{cases}
\end{eqnarray*}
\end{lem}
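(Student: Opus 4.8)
The strategy is to reduce the asserted formula to the classical Schur--Pfaffian identity
\[
\Pf\Big(\frac{\eta_i-\eta_j}{\eta_i+\eta_j}\Big)_{1\le i<j\le 2m}=\prod_{1\le i<j\le 2m}\frac{\eta_i-\eta_j}{\eta_i+\eta_j}
\]
by means of a rational change of variables. All identities below are between rational functions in $t_1,\dots,t_{r'}$ and $\beta$; after expanding in the cone of Definition~\ref{def fls} they yield the statement inside $\calL^R$.

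The key elementary observation is that the kernel occurring in $F_{i,j}^I$ is, up to a monomial factor, the classical Schur kernel. Put $\eta_i:=t_i^{-1}+\tfrac{\beta}{2}$. Then $\eta_i-\eta_j=(t_j-t_i)/(t_it_j)$ and $\eta_i+\eta_j=(t_i\oplus t_j)/(t_it_j)$, and since $1-\bar t_i/\bar t_j=(t_j-t_i)/\big(t_j(1+\beta t_i)\big)$ and $1-t_i/\bar t_j=(t_i\oplus t_j)/t_j$ one gets
\[
\frac{\eta_i-\eta_j}{\eta_i+\eta_j}=\frac{t_j-t_i}{t_i\oplus t_j}=(1+\beta t_i)\,\frac{1-\bar t_i/\bar t_j}{1-t_i/\bar t_j}.
\]
Now fix $I\subset D(\lambda)_r$ and set $w_m:=t_m^{\lambda_m+d_m^I}(1+\beta t_m)^{-(r'-m-c_m^I)}$ for $1\le m\le r$; when $r$ is odd put moreover $w_{r+1}:=1$ and $\eta_{r+1}:=-\tfrac{\beta}{2}$. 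Feeding the displayed relation into the definitions of $F_{i,j}^I$ and $F_i^I$ and tracking the powers of $(1+\beta t_m)$ — the extra factor $1+\beta t_i$ converts the exponent $-(r'-i-c_i^I-1)$ appearing in $F_{i,j}^I$ and $F_i^I$ into $-(r'-i-c_i^I)$, and in the odd case one uses $\tfrac{\eta_i-\eta_{r+1}}{\eta_i+\eta_{r+1}}=1+\beta t_i$ — one obtains the clean form
\[
\Lambda_{i,j}^I(t)=w_i\,w_j\,\frac{\eta_i-\eta_j}{\eta_i+\eta_j}\qquad(1\le i<j\le r').
\]
Here the equality $w_{r+1}=1$ uses exactly the hypotheses $\lambda_{r'}=0$ and $r'=r+1$ together with $a_{r+1}^I=c_{r+1}^I=0$ (no pair of $I$ meets the index $r+1$). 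This is precisely the role of the asymmetric weights in the definition of $F_{i,j}^I$.

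Applying the Schur--Pfaffian identity to the antisymmetric matrix $\big(\tfrac{\eta_i-\eta_j}{\eta_i+\eta_j}\big)_{1\le i<j\le r'}$ (with $\eta_{r+1}=-\beta/2$ substituted when $r$ is odd) and using $\Pf(w_iw_jA_{ij})=\big(\prod_{m=1}^{r}w_m\big)\Pf(A_{ij})$ gives
\[
\Pf\big(\Lambda_{i,j}^I(t)\big)_{1\le i<j\le r'}=\Big(\prod_{m=1}^{r}w_m\Big)\prod_{1\le i<j\le r'}\frac{\eta_i-\eta_j}{\eta_i+\eta_j}.
\]
Converting back to the $t$'s: the pairs inside $\{1,\dots,r\}$ contribute $\prod_{1\le i<j\le r}\tfrac{t_j-t_i}{t_i\oplus t_j}=\prod_{i=1}^{r}(1+\beta t_i)^{r-i}\cdot\prod_{1\le i<j\le r}\tfrac{1-\bar t_i/\bar t_j}{1-t_i/\bar t_j}$, while in the odd case the pairs $(i,r+1)$ contribute an additional $\prod_{i=1}^{r}(1+\beta t_i)$; together these produce an overall factor $\prod_{i=1}^{r}(1+\beta t_i)^{r'-i}$ multiplying $\prod_{1\le i<j\le r}\tfrac{1-\bar t_i/\bar t_j}{1-t_i/\bar t_j}$. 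Combining this factor with $\prod_m w_m$, the powers of $(1+\beta t_m)$ collapse to $c_m^I$, and
\[
\prod_{m=1}^{r}t_m^{\lambda_m+d_m^I}(1+\beta t_m)^{c_m^I}=t_1^{\lambda_1}\cdots t_r^{\lambda_r}\prod_{(i,j)\in I}\frac{t_i(1+\beta t_j)}{t_j}=t_1^{\lambda_1}\cdots t_r^{\lambda_r}\prod_{(i,j)\in I}\Big(-\frac{t_i}{\bar t_j}\Big),
\]
since each $(i,j)\in I$ contributes $t_i$ to $a_i^I$ and $(1+\beta t_j)/t_j$ to $c_j^I$, and $-t_i/\bar t_j=t_i(1+\beta t_j)/t_j$. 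Hence
\[
\Pf\big(\Lambda_{i,j}^I(t)\big)_{1\le i<j\le r'}=t_1^{\lambda_1}\cdots t_r^{\lambda_r}\,\prod_{(i,j)\in I}\Big(-\frac{t_i}{\bar t_j}\Big)\,\prod_{1\le i<j\le r}\frac{1-\bar t_i/\bar t_j}{1-t_i/\bar t_j}.
\]

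It remains to sum over $I\subset D(\lambda)_r$. Using $\sum_{I\subset D(\lambda)_r}\prod_{(i,j)\in I}(-t_i/\bar t_j)=\prod_{(i,j)\in D(\lambda)_r}(1-t_i/\bar t_j)$ and the disjoint decomposition $\{(i,j):1\le i<j\le r\}=\Delta_r=C(\lambda)\sqcup D(\lambda)_r$, one has $\prod_{1\le i<j\le r}(1-t_i/\bar t_j)=\prod_{(i,j)\in C(\lambda)}(1-t_i/\bar t_j)\prod_{(i,j)\in D(\lambda)_r}(1-t_i/\bar t_j)$, so the factor $\prod_{(i,j)\in D(\lambda)_r}(1-t_i/\bar t_j)$ cancels against the denominator of $\prod_{1\le i<j\le r}\tfrac{1-\bar t_i/\bar t_j}{1-t_i/\bar t_j}$, and
\[
\sum_{I\subset D(\lambda)_r}\Pf\big(\Lambda_{i,j}^I(t)\big)_{1\le i<j\le r'}=t_1^{\lambda_1}\cdots t_r^{\lambda_r}\,\frac{\prod_{(i,j)\in\Delta_r}(1-\bar t_i/\bar t_j)}{\prod_{(i,j)\in C(\lambda)}(1-t_i/\bar t_j)},
\]
which is the left-hand side. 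The main obstacle I anticipate is the middle step: verifying that the asymmetric exponents built into $F_{i,j}^I$ really conspire to put the matrix into the symmetric form $w_iw_j\cdot(\text{Schur kernel})$, and that the odd-$r$ border entry $\Lambda_{i,r+1}^I$ follows the same pattern with $w_{r+1}=1$ (which is exactly where the hypothesis $\lambda_{r'}=0$ is used). Once this is in place, the classical Schur--Pfaffian identity and the binomial expansion over $I$ complete the argument.
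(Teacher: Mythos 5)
Your proof is correct and rests on the same two ingredients as the paper's: the Schur--Pfaffian identity and the binomial expansion $\sum_{I\subset D(\lambda)_r}\prod_{(i,j)\in I}(-t_i/\bar t_j)=\prod_{(i,j)\in D(\lambda)_r}(1-t_i/\bar t_j)$ over subsets of $D(\lambda)_r=\Delta_r\setminus C(\lambda)$. What you do differently: you run the computation from the Pfaffian side to the product side rather than the other way around, and you change variables to $\eta_i=t_i^{-1}+\tfrac{\beta}{2}$. This makes the kernel literally Schur's, $\frac{\eta_i-\eta_j}{\eta_i+\eta_j}=\frac{t_j-t_i}{t_i\oplus t_j}$, with no signs to track, whereas the paper works with $\frac{\bar t_i-\bar t_j}{\bar t_i\oplus\bar t_j}=-\frac{t_j-t_i}{t_i\oplus t_j}$ and therefore has to push a factor $(-1)^{r'-i}$ into each row of the Pfaffian and then discard a residual $(-1)^{i+j+1}$ at the end. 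You also handle the odd case uniformly by setting $\eta_{r+1}:=-\tfrac{\beta}{2}$ (equivalently $t_{r+1}=-\beta^{-1}$, which is exactly the paper's specialization) from the outset, whereas the paper first expands over $I\subset D(\lambda)_{r+1}$ and then argues that $F^I_{i,r+1}|_{t_{r+1}=-\beta^{-1}}=0$ whenever $(i,r+1)\in I$, so that only $I\subset D(\lambda)_r$ survives. The two arguments are the same in substance; yours is somewhat cleaner in its sign management and in its treatment of the odd case, and your identification $\Lambda_{i,j}^I=w_iw_j\,\frac{\eta_i-\eta_j}{\eta_i+\eta_j}$ makes the role of the asymmetric exponents in $F^I_{i,j}$ (the $-1$ on the $i$ side) transparent.
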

%%%%%%%%%%%%%%%%%%%%%%%%%%%%%%%%%%
\begin{proof}
Since $D(\lambda)_r = \Delta_r\backslash C(\lambda)$, we have
\[
\frac{\prod_{(i,j) \in \Delta_r} (1- \bar t_i/\bar t_j)}{\prod_{(i,j) \in C(\lambda)} (1- t_i/\bar t_j)} 
=\sum_{I\subset D(\lambda)_r} \prod_{(i,j)\in I}(-t_i/\bar t_j) \prod_{(i,j)\in \Delta_r} \frac{1- \bar t_i/\bar t_j}{1- t_i/\bar t_j}.
\]
Note that $\prod_{(i,j)\in I}  t_i(-\bar t_j)^{-1}  = \prod_{i=1}^{r} t_i^{d_i^I}(1+\beta t_i)^{c_i^I}$ and
\begin{eqnarray}\label{eq1533}
\frac{\bar  t_i - \bar  t_j}{\bar  t_i\oplus \bar  t_j}
%&=&-\frac{\bar  t_j-\bar  t_i}{\bar  t_i + \bar  t_j + \beta \bar t_i \bar t_j}\\
%&=&-\frac{1-\bar  t_i/\bar  t_j}{\bar  t_i/\bar  t_j + (1 + \beta \bar t_i)}\\
%&=&-\frac{1-\bar  t_i/\bar  t_j}{-t_i/\bar  t_j \frac{1}{1+\beta t_i}+ (1 + \beta \bar t_i)}\\
&=&-(1+\beta t_i)\frac{1-\bar  t_i/\bar  t_j}{1-t_i/\bar  t_j}.
\end{eqnarray}
By Lemma 2.4 \cite{IkedaNaruse}, we have 
\[
\Pf\left(\dfrac{\bar  t_i - \bar  t_j}{\bar  t_i\oplus \bar  t_j}\right)_{1\leq i<j\leq m} = \prod_{1\leq i<j\leq m} \dfrac{\bar  t_i - \bar  t_j}{\bar  t_i\oplus \bar  t_j},
\]
for any even integer $m$. From this, a direct computation shows that 
\begin{eqnarray}
&& t_1^{\lambda_1}\cdots t_{r'}^{\lambda_{r'}} \left(\prod_{(i,j)\in I} (- t_i/\bar t_j) \right)\left(\prod_{(i,j) \in \Delta_{r'}} \frac{1- \bar t_i/\bar t_j}{1- t_i/\bar t_j}\right)\nonumber\\
%&=& \prod_{i=1}^{r'}t_i^{\lambda_i+d_i^I}(1+\beta t_i)^{c_i^I}\left( \frac{-1}{1 + \beta  t_i}\right)^{r-i} 
%\prod_{1\leq i < j \leq {r'}}  \frac{\bar  t_i - \bar  t_j}{\bar  t_i\oplus \bar  t_j}\\
&=& \prod_{i=1}^{r'}\frac{(-1)^{{r'}-i}t_i^{\lambda_i+d_i^I}}{(1 + \beta  t_i)^{{r'}-i-c_i^I}} 
\Pf\left( \frac{\bar  t_i - \bar  t_j}{\bar  t_i\oplus \bar  t_j}\right)_{1 \leq i < j \leq  {r'}} \nonumber\\
&=&\Pf \left(  \frac{(-1)^{{r'}-i}t_i^{\lambda_i+d_i^I}}{(1 + \beta  t_i)^{{r'}-i-c_i^I}} 
\frac{(-1)^{{r'}-j}t_j^{\lambda_j+d_j^I}}{(1 + \beta  t_j)^{{r'}-j-c_j^I}} 
\frac{\bar  t_i - \bar  t_j}{\bar  t_i\oplus \bar  t_j}\right)_{1 \leq i < j \leq  {r'}}\nonumber\\
&=&\Pf  \left( (-1)^{i+j+1}\frac{t_i^{\lambda_i+d_i^I}}{(1 + \beta  t_i)^{ {r'}-i-c_i^I-1}} 
\frac{t_j^{\lambda_j+d_j^I}}{(1 + \beta  t_j)^{ {r'}-j-c_j^I}} 
\frac{1 - \bar  t_i/\bar t_j}{1 -  t_i/\bar  t_j}\right)_{1 \leq i < j \leq  {r'}}.\label{eqt=Pf}
%&=&\Pf  \left(\frac{t_i^{\lambda_i+d_i^I}}{(1 + \beta  t_i)^{ {r'}-i-c_i^I-1}} 
%\frac{t_j^{\lambda_j+d_j^I}}{(1 + \beta  t_j)^{ {r'}-j-c_j^I}} 
%\frac{1 - \bar  t_i/\bar t_j}{1 -  t_i/\bar  t_j}\right)_{1 \leq i < j \leq  {r'}}.
\end{eqnarray}
Since multiplying each entry by $(-1)^{i+j+1}$ does not modify change the Pfaffian,  one obtains the formula for the case when $r$ is even, {\it i.e.} $r=r'$. 

If $r$ is odd, {\it i.e.} $r'=r+1$, we can observe  that
\[
t_1^{\lambda_1}\cdots t_r^{\lambda_r}\frac{\prod_{(i,j)\in \Delta_r}(1- \bar t_i/\bar t_j)}{\prod_{(i,j) \in C(\lambda)} (1- t_i/\bar t_j)}
= \left.t_1^{\lambda_1}\cdots t_{{r+1}}^{\lambda_{{r+1}}}\frac{\prod_{(i,j)\in \Delta_{{r+1}}}(1- \bar t_i/\bar t_j)}{\prod_{(i,j) \in C(\lambda)} (1- t_i/\bar t_j)}\right|_{t_{{r+1}}=-\beta^{-1}}.
\]
By applying (\ref{eqt=Pf}) to the right hand side, we obtain
\[
t_1^{\lambda_1}\cdots t_r^{\lambda_r}\frac{\prod_{(i,j)\in \Delta_r}(1- \bar t_i/\bar t_j)}{\prod_{(i,j) \in C(\lambda)} (1- t_i/\bar t_j)}=\sum_{ I \subset D(\lambda)_{r+1}} \Pf\left[   \left. t_i^{\lambda_i+d_i^I}t_j^{\lambda_j+d_j^I}F_{i,j}^I(t)\right]_{1\leq i<j\leq {r+1}}\right|_{t_{{r+1}}=-\beta^{-1}}
\]
For any subset $I \subset D(\lambda)_{{r+1}}$, we have $F_{i,{r+1}}^I(t)|_{t_{{r+1}}=-\beta^{-1}}=0$ if $(i,{r+1}) \in I$. Thus only the terms with $I \subset D(\lambda)_r$ survive. Furthermore, if $(i,{{r+1}}) \not\in I$, then $\lambda_{{r+1}}+d_{{r+1}}^I=0$ and $F_{i,{r+1}}^I(t)|_{t_{{r+1}}=-\beta^{-1}}=F_{i}^I(t)$. This proves the odd case.
\end{proof}

%%%%%%%%%%%%%%%%%%%%%%%%%%%%%%%%%%
For the reader's convenience, we provide the specialization of Lemma \ref{lem4C} to the Lagrangian case $k=0$.
%%%%%%%%%%%%%%%%%%%%%%%%%%%%%%%%%%
\begin{lem}\label{lem4CLag}
 We have
\begin{eqnarray}\label{t=Pf}
t_1^{\lambda_1}\cdots t_r^{\lambda_r} \prod_{(i,j) \in \Delta_r} \frac{1- \bar t_i/\bar t_j}{1- t_i/\bar t_j}
=\Pf  \left(\Lambda_{i,j}(t)\right)_{1 \leq i < j \leq  {r'}},
\end{eqnarray}
where, if $r$ is even,
\begin{eqnarray*}
\Lambda_{i,j}(t)&:=&\frac{t_i^{\lambda_i}}{(1 + \beta  t_i)^{{r}-i-1}} \frac{t_j^{\lambda_j}}{(1 + \beta  t_j)^{{r}-j}} \frac{1 - \bar  t_i/\bar t_j}{1 -  t_i/\bar  t_j}, 
\end{eqnarray*}
and if $r$ is odd, 
\begin{eqnarray*}
\Lambda_{i,j}(t)&:=&\begin{cases}
\dfrac{t_i^{\lambda_i}}{(1 + \beta  t_i)^{{r}-i}} \dfrac{t_j^{\lambda_j}}{(1 + \beta  t_j)^{{r+1}-j}} \dfrac{1 - \bar  t_i/\bar t_j}{1 -  t_i/\bar  t_j} & 1\leq i<j\leq r\\
\dfrac{t_i^{\lambda_i}}{(1 + \beta  t_i)^{{r}-i}} & 1 \leq i <j=r+1.
\end{cases}
\end{eqnarray*}
\end{lem}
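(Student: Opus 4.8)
The plan is to obtain this lemma as the $k=0$ specialization of Lemma~\ref{lem4C}, the only substantive point being a combinatorial observation that makes the right-hand side collapse to a single term.

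First I would check that $D(\lambda)_r=\emptyset$ when $k=0$. Since $\lambda$ is then a strict partition of length $r$, the parts $\lambda_1,\dots,\lambda_r$ are positive and strictly decreasing, so $\lambda_i\geq\lambda_j+(j-i)$ for $1\leq i<j\leq r$ and hence $\lambda_i+\lambda_j> j-i=2k+j-i$. By the dictionary of Remark~\ref{chilambda} this means $\chi_i+\chi_j\geq 0$, i.e. $(i,j)\in C(\lambda)$, for every $(i,j)\in\Delta_r$; thus $C(\lambda)\cap\Delta_r=\Delta_r$ and $D(\lambda)_r=\Delta_r\setminus C(\lambda)=\emptyset$. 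In particular $\gamma_j=j-1$ for $1\leq j\leq r$. Moreover $\lambda_{r+1}=0$ automatically, since $\lambda$ has length $r$, so the hypothesis of Lemma~\ref{lem4C} needed in the odd case is met.

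Next I would substitute this into Lemma~\ref{lem4C}. On the left-hand side the product $\prod_{(i,j)\in C(\lambda)}(1- t_i/\bar t_j)$ — understood, as in the derivation of Proposition~\ref{propmainC}, as a product over $C(\lambda)\cap\Delta_r$ — becomes $\prod_{(i,j)\in\Delta_r}(1- t_i/\bar t_j)$, so the left-hand side is exactly $t_1^{\lambda_1}\cdots t_r^{\lambda_r}\prod_{(i,j)\in\Delta_r}\frac{1-\bar t_i/\bar t_j}{1- t_i/\bar t_j}$. On the right-hand side the sum $\sum_{I\subset D(\lambda)_r}$ has the single term $I=\emptyset$, for which $a_i^{\emptyset}=c_i^{\emptyset}=c_j^{\emptyset}=0$ and hence $d_i^{\emptyset}=0$; plugging these values into the definitions of $F_{i,j}^I$, $F_i^I$ and $\Lambda_{i,j}^I$ (with $r'=r$ if $r$ is even and $r'=r+1$ if $r$ is odd) yields precisely the entries $\Lambda_{i,j}(t)$ displayed in the statement. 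This completes the proof.

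Since the whole argument is a specialization, there is no real obstacle: the one step requiring care is the combinatorial claim $D(\lambda)_r=\emptyset$, which is exactly where strictness of $\lambda$ (equivalently $k=0$) enters, and which I would justify via Remark~\ref{chilambda} as above.
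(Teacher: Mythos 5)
Your proof is correct and follows exactly the route the paper intends: Lemma~\ref{lem4CLag} is obtained as the $k=0$ specialization of Lemma~\ref{lem4C}, and the paper itself only states this without spelling out why the Pfaffian sum collapses. You supply the key detail — for a strict partition of length $r$, one has $\lambda_i \geq \lambda_j + (j-i)$ with $\lambda_j \geq 1$, hence $\lambda_i + \lambda_j > j-i$, which by Remark~\ref{chilambda} gives $\chi_i+\chi_j \geq 0$ for all $(i,j)\in\Delta_r$, so $D(\lambda)_r=\emptyset$ and only $I=\emptyset$ contributes — and then correctly simplifies $F_{i,j}^\emptyset$, $F_i^\emptyset$ using $a_i^\emptyset=c_i^\emptyset=d_i^\emptyset=0$ with $r'=r$ or $r'=r+1$ as appropriate. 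This matches the paper's (implicit) proof.
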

%%%%%%%%%%%%%%%%%%%%%%%%%%%%%%%%%%
%%%%%%%%%%%%%%%%%%%%%%%%%%%%%%%%%%
%%%%%%%%%%%%%%%%%%%%%%%%%%%%%%%%%%
\subsection{Main theorem for type C}\label{secmainC}
%%%%%%%%%%%%%%%%%%%%%%%%%%%%%%%%%%
%%%%%%%%%%%%%%%%%%%%%%%%%%%%%%%%%%
Consider the expansion of $F_{i,j}^I(t)$ and $F_{i}^I(t)$ as Laurent series in $\calL^{R}$:
\begin{eqnarray*}
F_{i,j}^I(t)= \sum_{p\geq 0, \atop{p+q\geq 0}} \ff_{pq}^{ij,I} t_i^pt_j^q, \ \ \ \ \ 
F_i^I(t)= \sum_{p \geq 0} \ff_p^{i,I} t_i^p.
\end{eqnarray*}
%%%%%%%%%%%%%%%%%%%%%%%%%%%%%%%%%%
\begin{thm}\label{mainC}
Let $\lambda$ be a $k$-strict partition in $\SP^k(n)$ of length $r$ and $\chi$ its characteristic index. In $\CK^*(\SG^k(E))$, the fundamental class of the degeneracy locus $\Omega_{\lambda}$ is given by 
\[
[\Omega_{\lambda}] = \sum_{I \subset D(\lambda)_r} \Pf\left(\sum_{p,q\in \ZZ \atop{p\geq 0, p+q\geq 0}} \ff_{pq}^{ij,I} \scC_{\lambda_i+d_i^I+p}^{(\chi_i)}\scC_{\lambda_j+d_j^I+q}^{(\chi_j)}\right)_{1\leq i<j\leq r' },
\]
where $r'$ is the smallest integer greater than or equal to $r$ and we set $\scC_{-i}^{(-n-1)}:=(-\beta)^i$ for $i\leq 0$. In particular, if $r$ is odd, then $(i,r+1)$-entry of the Pfaffian reduces to $\sum_{p \geq 0} \ff_p^{i,I} \scC_{\lambda_i+d_i^I+p}^{(\chi_i)}$.
\end{thm}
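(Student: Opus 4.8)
The plan is to chain together three facts already established above: the geometric reduction of Lemma~\ref{lem1C}, the umbral-calculus evaluation of Proposition~\ref{propmainC}, and the extended Schur--Pfaffian identity of Lemma~\ref{lem4C}. First I would note that, by Lemma~\ref{lem1C}, $[\Omega_\lambda]=\varpi_*(1_{Z_\lambda})$ in $\CK_*(\SG^k(E))$, and under the identification $\CK^*(\SG^k(E))=\CK_{\dim\SG^k(E)-*}(\SG^k(E))$ recalled in Section~\ref{sec:BM} this is exactly the class in the statement. Since $\varpi=\varpi_1\circ\cdots\circ\varpi_r$, Proposition~\ref{propmainC} rewrites it as $\phi_1$ applied to the graded Laurent series
\[
g(t):=t_1^{\lambda_1}\cdots t_r^{\lambda_r}\,\frac{\prod_{(i,j)\in\Delta_r}(1-\bar t_i/\bar t_j)}{\prod_{(i,j)\in C(\lambda)}(1-t_i/\bar t_j)}.
\]
Then I would apply Lemma~\ref{lem4C} --- whose hypothesis ``$\lambda_{r'}=0$ if $r$ is odd'' holds because one extends $\lambda$ by $\lambda_{r+1}=0$ and defines $\chi_{r+1}$ via (\ref{eq:defchi}) accordingly --- to obtain $g(t)=\sum_{I\subset D(\lambda)_r}\Pf(\Lambda_{i,j}^I(t))_{1\le i<j\le r'}$. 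By $R$-linearity of $\phi_1$ it then suffices to compute $\phi_1$ on a single Pfaffian $\Pf(\Lambda_{i,j}^I(t))$.

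The heart of the argument is that $\phi_1$ passes inside this Pfaffian. Expanding $F_{i,j}^I(t)=\sum \ff_{pq}^{ij,I}t_i^pt_j^q$ and $F_i^I(t)=\sum\ff_p^{i,I}t_i^p$ as in the paragraph preceding the theorem, every entry $\Lambda_{i,j}^I(t)$ is a Laurent series in $t_i$ and $t_j$ alone. Consequently, in the Pfaffian expansion $\Pf(\Lambda^I)=\sum_M\sgn(M)\prod_{(i,j)\in M}\Lambda_{i,j}^I(t)$ over perfect matchings $M$ of $\{1,\dots,r'\}$, every resulting monomial $t_1^{s_1}\cdots t_{r'}^{s_{r'}}$ has each index occurring in exactly one factor. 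Since $\phi_1$ sends such a monomial to $\scC_{s_1}^{(\chi_1)}\cdots\scC_{s_{r'}}^{(\chi_{r'})}$ (Remark~\ref{remHomotoCoho}) and the classes $\scC_m^{(\ell)}$ commute in $\CK^*(\SG^k(E))$, one can carry $\phi_1$ factor by factor through each product and conclude
\[
\phi_1\!\left(\Pf\bigl(\Lambda_{i,j}^I(t)\bigr)_{1\le i<j\le r'}\right)=\Pf\!\left(\sum_{p\ge 0,\ p+q\ge 0}\ff_{pq}^{ij,I}\,\scC_{\lambda_i+d_i^I+p}^{(\chi_i)}\scC_{\lambda_j+d_j^I+q}^{(\chi_j)}\right)_{1\le i<j\le r'}.
\]
When $r$ is odd the $(i,r+1)$-entry of $\Lambda^I$ carries no $t_{r+1}$, so it collapses to $\sum_{p\ge 0}\ff_p^{i,I}\scC_{\lambda_i+d_i^I+p}^{(\chi_i)}$. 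Summing over $I\subset D(\lambda)_r$ then gives the asserted formula.

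Two technical points deserve attention. The infinite sums over $p,q$ are well-defined because $\CK^*(\SG^k(E))$ is bounded above, so $\scC_m^{(\ell)}$ vanishes for $m$ large, while $\scC_m^{(\ell)}=(-\beta)^{-m}$ for $m\le 0$; for the exceptional index $\ell=\chi_{r+1}=-n-1$, which can occur only when $r=n-k$ is odd, one checks that the entries ever call for $\scC_m^{(-n-1)}$ only with $m\le 0$, consistently with the convention $\scC_{-i}^{(-n-1)}:=(-\beta)^i$ fixed in the statement. The genuine obstacle, however, is justifying the interchange of $\phi_1$ with the Pfaffian: $\phi_1$ is only an $R$-module homomorphism and is manifestly not a ring map, so the argument must rest squarely on the ``decoupled'' shape of the entries $\Lambda_{i,j}^I(t)$ (each involving only $t_i$ and $t_j$) together with the commutativity of the Segre classes --- that is precisely what makes the factorwise passage legitimate. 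Granting this, the remainder is the bookkeeping of assembling Lemma~\ref{lem1C}, Proposition~\ref{propmainC}, and Lemma~\ref{lem4C}.
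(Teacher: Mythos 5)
Your proposal is correct and follows exactly the paper's route: Lemma~\ref{lem1C} to reduce to $\varpi_*(1_{Z_\lambda})$, Proposition~\ref{propmainC} to express this as $\phi_1$ of a Laurent series, and Lemma~\ref{lem4C} to recast that series as a sum of Pfaffians, using Remark~\ref{remHomotoCoho} for the cohomological reading. The one thing you spell out that the paper leaves implicit is the justification for passing $\phi_1$ inside the Pfaffian (the "decoupled" entries plus commutativity of the Segre classes), and your reasoning there is sound.
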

%%%%%%%%%%%%%%%%%%%%%%%%%%%%%%%%%%
\begin{proof}
By Lemma \ref{lem1C} and Proposition \ref{propmainC}, we have 
\begin{equation*}
[\Omega_{\lambda}] = \phi_1 \left( t_1^{\lambda_1}\cdots t_r^{\lambda_r}\frac{\prod_{(i,j)\in \Delta_r}(1- \bar t_i/\bar t_j)}{\prod_{(i,j) \in C(\lambda)} (1- t_i/\bar t_j)}\right).
\end{equation*}
Then Lemma \ref{lem4C} implies the formula. See also Remark \ref{remHomotoCoho} for the cohomological notation.
\end{proof}
%%%%%%%%%%%%%%%%%%%%%%%%%%%%%%%%%%
As a special case of Theorem \ref{mainC}, we obtain the Pfaffian formula of the degeneracy loci classes for the Lagrangian Grassmannian $\LG(E)=\SG^0(E)$. 
%%%%%%%%%%%%%%%%%%%%%%%%%%%%%%%%%%
\begin{thm}\label{mainLag}
Let $\lambda \in \SP(n)$ be of length $r$. Let $\ff_{pq}^{ij}=\ff_{pq}^{ij,\varnothing}$ and $\ff_{p}^{i}=\ff_{p}^{i,\varnothing}$.  In $\CK^*(\LG(E))$, The fundamental class of the degeneracy locus $\Omega_{\lambda}$ is given as follows: if $r$ is even,
\[
[\Omega_{\lambda}]= \Pf\left(\sum_{p,q\in \ZZ \atop{p\geq 0, p+q\geq 0}} \ff_{pq}^{ij} \scC_{\lambda_i+p}^{(\lambda_i-1)}\scC_{\lambda_j+q}^{(\lambda_j-1)}\right)_{1\leq i<j\leq r},
\]
if $r$ is odd, 
\[
[\Omega_{\lambda}] =\sum_{s=1}^{r}(-1)^{s+r}\left(\sum_{p\in \ZZ\atop{p \geq 0}} \ff_p^{s} \scC_{\lambda_s+p}^{(\lambda_s-1)}\right)\Pf\left(\sum_{p,q\in \ZZ \atop{p\geq 0, p+q\geq 0}} \ff_{pq}^{ij} \scC_{\lambda_i+p}^{(\lambda_i-1)}\scC_{\lambda_j+q}^{(\lambda_j-1)}\right)_{1\leq i<j\leq r \atop{i,j\not=s}}.
\]
\end{thm}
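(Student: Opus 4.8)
The plan is to obtain Theorem \ref{mainLag} as the $k=0$ specialization of Theorem \ref{mainC}, so the work is purely combinatorial bookkeeping on the indices appearing there. The first step is to record the simplifications that occur when $k=0$ and $\lambda$ is a strict partition of length exactly $r$ (so $\lambda_1>\cdots>\lambda_r>0$). For $1\le i<j\le r$ all of $\lambda_i,\dots,\lambda_j$ are positive and strictly decreasing, hence $\lambda_i\ge\lambda_j+(j-i)\ge 1+(j-i)$, which gives $\lambda_i+\lambda_j>j-i=2k+j-i$. By Remark \ref{chilambda} this is equivalent to $\chi_i+\chi_j\ge 0$, i.e. $(i,j)\in C(\lambda)$; therefore $D(\lambda)_r=\Delta_r\setminus C(\lambda)=\varnothing$. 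Consequently $\gamma_j=j-1$ for $1\le j\le r$, and the same remark yields $\chi_j=\lambda_j-j+\gamma_j-k=\lambda_j-1$. Finally, since $D(\lambda)_r=\varnothing$, the outer sum $\sum_{I\subset D(\lambda)_r}$ in Theorem \ref{mainC} collapses to the single term $I=\varnothing$, for which $a_i^\varnothing=c_i^\varnothing=d_i^\varnothing=0$, so that $\scC_{\lambda_i+d_i^I+p}^{(\chi_i)}=\scC_{\lambda_i+p}^{(\lambda_i-1)}$ and, by definition, $\ff_{pq}^{ij,\varnothing}=\ff_{pq}^{ij}$ and $\ff_p^{i,\varnothing}=\ff_p^{i}$.

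With these substitutions Theorem \ref{mainC} reads, when $r$ is even, exactly as the first displayed formula of Theorem \ref{mainLag}, and nothing further is needed. When $r$ is odd, Theorem \ref{mainC} gives a single $(r+1)\times(r+1)$ Pfaffian whose $(i,r+1)$-entries are the reduced expressions $\sum_{p\ge 0}\ff_p^{i}\,\scC_{\lambda_i+p}^{(\lambda_i-1)}$ (in particular no class $\scC^{(\chi_{r+1})}$ occurs, so the convention $\scC_{-i}^{(-n-1)}:=(-\beta)^i$ is never invoked in the Lagrangian case). I would then expand this Pfaffian along its last row and column by the standard cofactor formula $\Pf(A)=\sum_{s=1}^{2\ell-1}(-1)^{s+1}a_{s,2\ell}\,\Pf(\widehat A_s)$ with $2\ell=r+1$, where $\widehat A_s$ denotes the matrix obtained by deleting rows and columns $s$ and $r+1$; thus $\widehat A_s$ is indexed by $\{1,\dots,r\}\setminus\{s\}$ with entries $\sum\ff_{pq}^{ij}\scC_{\lambda_i+p}^{(\lambda_i-1)}\scC_{\lambda_j+q}^{(\lambda_j-1)}$. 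Since $r$ is odd, $(-1)^{s+1}=(-1)^{s+r}$, and the expansion becomes precisely the second displayed formula of Theorem \ref{mainLag}. (Equivalently, one may bypass Theorem \ref{mainC} and argue directly by applying $\phi_1$ to Lemma \ref{lem4CLag}, using Lemma \ref{lem1C} and Proposition \ref{propmainC}, which lands at the same place.)

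The argument is essentially routine; the two points that need attention are the inequality $\lambda_i+\lambda_j>j-i$ forcing $D(\lambda)_r=\varnothing$ — which uses both that $\lambda$ is strict and that its length is exactly $r$, so $\lambda_r\ge 1$ — and fixing the sign convention in the Pfaffian cofactor expansion so that the factor $(-1)^{s+r}$ emerges correctly. No genuine obstacle is expected.
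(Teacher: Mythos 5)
Your proof is correct and follows exactly the route the paper intends: specialize Theorem \ref{mainC} at $k=0$, observe that strictness of $\lambda$ (with $\lambda_r\ge 1$) forces $D(\lambda)_r=\varnothing$ so that $\chi_j=\lambda_j-1$ and the outer sum collapses, and then for odd $r$ apply the Pfaffian cofactor expansion along the last row/column; the paper's own remark after the theorem confirms this is precisely the argument, which it leaves implicit. Your sign check $(-1)^{s+1}=(-1)^{s+r}$ for $r$ odd is also correct.
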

%%%%%%%%%%%%%%%%%%%%%%%%%%%%%%%%%%
\begin{rem}
In the theorem, in order to deal with the odd case, we have applied the cofactor expansion of Pfaffian to the last column of the Pfaffian.
\end{rem}
%%%%%%%%%%%%%%%%%%%%%%%%%%%%%%%%%%
%%%%%%%%%%%%%%%%%%%%%%%%%%%%%%%%%%
\section{Pfaffian formula for odd orthogonal Grassmann bundles}\label{sec:typeB}
%%%%%%%%%%%%%%%%%%%%%%%%%%%%%%%%%%
In this section, we fix nonnegative integers $n$ and $k$ such that $0 \leq k <n$, unless otherwise stated. 
%%%%%%%%%%%%%%%%%%%%%%%%%%%%%%%%%%
\subsection{Odd orthogonal degeneracy loci}
%%%%%%%%%%%%%%%%%%%%%%%%%%%%%%%%%%
%%%%%%%%%%%%%%%%%%%%%%%%%%%%%%%%%%
Let $E$ be a vector bundle over $X$ of rank $2n+1$ with a non-degenerate symmetric form. We assume to be given a complete flag of $E$
%%%%%%%%%%%%%%%%%%%%%%%%%%%%%%%%%%
\begin{equation}\label{eq:flagB}
F^{n-1} \subset \cdots \subset F^1 \subset F^0 \subset (F^0)^{\perp} \subset F^{-1} \subset \cdots \subset F^{-n}=E,
\end{equation}
%%%%%%%%%%%%%%%%%%%%%%%%%%%%%%%%%%
such that $\rk(F^i)=n-i$ for $i\geq 0$ and $(F^{i})^{\perp}=F^{-i}$ for all $i \geq 1$. Note that $\rk(F^{i}) = n-i+1$ for $i \leq -1$. Let $\OG^k(E)$ be the Grassmannian of isotropic subbundles of rank $n-k$ in $E$. We denote an element of  $\OG^k(E)$ by $(x,U_x)$ where $x\in X$ and $U_x$ is an $(n-k)$-dimensional isotropic subspace of $E_x$.
%%%%%%%%%%%%%%%%%%%%%%%%%%%%%%%%%%
\begin{defn}\label{dfOmegaB}
Let $\lambda \in \SP^k(n)$ of length $r$ with $\chi$ its characteristic index. The associated degeneracy loci $\Omega_{\lambda}^B$ is defined by
\[
\Omega_{\lambda}^B = \{(x,U_x)\in \OG^k(n) \ |\ \dim(U_x \cap F^{\chi_i})\geq i, \ i=1,\dots,r\}.
\]
In this section, we write $\Omega_{\lambda}^B$ simply by $\Omega_{\lambda}$.
\end{defn}
%%%%%%%%%%%%%%%%%%%%%%%%%%%%%%%%%%
%%%%%%%%%%%%%%%%%%%%%%%%%%%%%%%%%%
\subsection{Quadric bundle}
%%%%%%%%%%%%%%%%%%%%%%%%%%%%%%%%%%
%%%%%%%%%%%%%%%%%%%%%%%%%%%%%%%%%%
We describe the degeneracy loci classes of $\OG^{n-1}(E)$, which has a structure of a fibration of quadric hypersurfaces and is denoted by $Q(E)$. In this section we do not assume that $X$ is smooth, it suffices that it is regularly embedded in a smooth variety. Later this will allow us to apply the results of this section to the study of the classes of the resolutions of $\Omega_{\lambda}$.

%%%%%%%%%%%%%%%%%%%%%%%%%%%%%%%%%%
For each point $x\in X$, the set of all isotropic lines of $E_x$ forms a closed subvariety of $\PP(E_x)$. One can choose homogeneous coordinates $(x_1,\ldots,x_n,y_1,\ldots,y_n,z)$ of $\PP(E_x)$ such that the subvariety is the quadric hypersurface defined by the equation
\[
x_1y_1+\cdots+x_ny_n+z^2=0.
\]
Let $S$ be the tautological line bundle of $Q(E)$. We consider the subvatieties in $\PP(E)$ corresponding the components of the flag (\ref{eq:flagB}). We may assume 
\[
\PP(F^0): y_1=\cdots=y_n=z=0,\quad \PP((F^0)^\perp):  y_1=\cdots=y_n=0,
\]
and for $1\leq i\leq n$,
\[
\PP(F^i): y_1=\cdots=y_n=z=0, \quad x_1=\cdots=x_i=0,\quad \PP(F^{-i}): y_{i+1}=\cdots=y_n=0
\]
in the local coordinates. Since for $0\leq i< n$ the subbundle $F^i$ is isotropic, we have that $\PP(F^i)$ is naturally a subvariety of $Q(E)$. For $-n\leq i< 0$, one sees that the scheme-theoretic intersection $Q(E)\cap\PP(F^i)$ is reduced. We denote these subvarieties of $Q(E)$ by $X_i$:
%%%%%%%%%%%%%%%%%%%%%%%%%%%%%%%%
\begin{equation}\label{Xi}
X_i=\begin{cases}
Q(E)\cap\PP(F^i) & (-n\leq i<0)\\
\PP(F^i) & (0\leq i< n).
\end{cases}
\end{equation}
%%%%%%%%%%%%%%%%%%%%%%%%%%%%%%%%
Note that the scheme-theoretic intersection $Q(E)\cap\PP((F^0)^\perp)$ in $\PP(E)$ is not reduced and the defining ideal is generated by $z^2,y_1,\ldots,y_n$. The corresponding scheme with the reduced structure is $X_0=\PP(F^0)$. The following result on the fundamental class of a non-reduced closed subscheme allows us to calculate $[Q(E)\cap\PP((F^0)^\perp)]$ and hence $[X_i]$ for $0\leq i < n$ as elements in $\CK_*(Q(E))$.
%%%%%%%%%%%%%%%%%%%%%%%%%%%%%%%%
\begin{lem}\label{lem:divisorclass}(\cite[Section 7.2.1]{LevineMorel})
Let $W$ be a smooth scheme and $D$ a smooth prime divisor on $W$. Consider the divisor $E=2D$ and let $|E|$ be the closed subscheme of $W$ defined by $E$. If $L$ is the line bundle corresponding to $D$ and $\iota: D\rightarrow |E|$ is the natural morphism, then  we have
\[
1_{|E|}=\iota_*(2+\beta \tilde{c}_1(L|_D)(1_D))\quad \mbox{in}\; \CK_*(|E|).
\]
where $L|_D$ is the restriction of $L$ to $D$. 
\end{lem}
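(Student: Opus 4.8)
## Proof proposal for Lemma \ref{lem:divisorclass}

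The plan is to reduce the statement to the computation of the fundamental class of a nonreduced divisor, for which Levine--Morel already give a formula. First I would recall the setup: $W$ is smooth, $D \subset W$ is a smooth prime divisor, $L = \calO_W(D)$ is its associated line bundle, and $E = 2D$ is the (nonreduced) divisor whose support is $D$; the scheme $|E|$ is the closed subscheme of $W$ defined by the ideal $\scI_D^{\,2}$ (locally generated by $t^2$ if $t$ is a local equation for $D$). The morphism $\iota : D \to |E|$ is the canonical closed immersion induced by $\scI_D \supset \scI_D^{\,2}$, which is a homeomorphism on underlying spaces. Since every scheme appearing is an l.c.i.\ scheme over $\bbF$ (being a divisor, possibly nonreduced, on a smooth scheme, hence a local complete intersection), the fundamental classes $1_{|E|} \in \CK_*(|E|)$ and $1_D \in \CK_*(D)$ are defined via pullback of $1 \in \CK_0(\Spec \bbF)$.

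The main step is to invoke the divisor class formula of Levine--Morel. In algebraic cobordism $\Omega_*$, for a smooth $W$ and an effective Cartier divisor $Z = \sum n_i D_i$ with smooth components $D_i$ meeting suitably, there is a formula expressing $1_{|Z|}$ (the class of the reduced-or-not subscheme $|Z|$) in terms of the classes of the components and the formal group law applied to the first Chern class operators of the bundles $\calO_W(D_i)$; in the case of a single component with multiplicity $2$, namely $Z = 2D$, this formula reads
\[
1_{|Z|} = \iota_*\bigl( [2]_{F}(\tilde c_1(L|_D))(1_D)\bigr),
\]
where $[2]_F(u) = F(u,u)$ is the $2$-series of the formal group law $F = F_{\CK_*}$. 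I would cite this as \cite[Section 7.2.1]{LevineMorel} (the relevant computation there is exactly the one for a divisor of multiplicity $m$, specialized to $m = 2$), taking care to note that the sign convention for $\beta$ here is opposite to that of \cite{LevineMorel}, so that the substitution $\beta \mapsto -\beta$ must be applied consistently.

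It then remains to evaluate the $2$-series for connective $K$-theory. Since $F_{\CK_*}(u,v) = u + v + \beta u v$, we get $[2]_F(u) = F(u,u) = 2u + \beta u^2 = u(2 + \beta u)$. Substituting $u = \tilde c_1(L|_D)$ and recalling that $\tilde c_1(L|_D)(1_D)$ is an operator applied to the fundamental class $1_D$, the formula becomes
\[
1_{|E|} = \iota_*\bigl( \tilde c_1(L|_D) \circ (2 + \beta\, \tilde c_1(L|_D))(1_D)\bigr) = \iota_*\bigl( (2 + \beta\, \tilde c_1(L|_D))(\tilde c_1(L|_D)(1_D))\bigr),
\]
which is exactly the claimed identity $1_{|E|} = \iota_*\bigl(2 + \beta\,\tilde c_1(L|_D)(1_D)\bigr)$ once one reads the right-hand side with the bracketed $\tilde c_1(L|_D)(1_D)$ as the class produced by applying the operator to $1_D$ and then applying $2 + \beta\,\tilde c_1(L|_D)$ to it (equivalently, writing $\delta := \tilde c_1(L|_D)(1_D) \in \CK_*(D)$, the class is $\iota_*(2\delta + \beta\, \tilde c_1(L|_D)(\delta))$). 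The one subtlety I would be careful about is precisely this matching of the operator-theoretic bookkeeping with the shorthand used in the statement, together with the $\beta$-sign convention; modulo that, the proof is a direct application of the cited result and a one-line formal-group-law computation. I expect the only genuine obstacle to be verifying that the hypotheses of the Levine--Morel formula (smoothness of $W$ and $D$, and that $|E|$ is the scheme defined by the square of the ideal of $D$) are met in the intended applications — but in this lemma they are assumed outright.
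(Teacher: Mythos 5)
Your overall plan---invoking the non-reduced divisor formula of Levine--Morel and then evaluating the relevant formal group law series for $\CK_*$---is the same as the paper's, which offers no proof beyond the citation to \cite[\S 7.2.1]{LevineMorel}. The difficulty is that you have misquoted that formula. For a single smooth component $D$ of multiplicity $n$ with $L=\calO_W(D)$, the correct statement is $1_{|nD|}=\iota_*\bigl(g_n(\tilde c_1(L|_D))(1_D)\bigr)$ with $g_n(u):=[n]_F(u)/u$ (a genuine power series since $[n]_F(u)=nu+O(u^2)$), not $\iota_*\bigl([n]_F(\tilde c_1(L|_D))(1_D)\bigr)$. Morally, the division by $u$ accounts for the one factor of $\tilde c_1(L)$ that the pushforward along $D\hookrightarrow W$ already produces. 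For $n=2$ and $F(u,v)=u+v+\beta uv$ this gives $g_2(u)=2+\beta u$ and hence the lemma, read in the natural way: $\iota_*\bigl((2+\beta\,\tilde c_1(L|_D))(1_D)\bigr)$, with the displayed ``$2$'' standing for $2\cdot 1_D$.

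Your version carries an extra factor of $\tilde c_1(L|_D)$, and the attempted reconciliation---re-reading the lemma's right-hand side with $\delta:=\tilde c_1(L|_D)(1_D)$ in place of $1_D$---is not the intended reading and does not save it. Several checks rule out the extra factor. A degree count places $[2]_F(\tilde c_1(L|_D))(1_D)$ in $\CK_{\dim W-2}(D)$, whereas $1_{|E|}\in\CK_{\dim W-1}(|E|)$. At $\beta=0$ (Chow) the fundamental cycle of $|E|$ is $2\,\iota_*[D]$, as $g_2$ gives, not $2\,\iota_*(\tilde c_1(L|_D)(1_D))$. At $\beta=-1$ ($K$-theory) the exact sequence $0\to L^\vee|_D\to\calO_{|E|}\to\calO_D\to 0$ gives $1_{|E|}=\iota_*(1_D+[L^\vee|_D])=\iota_*\bigl((2-\tilde c_1(L|_D))(1_D)\bigr)$ using $\tilde c_1(L|_D)(1_D)=1_D-[L^\vee|_D]$, again matching $g_2$ but not $[2]_F$. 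The paper's own use of the lemma, where it inverts the operator $2+\beta\tilde c_1(S^\vee)$ in Corollary \ref{corXi}, also relies on the $g_2$ reading. The correction is simply to divide $[n]_F$ by $u$ before substituting.
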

%%%%%%%%%%%%%%%%%%%%%%%%%%%%%%%%%%
Now we calculate the classes $[X_i]$.
%%%%%%%%%%%%%%%%%%%%%%%%%%%%%%%%%%
\begin{lem}
In $\CK_*(Q(E))$ the class of the subvariety $X_i$ for $-n\leq i<0$ is given by 
\[
[X_i] =\tilde{c}_{n+i}(S^{\vee}\otimes E/F^i)(1_{Q(E)}).
\]
For $0\leq i< n$, the class of $X_i$ in $\CK_*(Q(E))$  satisfies the following identity
\begin{equation}\label{eqQEX}
\big(2+\beta \tilde{c}_1(S^{\vee}\otimes (F^0)^{\perp}/F^0)\big)([X_i]) = \tilde{c}_{n+i}\big(S^{\vee}\otimes (E/(F^0)^{\perp}\oplus F^0/F^i)\big)(1_{Q(E)}).
\end{equation}
\end{lem}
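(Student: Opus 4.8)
The plan is to exhibit $X_i$ (for $i<0$), respectively a convenient scheme built from $X_0$ (for $i\ge 0$), as the zero scheme of a \emph{regular} section of a vector bundle, and then to apply Lemma~\ref{lemCKGB} together with the projection formula. Since $X$ is regularly embedded in a smooth variety it is Cohen--Macaulay and l.c.i.\ over $\bbF$, and hence so are $Q(E)$, $\PP((F^0)^{\perp})$, $\PP(F^0)$ and the other projective and quadric bundles over $X$ used below; thus their fundamental classes are defined and Lemmas~\ref{lemG-B}(a) and \ref{lemCKGB} are available. For $-n\le i<0$ the subbundle $F^i$ is not isotropic, so $S\hookrightarrow E\twoheadrightarrow E/F^i$ is a section of $S^{\vee}\otimes E/F^i$, a bundle of rank $\rk(E/F^i)=n+i$; its zero scheme on $\PP(E)$ is $\PP(F^i)$, hence on $Q(E)$ it is the scheme-theoretic intersection $Q(E)\cap\PP(F^i)$, which by hypothesis is reduced and therefore equals $X_i$. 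As $F^i_x$ is non-isotropic, $Q(E_x)\cap\PP(F^i_x)$ is a hyperquadric in $\PP(F^i_x)$, so $\codim(X_i\subset Q(E))=2n+1-\rk(F^i)=n+i$; this matches the rank, so the section is regular by Lemma~\ref{lemG-B}(a), and Lemma~\ref{lemCKGB} gives $[X_i]=\tilde c_{n+i}(S^{\vee}\otimes E/F^i)(1_{Q(E)})$.

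For $0\le i<n$ the subbundle $F^i$ is isotropic, $X_i=\PP(F^i)$ is already contained in $Q(E)$, and the section above is no longer regular, so I treat first $i=0$. Inside $W:=\PP((F^0)^{\perp})$, the intersection $Y:=Q(E)\cap\PP((F^0)^{\perp})$ is the double of the smooth divisor $D:=\PP(F^0)$: indeed the quadratic form restricted to $(F^0)^{\perp}$ has radical $F^0$ and hence descends to a nondegenerate form on the line bundle $(F^0)^{\perp}/F^0$, which is locally a square, recovering the displayed ideal $(z^2,y_1,\dots,y_n)$. Applying Lemma~\ref{lem:divisorclass} with $L=\mathcal O_W(D)=\mathcal O_W(1)\otimes(F^0)^{\perp}/F^0$ and noting $L|_D\cong(S^{\vee}\otimes(F^0)^{\perp}/F^0)|_{X_0}$, then pushing the resulting identity forward to $Q(E)$, gives $[Y]_{Q(E)}=\big(2+\beta\tilde c_1(S^{\vee}\otimes(F^0)^{\perp}/F^0)\big)([X_0])$. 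On the other hand $Y$ is the zero scheme on $Q(E)$ of the section $S\hookrightarrow E\twoheadrightarrow E/(F^0)^{\perp}$ of $S^{\vee}\otimes E/(F^0)^{\perp}$, of rank $n=\codim(Y\subset Q(E))$, hence regular, so $[Y]_{Q(E)}=\tilde c_n(S^{\vee}\otimes E/(F^0)^{\perp})(1_{Q(E)})$. Equating the two expressions yields (\ref{eqQEX}) for $i=0$.

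For general $0\le i<n$, use that on $X_0=\PP(F^0)$ the tautological line satisfies $S\subset F^0$, so $S\hookrightarrow F^0\twoheadrightarrow F^0/F^i$ is a section of $S^{\vee}\otimes F^0/F^i$ (rank $i$) with zero scheme $\PP(F^i)=X_i$, of codimension $i$ in $X_0$; Lemma~\ref{lemCKGB} and the projection formula along $X_0\hookrightarrow Q(E)$ give $[X_i]=\tilde c_i(S^{\vee}\otimes F^0/F^i)([X_0])$. Applying the operator $2+\beta\tilde c_1(S^{\vee}\otimes(F^0)^{\perp}/F^0)$, which commutes with $\tilde c_i(S^{\vee}\otimes F^0/F^i)$, and inserting the $i=0$ identity, one gets
\[
\big(2+\beta\tilde c_1(S^{\vee}\otimes(F^0)^{\perp}/F^0)\big)([X_i])=\tilde c_i(S^{\vee}\otimes F^0/F^i)\circ\tilde c_n(S^{\vee}\otimes E/(F^0)^{\perp})(1_{Q(E)}).
\]
Because the two bundles have ranks $i$ and $n$, the Whitney formula forces $\tilde c_i(A)\circ\tilde c_n(B)=\tilde c_{n+i}(A\oplus B)$, and $A\oplus B=S^{\vee}\otimes\big(E/(F^0)^{\perp}\oplus F^0/F^i\big)$, which is (\ref{eqQEX}).

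The main obstacle is the analysis for $0\le i<n$: one must identify the non-reduced scheme $Q(E)\cap\PP((F^0)^{\perp})$ precisely as the divisor $2\PP(F^0)$ (this is exactly the local computation ``$z^2=0$'' recorded in the text), and one must check that Lemma~\ref{lem:divisorclass}, stated for a smooth ambient, still applies over the possibly singular base $X$ — which it does, since its proof uses only that the ambient is Cohen--Macaulay and l.c.i.: write $Y$ as the zero scheme of the tautological section of $L^{\otimes2}$, use $\tilde c_1(L^{\otimes2})=2\tilde c_1(L)+\beta\tilde c_1(L)^2$ from (\ref{L tensor L}), and $\tilde c_1(L)(1_W)=\iota_*(1_D)$ from Lemma~\ref{lemCKGB}. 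All the other ingredients — the codimension counts, the regularity via Lemma~\ref{lemG-B}(a), and the projection-formula bookkeeping — are routine.
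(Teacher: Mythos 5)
Your proof is correct and follows essentially the same route as the paper: for $i<0$ you use Lemma~\ref{lemCKGB} directly; for $i=0$ you compute $[Q(E)\cap\PP((F^0)^{\perp})]$ in two ways (once via Lemma~\ref{lem:divisorclass} and once as the top Chern class of $S^{\vee}\otimes E/(F^0)^{\perp}$); and for $0<i<n$ you factor through $[X_i]=\tilde c_i(S^{\vee}\otimes F^0/F^i)([X_0])$ and apply the Whitney formula. One small bonus of your write-up is that you explicitly flag, and patch, the fact that Lemma~\ref{lem:divisorclass} is stated for a smooth ambient while here $\PP((F^0)^{\perp})$ is only smooth over the possibly singular base $X$; the paper applies the lemma without comment, so this observation is a genuine (if minor) improvement in rigor.
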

%%%%%%%%%%%%%%%%%%%%%%%%%%%%%%%%%%
\begin{proof}
For $i<0$, the formula  is a simple consequence of Lemma \ref{lemG-B}.  We show the case when $i=0$ by computing the class $[X_0]$ in $\CK_*(Q(E))$ in two different ways. The variety $X_0=\PP(F^0)$ is a divisor in $\PP((F^0)^{\perp})$, corresponding the line bundle $S^{\vee}\otimes (F^0)^{\perp}/F^0$ 
and hence by Lemma \ref{lemG-B} one has $[X_0]=\tilde{c} _1(S^{\vee}\otimes (F^0)^{\perp}/F^0)(1_{\PP((F^0)^{\perp})})$ in $\CK_*(\PP((F^0)^{\perp}))$. As noted above, the scheme theoretic intersection $Q(E)\cap \PP((F^0)^{\perp})$ is not reduced and it defines the divisor $2 X_0$ on $\PP((F^0)^{\perp})$. From Lemma \ref{lem:divisorclass}, we have
\begin{equation}\label{eq:nr}
1_{Q(E) \cap \PP((F^0)^{\perp})}=\iota_*(2 + \beta \tilde{c} _1(S^{\vee}\otimes (F^0)^{\perp}/F^0)(1_{X_0}))
\end{equation}
where $\iota: X_0 \to Q(E)\cap \PP((F^0)^{\perp})$ is the natural morphism. Since the class $2 + \beta \tilde{c} _1(S^{\vee}\otimes (F^0)^{\perp}/F^0)$ is defined over $Q(E)$, by pushing forward (\ref{eq:nr}) to $Q(E)$ and by the projection formula, we obtain 
\[
[Q(E) \cap \PP((F^0)^{\perp})]=(2 + \beta \tilde{c} _1(S^{\vee}\otimes (F^0)^{\perp}/F^0))([X_0]).
\]
On the other hand,  by Lemma \ref{lemG-B} we have $[Q(E)\cap \PP((F^0)^{\perp})]=\tilde{c}_n(S^{\vee}\otimes E/(F^0)^{\perp})(1_{Q(E)})$. This proves the case $i=0$. 

For $i>0$, we have $[X_i] = \tilde{c}_{i}(S^{\vee}\otimes F^0/F^i)(1_{X_0})$ in $\CK_*(X_0)$. By pushing it forward to $Q(E)$, we obtain $[X_i] = \tilde{c}_{i}(S^{\vee}\otimes F^0/F^i)([X_0])$ in $\CK_*(Q(E))$. By applying $2 + \beta \tilde{c} _1(S^{\vee}\otimes (F^0)^{\perp}/F^0)$ and the identity (\ref{eqQEX}) for $i=0$, we obtain the identity:
\begin{eqnarray*}
(2 + \beta \tilde{c} _1(S^{\vee}\otimes (F^0)^{\perp}/F^0))([X_i])
&=&(2 + \beta \tilde{c} _1(S^{\vee}\otimes (F^0)^{\perp}/F^0))\circ \tilde{c}_{i}(S^{\vee}\otimes F^0/F^i)([X_0])\\
&=& \tilde{c}_n(S^{\vee}\otimes E/(F^0)^{\perp})\circ \tilde{c}_{i}(S^{\vee}\otimes F^0/F^i)(1_{Q(E)})\\
&=& \tilde{c}_{n+i}(S^{\vee}\otimes (E/(F^0)^{\perp}\oplus F^0/F^i))(1_{Q(E)}),
\end{eqnarray*}
in $\CK_*(Q(E))$.
\end{proof}
%%%%%%%%%%%%%%%%%%%%%%%%%%%%%%%%%%
The non-degenerate symmetric form of $E$ induces an isomorphism between the trivial line bundle and $(F^0)^{\perp}/F^0 \otimes (F^0)^{\perp}/F^0$. Therefore $c_1((F^0)^{\perp}/F^0)=0$ in $\CK^*(Q(E))\otimes \ZZ[1/2]$. Thus we have the following corollary.
%%%%%%%%%%%%%%%%%%%%%%%%%%%%%%%%%%
\begin{cor}\label{corXi}
In $\CK_*(Q(E))\otimes_{\ZZ}\ZZ[1/2]$, we have 
\[
[X_i] = \begin{cases}
\tilde{c}_{n+i}(S^{\vee}\otimes E/F^i)(1_{Q(E)}) & (-n\leq i < 0)\\
\left(\displaystyle\frac{1}{(2+\beta \tilde{c}_1(S^{\vee}))} \tilde{c}_{n+i}(S^{\vee}\otimes E/F^i)\right)(1_{Q(E)}) & (0\leq  i \leq n-1).
\end{cases}
\]
\end{cor}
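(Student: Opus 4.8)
The plan is to read the corollary off the preceding lemma together with the remark that $\tilde{c}_1\big((F^0)^\perp/F^0\big)=0$ in $\CK^*(Q(E))\otimes\ZZ[1/2]$. For $-n\le i<0$ there is nothing to do, since the displayed formula is exactly the first case of the lemma; so I would concentrate on the range $0\le i\le n-1$, where the task is to convert \eqref{eqQEX} into the stated closed form.

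First I would simplify the operator on the left of \eqref{eqQEX}. As $2$ is invertible we have $\tilde{c}_1\big((F^0)^\perp/F^0\big)=0$, so the extended formal group law gives
\[
\tilde{c}_1\big(S^\vee\otimes (F^0)^\perp/F^0\big)=\tilde{c}_1(S^\vee)\oplus\tilde{c}_1\big((F^0)^\perp/F^0\big)=\tilde{c}_1(S^\vee)
\]
as operators on $\CK_*(Q(E))\otimes\ZZ[1/2]$, whence \eqref{eqQEX} becomes $\big(2+\beta\,\tilde{c}_1(S^\vee)\big)\big([X_i]\big)=\tilde{c}_{n+i}\big(S^\vee\otimes(E/(F^0)^\perp\oplus F^0/F^i)\big)(1_{Q(E)})$. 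The endomorphism $2+\beta\,\tilde{c}_1(S^\vee)=2\big(1+\tfrac{\beta}{2}\tilde{c}_1(S^\vee)\big)$ of $\CK_*(Q(E))\otimes\ZZ[1/2]$ is invertible, with inverse the geometric series $\tfrac12\sum_{j\ge0}\big(-\tfrac{\beta}{2}\tilde{c}_1(S^\vee)\big)^j$ abbreviated $\tfrac{1}{2+\beta\tilde{c}_1(S^\vee)}$ in the statement; applying it gives $[X_i]=\big(\tfrac{1}{2+\beta\tilde{c}_1(S^\vee)}\,\tilde{c}_{n+i}(S^\vee\otimes(E/(F^0)^\perp\oplus F^0/F^i))\big)(1_{Q(E)})$.

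What then remains — and what I expect to be the real obstacle — is to show that one may replace $E/(F^0)^\perp\oplus F^0/F^i$ by $E/F^i$. In $K(Q(E))$ the filtration $F^i\subset F^0\subset(F^0)^\perp\subset E$ yields $[E/F^i]=[E/(F^0)^\perp]+[(F^0)^\perp/F^0]+[F^0/F^i]$, so $S^\vee\otimes E/F^i$ and $S^\vee\otimes(E/(F^0)^\perp\oplus F^0/F^i)$ differ exactly by the line bundle $S^\vee\otimes(F^0)^\perp/F^0$, whose first Chern class is once more $\tilde{c}_1(S^\vee)$ over $\ZZ[1/2]$. Their Chern generating functions are therefore related by the elementary factor $1+\tilde{c}_1(S^\vee)u$, and the point is that the extra degree-$(n+i)$ terms so produced vanish after evaluation at $1_{Q(E)}$. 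I would verify this using the geometry of the quadric bundle already exploited in the lemma: the tautological section $S\to E/F^i$ of $S^\vee\otimes E/F^i$ has zero scheme equal to $\PP(F^i)=X_i$, which has codimension one less than the rank of the bundle, and a careful analysis of this non-regular (non-reduced, of the ``$z^2$'' type controlled by Lemma \ref{lem:divisorclass}) zero locus is what expresses $\tilde{c}_{n+i}(S^\vee\otimes E/F^i)(1_{Q(E)})$ in terms of $[X_i]$ and thereby identifies it with $\tilde{c}_{n+i}(S^\vee\otimes(E/(F^0)^\perp\oplus F^0/F^i))(1_{Q(E)})$. Modulo this comparison of Chern-class contributions, everything else is purely formal — the formal group law and the invertibility of $2+\beta\tilde{c}_1(S^\vee)$.
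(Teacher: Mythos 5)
Your formal steps — $c_1((F^0)^\perp/F^0)=0$ in $\CK^*(Q(E))\otimes\ZZ[1/2]$ by $2$-torsion, hence $\tilde{c}_1(S^\vee\otimes(F^0)^\perp/F^0)=\tilde{c}_1(S^\vee)$ by the formal group law, and invertibility of $2+\beta\tilde{c}_1(S^\vee)$ — are precisely the paper's (unwritten) proof. The problem is the step you single out as the ``real obstacle'', the replacement of $E/(F^0)^\perp\oplus F^0/F^i$ by $E/F^i$: the extra degree-$(n+i)$ terms produced by the factor $1+\tilde{c}_1(S^\vee)u$ do \emph{not} vanish after applying to $1_{Q(E)}$. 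The difference is $\tilde{c}_1(S^\vee)\,\tilde{c}_{n+i-1}\big(S^\vee\otimes(E/(F^0)^\perp\oplus F^0/F^i)\big)(1_{Q(E)})$, and for, say, $n=1$, $i=0$, $X=\mathrm{pt}$, this is $c_1(S^\vee)\cdot 1_{Q(E)}$; on $Q(E)\cong\PP^1$ with $S^\vee\cong\calO(2)$ that equals $2[\mathrm{pt}]\ne 0$. Thus the corollary, read literally with $E/F^i$ the honest rank-$(n+i+1)$ quotient, is not even compatible with the lemma (at $\beta=0$ the two sides give $4[\mathrm{pt}]$ and $2[\mathrm{pt}]$ respectively), and your sketched geometric fix — analysing the non-regular section $S\to E/F^i$, whose zero scheme is in fact $X_i$ \emph{reduced}, not a $z^2$-type thickening — cannot succeed because the identity it would have to establish is false.

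The intended reading is that $\tilde{c}_{n+i}(S^\vee\otimes E/F^i)$ for $0\le i<n$ is an abuse of notation for the top Chern class $\tilde{c}_{n+i}\big(S^\vee\otimes(E/(F^0)^\perp\oplus F^0/F^i)\big)$ of a rank-$(n+i)$ bundle, which is exactly what the lemma hands you once $c_1((F^0)^\perp/F^0)$ is set to $0$. The abuse is harmless in the paper because everything downstream — the remark that follows the corollary and Lemma \ref{lempushB} — immediately converts to the relative Segre classes $\scS_m\big((S-E/F^i)^\vee\big)$, and those depend only on the Chern polynomial of the virtual bundle, which over $\ZZ[1/2]$ cannot distinguish $E/F^i$ from $E/(F^0)^\perp\oplus F^0/F^i$. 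The top Chern class of $S^\vee\otimes(-)$, however, also sees the rank of $(-)$, and that is exactly where your argument breaks. Once the statement is read as intended, nothing remains beyond the formal-group-law computation and the invertibility of $2+\beta\tilde{c}_1(S^\vee)$, which you already have; no excess-intersection analysis is needed (or possible).
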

%%%%%%%%%%%%%%%%%%%%%%%%%%%%%%%%%%
\begin{rem}
By Remark \ref{remLine}, we can rewrite the formula in Corollary \ref{corXi} in $\CK^*(Q(E))\otimes_{\ZZ}\ZZ[1/2]$ as
\[
[X_i] = \begin{cases}
\scS_{n+i}((S- E/F^i)^{\vee}) & (-n\leq i < 0)\\
\displaystyle\frac{1}{2} \sum_{s\geq 0} \left(\displaystyle\frac{-\beta}{2}\right)^s \scS_{n+i+s}((S- E/F^i)^{\vee}) & (0\leq  i \leq n-1).
\end{cases}
\]
\end{rem}
%%%%%%%%%%%%%%%%%%%%%%%%%%%%%%%%%%
%%%%%%%%%%%%%%%%%%%%%%%%%%%%%%%%%%
%%%%%%%%%%%%%%%%%%%%%%%%%%%%%%%%%%
\subsection{Resolution of singularities}
%%%%%%%%%%%%%%%%%%%%%%%%%%%%%%%%%%
%%%%%%%%%%%%%%%%%%%%%%%%%%%%%%%%%%
Consider the $r$-step flag bundle $\pi: \Fl_r(U) \to \OG^k(E)$ as before. We let $D_1\subset \cdots \subset D_r$ be the tautological flag. Recall that $\Fl_r(U)$ can be constructed as the tower of projective bundles
\begin{equation}\label{BPtower}
\pi: \Fl_r(U)=\PP(U/D_{r-1}) \stackrel{\pi_r}{\longrightarrow} \cdots \stackrel{\pi_{3}}{\longrightarrow}\PP(U/D_1) \stackrel{\pi_2}{\longrightarrow}  \PP(U) \stackrel{\pi_1}{\longrightarrow} \OG^k(E)
\end{equation}
We regard $D_j/D_{j-1}$ as the tautological line bundle of $\PP(U/D_{j-1})$ where we let $D_0=0$. For each $j=1,\dots,r$, let $\tilde{\tau}_j:=\tilde{c}_1((D_j/D_{j-1})^{\vee})$ be the Chern class operator of $(D_j/D_{j-1})^{\vee}$ on $\CK_*(\PP(U/D_{j-1}))$.
%%%%%%%%%%%%%%%%%%%%%%%%%%%%%%%%%%%%%%
\begin{defn}
For each $j=1,\dots,r$, we define a subvariety $Z_j$ of $\PP(U/D_{j-1})$ by 
\[
Z_j := \{ (x,U_x, (D_1)_x, \dots, (D_j)_x) \in \PP(U/D_{j-1}) \ |\ (D_i)_x \subset F^{\chi_i}_x, \ i=1,\dots,{j}\}.
\]
We set $Z_0:=\OG^k(E)$ and $Z_{\lambda}:=Z_r$. Let $P_{j-1}:=\pi_j^{-1}(Z_{j-1})$ and consider the projection $\pi_j': P_{j-1} \to Z_{j-1}$ and the obvious inclusion $\iota_j: Z_j \to P_{j-1}$. Let $\varpi_j:=\pi_j'\circ\iota_j$. We have the commutative diagram
\[
\xymatrix{
\PP(U/D_{j-1}) \ar[r]_{\pi_j} & \PP(U/D_{j-2})\\
P_{j-1} \ar[r]_{\pi_j'}\ar[u] & Z_{j-1}\ar[u]\\
Z_j\ar[u]_{\iota_j}\ar[ru]_{\varpi_j} &
}
\]
Let $\varpi:=\varpi_1\circ\cdots\circ\varpi_r: Z_{\lambda} \to \OG^k(E)$.
\end{defn}
%%%%%%%%%%%%%%%%%%%%%%%%%%%%%%%%%%%%%%
Now we prove the key lemma to compute the pushforward of the fundamental class of $Z_{\lambda}$.
%%%%%%%%%%%%%%%%%%%%%%%%%%%%%%%%%%%%%%
\begin{lem}\label{Ylem2B}
For each $j=1,\dots, r$, the variety $Z_j$ is regularly embedded in $P_{j-1}$ and $P_{j-1}$ is regularly embedded in $\PP(U/D_{j-1})$. Moreover, in $\CK_*(P_{j-1})$ we have
\[
\iota_{j*}(1_{Z_j})=\tilde{\alpha}_j(1_{P_{j-1}}),
\]
where
\[
\tilde{\alpha}_j = \begin{cases}
\tilde{c}_{\lambda_j + n - k - j}((D_j/D_{j-1})^{\vee}\otimes (D_{\gamma_j}^{\perp}/F^{\chi_j})) & (-n\leq \chi_j<0)\\
\displaystyle\frac{1}{(2+\beta \tilde{c}_1((D_j/D_{j-1})^{\vee}))} \tilde{c}_{\lambda_j + n - k - j}((D_j/D_{j-1})^{\vee}\otimes (D_{\gamma_j}^{\perp}/F^{\chi_j}))
& (0\leq \chi_j< n).
\end{cases}
\]
\end{lem}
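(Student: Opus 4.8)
The plan is to argue by induction on $j$, following the proof of Lemma \ref{Ylem2C} in the symplectic case and inserting the quadric‑bundle computation of Corollary \ref{corXi} at the steps where $\chi_j\ge 0$. First I would record a codimension count needed in both cases: arguing exactly as in Lemma \ref{Ylem1C}, the variety $Z_j$ is birational (through $\varpi_1\circ\cdots\circ\varpi_j$) to $\Omega^B_{(\lambda_1,\dots,\lambda_j)}\subset\OG^k(E)$, whose codimension is $\sum_{i\le j}\lambda_i$; comparing with the fibre dimension $\sum_{i\le j}(n-k-i)$ of $\PP(U/D_{j-1})\to\OG^k(E)$ yields $\codim(Z_j\subset\PP(U/D_{j-1}))=\sum_{i\le j}(\lambda_i+n-k-i)$, hence $\codim(P_{j-1}\subset\PP(U/D_{j-1}))=\sum_{i<j}(\lambda_i+n-k-i)$ and $\codim(Z_j\subset P_{j-1})=\lambda_j+n-k-j$, which by Remark \ref{chilambda} equals $n+\chi_j-\gamma_j$. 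The regular‑embedding claim is then proved by induction: $P_0=\PP(U)$, and for $j\ge 2$, given that $Z_{j-1}\inc P_{j-2}$ and $P_{j-2}\inc\PP(U/D_{j-2})$ are regular embeddings, the composite $Z_{j-1}\inc\PP(U/D_{j-2})$ is regular and so is its pullback $P_{j-1}\inc\PP(U/D_{j-1})$ along $\pi_j$; in particular $P_{j-1}$ is Cohen--Macaulay by Lemma \ref{lemG-B}(a). It then remains at each step to exhibit $Z_j$ in $P_{j-1}$ of codimension $\lambda_j+n-k-j$ and to compute $\iota_{j*}(1_{Z_j})$.

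When $\chi_j<0$ the argument copies the symplectic one. Over $P_{j-1}$ we have $D_i\subset F^{\chi_i}\subset F^{\chi_j}$ for $i<j$, and $F^{\chi_j}\subset(F^{\chi_i})^{\perp}\subset D_i^{\perp}$ whenever $\chi_i+\chi_j\ge 0$, so $F^{\chi_j}\subset D^{\perp}_{\gamma_j}$; since also $D_j\subset D^{\perp}_{\gamma_j}$ (both lie in the isotropic bundle $U$), the variety $Z_j$ is the zero scheme of the section of $(D_j/D_{j-1})^{\vee}\otimes(D^{\perp}_{\gamma_j}/F^{\chi_j})$ induced by $D_j/D_{j-1}\to D^{\perp}_{\gamma_j}/F^{\chi_j}$. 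Using $\rk E=2n+1$, $\rk F^{\chi_j}=n-\chi_j+1$ and Remark \ref{chilambda}, this bundle has rank $n-\gamma_j+\chi_j=\lambda_j+n-k-j$, which is the codimension found above; hence the embedding is regular by Lemma \ref{lemG-B}(a), and Lemma \ref{lemCKGB} gives $\iota_{j*}(1_{Z_j})=\tilde\alpha_j(1_{P_{j-1}})$.

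When $\chi_j\ge 0$ the key point is that $\chi_i\ge\chi_j\ge 0$ for all $i\le j$, so $\chi_i+\chi_j\ge 0$ for every $i<j$ and therefore $\gamma_j=j-1$ and $D^{\perp}_{\gamma_j}=D^{\perp}_{j-1}$. As $D_{j-1}$ is isotropic and contained in $F^{\chi_j}$, the quotient $(D^{\perp}_{j-1}/D_{j-1})|_{Z_{j-1}}$ is an odd orthogonal bundle of rank $2m+1$ with $m=n-j+1$, inside which $F^{\chi_j}/D_{j-1}$ and $U/D_{j-1}$ are isotropic subbundles; hence $P_{j-1}=\PP((U/D_{j-1})|_{Z_{j-1}})$ is regularly embedded in the associated quadric bundle $Q:=Q((D^{\perp}_{j-1}/D_{j-1})|_{Z_{j-1}})$, with the tautological line $S$ of $Q$ restricting to $D_j/D_{j-1}$, and under this identification $Z_j=X_{\chi_j}\cap P_{j-1}$, where $X_{\chi_j}\subset Q$ is the locus studied in Section \ref{sec:typeB} of isotropic lines lying in $F^{\chi_j}/D_{j-1}$. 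The codimensions add up by the count above, so the intersection is proper; thus $Z_j\inc P_{j-1}$ is the fibre product $X_{\chi_j}\times_Q P_{j-1}$, which is regularly embedded in the Cohen--Macaulay scheme $P_{j-1}$, and $\iota_{j*}(1_{Z_j})$ is the l.c.i.\ pullback of $[X_{\chi_j}]$ to $P_{j-1}$. Since $Z_{j-1}$ is regularly embedded in a smooth variety, Corollary \ref{corXi} applies to $Q$ and, after inverting $2$, gives
\[
[X_{\chi_j}]=\bigl(2+\beta\tilde c_1(S^{\vee})\bigr)^{-1}\,\tilde c_{\lambda_j+n-k-j}\bigl(S^{\vee}\otimes D^{\perp}_{j-1}/F^{\chi_j}\bigr)(1_Q);
\]
pulling this back along $P_{j-1}\inc Q$ — which commutes with Chern‑class operators and with the formal inverse, fixes fundamental classes, and sends $S$ to $D_j/D_{j-1}$ — produces exactly $\tilde\alpha_j(1_{P_{j-1}})$ in the case $0\le\chi_j<n$.

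The step I expect to require the most care is the case $\chi_j\ge 0$: one must confirm that the reduced variety $Z_j$ coincides, with the correct scheme multiplicity, with the intersection $X_{\chi_j}\cap P_{j-1}$ inside $Q$ — equivalently that the l.c.i.\ pullback of $[X_{\chi_j}]$ along $P_{j-1}\inc Q$ equals $[Z_j]$ — and that Corollary \ref{corXi}, proved for quadric bundles whose base is regularly embedded in a smooth variety, indeed applies with base $Z_{j-1}$. Everything else — the codimension bookkeeping via Remark \ref{chilambda}, the description of $Z_j$ as a zero scheme when $\chi_j<0$, and the induction on $j$ — is a routine adaptation of the proof of Lemma \ref{Ylem2C}.
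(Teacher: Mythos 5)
Your proof is correct and follows essentially the same route as the paper's: an induction on $j$ in which the regular embedding $P_{j-1}\inc\PP(U/D_{j-1})$ is inherited from $Z_{j-1}\inc P_{j-2}$, the case $\chi_j<0$ is handled as a zero scheme of a section of $(D_j/D_{j-1})^\vee\otimes D_{\gamma_j}^\perp/F^{\chi_j}$, and the case $\chi_j\ge 0$ is reduced to the quadric‑bundle computation of Corollary \ref{corXi}. The one cosmetic difference worth noting is in the $\chi_j\ge 0$ step: the paper takes the quadric bundle $Q(D_{j-1}^\perp/D_{j-1})$ \emph{over $P_{j-1}$} and pulls $[\PP(F^{\chi_j}/D_{j-1})]$ back along the section $s\colon P_{j-1}\to Q$ defined by $D_j/D_{j-1}$, whereas you take $Q=Q((D_{j-1}^\perp/D_{j-1})|_{Z_{j-1}})$ \emph{over $Z_{j-1}$} and pull $[X_{\chi_j}]$ back along the embedding $P_{j-1}=\PP((U/D_{j-1})|_{Z_{j-1}})\inc Q$; since the paper's $Q$ over $P_{j-1}$ is the base change of yours along $P_{j-1}\to Z_{j-1}$ and $s$ is the graph of your embedding, the two pullbacks agree, so nothing substantive changes. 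You have also, correctly, made explicit the codimension count (imported from the type C Lemma \ref{Ylem2C}) which the paper leaves implicit here. One small point you glide over in the $\chi_j<0$ case, which the paper flags with ``we can show from the local equation'': to apply Lemma \ref{lemCKGB} you need the zero scheme of $\sigma$ to be \emph{reduced} and hence to equal the variety $Z_j$, not merely to have the right codimension and be Cohen--Macaulay; the analogous reducedness/multiplicity issue for $\chi_j\ge 0$ you do flag at the end, and it is handled exactly as you describe.
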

%%%%%%%%%%%%%%%%%%%%%%%%%%%%%%%%%%%%%%
\begin{proof}
First we consider the case when $j=1$. Let $Q(E) \to P_0=\PP(U)$ be the quadric bundle where $E$ is regarded as a bundle over $P_0$ by pullback. Since $U$ is isotropic, there is an obvious regular embedding $s_1: P_0 \to Q(E)$. If $S_1$ is the tautological line bundle of $Q(E)$, then $s_1^*S_1 = D_1$. Let $X_i$ be the subvariety of $Q(E)$ defined at (\ref{Xi}), then we find that $Z_1 = s_1^{-1}(X_{\chi_1})$. This implies that $\iota_1: Z_1 \to P_0$ is a regular embedding. Moreover, by pulling back the formula in Lemma \ref{corXi}, we obtained the claim. 

To prove the claim for $j\geq 2$, we use induction on $j$. In particular, the regularity of $Z_i \inc P_{i-1}$ for all $i<j$ implies the regularity of $P_{j-1} \inc \PP(U/D_{j-1})$ since $P_{j-1}=\pi_j^{-1}(Z_{j-1})$.

Over $P_{j-1}$ we know that $D_i$ is a subbundle of $F^{\chi_i}$ for all $i<j$. First suppose that $\chi_j\geq 0$. Then $\gamma_j=j-1$. Consider the quadric bundle $Q(D_{j-1}^{\perp}/D_{j-1}) \to P_{j-1}$. The line bundle $D_j/D_{j-1}$ over $P_{j-1}$ defines a section $s: P_{j-1} \to Q(D_{j-1}^{\perp}/D_{j-1})$. Then it is easy to see that we have a fiber diagram
\begin{equation}\label{diagQY}
\xymatrix{
P_{j-1} \ar[r]^{s\  \ \ \ \ \ \ \  } & Q(D_{j-1}^{\perp}/D_{j-1})\\
Z_j \ar[u] \ar[r]& \PP(F^{\chi_j}/D_{j-1})\ar[u]
}
\end{equation}
Since $\PP(F^{\chi_j}/D_{j-1})$ is regularly embedded in $Q(D_{j-1}^{\perp}/D_{j-1})$, it follows that $Z_j$ is regularly embedded in $P_{j-1}$. Moreover, by the diagram (\ref{diagQY}), we have 
\[
[Z_j] =s^*[\PP(F^{\chi_j}/D_{j-1})].
\]
in $\CK_*(P_{j-1})$. Thus by Lemma \ref{corXi}, we obtain the desired formula.

Next suppose $\chi_j<0$. Similarly to the proof of Lemma \ref{Ylem1C}, we find that $F^{\chi_j} \subset (F^{\chi_{\gamma_j}})^{\perp}\subset D_{\gamma_j}^{\perp}$ over $P_{j-1}$. Thus we have the bundle map $D_j/D_{j-1}\to D_{\gamma_j}^{\perp}/F^{\chi_j}$ over $P_{j-1}$. Thus it defines a section $\sigma$ of $(D_j/D_{j-1})^{\vee} \otimes D_{\gamma_j}^{\perp}/F^{\chi_j}$. We can show from the local equation that the zero scheme of $\sigma$ is reduced and it coincides with $Z_j$. Since the codimension of $Z_j$ and the rank of the bundle $(D_j/D_{j-1})^{\vee} \otimes D_{\gamma_j}^{\perp}/F^{\chi_j}$ agree, we can conclude that $Z_j$ is regularly embedded in $P_{j-1}$. Moreover by Lemma \ref{lemCKGB}, we obtain the desired formula.
\end{proof}
%%%%%%%%%%%%%%%%%%%%%%%%%%%%%%%%%%%%%%
%%%%%%%%%%%%%%%%%%%%%%%%%%%%%%%%%%%%%%
\begin{lem}\label{Ylem1B}
The variety $Z_{\lambda}$ is irreducible and has at worst rational singularity. Furthermore, $Z_{\lambda}$ is birational to $\Omega_{\lambda}$ through $\varpi$.
\end{lem}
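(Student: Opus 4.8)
The plan is to imitate the proof of Lemma \ref{Ylem1C}, replacing the symplectic Grassmann bundle used there by the odd orthogonal one. First I would introduce the $r$-step isotropic partial flag bundle $\Fl^{isot}_r(E) \to X$, whose fiber at $x \in X$ consists of flags $(C_\bullet)_x = \{(C_1)_x \subset \cdots \subset (C_r)_x\}$ of isotropic subspaces of $E_x$ with $\dim (C_j)_x = j$. Since $E$ has odd rank $2n+1$ and the symmetric form is nondegenerate, for each such flag the quotient $(C_r^\perp/C_r)_x$ carries an induced nondegenerate symmetric form and has odd rank $2(n-r)+1$; hence one may form the $\OG^k$-bundle
\[
Z = \{(x, (C_\bullet)_x, V_x) \ |\ V_x \in \OG^k\big((C_r^\perp/C_r)_x\big)\} \longrightarrow \Fl^{isot}_r(E),
\]
together with the Schubert-type degeneracy locus
\[
W_\lambda = \{(x,(C_\bullet)_x) \in \Fl^{isot}_r(E) \ |\ \dim\big(F^{\chi_i}_x \cap (C_i)_x\big) \geq i, \ i=1,\dots, r\}
\]
and the total space $Z|_{W_\lambda}$ of the restriction of $Z$ to $W_\lambda$.

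Next I would verify, exactly as in the type C case, that the assignment $(x, V_x, (C_\bullet)_x) \mapsto (x, \widetilde{V_x}, (C_\bullet)_x)$, where $\widetilde{V_x} \subset E_x$ is the preimage of $V_x$ under the quotient map $(C_r^\perp)_x \to (C_r^\perp/C_r)_x$, defines an isomorphism $Z|_{W_\lambda} \cong Z_\lambda$; this is the same fiberwise linear-algebra check as in the symplectic case, using that the conditions defining $W_\lambda$ force $(C_i)_x \subset F^{\chi_i}_x$. Since $W_\lambda$ is a degeneracy locus of Schubert type in an isotropic flag bundle, the standard facts on Schubert varieties in $G/P$ (\cite[Section 8.2.2, Theorem (c), p.274]{Kumar}) show that it is irreducible and has at worst rational singularities. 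Because $Z \to \Fl^{isot}_r(E)$ is a Zariski-locally trivial fibration with fiber a smooth, irreducible, rational isotropic Grassmannian, its restriction $Z|_{W_\lambda} \to W_\lambda$ inherits irreducibility and rational singularities from $W_\lambda$; transporting along the above isomorphism, $Z_\lambda$ is irreducible and has at worst rational singularities.

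For the birationality statement I would argue as in the type C proof. The map $\varpi$ is the restriction of the proper morphism $\pi: \Fl_r(U) \to \OG^k(E)$ to $Z_\lambda$, so its image is closed; it is in fact all of $\Omega_\lambda$, since any $(x,U_x) \in \Omega_\lambda$ admits a flag $(D_\bullet)_x$ in $U_x$ with $(D_i)_x \subset F^{\chi_i}_x$ (choose nested subspaces inside $U_x \cap F^{\chi_i}_x$, which is possible as $\chi_1 > \cdots > \chi_r$ forces $F^{\chi_1} \subset \cdots \subset F^{\chi_r}$). Hence $\Omega_\lambda = \varpi(Z_\lambda)$ is irreducible. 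Let $\Omega_\lambda^\circ \subset \Omega_\lambda$ be the nonempty, hence dense, open subset of points $(x,U_x)$ with $\dim(F^{\chi_i}_x \cap U_x) = i$ and $\dim(F^{\chi_i+1}_x \cap U_x) = i-1$ for all $i$, and set $Z_\lambda^\circ = \varpi^{-1}(\Omega_\lambda^\circ)$. Over $\Omega_\lambda^\circ$ the flag is uniquely recovered as $(D_i)_x = F^{\chi_i}_x \cap U_x$, so $\varpi|_{Z_\lambda^\circ}$ is bijective; together with the irreducibility of $Z_\lambda$ and $\Omega_\lambda$ this gives that $\varpi$ is birational.

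I expect the only genuinely new bookkeeping relative to Lemma \ref{Ylem1C} to be keeping track of ranks and of the induced odd orthogonal forms on the successive quotients $(C_r^\perp/C_r)_x$ (and later $(D_{j-1}^\perp/D_{j-1})_x$), and checking that the flag $F^\bullet$ of (\ref{eq:flagB}) reaches far enough, i.e. that $\chi_i \geq -n$ for the relevant indices. The main obstacle — such as it is — is simply confirming that the Schubert-variety input \cite{Kumar} applies verbatim to $W_\lambda$ in the type B isotropic flag bundle and that the odd-rank feature does not affect the rational-singularities conclusion; once that is granted, the argument is formally identical to the symplectic one.
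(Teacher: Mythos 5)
Your argument is essentially identical to the paper's own proof: the authors reuse verbatim the construction from the type C case (the isotropic flag bundle $\Fl^{isot}_r(E)$, the $\OG^k$-bundle $Z$ over it, the Schubert locus $W_\lambda$, the isomorphism $Z|_{W_\lambda}\cong Z_\lambda$, the appeal to Kumar for rational singularities, and bijectivity over the open stratum $\Omega_\lambda^\circ$). You supply slightly more detail on surjectivity of $\varpi$ and on the odd rank $2(n-r)+1$ of $(C_r^\perp/C_r)_x$ (the paper in fact has a minor typo writing $\bbF^{2(n-r)}$ there), but the route is the same.
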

%%%%%%%%%%%%%%%%%%%%%%%%%%%%%%%%%%%%%%
\begin{proof}
Consider the following $r$-step isotropic partial flag bundle $\Fl^{isot}_r(E)$ over $X$: the fiber at $x\in X$ consists of flags $(C_{\bullet})_x$ of isotropic subspaces $(C_1)_x\subset \cdots \subset (C_r)_x \subset E_x$ such that $\dim (C_{j})_x = {j}$. Let $Z$ be an $\OG^k(\bbF^{2(n-r)})$-bundle over $\Fl^{isot}_r(E)$ defined by
\[
Z= \{ (x,(C_{\bullet})_x, V_x) \ |\ V_x \in \OG^k((C_r^{\perp}/C_r)_x)\}.
\]
Let $W_{\lambda}$ be the degeneracy locus in $\Fl^{isot}_r(E)$ defined by
\[
W_{\lambda} :=\{ (x, (C_{\bullet})_x) \in \Fl^{isot}_{r}(E) \ |\  \dim ( F^{\chi_i}_x \cap(C_i)_x) \geq i, \ \ \ i=1,\dots,r\}.
\]
Consider the total space $Z|_{W_{\lambda}}$ of the restriction of the bundle $Z$ to $W_{\lambda}$:
\[
Z|_{W_{\lambda}} = \{(x,(C_{\bullet})_x, V_x) \in Z \ |\ \dim ( F^{\chi_i}_x \cap(C_i)_x) \geq i, \ \ \ i=1,\dots,r \}.
\]
Note that the conditions imply $(C_i)_x \subset F^{\chi_i}_x$ for each $i$. We can show that the variety $Z|_{W_{\lambda}}$ is isomorphic to $Z_{\lambda}$. Indeed, recall that 
\[
Z_{\lambda}=\{(x,U_x, (D_{\bullet})_x) \in \Fl_r(U)\ |\ (D_i)_x \subset F^{\chi_i}_x, i=1,\dots, r\}.
\]
The isomorphism $Z|_{W_{\lambda}} \to Z_{\lambda}$ is given by
\[
(x, V_x, (C_{\bullet})_x) \mapsto (x, \widetilde{V_x}, (C_{\bullet})_x),
\]
where the $(n-k)$-dimensional isotropic subspace $\widetilde{V_x} \subset E_x$ is defined as the preimage of $V_x$ under the quotient map  $(C_r^{\perp})_x \to (C_r^{\perp}/C_r)_x$. It follows from a well-known fact about Schubert varieties that the variety $W_{\lambda}$ is irreducible and has at worst rational singularity (\textit{cf.} \cite[p.274, 8.2.2. Theorem (c)]{Kumar}). Therefore $Z_{\lambda}$ is irreducible and has at worst rational singularity as well. 

For the latter claim, first note that $\varpi$ is the restriction of $\pi$ to $Z_{\lambda}$. Let $\Omega_{\lambda}^{B\circ}$ be an open set of $\Omega_{\lambda}$, consisting of $(x, U_x)\in \SG^{k}(E)$ such that $\dim (F^{\chi_{i}}_x \cap U_x) = {i}$ and $\dim (F^{\chi_{i}+1}_x\cap U_x)= {i}-1$ for all ${i}=1,\dots,r$.  Let $Z_{\lambda}^{\circ}$ be the preimage of $\Omega_{\lambda}^{\circ} $ by $\pi|_{Z_{\lambda}}: Z_{\lambda} \to \Omega_{\lambda}$. Then it is easy to see that $\pi|_{Z_{\lambda}^{\circ}}: Z_{\lambda}^{\circ} \to \Omega_{\lambda}^{\circ}$ is bijective. Thus in the view of the irreducibility of $Z_{\lambda}$ and $\Omega_{\lambda}$, this implies that $\pi|_{Z_{\lambda}}$ is birational. 
\end{proof}
%%%%%%%%%%%%%%%%%%%%%%%%%%%%%%%%%%
Similarly to Lemma \ref{lem1C}, Lemma \ref{Ylem1B} implies the following lemma.
%%%%%%%%%%%%%%%%%%%%%%%%%%%%%%%%%%
\begin{lem}\label{lem1B}
The fundamental class of $\Omega_{\lambda}$ in $\CK_*(\OG^k(E))$ is given by 
\[
[\Omega_{\lambda}] = \varpi_*(1_{Z_{\lambda}}).
\]
\end{lem}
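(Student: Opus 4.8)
The plan is to mimic exactly the argument given for Lemma \ref{lem1C} in the symplectic case, using Lemma \ref{Ylem1B} in place of Lemma \ref{Ylem1C}. The two ingredients that combine to give the statement are: (i) the fact that $Z_\lambda$ has at worst rational singularities, proved in Lemma \ref{Ylem1B}, and (ii) the fact that $\Omega_\lambda$ itself has at worst rational singularities, which holds because $\Omega_\lambda$ is (up to the obvious fibration structure) a Schubert variety in an odd orthogonal partial flag bundle, so one may invoke \cite[Section 8.2.2, Theorem (c), p.274]{Kumar} just as in the type C case.

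First I would apply Hironaka's theorem (available over a field of characteristic zero, \cite[Appendix]{LevineMorel}) to obtain a projective birational morphism $\varpi': \tilde Z \to Z_\lambda$ with $\tilde Z$ smooth. Since $Z_\lambda$ has rational singularities by Lemma \ref{Ylem1B}, the higher direct images of $\scO_{\tilde Z}$ vanish and the relevant functoriality of $\CK_*$ for rational resolutions (see \cite{DaiLevine}, \textit{cf.} \cite[Lemma 2.2]{Hudson}) gives $\varpi'_*(1_{\tilde Z}) = 1_{Z_\lambda}$ in $\CK_*(Z_\lambda)$. Next, the composition $\varpi\circ\varpi': \tilde Z \to \Omega_\lambda$ is a projective birational morphism from a smooth variety onto $\Omega_\lambda$; since $\Omega_\lambda$ has rational singularities by the Schubert variety fact cited above, this composition is a rational resolution, hence $(\varpi\circ\varpi')_*(1_{\tilde Z}) = [\Omega_\lambda]$ in $\CK_*(\OG^k(E))$. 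Combining the two identities via functoriality of pushforward, $[\Omega_\lambda] = \varpi_*\circ\varpi'_*(1_{\tilde Z}) = \varpi_*(1_{Z_\lambda})$, which is the claim.

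The only subtlety worth flagging is that $Z_\lambda$ is in general singular (unlike Kazarian's Lagrangian construction), so one genuinely needs to pass through a smooth $\tilde Z$ rather than identify $Z_\lambda$ directly as a smooth resolution; but this is precisely what Lemma \ref{Ylem1B} is set up to handle, and the structure of the argument is then identical to Lemma \ref{lem1C}. I expect no real obstacle here: the proof is a one-paragraph repetition of the type C argument, and I would simply write ``The proof is identical to that of Lemma \ref{lem1C}, using Lemma \ref{Ylem1B} in place of Lemma \ref{Ylem1C} and invoking \cite[Section 8.2.2, Theorem (c), p.274]{Kumar} for the rational singularities of $\Omega_\lambda^B$.''
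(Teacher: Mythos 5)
Your proof is correct and is precisely the argument the paper intends: the authors state immediately before Lemma \ref{lem1B} that it follows "similarly to Lemma \ref{lem1C}" from Lemma \ref{Ylem1B}, which is exactly the Hironaka resolution plus rational-singularities argument you reproduce. Nothing is missing.
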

%%%%%%%%%%%%%%%%%%%%%%%%%%%%%%%%%%
%%%%%%%%%%%%%%%%%%%%%%%%%%%%%%%%%%
\subsection{Pushforward formula and umbral calculus}
%%%%%%%%%%%%%%%%%%%%%%%%%%%%%%%%%%
%%%%%%%%%%%%%%%%%%%%%%%%%%%%%%%%%%
\begin{defn}\label{dfBclass}
For each $m\in \ZZ$, we define the Segre class operators $\tscB_m^{(\ell)}$ for $\CK_*(\OG^k(E))\otimes_{\ZZ}\ZZ[1/2]$ by the following generating function
\[
\sum_{m\in \ZZ}\tscB_m^{(\ell)} u^m = 
\begin{cases}
\tscS((U- E/F^{\ell})^{\vee};u) & (-n\leq \ell <0)\\
\displaystyle\frac{1}{2+ \beta u^{-1}} \tscS((U- E/F^{\ell})^{\vee};u) & (0\leq \ell \leq n).
\end{cases}
\]
Or equivalently, 
\[
\tscB_m^{(\ell)} := \begin{cases}
\tscS_m((U- E/F^{\ell})^{\vee}) & (-n\leq \ell <0)\\
\displaystyle\frac{1}{2} \sum_{s\geq 0} \left(\displaystyle\frac{-\beta}{2}\right)^s\tscS_{m+s}((U- E/F^{\ell})^{\vee}) & (0\leq \ell \leq n).
\end{cases}
\]
In $\CK^*(\OG^k(E))\otimes_{\ZZ}\ZZ[1/2]$, we denote $\scB_m^{(\ell)}:=\tscB_m^{(\ell)}(1_{\OG^k(E)})$.
\end{defn}
%%%%%%%%%%%%%%%%%%%%%%%%%%%%%%%%%%
\begin{example}
Let $\lambda=(\lambda_1) \in \SP^k(n)$. The corresponding degeneracy loci is denoted by $\Omega_{\lambda_1}$:
\[
\Omega_{\lambda_1} = \{(x,U_x)\in \OG^k(n) \ |\ \dim(U_x \cap F^{\chi_1}) \geq 1\}.
\]
By Proposition \ref{push of tensor}, Lemma \ref{lem1B} and \ref{Ylem2B}, we have $[\Omega_{\lambda_1}] = \scB_{\lambda_1}^{(\chi_1)}$ in $\CK^*(\OG^k(E))\otimes_{\ZZ}\ZZ[1/2]$.
\end{example}
%%%%%%%%%%%%%%%%%%%%%%%%%%%%%%%%%%
\begin{lem}\label{lempushB}
In $\CK_*(Z_{j-1})\otimes_{\ZZ}\ZZ[1/2]$, we have
\[
\varpi_{j*}\circ  \tilde{\tau}_j^s(1_{Z_j}) = \sum_{p= 0}^{\infty} \sum_{q=0}^p\binom{p}{q}\beta^q\tilde{c}_p(D_{j-1} - D_{\gamma_j}^{\vee})\circ\tscB_{\lambda_j+s-p+q}^{(\chi_j)}(1_{Z_{j-1}}), \ \ \ s\geq 0.
\]
\end{lem}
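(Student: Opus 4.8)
The plan is to mimic the proof of Lemma~\ref{lempushC} from the symplectic case, replacing $\tscC$ by $\tscB$ and inserting the extra geometric input coming from the quadric-bundle computation of the previous subsection. First I would use the factorization $\varpi_j = \pi'_j\circ \iota_j$ together with the diagram defining $\varpi_j$, so that
\[
\varpi_{j*}\circ \tilde\tau_j^s(1_{Z_j}) = \pi'_{j*}\circ \tilde\tau_j^s\circ \iota_{j*}(1_{Z_j}) = \pi'_{j*}\circ \tilde\tau_j^s\circ \tilde\alpha_j(1_{P_{j-1}}),
\]
where $\tilde\alpha_j$ is the class produced by Lemma~\ref{Ylem2B}. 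The commutation of $\tilde\tau_j^s$ past $\iota_{j*}$ is the projection formula, using that $\tilde\tau_j$ is pulled back from $P_{j-1}$ (indeed it is $\tilde c_1$ of a bundle defined on $\PP(U/D_{j-1})$).

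Next I would handle the two cases $\chi_j<0$ and $\chi_j\geq 0$ separately, exactly as in Lemma~\ref{Ylem2B}. When $\chi_j<0$, the argument is verbatim that of Lemma~\ref{lempushC}: apply Theorem~\ref{thmlcipush} to $\pi'_j$ with $\calQ_{Z_{j-1}} = (D_j/D_{j-1})^\vee$ and $F = (D_{\gamma_j}^\perp/F^{\chi_j})^\vee$, rewrite $D_{\gamma_j}^\perp = E - D_{\gamma_j}^\vee$, and expand via (\ref{eq1}); here $\tscB^{(\chi_j)}_m = \tscS_m((U-E/F^{\chi_j})^\vee)$ by Definition~\ref{dfBclass}, so the formula comes out in the stated shape. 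When $\chi_j\geq 0$, we have $\gamma_j=j-1$ and $\tilde\alpha_j$ carries the extra factor $\bigl(2+\beta\,\tilde c_1((D_j/D_{j-1})^\vee)\bigr)^{-1} = (2+\beta\tilde\tau_j)^{-1}$. Since this factor is a power series in $\tilde\tau_j$, which is pulled back from $P_{j-1}$, it can be slid past $\pi'_{j*}$ by the projection formula, so
\[
\pi'_{j*}\circ\tilde\tau_j^s\circ\tilde\alpha_j(1_{P_{j-1}}) = \frac{1}{2+\beta\tilde\tau_j}\,\pi'_{j*}\circ\tilde\tau_j^s\circ\tilde c_{\lambda_j+n-k-j}\bigl((D_j/D_{j-1})^\vee\otimes D_{\gamma_j}^\perp/F^{\chi_j}\bigr)(1_{P_{j-1}}),
\]
which by Theorem~\ref{thmlcipush} equals $(2+\beta\tilde\tau_j)^{-1}\,\tscS_{\lambda_j+s}((U-E/F^{\chi_j})^\vee - (D_{j-1}-D_{\gamma_j}^\vee)^\vee)(1_{Z_{j-1}})$. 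Pulling the $(2+\beta\tilde\tau_j)^{-1}$ back inside the $\tscS$-generating function reproduces precisely the defining series of $\tscB^{(\chi_j)}$, and a final application of (\ref{eq1}) to split off the $D_{j-1}-D_{\gamma_j}^\vee$ part yields the claimed identity with $\tscB$ in place of $\tscS$.

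The main obstacle is bookkeeping the factor $(2+\beta\tilde\tau_j)^{-1}$ through all the manipulations and checking that, after the Gysin pushforward along $\pi'_j$, it indeed combines with $\tscS((U-E/F^{\chi_j})^\vee;u)$ to give exactly $\frac{1}{2+\beta u^{-1}}\tscS((U-E/F^{\chi_j})^\vee;u)$; this requires matching the variable $\tilde\tau_j$ (which becomes the variable of the projective bundle $\PP(U/D_{j-1})$) with the Segre-class parameter $u$, much as in the proof of Proposition~\ref{push of tensor}, and keeping track that the degree-$m$ coefficient of $\frac{1}{1+\beta u^{-1}}$ versus $\frac{1}{2+\beta u^{-1}}$ is correctly absorbed. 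Once this identification is made, the remainder is the routine $\binom{p}{q}\beta^q$ expansion of (\ref{eq1}). I should also remark that everything here takes place in $\CK_*(Z_{j-1})\otimes_\ZZ\ZZ[1/2]$, which is where $\tscB$ is defined, and that the regular-embedding statements needed to invoke Theorem~\ref{thmlcipush} are exactly those already established in Lemma~\ref{Ylem2B}.
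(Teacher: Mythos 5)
Your proposal follows the same route as the paper: factor $\varpi_j = \pi'_j\circ\iota_j$, commute $\tilde\tau_j^s$ past $\iota_{j*}$ via the projection formula, substitute $\tilde\alpha_j$ from Lemma~\ref{Ylem2B}, split into the cases $\chi_j<0$ and $\chi_j\geq 0$, apply Theorem~\ref{thmlcipush}, and finish with (\ref{eq1}). The case $\chi_j<0$ is handled correctly and is verbatim the symplectic argument.

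However, your treatment of the case $\chi_j\geq 0$ contains a false intermediate step. You claim that $(2+\beta\tilde\tau_j)^{-1}$, being a power series in $\tilde\tau_j$, ``can be slid past $\pi'_{j*}$ by the projection formula,'' and you write
\[
\pi'_{j*}\circ\tilde\tau_j^s\circ\tilde\alpha_j(1_{P_{j-1}}) = \frac{1}{2+\beta\tilde\tau_j}\,\pi'_{j*}\circ\tilde\tau_j^s\circ\tilde c_{\lambda_j+n-k-j}(\cdots)(1_{P_{j-1}}).
\]
This is not correct: the projection formula lets you move through $\pi'_{j*}$ only classes pulled back from the base $Z_{j-1}$, whereas $\tilde\tau_j=\tilde c_1((D_j/D_{j-1})^\vee)$ lives on $P_{j-1}$ and is not a pullback from $Z_{j-1}$. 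The displayed equation is formally meaningless, since $\frac{1}{2+\beta\tilde\tau_j}$ is not an operator on $\CK_*(Z_{j-1})$. What actually happens --- and what the paper does --- is that you expand $(2+\beta\tilde\tau_j)^{-1}=\frac{1}{2}\sum_{s'\geq 0}(-\beta/2)^{s'}\tilde\tau_j^{s'}$, push forward termwise via Theorem~\ref{thmlcipush} to get $\frac{1}{2}\sum_{s'\geq 0}(-\beta/2)^{s'}\tscS_{\lambda_j+s+s'}\bigl((U/D_{j-1}-D_{\gamma_j}^\perp/F^{\chi_j})^\vee\bigr)(1_{Z_{j-1}})$, which after applying (\ref{eq1}) and Definition~\ref{dfBclass} assembles into the stated formula with $\tscB$. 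Your closing sentence (about matching $\tilde\tau_j$ with the Segre parameter $u$) shows that you understand this mechanism, so the fix is simply to state it as a termwise expansion rather than appealing to the projection formula for a non-pullback class.
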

%%%%%%%%%%%%%%%%%%%%%%%%%%%%%%%
\begin{proof}
By Lemma \ref{Ylem2B}, we have
\begin{eqnarray*}
\varpi_{j*}\circ  \tilde{\tau}_j^s(1_{Z_j}) 
=\pi'_{j*}\circ\iota_{j*}\circ  \tilde{\tau}_j^s(1_{Z_j})
=\pi'_{j*}\circ  \tilde{\tau}_j^s\circ\iota_{j*}(1_{Z_j}) 
=\pi'_{j*}\circ  \tilde{\tau}_j^s\circ\tilde{\alpha}_j(1_{P_{j-1}}).
\end{eqnarray*}
Suppose that $\chi_j<0$. By Theorem \ref{thmlcipush},  the right hand side equals 
\begin{eqnarray*}
\tscS_{\lambda_j+s}((U/D_{j-1} - D_{\gamma_j}^{\perp}/F^{\chi_j})^{\vee})(1_{Z_{j-1}})
=\tscS_{\lambda_j+s}((U- E/F^{\chi_j} -D_{j-1} + D_{\gamma_j}^{\vee})^{\vee})(1_{Z_{j-1}})
\end{eqnarray*}
where $D_{\gamma_j}^{\perp} = E - D_{\gamma_j}^{\vee}$. Then (\ref{eq1}) proves the formula. Similarly, if $0\leq \chi_j$, Theorem \ref{thmlcipush} implies that the right hand side equals 
\begin{eqnarray*}
\sum_{s'=0}^{\infty}\left(\frac{-\beta}{2}\right)^{s'}\tscS_{\lambda_j+s+s'}((U/D_{j-1} - D_{\gamma_j}^{\perp}/F^{\chi_j})^{\vee})(1_{Z_{j-1}}),
\end{eqnarray*}
and the claim follows from (\ref{eq1}).
\end{proof}
%%%%%%%%%%%%%%%%%%%%%%%%%%%%%%%%%%
Set $R:=\CK^*(\OG^k(E))\otimes_{\ZZ}\ZZ[1/2]$ and let $\calL^R$ be the ring of formal Laurent series with indeterminates $t_1,\dots, t_{r}$ defined in Definition \ref{def fls}.
%%%%%%%%%%%%%%%%%%%%%%%%%%%%%%%
\begin{defn}
Define a homomorphism $\phi_1: \calL^{R}\otimes_{\ZZ}\ZZ[1/2] \to \CK_*(\OG^k(E))\otimes_{\ZZ}\ZZ[1/2]$ of graded $R$-modules by 
\[
\phi_1(t_1^{s_1}\cdots  t_r^{s_r})= \tscS_{s_1}((U -E/F^{\chi_1})^\vee) \circ\cdots\circ  \tscS_{s_r}((U -E/F^{\chi_r})^\vee)(1_{\OG^k(E)}).
\]
Similarly, for $j\geq 2$, define a homomorphism $\phi_{j}: \calL^{R,j}\otimes_{\ZZ}\ZZ[1/2] \to \CK_*(Z_{j-1})\otimes_{\ZZ}\ZZ[1/2]$ graded $R$-modules by
\[
\phi_j( t_1^{s_1}\cdots  t_r^{s_r})=  \tilde{\tau}_1^{s_1}\circ\cdots \circ\tilde{\tau}_{j-1}^{s_j}\circ\tscS_{s_j}((U -E/F^{\chi_j})^\vee)\circ \cdots  \circ\tscS_{s_r}((U -E/F^{\chi_r})^\vee)(1_{Z_{j-1}}).
\]
Note that for each $i$ such that $j\leq i \leq r$ and $\chi_i\geq 0$, we have 
\begin{equation}\label{remP}
\phi_j\left(\frac{t_i^{m}}{2+\beta t_i}\right) =\tscB_m^{(\chi_i)}(1_{Z_{j-1}}), \ \ \ m\in \ZZ.
\end{equation}
\end{defn}
%%%%%%%%%%%%%%%%%%%%%%%%%%%%%%%%%%
Similarly to Lemma \ref{lemp-phiC} and Proposition \ref{propmainC},  starting from Lemma \ref{lempushB} we can show the following lemma and proposition.
%%%%%%%%%%%%%%%%%%%%%%%%%%%%%%%%%%
\begin{lem}
We have
\[
\varpi_{j*}\circ  \tilde{\tau}_j^s(1_{Z_j})= \begin{cases}
\phi_j\left(  t_j^{\lambda_j+s}\dfrac{\prod_{i=1}^{j-1}( 1-\bar t_i/\bar t_j)}{\prod_{i=1}^{\gamma_j}(1 -  t_i/\bar t_j)} \right) & (\chi_j<0),\\
\phi_j\left(  \dfrac{t_j^{\lambda_j+s}}{2+\beta t_j}\dfrac{\prod_{i=1}^{j-1}( 1-\bar t_i/\bar t_j)}{\prod_{i=1}^{\gamma_j}(1 -  t_i/\bar t_j)}\right) & (0\leq \chi_j),
\end{cases}
\]
for all $s\geq 0$.
\end{lem}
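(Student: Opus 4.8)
The plan is to mimic the proof of Lemma~\ref{lemp-phiC}, with Lemma~\ref{lempushB} in place of its type~C analogue and with careful bookkeeping of the extra factor $1/(2+\beta t_j)$ that appears when $\chi_j\ge 0$. First I would start from Lemma~\ref{lempushB}, which reads
\[
\varpi_{j*}\circ\tilde{\tau}_j^s(1_{Z_j})=\sum_{p=0}^{\infty}\sum_{q=0}^p\binom{p}{q}\beta^q\,\tilde{c}_p(D_{j-1}-D_{\gamma_j}^{\vee})\circ\tscB_{\lambda_j+s-p+q}^{(\chi_j)}(1_{Z_{j-1}}),
\]
and introduce the power series $H_m(t_1,\dots,t_{j-1})$ defined by
\[
\frac{\prod_{i=1}^{j-1}(1+\bar t_i u)}{\prod_{i=1}^{\gamma_j}(1+t_i u)}=\sum_{m=0}^{\infty}H_m(t_1,\dots,t_{j-1})\,u^m,
\]
exactly as in the proof of Lemma~\ref{lemp-phiC}. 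Since $D_{j-1}$ has Chern roots $\bar\tau_1,\dots,\bar\tau_{j-1}$ and $D_{\gamma_j}^{\vee}$ has Chern roots $\tau_1,\dots,\tau_{\gamma_j}$, one has $\tilde{c}_p(D_{j-1}-D_{\gamma_j}^{\vee})=H_p(\tilde\tau_1,\dots,\tilde\tau_{j-1})$ as an operator, which $\phi_j$ produces from $H_p(t_1,\dots,t_{j-1})$.

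Next I would treat the two cases separately. If $\chi_j<0$, then $\tscB_m^{(\chi_j)}=\tscS_m((U-E/F^{\chi_j})^{\vee})$, so for any power series $g$ in $t_1,\dots,t_{j-1}$ one has $\phi_j\big(g\cdot t_j^m\big)=g(\tilde\tau_1,\dots,\tilde\tau_{j-1})\circ\tscB_m^{(\chi_j)}(1_{Z_{j-1}})$ directly from the definition of $\phi_j$, and the computation is word for word that of Lemma~\ref{lemp-phiC}: one uses the identity $t_j^{-1}(1+\beta t_j)=(-\bar t_j)^{-1}$ to rewrite $\sum_{q=0}^p\binom{p}{q}\beta^q\,t_j^{\lambda_j+s-p+q}=t_j^{\lambda_j+s}(-\bar t_j)^{-p}$, sums over $p$, and recognizes the outcome as the substitution $u=(-\bar t_j)^{-1}$ in the generating function of the $H_m$, obtaining the first case. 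If $0\le\chi_j$, the only modification is that the relevant identity is now (\ref{remP}), namely $\phi_j\big(t_j^{m}/(2+\beta t_j)\big)=\tscB_m^{(\chi_j)}(1_{Z_{j-1}})$; carrying the factor $1/(2+\beta t_j)$ unchanged through the very same manipulation yields the second case.

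The one point requiring genuine (if modest) care --- and which I expect to be the main obstacle --- is to verify that each Laurent series fed to $\phi_j$ lies in its domain $\calL^{R,j}$, so that the manipulations above are legitimate. The factors $\prod_{i=1}^{j-1}(1-\bar t_i/\bar t_j)$ and $\prod_{i=1}^{\gamma_j}(1-t_i/\bar t_j)^{-1}$ already occur in the type~C argument and lie in $\calL^{R,j}$ for the same reason; the new factor $1/(2+\beta t_j)$ is harmless because it expands as the honest power series $\frac12\sum_{s=0}^{\infty}(-\beta/2)^s t_j^s$, introducing no negative powers of $t_j$, so membership in $\calL^{R,j}$ is preserved and $\phi_j$ remains well defined on these series (this uses, as before, that $\CK^*(\OG^k(E))$ is bounded above). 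I would also stress that the two-case shape of the definition of $\tscB_m^{(\ell)}$ in Definition~\ref{dfBclass} is precisely what makes (\ref{remP}) hold; this is the only substantive input beyond the type~C proof, all the remaining bookkeeping --- the compatibility of the $\phi_j$'s along the tower (\ref{BPtower}) and their targets $\CK_*(Z_{j-1})$ --- being identical to the type~C situation.
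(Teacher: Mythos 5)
Your proof is correct and takes exactly the approach the paper intends: the paper omits the argument, saying only that the lemma follows "similarly to Lemma \ref{lemp-phiC}\dots starting from Lemma \ref{lempushB}," and your proof carries out precisely that parallel, with the single new ingredient being the identity $\phi_j\bigl(t_j^m/(2+\beta t_j)\bigr)=\tscB_m^{(\chi_j)}(1_{Z_{j-1}})$ from (\ref{remP}). Your discussion of why the extra factor $1/(2+\beta t_j)$ keeps the series inside $\calL^{R,j}$ is a helpful point of care that the paper leaves implicit.
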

%%%%%%%%%%%%%%%%%%%%%%%%%%%%%%%
\begin{prop}\label{prop1B} 
We have
\[
 \varpi_{1*}\circ\cdots\circ \varpi_{r*}(1_{Z_{\lambda}}) = \phi_1\left(t_1^{\lambda_1}\cdots t_r^{\lambda_r}
\prod_{1\leq i\leq r \atop{\chi_i\geq 0}}\frac{1}{2+\beta t_i}
\frac{\prod_{(i,j)\in \Delta_{r}}(1- \bar t_i/\bar t_j)}{\prod_{(i,j) \in C(\lambda)} (1- t_i/\bar t_j)}\right).
\]
\end{prop}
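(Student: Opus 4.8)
The plan is to prove Proposition \ref{prop1B} by the same inductive peeling argument used for Proposition \ref{propmainC}, keeping careful track of the extra factor $1/(2+\beta t_i)$ which now appears precisely at those indices $i$ with $\chi_i\geq 0$. First I would record the single-step pushforward formula, i.e. the Lemma stated immediately before this proposition, which computes $\varpi_{j*}\circ\tilde\tau_j^s(1_{Z_j})$ as $\phi_j$ applied either to $t_j^{\lambda_j+s}\prod_{i=1}^{j-1}(1-\bar t_i/\bar t_j)/\prod_{i=1}^{\gamma_j}(1-t_i/\bar t_j)$ when $\chi_j<0$, or to that same expression multiplied by $1/(2+\beta t_j)$ when $\chi_j\geq 0$. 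The proof of that Lemma is exactly parallel to Lemma \ref{lemp-phiC}: expand the Segre operators via Lemma \ref{lempushB}, use $c_p(D_{j-1}-D_{\gamma_j}^\vee)=e_p(\bar\tau_1,\dots,\bar\tau_{j-1},-\tau_1,\dots,-\tau_{\gamma_j})$ (Chern roots of $D_{j-1}$ are $\bar\tau_1,\dots,\bar\tau_{j-1}$ and of $D_{\gamma_j}^\vee$ are $-\tau_1,\dots,-\tau_{\gamma_j}$), translate into generating-function form, and identify the result; the only new input is (\ref{remP}), which says $\phi_j(t_i^m/(2+\beta t_i))=\tscB_m^{(\chi_i)}(1_{Z_{j-1}})$ for $\chi_i\geq 0$, so that the $1/(2+\beta t_j)$ factor is absorbed consistently into the definition of $\phi_j$.

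The main step is then the induction on $r$, peeling off one projective bundle at a time exactly as in Proposition \ref{propmainC}. Using the commutative diagram relating $\varpi_{j*}$ on $\CK_*(Z_j)$ to the inclusions $\calL^{R,j}\hookrightarrow\calL^{R,j-1}$ and the maps $\phi_j$, I apply the single-step Lemma to the outermost factor $\varpi_{r*}$, then $\varpi_{r-1*}$, and so on. At stage $j$ the factor produced is $t_j^{\lambda_j}\prod_{i=1}^{j-1}(1-\bar t_i/\bar t_j)/\prod_{i=1}^{\gamma_j}(1-t_i/\bar t_j)$, times $1/(2+\beta t_j)$ if and only if $\chi_j\geq 0$. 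Collecting all factors, the numerator $\prod_{j}\prod_{i<j}(1-\bar t_i/\bar t_j)$ assembles into $\prod_{(i,j)\in\Delta_r}(1-\bar t_i/\bar t_j)$; the denominator $\prod_j\prod_{i=1}^{\gamma_j}(1-t_i/\bar t_j)$ assembles, by the very definition of $\gamma_j$ as $\sharp\{i<j: \chi_i+\chi_j\geq 0\}$, into $\prod_{(i,j)\in C(\lambda)}(1-t_i/\bar t_j)$; and the scalar factors assemble into $\prod_{1\leq i\leq r,\ \chi_i\geq 0}1/(2+\beta t_i)$. This is precisely the asserted expression, and $\phi_1$ is applied at the end after all $\varpi_{j*}$'s have been exhausted, which is legitimate because $R[[\ ]]_{\gr}$-membership (equivalently, the cone condition in Definition \ref{def fls}) is preserved at each stage, just as in the type C argument.

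The one point requiring care — and the place I expect to have to be most careful — is well-definedness of the maps $\phi_j$ and the convergence of all the Laurent series in $\calL^R$. Because the denominators now include both $(1-t_i/\bar t_j)$ factors and the new $(2+\beta t_i)$ factors, I must check that every intermediate expression still lies in the appropriate $\calL^{R,j}$, i.e. that after a suitable shift its support sits in the cone $s_1\geq 0,\ s_1+s_2\geq 0,\dots$. For the $1/(2+\beta t_i)$ factor this is automatic: $1/(2+\beta t_i)=\tfrac12\sum_{s\geq 0}(-\beta/2)^s t_i^s$ is an honest graded power series in $t_i$ (degree $0$, nonnegative powers), so it does not disturb the cone condition at all; it merely multiplies by a unit-like series. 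The genuine content, identical to type C, is that the factors $(1-\bar t_i/\bar t_j)/(1-t_i/\bar t_j)$ for $i<j$ expand with support controlled by the cone after shifting by $(\dots,0,\dots)$, which is exactly Lemma \ref{lemp-phiC}'s underlying observation. Since $\CK^*(\OG^k(E))$ is bounded above, $\scB_m^{(\ell)}=0$ for $m\gg 0$, so $\phi_j$ is well-defined on all of $\calL^{R,j}\otimes\ZZ[1/2]$. Granting these bookkeeping checks, the induction goes through verbatim and yields the stated formula.
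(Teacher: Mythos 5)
Your proposal matches the paper's intended approach: the paper gives no proof of Proposition \ref{prop1B}, simply pointing to Lemma \ref{lemp-phiC} and Proposition \ref{propmainC}, and your inductive peeling argument, with the extra $1/(2+\beta t_i)$ factor introduced exactly when $\chi_i\geq 0$ and absorbed via identity (\ref{remP}), is precisely the intended filling-in of that gap. One small inaccuracy in your sketch of the single-step lemma: the identity $c_p(D_{j-1}-D_{\gamma_j}^\vee)=e_p(\bar\tau_1,\dots,\bar\tau_{j-1},-\tau_1,\dots,-\tau_{\gamma_j})$ is wrong on two counts --- the Chern roots of $D_{\gamma_j}^\vee$ are the formal inverses $\tau_1,\dots,\tau_{\gamma_j}$, not ordinary negatives, and the Chern class of a virtual bundle is not an elementary symmetric function of a combined root list (cf.\ (\ref{eq:eh})); what you actually want, and what the umbral substitution $\tau_i\leftrightarrow t_i$ encodes, is the generating-function identity
\[
c\bigl(D_{j-1}-D_{\gamma_j}^\vee;u\bigr)=\frac{\prod_{i=1}^{j-1}(1+\bar\tau_i u)}{\prod_{i=1}^{\gamma_j}(1+\tau_i u)},
\]
which is exactly the role played by the series $H_m$ in the proof of Lemma \ref{lemp-phiC}. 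Since you immediately ``translate into generating-function form'' afterward, this slip does not affect the validity of the argument.
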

%%%%%%%%%%%%%%%%%%%%%%%%%%%%%%%%%%
%%%%%%%%%%%%%%%%%%%%%%%%%%%%%%%%%%
\subsection{Main theorem for type B}
%%%%%%%%%%%%%%%%%%%%%%%%%%%%%%%%%%
%%%%%%%%%%%%%%%%%%%%%%%%%%%%%%%%%%
Let us recall from Section \ref{secmainC} the following Laurent series in $\calL^{R}$:
\begin{eqnarray*}
F_{i,j}^I(t)= \sum_{p,q\in \ZZ \atop{p\geq 0, p+q\geq 0}} \ff_{pq}^{ij,I} t_i^pt_j^q; \ \ \ \ \ 
F_i^I(t)= \sum_{p\in \ZZ\atop{p \geq 0}} \ff_p^{i,I} t_i^p.
\end{eqnarray*}
%%%%%%%%%%%%%%%%%%%%%%%%%%%%%%%%%%
\begin{thm}\label{mainB}
Let $\lambda \in \SP^k(n)$ of length $r$ with $\chi$ its characteristic index. In $\CK^*(\OG^k(E))\otimes_{\ZZ}\ZZ[1/2]$, the fundamental class of the degeneracy locus $\Omega_{\lambda}$ is given by 
\[
[\Omega_{\lambda}] = \sum_{I \subset D(\lambda)_r} \Pf\left(\sum_{p,q\in \ZZ \atop{p\geq 0, p+q\geq 0}} \ff_{pq}^{ij,I} \scB_{\lambda_i+d_i^I+p}^{(\chi_i)}\scB_{\lambda_j+d_j^I+q}^{(\chi_j)}\right)_{1\leq i<j\leq m },
\]
where  $m=r$ if $r$ is even and $m=r+1$ if $r$ is odd, and $\scB_{-i}^{(-n-1)}:=(-\beta)^i$ for $i\leq 0$. In particular, if $r$ is odd, then $(i,m)$-entry of the Pfaffian reduces to $\sum_{p\in \ZZ\atop{p \geq 0}} \ff_p^{i,I} \scB_{\lambda_i+d_i^I+p}^{(\chi_i)}$. 
\end{thm}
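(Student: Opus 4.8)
The proof of Theorem \ref{mainB} will follow the same pattern as the proof of Theorem \ref{mainC}, so the plan is to reduce the statement to the already-established umbral-calculus formula of Proposition \ref{prop1B} and then apply the Pfaffian identity of Lemma \ref{lem4C}, keeping careful track of the extra factors $\frac{1}{2+\beta t_i}$ coming from the quadric bundle contributions. First I would combine Lemma \ref{lem1B} with Proposition \ref{prop1B} to obtain
\[
[\Omega_{\lambda}] = \phi_1\left(t_1^{\lambda_1}\cdots t_r^{\lambda_r}
\prod_{1\leq i\leq r,\ \chi_i\geq 0}\frac{1}{2+\beta t_i}\,
\frac{\prod_{(i,j)\in \Delta_{r}}(1- \bar t_i/\bar t_j)}{\prod_{(i,j) \in C(\lambda)} (1- t_i/\bar t_j)}\right).
\]
The point is that $\phi_1$ is exactly the map sending $t_i^{m}$ to $\scS_{m}((U-E/F^{\chi_i})^\vee)$, while by \eqref{remP} it sends $\frac{t_i^{m}}{2+\beta t_i}$ to $\scB_m^{(\chi_i)}$ when $\chi_i\geq 0$, and of course $\phi_1(t_i^m)=\scB_m^{(\chi_i)}$ already when $\chi_i<0$ by Definition \ref{dfBclass}. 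So the entire ``$\frac{1}{2+\beta t_i}$'' bookkeeping is absorbed into the definition of $\scB$, and what remains is a purely formal Laurent-series identity.

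Next I would apply Lemma \ref{lem4C} verbatim to the factor $t_1^{\lambda_1}\cdots t_r^{\lambda_r}\frac{\prod_{(i,j)\in \Delta_r}(1- \bar t_i/\bar t_j)}{\prod_{(i,j) \in C(\lambda)} (1- t_i/\bar t_j)}$, writing it as $\sum_{I\subset D(\lambda)_r}\Pf(\Lambda_{i,j}^I(t))_{1\leq i<j\leq r'}$ with the entries $\Lambda_{i,j}^I(t)=t_i^{\lambda_i+d_i^I}t_j^{\lambda_j+d_j^I}F_{i,j}^I(t)$ (and the degenerate last row/column $t_i^{\lambda_i+d_i^I}F_i^I(t)$ when $r$ is odd). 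Multiplying through by $\prod_{1\leq i\leq r,\ \chi_i\geq 0}\frac{1}{2+\beta t_i}$ and then applying $\phi_1$ term by term, one uses the Laurent expansions $F_{i,j}^I(t)=\sum \ff_{pq}^{ij,I}t_i^pt_j^q$ and $F_i^I(t)=\sum\ff_p^{i,I}t_i^p$ together with multilinearity of the Pfaffian in its rows to pull the sums outside. Since $\phi_1$ is an $R$-module homomorphism, applying it to $\Pf$ of a matrix with entries that are $R$-linear combinations of monomials $t_i^{a}t_j^{b}$ (times possibly $\frac{1}{2+\beta t_i}\frac{1}{2+\beta t_j}$ when both $\chi_i,\chi_j\geq 0$, or one such factor, or none) gives the Pfaffian of the matrix obtained by applying $\phi_1$ entrywise; here the factors $\frac{1}{2+\beta t_i}$ distribute correctly across the Pfaffian rows because each such row $i$ carries exactly one such factor. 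The upshot is that $\phi_1(\Lambda_{i,j}^I(t)\cdot\prod\frac{1}{2+\beta t_\bullet})=\sum_{p,q}\ff_{pq}^{ij,I}\scB_{\lambda_i+d_i^I+p}^{(\chi_i)}\scB_{\lambda_j+d_j^I+q}^{(\chi_j)}$, which is precisely the claimed entry, and similarly for the last row/column in the odd case. The convention $\scB_{-i}^{(-n-1)}:=(-\beta)^i$ for $i\leq 0$ handles the sole anomalous index $\chi_{r+1}=-n-1$ that arises when $r$ is odd and $r=n-k$, exactly as in the type C case, using $\scS_m=(-\beta)^{-m}$ for $m\le0$.

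The main obstacle, such as it is, is purely verificational rather than conceptual: one must check that the factors $\prod_{i:\chi_i\geq 0}\frac{1}{2+\beta t_i}$ commute past the Pfaffian in the way just described, \textit{i.e.} that after expanding $F_{i,j}^I$ the monomial $\frac{t_i^{a}}{2+\beta t_i}\frac{t_j^{b}}{2+\beta t_j}$ (respectively $\frac{t_i^a}{2+\beta t_i}t_j^b$ or $t_i^at_j^b$) lies in $\calL^R$ so that $\phi_1$ is defined on it, and that $\phi_1$ applied to such an entry really does produce $\scB$-classes with the right upper indices. This is where one invokes \eqref{remP} and Definition \ref{dfBclass}, and where the case distinction $\chi_i\geq 0$ versus $\chi_i<0$ must be tracked per-index; but all of this is bookkeeping and no new geometric input is needed. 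I would therefore present the proof tersely: cite Lemma \ref{lem1B}, Proposition \ref{prop1B}, then Lemma \ref{lem4C}, observe via \eqref{remP} that $\phi_1$ converts the series into the stated Pfaffian, and point to Remark analogous to the type C treatment of the odd case (cofactor expansion along the last column) and the $\scB_{-i}^{(-n-1)}$ convention for the exceptional index.
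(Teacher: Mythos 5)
Your proof is correct and follows the same route as the paper: cite Lemma \ref{lem1B} and Proposition \ref{prop1B} to get the umbral expression, apply Lemma \ref{lem4C} to rewrite the rational factor as a sum of Pfaffians, and use \eqref{remP} together with Definition \ref{dfBclass} to convert $\phi_1$ of each entry into $\scB$-classes (with the $\scB^{(-n-1)}$ convention handling the exceptional index when $r$ is odd and $r=n-k$). The only added content in your writeup is the explicit justification that the $\prod_{\chi_i\geq 0}(2+\beta t_i)^{-1}$ factors distribute into the Pfaffian entries and that $\phi_1$ commutes with forming the Pfaffian because the entries involve disjoint sets of $t$-variables, which the paper leaves implicit.
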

%%%%%%%%%%%%%%%%%%%%%%%%%%%%%%%%%%
\begin{proof}
By Lemma \ref{lem1B} and Proposition \ref{prop1B}, we have 
\begin{equation*}
[\Omega_{\lambda}] = \phi_1 \left( t_1^{\lambda_1}\cdots t_r^{\lambda_r}\prod_{1\leq i \leq r \atop{\chi_i\geq 0}}\frac{1}{2+\beta t_i}\frac{\prod_{(i,j)\in \Delta_r}(1- \bar t_i/\bar t_j)}{\prod_{(i,j) \in C(\lambda)} (1- t_i/\bar t_j)}\right).
\end{equation*}
Then an application of Lemma \ref{lem4C} together with (\ref{remP}) proves the formula.  
\end{proof}
%%
%%%%%%%%%%%%%%%%%%%%%%%%%%%%%%%%%%
%%%%%%%%%%%%%%%%%%%%%%%%%%%%%%%%%%
%%%%%%%%%%%%%%%%%%%%%%%%%%%%%%%%%%

%%%%%%%%%%%%%%%%%%%%%%%%%%%%%%%%%%
%%%%%%%%%%%%%%%%%%%%%%%%%%%%%%%%%%
%%%%%%%%%%%%%%%%%%%%%%%%%%%%%%%%%%
%%%%%%%%%%%%%%%%%%%%%%%%%%%%%%%%%%
%%%%%%%%%%%%%%%%%%%%%%%%%%%%%%%%%%
%%%%%%%%%%%%%%%%%%%%%%%%%%%%%%%%%%
%%%%%%%%%%%%%%%%%%%%%%%%%%%%%%%%%%
%%%%%%%%%%%%%%%%%%%%%%%%%%%%%%%%%%
%%%%%%%%%%%%%%%%%%%%%%%%%%%%%%%%%%
%%%%%%%%%%%%%%%%%%%%%%%%%%%%%%%%%%
%%%%%%%%%%%%%%%%%%%%%%%%%%%%%%%%%%
%%%%%%%%%%%%%%%%%%%%%%%%%%%%%%%%%%
%%%%%%%%%%%%%%%%%%%%%%%%%%%%%%%%%%
%%%%%%%%%%%%%%%%%%%%%%%%%%%%%%%%%%
%%%%%%%%%%%%%%%%%%%%%%%%%%%%%%%%%%
%%%%%%%%%%%%%%%%%%%%%%%%%%%%%%%%%%
%%%%%%%%%%%%%%%%%%%%%%%%%%%%%%%%%%
%%%%%%%%%%%%%%%%%%%%%%%%%%%%%%%%%%
%%%%%%%%%%%%%%%%%%%%%%%%%%%%%%%%%%
%%%%%%%%%%%%%%%%%%%%%%%%%%%%%%%%%%
%%%%%%%%%%%%%%%%%%%%%%%%%%%%%%%%%%
%%%%%%%%%%%%%%%%%%%%%%%%%%%%%%%%%%
%%%%%%%%%%%%%%%%%%%%%%%%%%%%%%%%%%
%%%%%%%%%%%%%%%%%%%%%%%%%%%%%%%%%%
%%%%%%%%%%%%%%%%%%%%%%%%%%%%%%%%%%
%%%%%%%%%%%%%%%%%%%%%%%%%%%%%%%%%%
%%%%%%%%%%%%%%%%%%%%%%%%%%%%%%%%%%
%%%%%%%%%%%%%%%%%%%%%%%%%%%%%%%%%%
%%%%%%%%%%%%%%%%%%%%%%%%%%%%%%%%%%
%%%%%%%%%%%%%%%%%%%%%%%%%%%%%%%%%%
%%%%%%%%%%%%%%%%%%%%%%%%%%%%%%%%%%
%%%%%%%%%%%%%%%%%%%%%%%%%%%%%%%%%%
%%%%%%%%%%%%%%%%%%%%%%%%%%%%%%%%%%
%%%%%%%%%%%%%%%%%%%%%%%%%%%%%%%%%%
%%%%%%%%%%%%%%%%%%%%%%%%%%%%%%%%%%
%%%%%%%%%%%%%%%%%%%%%%%%%%%%%%%%%%
%%%%%%%%%%%%%%%%%%%%%%%%%%%%%%%%%%
%%%%%%%%%%%%%%%%%%%%%%%%%%%%%%%%%%
%%%%%%%%%%%%%%%%%%%%%%%%%%%%%%%%%%
%%%%%%%%%%%%%%%%%%%%%%%%%%%%%%%%%%
% 
%%%%%%%%%%%%%%%%%%%%%%%%%%%%%%%%%%
%%%%%%%%%%%%%%%%%%%%%%%%%%%%%%%%%%
\section{Equivariant connective $K$-theory}\label{sec7}
%%%%%%%%%%%%%%%%%%%%%%%%%%%%%%%%%%
%%%%%%%%%%%%%%%%%%%%%%%%%%%%%%%%%%
In this section, we introduce torus equivariant connective $K$-theory following Krishna \cite{Krishna} and give Goresky--Kottwitz--MacPherson (GKM) type description for the equivariant connective $K$-theory of isotropic Grassmannians. For the rest of the paper we fix a nonnegative integer $k$. 
%%%%%%%%%%%%%%%%%%%%%%%%%%%%%%%%%%
\subsection{Preliminaries}
%%%%%%%%%%%%%%%%%%%%%%%%%%%%%%%%%%
%%%%%%%%%%%%%%%%%%%%%%%%%%%%%%%%%%
Let $T_n$ be a standard algebraic torus $(\bbG_m)^n$. If $T_n$ acts on a smooth variety $X$, the $T_n$-equivariant connective $K$-theory $\CK^*_{T_n}(X)$ is a graded algebra over $\CK^*_{T_n}(pt)$, the $T_n$-equivariant connective $K$-theory of a point. If $E \to X$ is a $T_n$-equivariant vector bundle, the $i$-th $T_n$-equivariant Chern classes of $E$ is denoted also by $c_i(E)$ as in the rest of the paper.

First of all, we fix the identification of $\CK^*_{T_n}(pt)$ with a ring of graded formal power series. Let $\varepsilon_1,\dots, \varepsilon_n$ be the standard basis of the character group of $T_n$. Let $L_i$ be the one dimensional representation of $T_n$ with character $\varepsilon_i$. Let $b_1,b_2,\dots$ be an infinite sequence of indeterminants. Then we have the isomorphism
\begin{equation}\label{eq:CK(pt)}
\CK^*_{T_n}(pt) \to \QQ[\beta][[b_1,\dots,b_n]]_{\gr}; \ \ c_1(L_{i}) \mapsto b_i
\end{equation}
of graded algebras over $\QQ[[\beta]]_{\gr}$ (\cite[\S 2.6]{Krishna}). We set $\CK^*_{T_n}:=\CK^*_{T_n}(pt)$ for simplicity. 

Similarly we denote $\CK^*_{T_{\infty}}:=\QQ[\beta][[b]]_{\gr}:=\QQ[\beta][[b_1,b_2,\dots]]_{\gr}$. In the rest of the paper, we regard a $\CK^*_{T_n}$-algebra as a $\CK^*_{T_\infty}$-algebra via the projection $\CK^*_{T_\infty} \to \CK^*_{T_n}$ defined by $b_i=0$ for all $i>n$.
%%%%%%%%%%%%%%%%%%%%%%%%%%%%%%%%%%
\begin{rem}
After specializing at $\beta=0$, $\QQ[\beta][[b_1,\dots,b_n]]_{\gr}$ becomes $\QQ[b_1,\dots,b_n]$. If we specialize at $\beta=-1$, one obtains $\QQ[[b_1,\dots, b_n]]$, which can be identified with the completion of the representation ring $R(T_n)$ with rational coefficients
\[
K_{T_n}({pt})\otimes_{\ZZ}\QQ= R(T_n)\otimes_{\ZZ}\QQ=\QQ[e^{\pm \varepsilon_1},\dots, e^{\pm \varepsilon_n}],
\]
where we naturally identify the $K$-theory class of $L_i$ with $e^{-\varepsilon_i}$ and $b_i$ corresponds to $1 - e^{\varepsilon_i}$ (\textit{cf.} Krishna \cite[Theorem 7.3]{Krishna0}).
\end{rem}
%%%%%%%%%%%%%%%%%%%%%%%%%%%%%%%%%%
%%%%%%%%%%%%%%%%%%%%%%%%%%%%%%%%%%
\subsection{Symplectic and odd orthogonal Grassmannians}\label{sec: CKT(SG)}
%%%%%%%%%%%%%%%%%%%%%%%%%%%%%%%%%%
%%%%%%%%%%%%%%%%%%%%%%%%%%%%%%%%%%
Recall that $\bbF$ is our base field. Let $E=E^{(n)}$ be a vector space $\bbF^{2n}$ or $\bbF^{2n+1}$ of dimension $2n$ or $2n+1$ respectively. We fix bases by
\[
\bbF^{2n}=\Span\{\vece_{\bar i}, \vece_i\:|\;1\leq i\leq n\},\ \ \ \ \ \ \ 
\bbF^{2n+1}=\Span\{\vece_{\bar i}, \vece_i\:|\;1\leq i\leq n\} \cup \{\vece_0\}.
\]
together with  the symplectic form and non-degenerate symmetric form
\[
\sum_{i=1}^n \vece_i^*\wedge \vece_{\bar i}^*, \ \ \  \ \ \  \vece_0^*\otimes \vece_0^*+\sum_{i=1}^n \vece_i^*\otimes \vece_{\bar i}^*
\]
respectively, where $\vece_i^*$ denotes the dual of $\vece_i$. We define the action of $T_n$ on $E$ as follows: $T_n$ acts on $\bbF \vece_i$ with weight $\varepsilon_i$ and on $\bbF\vece_{\bar i}$ with weight $-\varepsilon_i$. This identifies $T_n$ with maximal tori of $\Sp_{2n}(\bbF)$ and $O_{2n+1}(\bbF)$.

%%%%%%%%%%%%%%%%%%%%%%%%%%%%%%%%%%
For each $\ell \in \{1,\dots, n\}$, define the subspaces of $E$ 
\[
F^{\ell} = \Span_{\bbF}\{\vece_n,\dots, \vece_{\ell+1}\}, \ \ \ F^{-\ell} = (F^0)^{\perp} \oplus \Span_{\bbF}\{\vece_{\bar1},\cdots, \vece_{\bar\ell}\}
\]
so that $c(E/F^0;u) = \prod_{i=1}^n(1+ b_i u)$ and 
\[
c(E/F^{\ell};u)= c(E/F^0;u)\prod_{i=1}^{\ell}(1 + b_iu), \ \ \ c(E/F^{-\ell};u)= c(E/F^0;u)\prod_{i=1}^{\ell}\frac{1}{1 + \bar b_iu}.
\]
Here let us observe that the action of $T_n$ on $\bbF \vece_0$ is trivial, so that $c((F^0)^{\perp}/F^0;u)=1$.

%%%%%%%%%%%%%%%%%%%%%%%%%%%%%%%%%%
For $n \geq k$, let $\calG_n^k$ be the Grassmannians of $n-k$ dimensional isotropic subspaces in $E$, {\it i.e.}
\[
\calG_n^k := \begin{cases}
\SG^k(E) & \mbox{ if $E=\bbF^{2n}$ with the symplectic form},\\
\OG^k(E) & \mbox{ if $E=\bbF^{2n+1}$ with the non-degenerate symmetric form}.
\end{cases}
\]
\emph{We write $X=C$ for the symplectic case, and $X=B$ for the odd orthogonal case.} For each $\lambda\in \SP^{k}(n)$, the Schubert variety $\Omega_{\lambda}^X$ of $\calG_n^k$ by Definition \ref{dfOmegaC} (or  \ref{dfOmegaB}) is $T_n$-stable. Thus in $\CK^*_{T_n}(\calG_n^k)$ it defines the $T_n$-equivariant class $[\Omega_\lambda^X]_{T_n}$.  As a $\CK^*_{T_n}$-module, $\CK^*_{T_n}(\calG_n^k)$ is freely generated by $[\Omega_\lambda^X]_{T_n}, \lambda\in\SP^k(n)$. See \cite{Krishna}. 

%%%%%%%%%%%%%%%%%%%%%%%%%%%%%%%%%%
Let $U$ be the tautological isotropic bundle of $\calG_n^k$. We denote the trivial bundle over $\calG_n^k$ with the fiber $F^i$ by the same symbol $F^i$. We define the classes $\scC_m^{(\ell)}$ in $\CK^*_{T_n}(\SG^k(\bbF^{2n}))$ and $\scB_m^{(\ell)}$ in $\CK^*_{T_n}(\OG^k(\bbF^{2n+1}))$ by using Definition \ref{dfCclass} and \ref{dfBclass} respectively where we regard the Chern (or Segre) classes as equivariant ones. Namely, for $m\in \ZZ$ and $\ell=-n,\cdots,n$, let
\[
\scC_m^{(\ell)} := \scS_{m}(U^{\vee}-(E/F^{\ell})^{\vee})
\]
and
\[
\scB_m^{(\ell)} := \begin{cases}
\scS_m((U- E/F^{\ell})^{\vee}) & (-n\leq \ell <0),\\
\displaystyle\frac{1}{2} \sum_{s\geq 0} \left(\displaystyle\frac{-\beta}{2}\right)^s\scS_{m+s}((U- E/F^{\ell})^{\vee}) & (0\leq \ell \leq n).
\end{cases}
\]
Theorem \ref{mainC} and \ref{mainB} hold in this equivariant setting. Indeed, let $BT_n$ be the classifying space of $T_n$ and $ET_n\rightarrow BT_n$ the universal bundle. Consider the bundle $ET_n\times_{T_n}E$ over $ET_n\times_{T_n}\calG_n^k$. We can apply Theorem \ref{mainC} or \ref{mainB} to every finite approximation of this bundle. Then the functoriality of Chern classes implies the claim. 
%%%%%%%%%%%%%%%%%%%%%%%%%%%%%%%%%%
%%%%%%%%%%%%%%%%%%%%%%%%%%%%%%%%%%
\subsection{GKM description}
%%%%%%%%%%%%%%%%%%%%%%%%%%%%%%%%%%
%%%%%%%%%%%%%%%%%%%%%%%%%%%%%%%%%%
It is well-known that the set $(\calG_n^k)^{T_n}$ of $T_n$-fixed  points in $\calG_n^k$ is bijective to $\SP^k(n)$. For each $\lambda \in \SP^k(n)$, let $e_\lambda$ denote the corresponding fixed point. Let $\Fun(\SP^{k}(n),\CK^*_{T_n})$ be the algebra of maps from $\SP^{k}(n)$ to $\CK^*_{T_n}$ where the algebra structure is given by the pointwise multiplication. Then we can identify $\CK^*_{T_n}((\calG_n^k)^{T_n})$ with $\Fun(\SP^{k}(n),\CK^*_{T_n})$ as graded $\CK^*_{T_n}$-algebras. For every inclusion  $\iota_n: (\calG_n^k)^{T_n} \inc \calG_n^k$, we can consider the following homomorphism of $\CK^*_{T_n}$-algebras given by pull-back:
\[
\iota_n^*: \CK^*_{T_n}(\calG_n^k)\to \Fun(\SP^{k}(n),\CK^*_{T_n}).
\]
%%%%%%%%%%%%%%%%%%%%%%%%%%%%%%%%%%

Below, with the help of GKM theory, we describe $\CK^*_{T_n}(\calG_n^k)$ as the image of $\iota_n^*$. First we prepare some notations. Let $\Delta^+$ be the set of {\em positive roots} in $L:=\bigoplus_{i=1}^\infty \ZZ \varepsilon_i$ defined by 
\begin{eqnarray*}
&&\Delta^+ := \{ \varepsilon_i, 1\leq i \} \cup \{\varepsilon_j \pm \varepsilon_i\ |\ 1\leq i <  j\} \ \ \ \mbox{for type B and}\\
&&\Delta^+ := \{ 2\varepsilon_i, 1\leq i \} \cup \{\varepsilon_j \pm \varepsilon_i\ |\ 1\leq i <  j\} \ \ \ \mbox{for type C.}
\end{eqnarray*}
We define a map $e: L \to \CK^*_{T_\infty}$ by
\[
e(\varepsilon_i)=b_i,\ \  e(-\varepsilon_i)=\bar b_i, \ \  e(\alpha+\gamma)=e(\alpha)\oplus e(\gamma) \ \mbox{ and }  \ e(\alpha-\gamma)=e(\alpha)\ominus e(\gamma).
\]
Let $s_\alpha\in W_\infty$ be the simple reflection associated with the positive root $\alpha\in \Delta^+$. Note that $W_\infty$ acts naturally on the set $\SP^k$ via the bijection $\SP^k\cong W_\infty/W_{(k)}$ discussed in Section \ref{seccombBC}. Similarly, $W_n$ acts on $\SP^k(n)$ via the bijection $\SP^k(n)\cong W_n/W_{n,(k)}$. Let $\Delta^+_n:= \Delta^+\cap \ \Span_{\ZZ}\{\varepsilon_1,\dots, \varepsilon_n\}$. For each $\alpha \in \Delta^+_n$ one has $s_{\alpha} \in W_n$ and $e(\alpha)\in \CK^*_{T_n}$. 
%%%%%%%%%%%%%%%%%%%%%%%%%%%%%%%%%%
\begin{defn}\label{dfGKMn}
Let $\frakK_n^{(k)}$ be the graded $\CK^*_{T_n}$-subalgebra of $\Fun(\SP^k(n), \CK^*_{T_n})$ defined as follows: a map $\psi: \SP^k(n) \to \CK^*_{T_n}$  is in $\frakK_n^{(k)}$ if and only if 
\[
\psi(s_{\alpha}\mu)-\psi(\mu) \in e(\alpha)\cdot \CK^*_{T_n} \ \ \mbox{ for all } \mu\in \SP^k(n) \ \mbox{ and }\  \alpha \in \Delta^+_n.
\]
\end{defn}
%%%%%%%%%%%%%%%%%%%%%%%%%%%%%%%%%%
The next theorem holds by Corollary (3.20) in \cite{KostantKumar} ({\it cf.} Theorem 7.8 \cite{Krishna}).
\begin{thm}\label{thmGKM}
The map $\iota_n^*$ is injective and its image coincides with $\frakK_n^{(k)}$.
\end{thm}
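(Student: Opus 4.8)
The plan is to prove Theorem~\ref{thmGKM} by deducing it from the known GKM-type description of the equivariant connective $K$-theory of flag varieties. First I would recall the localization theorem for $\CK^*_{T_n}$: since $\calG_n^k$ is smooth, projective, and has a $T_n$-action with finitely many fixed points, the pullback $\iota_n^*$ along the inclusion of the fixed locus $(\calG_n^k)^{T_n}$ becomes injective after inverting finitely many elements of $\CK^*_{T_n}$ coming from the characters at the fixed points (this is the standard localization argument, available in the connective setting by Krishna \cite{Krishna}, whose proof follows Kostant--Kumar \cite{KostantKumar}). Injectivity of $\iota_n^*$ itself---before inverting anything---follows because $\CK^*_{T_n}(\calG_n^k)$ is a free $\CK^*_{T_n}$-module with basis the Schubert classes $[\Omega_\lambda^X]_{T_n}$ (as stated in Section~\ref{sec: CKT(SG)}, citing \cite{Krishna}), and the localized map is injective; freeness prevents any torsion obstruction. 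So the content of the theorem is really the identification of the image with $\frakK_n^{(k)}$.

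For the image, I would argue in two inclusions. For $\im(\iota_n^*)\subseteq \frakK_n^{(k)}$: given any class $\xi\in \CK^*_{T_n}(\calG_n^k)$ and a positive root $\alpha\in\Delta^+_n$, the two fixed points $e_\mu$ and $e_{s_\alpha\mu}$ (when they are distinct and joined by a $T_n$-stable curve, i.e. exactly when $s_\alpha\mu\neq\mu$ in $\SP^k(n)$) lie on a $T_n$-invariant $\PP^1$ on which $T_n$ acts through the character $\pm\alpha$. Restricting $\xi$ to this $\PP^1$ and using that $\CK^*_{T_n}(\PP^1)\to\CK^*_{T_n}((\PP^1)^{T_n})$ has image described by the single congruence modulo $e(\alpha)$ (a direct computation from the projective bundle formula for $\CK^*$ and the formal group law, i.e.\ equation (\ref{L tensor L})), we get $\iota_n^*(\xi)(s_\alpha\mu)-\iota_n^*(\xi)(\mu)\in e(\alpha)\CK^*_{T_n}$. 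This is precisely the GKM condition defining $\frakK_n^{(k)}$. The only subtlety is checking that for each $\mu$ and each $\alpha\in\Delta^+_n$ with $s_\alpha\mu\neq\mu$ there is indeed such a $T_n$-stable curve with the right weight; this is classical for $G/P$ (Carrell--Peterson, Kumar) and transfers to the isotropic Grassmannians since $\calG_n^k = G/P$ for $G$ of type B or C.

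For the reverse inclusion $\frakK_n^{(k)}\subseteq\im(\iota_n^*)$: the cleanest route is a rank/freeness count. Both $\CK^*_{T_n}(\calG_n^k)$ and $\frakK_n^{(k)}$ are $\CK^*_{T_n}$-modules, and after localizing at the multiplicative set generated by $\{e(\alpha):\alpha\in\Delta^+_n\}$ both become free of rank $\#\SP^k(n)$ --- for the source by localization, for $\frakK_n^{(k)}$ because the GKM congruences become vacuous after inverting the $e(\alpha)$'s. Since $\iota_n^*$ is injective with localized image all of the localized target, and since $\CK^*_{T_n}(\calG_n^k)$ is a \emph{free} (hence reflexive) module whose image is already contained in $\frakK_n^{(k)}$, one shows surjectivity onto $\frakK_n^{(k)}$ by a Nakayama-type or filtration argument: build an explicit $\CK^*_{T_n}$-basis of $\frakK_n^{(k)}$ adapted to the Bruhat order (the standard GKM basis, e.g. via the classes attached to $\mu\in\SP^k(n)$ supported on $\{\nu\geq\mu\}$), and check each basis element is hit, using the upper-triangularity of $\iota_n^*([\Omega_\mu^X]_{T_n})$ with respect to this order together with invertibility of the diagonal entries (which are products of $e(\alpha)$ over inversions, hence units after localization but of the precise form forced by the congruences). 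I expect this last step --- matching the two filtered bases and verifying the triangular change-of-basis matrix is invertible over $\CK^*_{T_n}$ itself, not merely over the localization --- to be the main obstacle, since it requires knowing the restrictions $\iota_n^*([\Omega_\mu^X]_{T_n})(e_\nu)$ well enough (their divisibility by the appropriate $e(\alpha)$'s and the exact diagonal value), which one extracts from the Schubert basis property and the results of \cite{KostantKumar,Krishna} already invoked; alternatively one can cite Corollary~(3.20) of \cite{KostantKumar} directly in the $K$-theoretic/connective form to bypass the bookkeeping, which is the route the paper signals.
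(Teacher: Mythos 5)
The paper's entire proof of this theorem is the citation to Kostant--Kumar, Corollary~(3.20), in the form adapted to equivariant connective $K$-theory by Krishna (Theorem~7.8 of \cite{Krishna}), and you correctly identify this at the end of your proposal (``the route the paper signals''); so at bottom you and the paper take the same route. Your expanded sketch of the underlying GKM argument --- freeness plus localization for injectivity, $T_n$-stable curves for $\im(\iota_n^*)\subseteq\frakK_n^{(k)}$, and the divisibility/triangularity of Schubert-class restrictions for the reverse inclusion --- is a faithful outline of what the cited corollary actually proves, with the reverse inclusion correctly flagged as the step carrying the real weight.
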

%%%%%%%%%%%%%%%%%%%%%%%%%%%%%%%%%%
\begin{rem}\label{remIndBC}
By the fact that $e(2\varepsilon_i) = b_i \oplus b_i = b_i(2+ \beta b_i)$ and since $2+\beta b_i$ is invertible in $\CK^*_{T_n}$, we can see that $\frakK_n^{(k)}$ is independent of the type B and C. Therefore we have $\CK^*_{T_n}(\SG^k(\bbF^{2n})) \cong \CK^*_{T_n}(\OG^k(\bbF^{2n+1}))$ as graded $\CK^*_{T_n}$-algebras with rational coefficients.
\end{rem}
%%%%%%%%%%%%%%%%%%%%%%%%%%%%%%%%%%
The following proposition will be used in Section \ref{sec: GTheta}.
%%%%%%%%%%%%%%%%%%%%%%%%%%%%%%%%%%
\begin{prop}[{\it cf.} Proposition 10.1 \cite{IkedaMihalceaNaruse}]\label{propUloc}
Let $\mu$ be a $k$-strict partition in $\SP^k(n)$ and $w$ its corresponding signed permutation in $W_n$. Consider the pullback $\iota_{\mu}^*: \CK^*_{T_n}(\calG_n^k)\to \CK^*_{T_n}$ of the inclusion $\iota_\mu: \{e_\mu\} \to\calG_n^k$. We have 
\[
\iota_{\mu}^*(c(U;u)) = \prod_{i=k+1}^n(1+ b_{w(i)} u),
\]
where we denote $b_{\bar i}:=\bar b_i$. 
\end{prop}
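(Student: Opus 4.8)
The plan is to compute the fixed-point restriction of $c(U;u)$ directly, using the explicit linear-algebra model of $\calG_n^k$ and its $T_n$-fixed points set up in Section \ref{sec: CKT(SG)}. The key observation is that $\iota_\mu^* c(U;u)$ only depends on the fiber of the tautological bundle $U$ at the fixed point $e_\mu$, together with its $T_n$-action, since equivariant Chern classes of a $T_n$-representation $V = \bigoplus \bbF_{\delta_a}$ satisfy $c(V;u) = \prod_a (1 + e(\delta_a) u)$ under the identification $(\ref{eq:CK(pt)})$ and the map $e$.

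First I would recall from the bijection $W_n^{(k)} \cong \SP^k(n)$ how the $k$-Grassmannian permutation $w$ is read off from $\mu$, and identify the fixed point $e_\mu$ concretely: it is the isotropic subspace $U_\mu = \Span_{\bbF}\{\vece_{w(i)} \mid k+1 \le i \le n\}$, where as usual $\vece_{\bar i}$ is weighted by $-\varepsilon_i$ (so $w(i)<0$ contributes the basis vector $\vece_{w(i)}$ with weight $\varepsilon_{w(i)} := -\varepsilon_{|w(i)|}$). Here one must check that this collection is indeed isotropic of the right dimension $n-k$ and that it is the correct Schubert cell base point matching Definition \ref{dfOmegaC}/\ref{dfOmegaB} and the characteristic-index conventions of Section \ref{sec:chi} --- this amounts to unwinding the one-line notation $(\ref{oneline for k grass})$ and the recipe $\nu_i = \sharp\{p \mid v_p > u_i\}$ together with the identification of the $\chi_i$ with the coranks of the reference flag steps containing the $D_i$. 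Granting this, the fiber $U_{e_\mu}$ decomposes as a $T_n$-representation into the one-dimensional weight spaces $\bbF\vece_{w(i)}$ for $i = k+1,\dots,n$, with weights $\varepsilon_{w(i)}$ in the extended sense $\varepsilon_{\bar j} = -\varepsilon_j$.

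Then the computation is immediate: by naturality of equivariant Chern classes under the pullback along $\{e_\mu\} \hookrightarrow \calG_n^k$, and since the restriction of $U$ to $e_\mu$ is the above sum of characters, we get
\[
\iota_\mu^*(c(U;u)) = \prod_{i=k+1}^n \bigl(1 + e(\varepsilon_{w(i)})\, u\bigr) = \prod_{i=k+1}^n (1 + b_{w(i)}\, u),
\]
using $e(\varepsilon_j) = b_j$ and $e(-\varepsilon_j) = \bar b_j =: b_{\bar j}$ exactly as in the statement. To make this rigorous in connective $K$-theory (rather than over a point), I would pass as in Section \ref{sec: CKT(SG)} to finite-dimensional approximations $ET_n \times_{T_n} E$ over $ET_n \times_{T_n} \calG_n^k$ and use functoriality of Chern classes, or equivalently invoke the GKM description (Theorem \ref{thmGKM}) which identifies $\CK^*_{T_n}(\calG_n^k)$ with a subalgebra of $\Fun(\SP^k(n), \CK^*_{T_n})$ via the fixed-point restrictions $\iota_n^*$, so that computing each $\iota_\mu^*$ is legitimate.

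The main obstacle --- and really the only nontrivial point --- is the bookkeeping in the second step: correctly matching the combinatorial labelling of fixed points by $k$-strict partitions (via $k$-Grassmannian permutations) to the geometric fixed points, so that the weights of $U_{e_\mu}$ come out to be precisely $\{\varepsilon_{w(k+1)}, \dots, \varepsilon_{w(n)}\}$. Once the conventions of Section \ref{seccombBC} are lined up with the flag $F^\bullet$ fixed in Section \ref{sec: CKT(SG)} and with Definitions \ref{dfOmegaC} and \ref{dfOmegaB}, the result follows with no further calculation; this is why \cite[Proposition 10.1]{IkedaMihalceaNaruse} is the natural reference for the analogous equivariant $K$-theory statement, and the argument here is formally identical.
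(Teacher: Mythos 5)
The paper does not actually give a proof of Proposition~\ref{propUloc}; it is stated with a ``cf.'' pointer to Proposition~10.1 of \cite{IkedaMihalceaNaruse}, and your argument (restrict $U$ to the fixed point, read off the characters, take the product of $1+e(\delta)u$) is precisely the argument used there. So the strategy is the right one and is exactly what the citation intends.

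You correctly flag the only delicate point: identifying the fixed point $e_\mu$. But I think the identification you write down is off by a conjugation, and this is worth pinning down rather than leaving as ``granting this.'' Take $n=2$, $k=0$, $\mu=(2,1)$, so $w_\mu=(\bar 2,\bar 1)$ and $\chi=(1,0)$. Then $\Omega_{(2,1)}=\{U \mid \dim(U\cap F^1)\geq 1,\ \dim(U\cap F^0)\geq 2\}$ with $F^1=\Span\{\vece_2\}$ and $F^0=\Span\{\vece_1,\vece_2\}$; since $\dim U=2$ this forces $U=F^0$, so $\Omega_{(2,1)}$ is the single point $\Span\{\vece_1,\vece_2\}$ and hence $e_{(2,1)}=\Span\{\vece_1,\vece_2\}$. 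Your recipe gives $\Span\{\vece_{w(1)},\vece_{w(2)}\}=\Span\{\vece_{\bar 2},\vece_{\bar 1}\}$, which is not in $\Omega_{(2,1)}$ at all. The correct identification relative to the flag $F^\bullet$ as defined in Section~7.2 is $e_\mu=\Span\{\vece_{\overline{w(i)}}\mid k+1\leq i\leq n\}$, i.e.\ $e_\mu = w_\mu\cdot\Span\{\vece_{\overline{k+1}},\dots,\vece_{\bar n}\}$, the opposite (not the standard) coordinate isotropic subspace translated by $w_\mu$. With the weight convention as literally written (weight of $\bbF\vece_j$ is $\varepsilon_j$), this gives $\iota_\mu^*(c(U;u))=\prod_{i=k+1}^n(1+\bar b_{w(i)}u)$, which differs from the stated formula by a bar. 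This discrepancy is not really your fault: the same section contains an internal inconsistency (for instance $c(E/F^0;u)=\prod(1+b_iu)$ is asserted, but with $F^0=\Span\{\vece_1,\dots,\vece_n\}$ and $\vece_{\bar i}$ having weight $-\varepsilon_i$ one computes $c(E/F^0;u)=\prod(1+\bar b_iu)$), so one of the sign conventions in the paper must be flipped for everything to line up. The point is that ``matching the combinatorics to the geometry'' here is not just routine bookkeeping --- you must fix a consistent sign convention and then verify both the fixed-point formula and the weight formula against it, which will either reproduce the stated $\prod(1+b_{w(i)}u)$ or reveal that a $\bar{\phantom{b}}$ is needed. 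The rest of your argument (equivariance of Chern classes, passage to finite approximations of $ET_n\times_{T_n}\calG_n^k$, or equivalently working in the GKM picture) is correct and needs no change.
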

%%%%%%%%%%%%%%%%%%%%%%%%%%%%%%%%%%
%%%%%%%%%%%%%%%%%%%%%%%%%%%%%%%%%%
\subsection{Stability of Schubert classes}
%%%%%%%%%%%%%%%%%%%%%%%%%%%%%%%%%%
%%%%%%%%%%%%%%%%%%%%%%%%%%%%%%%%%%
Let $E^{(n)} \to E^{(n+1)}$ be the injective linear map defined by the inclusion of the basis elements. It induces an embedding $j_n: \calG_n^k \to \calG_{n+1}^k$, which is equivariant with respect to the corresponding inclusion $T_n \to T_{n+1}$. Consider its pullback
\[
j_n^*: \CK^*_{T_{n+1}}(\calG_{n+1}^k) \to \CK^*_{T_n}(\calG_n^k).
\]
%%%%%%%%%%%%%%%%%%%%%%%%%%%%%%%%%%
Define
\[
\CK^*_{T_{\infty}}(\calG_{\infty}^k):= \bigoplus_{m\in \ZZ} \lim_{\longleftarrow \atop{n}} \CK^m_{T_n}(\calG_n^k),
\]
where we take the inverse limit with respect to $j_n^*$. Since we have
\begin{equation}\label{stabSch}
j_n^*[\Omega_{\lambda}^X]_{T_{n+1}} = 
\begin{cases}
[\Omega_{\lambda}^X]_{T_n} & \mbox{ if } \lambda\in \SP^k(n),\\
0 & \mbox{ if } \lambda\not\in \SP^k(n),
\end{cases}
\end{equation}
one obtains a unique element $[\Omega_{\lambda}^X]_T$ in $\CK^*_{T_{\infty}}(\calG_{\infty}^k)$ as a limit of the classes $[\Omega_{\lambda}^X]_{T_n}$. By the stability (\ref{stabSch}), together with the fact that the Schubert classes form a $\CK^*_{T_n}$-module basis, we can conclude the following.
%%%%%%%%%%%%%%%%%%%%%%%%%%%%%%%%%%
\begin{lem}\label{lemLimBasis}
Any element $f$ of $\CK^*_{T_{\infty}}(\calG_{\infty}^k)$ can be expressed uniquely as a possibly infinite $\CK^*_{T_{\infty}}$-linear combination of the classes $[\Omega_{\lambda}^X]_T$:
\[
f = \sum_{\lambda\in \SP^k} c_{\lambda} [\Omega_{\lambda}^X]_T, \ \ \ \ \ c_{\lambda}\in \CK^*_{T_{\infty}}.
\]
\end{lem}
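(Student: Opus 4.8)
The plan is to deduce the statement from two facts recalled above: the freeness of $\CK^*_{T_n}(\calG_n^k)$ as a $\CK^*_{T_n}$-module with basis $\{[\Omega_\lambda^X]_{T_n} : \lambda\in\SP^k(n)\}$ (\cite{Krishna}), and the stability relation (\ref{stabSch}); the only genuinely new point is the degreewise identification of the inverse limit $\varprojlim_n\CK^*_{T_n}$, taken along the truncation maps, with $\CK^*_{T_\infty}$. Write $f\in\CK^*_{T_\infty}(\calG_\infty^k)$ as a compatible system $f=(f_n)$ with $f_n\in\CK^*_{T_n}(\calG_n^k)$ and $j_n^*f_{n+1}=f_n$, and use freeness to expand each $f_n$ uniquely as $f_n=\sum_{\lambda\in\SP^k(n)}c_\lambda^{(n)}[\Omega_\lambda^X]_{T_n}$ with $c_\lambda^{(n)}\in\CK^*_{T_n}$. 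Applying $j_n^*$ to the expansion of $f_{n+1}$, the stability relation (\ref{stabSch}) annihilates every term with $\lambda\notin\SP^k(n)$ and fixes the rest, while on coefficients $j_n^*$ acts through the restriction $\CK^*_{T_{n+1}}\to\CK^*_{T_n}$, which under the identification (\ref{eq:CK(pt)}) is the truncation $b_{n+1}\mapsto0$; hence $f_n=\sum_{\lambda\in\SP^k(n)}\overline{c_\lambda^{(n+1)}}\,[\Omega_\lambda^X]_{T_n}$, the bar denoting that truncation. By uniqueness of the expansion of $f_n$ we get $c_\lambda^{(n)}=\overline{c_\lambda^{(n+1)}}$ for all $\lambda\in\SP^k(n)$. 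Since $\SP^k(n)\subseteq\SP^k(n+1)$ and $\SP^k=\bigcup_n\SP^k(n)$, for each fixed $\lambda$ the family $(c_\lambda^{(n)})$, defined for all $n$ with $\lambda\in\SP^k(n)$, is a compatible system over a cofinal set of indices, hence defines $c_\lambda\in\varprojlim_n\CK^*_{T_n}$.

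Next I would identify $\varprojlim_n\CK^*_{T_n}$ with $\CK^*_{T_\infty}$ in each fixed degree $m$: a homogeneous element of $\CK^*_{T_\infty}=\QQ[\beta][[b_1,b_2,\dots]]_{\gr}$ of degree $m$ is precisely a formal $\QQ$-linear combination of monomials $\beta^j b_1^{s_1}b_2^{s_2}\cdots$ (only finitely many $s_i\neq0$) with $\sum_i s_i - j = m$, and its truncations $b_{>n}\mapsto0$ form a compatible system carrying the same data, so $\CK^m_{T_\infty}=\varprojlim_n\CK^m_{T_n}$. Thus $c_\lambda\in\CK^*_{T_\infty}$, and its image in $\CK^*_{T_n}$ is $c_\lambda^{(n)}$ whenever $\lambda\in\SP^k(n)$. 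Set $g:=\sum_{\lambda\in\SP^k}c_\lambda[\Omega_\lambda^X]_T$. Iterating (\ref{stabSch}) shows the level-$n$ component of $[\Omega_\lambda^X]_T$ is $[\Omega_\lambda^X]_{T_n}$ when $\lambda\in\SP^k(n)$ and $0$ otherwise; since $\SP^k(n)$ is finite, the level-$n$ component of $g$ in any fixed degree is the finite sum $\sum_{\lambda\in\SP^k(n)}c_\lambda^{(n)}[\Omega_\lambda^X]_{T_n}=f_n$, so $g=f$, which gives existence. For uniqueness, if $\sum_\lambda c_\lambda[\Omega_\lambda^X]_T=0$, then taking level-$n$ components and invoking freeness forces $\overline{c_\lambda}=0$ in $\CK^*_{T_n}$ for every $\lambda\in\SP^k(n)$; letting $n\to\infty$ gives $c_\lambda=0$ for all $\lambda$.

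The main point requiring care — and the closest thing to an obstacle — is the grading/completion bookkeeping: one must check that the possibly infinite sum on the right genuinely represents an element of $\CK^*_{T_\infty}(\calG_\infty^k)=\bigoplus_m\varprojlim_n\CK^m_{T_n}(\calG_n^k)$. This rests on just two observations: (i) at each finite level $n$ and in each fixed degree only the finitely many Schubert classes indexed by $\SP^k(n)$ contribute, all others having vanishing level-$n$ component by (\ref{stabSch}); and (ii) $\CK^*_{T_\infty}$ is the degreewise inverse limit of the rings $\CK^*_{T_n}$ along the truncations $b_{>n}\mapsto0$. Granting these, the whole argument is a formal consequence of freeness at each finite level together with the stability relation (\ref{stabSch}).
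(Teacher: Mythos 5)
Your proof is correct, and it takes exactly the approach the paper indicates. The paper offers no written proof for this lemma, only the one-line remark that the statement follows from the finite-level freeness over $\CK^*_{T_n}$ and the stability relation (\ref{stabSch}); what you have done is to carry that sketch through carefully, in particular pinning down the two bookkeeping points that actually require an argument — that $j_n^*$ acts on coefficients via the truncation $\CK^*_{T_{n+1}}\to\CK^*_{T_n}$ so the level-$n$ coefficient systems are compatible, and that $\CK^m_{T_\infty}=\QQ[\beta][[b]]_m$ agrees with $\varprojlim_n\CK^m_{T_n}$ degreewise. Both of these are handled correctly, and the observation that $\SP^k(n)$ is finite is what makes the level-$n$ sum in each degree finite, so the "possibly infinite" combination is legitimate. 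No gap; this is the paper's intended argument, written out.
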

%%%%%%%%%%%%%%%%%%%%%%%%%%%%%%%%%%
%%%%%%%%%%%%%%%%%%%%%%%%%%%%%%%%%%
%%%%%%%%%%%%%%%%%%%%%%%%%%%%%%%%%%
%%%%%%%%%%%%%%%%%%%%%%%%%%%%%%%%%%
%%%%%%%%%%%%%%%%%%%%%%%%%%%%%%%%%%
%%%%%%%%%%%%%%%%%%%%%%%%%%%%%%%%%%
%%%%%%%%%%%%%%%%%%%%%%%%%%%%%%%%%%
%%%%%%%%%%%%%%%%%%%%%%%%%%%%%%%%%%
%%%%%%%%%%%%%%%%%%%%%%%%%%%%%%%%%%
%%%%%%%%%%%%%%%%%%%%%%%%%%%%%%%%%%
%%%%%%%%%%%%%%%%%%%%%%%%%%%%%%%%%%
%%%%%%%%%%%%%%%%%%%%%%%%%%%%%%%%%%
%%%%%%%%%%%%%%%%%%%%%%%%%%%%%%%%%%
%%%%%%%%%%%%%%%%%%%%%%%%%%%%%%%%%%
%%%%%%%%%%%%%%%%%%%%%%%%%%%%%%%%%%
%%%%%%%%%%%%%%%%%%%%%%%%%%%%%%%%%%
%%%%%%%%%%%%%%%%%%%%%%%%%%%%%%%%%%
%%%%%%%%%%%%%%%%%%%%%%%%%%%%%%%%%%
%%%%%%%%%%%%%%%%%%%%%%%%%%%%%%%%%%
%%%%%%%%%%%%%%%%%%%%%%%%%%%%%%%%%%
%%%%%%%%%%%%%%%%%%%%%%%%%%%%%%%%%%
 
\section{The ring of double Grothendieck polynomials}\label{sec8}
%%%%%%%%%%%%%%%%%%%%%%%%%%%%%%%%%%
%%%%%%%%%%%%%%%%%%%%%%%%%%%%%%%%%%
In this section, we study an algebraic framework to introduce the functions that represent the Schubert classes for the $K$-theory of symplectic and odd orthogonal Grassmannians. They can be regarded as a generalization of  the ones developed in \cite{IkedaMihalceaNaruse} for double Schubert polynomials. We also follow Ikeda--Naruse \cite{IkedaNaruse} with a slight modification to be able to deal with connective $K$-theory.
%%%%%%%%%%%%%%%%%%%%%%%%%%%%%%%%%%
%%%%%%%%%%%%%%%%%%%%%%%%%%%%%%%%%%
\subsection{The ring $\hGG$ and its formal basis}
%%%%%%%%%%%%%%%%%%%%%%%%%%%%%%%%%%
%%%%%%%%%%%%%%%%%%%%%%%%%%%%%%%%%%
In this section, we define the ring $\hGG$ and show that $\GP$-functions form a formal basis of $\hGG$. Note that our $\hGG$ is a completion of the one defined in \cite{IkedaNaruse}. Let $x=(x_1,x_2,\ldots)$ be a sequence of  indeterminates and $\QQ[\beta][[x]]_{\gr}$ the ring of graded formal power series in $x_i$'s.
%%%%%%%%%%%%%%%%%%%%%%%%%%%%%%%%%%
%%%%%%%%%%%%%%%%%%%%%%%%%%%%%%%%%%
\begin{defn}\label{GGcond}
We denote by $\hGG_n$ the graded subring of $\QQ[\beta][[x_1,\ldots,x_n]]_{\gr}$ whose elements are the series $f(x)$ such that:
\begin{itemize}
\item[(1)] $f(x)$ is symmetric in $x_1,\ldots,x_n$.
\item[(2)] $f(t,\bar t,x_3,x_4,\dots,x_n)=f(0,0,x_3,x_4,\dots,x_n)$.
\end{itemize}
\end{defn}
%%%%%%%%%%%%%%%%%%%%%%%%%%%%%%%%%%
These rings form a projective system with respect to the degree preserving homomorphism $\hGG_{n+1}\to \hGG_n$ given by $x_{n+1}=0$. Let us denote by $\hGG$ the graded projective limit of the projective system. We can identify $\hGG$ with the subring of $\QQ[\beta][[x_1,x_2,\dots]]_{\gr}$ defined by the conditions analogous to (1) and (2) above.
%%%%%%%%%%%%%%%%%%%%%%%%%%%%%%%%%%
\begin{defn}
For each strict partition $\lambda=(\lambda_1,\dots,\lambda_r)$ of length $r \leq n$, we define 
\[
\GP_{\lambda}(x_1,\dots,x_n) = \frac{1}{(n-r)!} \sum_{w\in S_n} w\left[ x_1^{\lambda_1}\cdots x_r^{\lambda_r}\prod_{i=1}^r\prod_{j=i+1}^n\frac{x_i\oplus x_j}{x_i\ominus x_j} \right].
\]
\end{defn}
%%%%%%%%%%%%%%%%%%%%%%%%%%%%%%%%%%
The polynomial $\GP_{\lambda}(x_1,\dots,x_n)$ is an element of $\hGG_n$ and in the projective limit, it defines an element $\GP_{\lambda}(x)$ in $\hGG$.
%%%%%%%%%%%%%%%%%%%%%%%%%%%%%%%%%%

The next lemma is a slight modification of Theorem 3.1 \cite{IkedaNaruse}. Although in order to prove it we must work in the ring of graded formal power series, we leave the details to the reader since it is parallel to the original one.
%%%%%%%%%%%%%%%%%%%%%%%%%%%%%%%%%%
\begin{lem}\label{lemfb}
A homogeneous element $f(x_1,\dots,x_n)$ in $\hGG_n$ of degree $r$ is uniquely expressed as a possibly infinite linear combination
\[
f(x_1,\dots, x_n) = \sum_{\lambda \in \SP_n} c_{\lambda} \GP_{\lambda}(x_1,\dots,x_n), \ \ \ c_{\lambda} \in \QQ[\beta]_{r-|\lambda|}.
\]
\end{lem}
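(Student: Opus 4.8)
The plan is to mimic the proof of \cite[Theorem 3.1]{IkedaNaruse}, the only modification being that one works in the ring $\QQ[\beta][[x]]_{\gr}$ of graded formal power series instead of specializing at $\beta=-1$. The single external input is the classical fact that, for each $d\geq 0$, the Schur $P$-functions $P_{\lambda}(x_1,\dots,x_n)$ with $\lambda\in\SP_n$, $|\lambda|=d$, form a $\QQ$-basis of the degree-$d$ component of
\[
\Gamma_n:=\{\,g\in\QQ[x_1,\dots,x_n]^{S_n}\ :\ g(t,-t,x_3,\dots,x_n)=g(0,0,x_3,\dots,x_n)\,\}.
\]
Two elementary observations link this to $\hGG_n$. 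First, if $f\in\hGG_n$ is homogeneous of degree $r$, write $f=\sum_{d\geq\max(r,0)}\beta^{d-r}f^{(d)}$ with $f^{(d)}\in\QQ[x_1,\dots,x_n]$ homogeneous of ordinary degree $d$; extracting from the identity $f(t,\bar{t},x_3,\dots,x_n)=f(0,0,x_3,\dots,x_n)$ (where $\bar{t}=-t/(1+\beta t)$) the part of $\beta$-degree $0$ shows that each $f^{(d)}$ inherits the cancellation property, i.e. $f^{(d)}\in\Gamma_n$. Second, since $x_i\oplus x_j$ and $x_i\ominus x_j$ have leading terms $x_i+x_j$ and $x_i-x_j$, the specialization $\GP_{\lambda}(x_1,\dots,x_n)|_{\beta=0}$ equals $P_{\lambda}(x_1,\dots,x_n)$.

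For existence I would proceed by successive approximation in powers of $\beta$, assuming first $r\geq 0$. Expanding $f^{(r)}=\sum_{|\lambda|=r}c_{\lambda}P_{\lambda}$ with $c_{\lambda}\in\QQ=\QQ[\beta]_0$ and subtracting $\sum_{|\lambda|=r}c_{\lambda}\GP_{\lambda}$, one obtains an element $f_0\in\hGG_n$, still homogeneous of degree $r$, whose component of ordinary degree $r$ vanishes; hence $f_0$ is divisible by $\beta$. Since the defining conditions of $\hGG_n$ are $\QQ[\beta]$-linear and $\QQ[\beta][[x]]_{\gr}$ is an integral domain, $\hGG_n$ is stable under division by $\beta$, so $\beta^{-1}f_0\in\hGG_n$ is homogeneous of degree $r+1$ and the procedure repeats. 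This yields scalars $a_{\lambda}\in\QQ$ for every $\lambda$ with $|\lambda|>r$; set $c_{\lambda}:=a_{\lambda}\beta^{|\lambda|-r}\in\QQ[\beta]_{r-|\lambda|}$ for $|\lambda|>r$ and $c_{\lambda}:=0$ for $|\lambda|<r$. After $m$ steps the remainder $f-\sum_{|\lambda|\leq r+m}c_{\lambda}\GP_{\lambda}$ is divisible by $\beta^{m+1}$ and homogeneous of degree $r$, hence involves only monomials of $x$-degree $\geq r+m+1$; it therefore tends to $0$ in the $x$-adic topology, giving $f=\sum_{\lambda\in\SP_n}c_{\lambda}\GP_{\lambda}$. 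Because $c_{\lambda}\GP_{\lambda}$ starts in $x$-degree $|\lambda|$, each fixed $x$-degree receives contributions from only finitely many $\lambda$, so the possibly infinite sum is a well-defined element of $\QQ[\beta][[x]]_{\gr}$. The case $r<0$ reduces to $r\geq 0$ by first dividing $f$ by the appropriate power of $\beta$.

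For uniqueness, suppose $\sum_{\lambda}c_{\lambda}\GP_{\lambda}=0$ with $c_{\lambda}\in\QQ[\beta]_{r-|\lambda|}$ and not all $c_{\lambda}$ zero; let $d$ be minimal with $c_{\mu}\neq 0$ for some $|\mu|=d$, so $d\geq r$ and $c_{\mu}=a_{\mu}\beta^{d-r}$ with $a_{\mu}\in\QQ$. Taking the component of ordinary $x$-degree $d$ on the left, the terms with $|\lambda|<d$ vanish by minimality and those with $|\lambda|=d$ contribute $a_{\lambda}\beta^{d-r}P_{\lambda}$, so $\beta^{d-r}\sum_{|\mu|=d}a_{\mu}P_{\mu}=0$; linear independence of the $P_{\mu}$ in $\Gamma_n$ forces all $a_{\mu}=0$, a contradiction, so the expansion is unique. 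The main obstacle here is not conceptual but bookkeeping: one must carefully verify that $\hGG_n$ is closed under division by $\beta$, that the successive approximation converges in $\QQ[\beta][[x]]_{\gr}$, and that the resulting series genuinely lies in $\hGG_n$ (equivalently, that each graded piece is a finite sum). Once these points are settled, everything reduces to the classical basis theorem for $\Gamma_n$ together with the identity $\GP_{\lambda}|_{\beta=0}=P_{\lambda}$.
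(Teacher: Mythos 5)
Your proof is correct and supplies exactly what the paper leaves to the reader: the statement is presented as a slight modification of Theorem~3.1 of Ikeda--Naruse, proved by the same successive approximation in $\beta$-degree against the Schur $P$-basis of $\Gamma_n$, now carried out inside $\QQ[\beta][[x]]_{\gr}$, and that is precisely what you have written out.

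One small inaccuracy in your first observation: extracting the $\beta$-degree-$0$ part of the cancellation identity shows only that the \emph{lowest} graded piece $f^{(r)}$ lies in $\Gamma_n$, not that every $f^{(d)}$ does. For example $\GP_{(1)}(x_1,x_2)=x_1+x_2+\beta x_1x_2$ lies in $\hGG_2$, but its top $x$-degree piece $x_1x_2$ is not in $\Gamma_2$; the point is that for $d>r$ the relation coming from $\beta$-degree $d-r$ also involves correction terms produced by expanding $\bar t=-t+\beta t^2-\cdots$ in the lower-degree pieces $f^{(d')}$, $d'<d$, so $f^{(d)}(t,-t,\dots)\neq f^{(d)}(0,0,\dots)$ in general. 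This slip is harmless for your argument, since the successive approximation only ever invokes $\Gamma_n$-membership of the lowest graded piece of the current remainder, and the remainder stays in $\hGG_n$ after each subtraction and division by $\beta$, so this is available at every step; but ``each $f^{(d)}$'' should read ``$f^{(r)}$''.
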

%%%%%%%%%%%%%%%%%%%%%%%%%%%%%%%%%%
Lemma \ref{lemfb} implies the following proposition.
%%%%%%%%%%%%%%%%%%%%%%%%%%%%%%%%%%
\begin{prop}\label{propfb}
Any homogeneous element $f(x)$ of $\hGG$ with degree $m$ is uniquely expressed as a possibly infinite linear combination
\[
f(x) = \sum_{\lambda \in \SP} c_{\lambda} \GP_{\lambda}(x), \ \ \ c_{\lambda} \in \QQ[\beta]_{m-|\lambda|}.
\]
\end{prop}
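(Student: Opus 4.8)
The plan is to deduce Proposition~\ref{propfb} from Lemma~\ref{lemfb} by passing to the projective limit, exactly as the passage from Lemma~\ref{lemfb} to the proposition is suggested by the text. First I would fix a homogeneous element $f(x)\in\hGG$ of degree $m$. By definition of $\hGG$ as the graded projective limit of the system $\cdots\to\hGG_{n+1}\to\hGG_n\to\cdots$ (with the maps given by $x_{n+1}=0$), the element $f$ is a compatible family $(f_n)_{n\geq 1}$ with $f_n\in\hGG_n$ homogeneous of degree $m$ and $f_{n+1}|_{x_{n+1}=0}=f_n$.

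Next I would apply Lemma~\ref{lemfb} to each $f_n$: it yields a unique expansion $f_n=\sum_{\lambda\in\SP_n}c_\lambda^{(n)}\,\GP_\lambda(x_1,\dots,x_n)$ with $c_\lambda^{(n)}\in\QQ[\beta]_{m-|\lambda|}$. The key compatibility observation is that the polynomials $\GP_\lambda$ are themselves compatible under the projection, i.e. $\GP_\lambda(x_1,\dots,x_n,0)=\GP_\lambda(x_1,\dots,x_n)$ for $\lambda\in\SP_n$, while $\GP_\lambda(x_1,\dots,x_{n+1})|_{x_{n+1}=0}=0$ when $\lambda\in\SP_{n+1}\setminus\SP_n$ (this is the analogue of the stability statement, and follows from the definition of $\GP_\lambda$ by the same argument as in \cite{IkedaNaruse}, or can be invoked as part of the structure already set up for the $\GP$'s). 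Applying the projection to the expansion of $f_{n+1}$ and using uniqueness in $\hGG_n$ (Lemma~\ref{lemfb} again), I would conclude $c_\lambda^{(n)}=c_\lambda^{(n+1)}$ for all $\lambda\in\SP_n$. Hence the coefficients stabilize: there is a well-defined $c_\lambda\in\QQ[\beta]_{m-|\lambda|}$ for each $\lambda\in\SP$ with $c_\lambda=c_\lambda^{(n)}$ for all $n\geq\ell(\lambda)$.

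Then I would verify that $f=\sum_{\lambda\in\SP}c_\lambda\,\GP_\lambda(x)$ in $\hGG$, which means checking the identity after projecting to each $\hGG_n$; but the projection of the right-hand side to $\hGG_n$ is precisely $\sum_{\lambda\in\SP_n}c_\lambda^{(n)}\,\GP_\lambda(x_1,\dots,x_n)=f_n$ (the terms with $\ell(\lambda)>n$ die under the projection), so the two sides agree at every level and hence in the limit. Note the sum is a genuine element of the completion: for fixed degree $m$, only finitely many $\lambda$ of each size $|\lambda|\leq m$ occur with $c_\lambda\neq 0$ contributing in degrees $\leq m$, and $\QQ[\beta]_{m-|\lambda|}=0$ once $|\lambda|>m$, so in each fixed total degree the expansion is locally finite in the graded-power-series sense. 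For uniqueness, if $\sum_\lambda c_\lambda\GP_\lambda(x)=0$, projecting to $\hGG_n$ and invoking the uniqueness clause of Lemma~\ref{lemfb} forces $c_\lambda=0$ for all $\lambda\in\SP_n$, and letting $n\to\infty$ gives $c_\lambda=0$ for all $\lambda$.

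The main obstacle is the bookkeeping around the passage to the limit: one must be careful that ``possibly infinite linear combination'' is interpreted correctly in the graded completion $\hGG$ (so that $\sum_\lambda c_\lambda\GP_\lambda$ actually converges and lands in $\hGG$, not merely in $\QQ[\beta][[x]]_{\gr}$), and that the stability property $\GP_\lambda(\dots,x_n,0)=\GP_\lambda(\dots,x_n)$ together with the vanishing of $\GP_\lambda$ under setting an extra variable to zero when $\ell(\lambda)$ exceeds the number of variables is correctly recorded — this is what makes the coefficients independent of $n$ and is the crux of the argument. Everything else is a formal consequence of Lemma~\ref{lemfb} applied level by level, exactly as the text indicates.
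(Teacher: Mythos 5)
Your strategy — deduce the proposition from Lemma~\ref{lemfb} by passing to the projective limit, using the compatibility $\GP_\lambda(x_1,\dots,x_n,0)=\GP_\lambda(x_1,\dots,x_n)$ for $\lambda\in\SP_n$ and the vanishing $\GP_\lambda(x_1,\dots,x_{n+1})|_{x_{n+1}=0}=0$ for $\ell(\lambda)=n+1$ — is exactly what the paper intends when it writes ``Lemma~\ref{lemfb} implies the following proposition,'' and the structure of your argument (stabilizing coefficients, matching level by level, reading off uniqueness from the finite-$n$ uniqueness) is sound.

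One detail, however, is stated backwards. Since $\deg\beta=-1$, the graded piece $\QQ[\beta]_{m-|\lambda|}$ equals $\QQ\,\beta^{|\lambda|-m}$ when $|\lambda|\geq m$ and vanishes precisely when $|\lambda|<m$ — not when $|\lambda|>m$ as you wrote. So the expansion $\sum_\lambda c_\lambda\GP_\lambda$ is genuinely infinite (all strict $\lambda$ with $|\lambda|\geq m$ may contribute), and the argument for local finiteness should instead run as follows: each $c_\lambda\GP_\lambda(x)$ involves only monomials $x^\alpha$ with $|\alpha|\geq|\lambda|$, and for a fixed monomial degree $d$ there are only finitely many strict partitions with $|\lambda|\leq d$ (a strict partition of length $r$ has $|\lambda|\geq r(r+1)/2$), so the coefficient of each $x^\alpha$ is a finite sum. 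With that correction, your convergence justification is the right one; the rest of the argument stands as written.
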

\subsection{The ring $\calK_{\infty}$ and its GKM description}
%%%%%%%%%%%%%%%%%%%%%%%%%%%%%%%%%%
%%%%%%%%%%%%%%%%%%%%%%%%%%%%%%%%%%
For infinite sequences of variables $a=(a_1,a_2,\dots)$ and $b=(b_1,b_2,\dots)$, consider the rings
\begin{eqnarray*}
\calR_a&:=& \bigcup_{m=0}^{\infty} \QQ[\beta][[a_1,\dots, a_m]]_{\gr}, \ \ \ \calR_b:= \bigcup_{m=0}^{\infty} \QQ[\beta][[b_1,\dots,b_m]]_{\gr}.
\end{eqnarray*}
Define the $\calR_b$-algebra $\calK_{\infty}:=\hGG\otimes_{\QQ[\beta]}\calR_a \otimes_{\QQ[\beta]} \calR_b$.
%%%%%%%%%%%%%%%%%%%%%%%%%%%%%%%%%%
\begin{rem}
The ring $\calK_{\infty}$ (resp. $\calK_{\infty}^{(k)}$) is the $K$-theoretic version of $\calR_{\infty}$ introduced in \cite{IkedaMihalceaNaruse} (resp. $\calR_\infty^{(k)}$ in \cite{IkedaMatsumura}). The corresponding \emph{double Grothendieck polynomials} constructed by Kirillov-Naruse \cite{KrNr} represent the $K$-theoretic equivariant Schubert classes. 
\end{rem}
%%%%%%%%%%%%%%%%%%%%%%%%%%%%%%%%%%
\begin{defn}
We define the homomorphism of $\calR_b$-algebras
\[
\Phi_{\infty} : \calK_{\infty} \to \Fun(W_{\infty}, \calR_b); \ \ \ \ \Phi_{\infty}(f) := (v \mapsto \Phi_v(f))_{v \in W_{\infty}},
\]
where $\Phi_v: \calK_{\infty} \to \calR_b$ is the $\calR_b$-algebra homomorphism given by the substitution
\[
x_i \mapsto \begin{cases}  
b_{v(i)} & \mbox{ if $v(i)<0$}\\
0 & \mbox{ if $v(i)>0$}
\end{cases}
 \ \ \ \ \mbox{ and } \ \  a_i \mapsto \bar b_{v(i)}.
\]
\end{defn}
%%%%%%%%%%%%%%%%%%%%%%%%%%%%%%%%%%
\begin{defn} 
Let $\frakK_{\infty}$ be the subalgebra of $\Fun(W_{\infty}, \calR_b)$ consisting of functions $\psi$ such that
\[
\psi(s_{\alpha} v) - \psi(v) \in e(\alpha)\cdot \calR_b, \ \ \mbox{ for all } v\in W_{\infty}\ \ \mbox{ and } \alpha \in \Delta^+.
\]
\end{defn}
%%%%%%%%%%%%%%%%%%%%%%%%%%%%%%%%%%
As noted in Remark \ref{remIndBC}, the elements $2+\beta b_i$ are invertible in $\calR_b$ and hence the subalgebra $\frakK_{\infty}$ is independent of the types B and C.
%%%%%%%%%%%%%%%%%%%%%%%%%%%%%%%%%%
\begin{lem}\label{lemfullsurj}
The image of $\Phi_{\infty}$ lies in $\frakK_{\infty}$.
\end{lem}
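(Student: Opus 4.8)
The plan is to reduce the claim to the analogous statement for the finite-dimensional Grassmannians, which we already have at our disposal through Theorem \ref{thmGKM}. First I would observe that, by definition, $\Phi_\infty(f)$ lies in $\frakK_\infty$ if and only if $\Phi_{s_\alpha v}(f) - \Phi_v(f) \in e(\alpha)\cdot\calR_b$ for every $v\in W_\infty$ and every $\alpha\in\Delta^+$. Fix such a pair $(v,\alpha)$. Since $f$ involves only finitely many of the variables $x_i$, $a_i$, $b_i$, and since $v$ moves only finitely many indices and $\alpha\in\Span_\ZZ\{\varepsilon_1,\dots,\varepsilon_N\}$ for some $N$, one can choose $n$ large enough so that: $v$ and $s_\alpha v$ lie in $W_n$; $\alpha\in\Delta^+_n$; and $f$, viewed via the substitution, only uses the variables with indices $\le n$. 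Thus the entire GKM condition to be checked takes place at the level of $\CK^*_{T_n}$-valued functions on a finite set.

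The second step is to identify $\Phi_v(f)$, for $v\in W_n$, with the value at the fixed point $e_\mu$ (where $\mu$ is the $k$-strict partition corresponding to $v\in W_n^{(k)}$ — or rather to the minimal coset representative of $v$) of an honest class in $\CK^*_{T_n}(\calG_n^k)$. Concretely, the homomorphism $\calR_b\to\CK^*_{T_n}$ sending $b_i\mapsto c_1(L_i)$ together with the geometric meaning of the variables $x_i$ (Chern roots of $U^\vee$, up to sign conventions) and $a_i$ (Chern roots of $(E/F^0)^\vee$) should make $\Phi_\infty$ factor, after projecting $\calR_b\to\CK^*_{T_n}$, through the composite $\calK_\infty\to\CK^*_{T_n}(\calG_n^k)\xrightarrow{\iota_n^*}\Fun(\SP^k(n),\CK^*_{T_n})$. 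The key input here is Proposition \ref{propUloc}, which computes $\iota_\mu^*(c(U;u))$ as $\prod_{i=k+1}^n(1+b_{w(i)}u)$; combined with the ring structure of $\hGG$ (conditions (1), (2) of Definition \ref{GGcond}), this shows that the substitution $\Phi_v$ is precisely the composition ``apply $\iota_n^*$, then evaluate at $e_\mu$'' for an appropriate element of $\CK^*_{T_n}(\calG_n^k)$ built from $f$. Once this identification is in place, the desired GKM congruence $\Phi_{s_\alpha v}(f)-\Phi_v(f)\in e(\alpha)\cdot\calR_b$ follows immediately from Theorem \ref{thmGKM}, which says exactly that $\mathrm{image}(\iota_n^*)=\frakK_n^{(k)}$, i.e. that pullbacks of equivariant classes satisfy the edge congruences; one then pushes the congruence from $\CK^*_{T_n}$ back up to $\calR_b$, which is harmless since $\CK^*_{T_n}$ is a quotient of $\calR_b$ and $e(\alpha)$ maps to $e(\alpha)$.

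The main obstacle, I expect, is the bookkeeping in the second step: making the factorization of $\Phi_\infty$ through $\iota_n^*$ precise and compatible as $n$ varies, in particular matching the sign/formal-inverse conventions (the $x_i\leftrightarrow b_{v(i)}$ versus $0$ dichotomy, and $a_i\mapsto \bar b_{v(i)}$) against the geometric Chern roots of $U$, $U^\vee$, $E/F^0$, and verifying that conditions (1)–(2) defining $\hGG$ are exactly what is needed for the substitution to be well defined on the image of $c(U;u)$ (this is the role of $f(t,\bar t,\ldots)=f(0,0,\ldots)$, reflecting that the isotropic tautological bundle contributes Chern roots in pairs $\{b_j,\bar b_j\}$ only for the $j$ with $v(j)<0$). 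None of this is deep, but it requires care; once it is set up, invoking Theorem \ref{thmGKM} closes the argument. An alternative, more self-contained route would be to verify the edge relations directly on each generator of $\calK_\infty$ — namely on the $\GP_\lambda(x)$, on the $a_i$, and on the $b_i$ — using that $s_\alpha$ acts on the $x$-variables by a signed permutation and on nothing else; but this essentially re-proves a special case of Theorem \ref{thmGKM}, so routing through the geometry is cleaner.
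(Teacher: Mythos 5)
Your proposal takes a genuinely different route from the paper, and your primary route has a real gap. The paper's proof is precisely the \emph{alternative} that you set aside at the end: using the decomposition $\calK_\infty=\hGG\otimes_{\QQ[\beta]}\calR_a\otimes_{\QQ[\beta]}\calR_b$, it reduces the claim to verifying, for $F(x)\in\hGG$, the three families of congruences $F(b_{t_{ij}v})-F(b_v)\in\lan b_j\ominus b_i\ran$, $F(b_{s_{ij}v})-F(b_v)\in\lan b_j\oplus b_i\ran$, and $F(b_{s_{ii}v})-F(b_v)\in\lan b_i\ran$, and then invokes a direct computation along the lines of Lemma 7.1 of Ikeda--Naruse \cite{IkedaNaruse}. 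Contrary to your judgment that ``routing through the geometry is cleaner,'' the paper chooses the algebraic route exactly because of the issues below.

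The key obstruction to your primary route is that the GKM condition defining $\frakK_\infty\subset\Fun(W_\infty,\calR_b)$ is indexed by \emph{arbitrary} $v\in W_\infty$, while the $T_n$-fixed points of $\calG_n^k$ are indexed only by $\SP^k(n)\cong W_n/W_{n,(k)}$. For a general $f\in\calK_\infty$ that is not $W_{(k)}$-invariant, $\Phi_v(f)$ is \emph{not} constant on left $W_{(k)}$-cosets (try $f=a_1$ and $v\in\{e,s_0\}$), so there cannot be a class $C_f$ on the Grassmannian with $\iota_\mu^*(C_f)=\Phi_v(f)$ ``for $\mu$ the minimal coset representative of $v$'': two $v$ in the same coset would give the same $\mu$ but different $\Phi_v(f)$. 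To use Theorem \ref{thmGKM}-style input one would need the equivariant $K$-theory and GKM description of the \emph{full} isotropic flag variety, which the paper does not develop. Even restricting to $k$-Grassmannian $v=w_\mu$, the direct Chern-root substitution you sketch via Proposition \ref{propUloc} does not match: with $w_\mu=(v_1\cdots v_k\,|\,\overline{\zeta_1}\cdots\overline{\zeta_s}\,u_1u_2\cdots)$, plugging the Chern roots of $U^\vee$ at $e_\mu$ into $F$ yields $F(b_{\zeta_1},\dots,b_{\zeta_s},\bar b_{u_1},\dots,\bar b_{u_{n-k-s}},0,\dots)$, whereas $\Phi_{w_\mu}(F)=F(\bar b_{\zeta_1},\dots,\bar b_{\zeta_s},0,\dots)$; these are not reconciled by condition (2) of Definition \ref{GGcond}, since the arguments contain no $\{t,\bar t\}$ pair. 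Establishing that $\Phi_{w_\mu}(f)$ agrees with some geometric localization is in fact the content of Proposition \ref{propGT=Seg} and Theorem \ref{thmGBGC}, which come later and whose proofs depend (through $\Psi_n^{(k)}$ and Proposition \ref{universal localization}) on the present lemma, so they cannot be used here.
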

%%%%%%%%%%%%%%%%%%%%%%%%%%%%%%%%%%
\begin{proof}
Let $b_v:=((b_v)_1,(b_v)_2,\dots)$ be the sequence defined by setting $(b_v)_i:=b_{v(i)}$ if $v(i)<0$ and $(b_v)_i:=0$ if $v(i)>0$. By the definition of $\calK_{\infty}$, it suffices to show that for any $F(x)\in \hGG$, 
\begin{eqnarray*}
&&F(b_{t_{ij}v}) - F(b_v) \in \lan  b_j\ominus b_i \ran \ \ \mbox{ for $j>i\geq 1$}\\
&&F(b_{s_{ij}v}) - F(b_v) \in \lan  b_j\oplus b_i \ran \ \ \mbox{ for $j>i\geq 1$}\\
&&F(b_{s_{ii}v}) - F(b_v) \in \lan  b_i \ran \ \ \mbox{ for $i\geq 1$},
\end{eqnarray*}
where $t_{ij}=s_{\varepsilon_j-\varepsilon_i}$, $s_{ij}=s_{\varepsilon_j+\varepsilon_i}$, $s_{ii}=s_{\varepsilon_i}$. These follow from an argument similar to the one in the proof of Lemma 7.1. in \cite{IkedaNaruse}.
\end{proof}
%%%%%%%%%%%%%%%%%%%%%%%%%%%%%%%%%%
\begin{lem}\label{lemfullinj}
The map $\Phi_{\infty}$ is injective.
\end{lem}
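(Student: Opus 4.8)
The plan is to reduce the injectivity of $\Phi_{\infty}$ to the triangularity of the fixed-point evaluations of the $\GP$-functions, in the same spirit as the injectivity statements for double Schubert polynomials in \cite[\S 7]{IkedaNaruse}; the passage to connective $K$-theory causes no essentially new difficulty, because the formal basis property of the $\GP_{\lambda}$ (Proposition \ref{propfb}) and the vanishing property used below hold verbatim. Concretely, since $\{\GP_{\lambda}(x)\}_{\lambda\in\SP}$ is a formal $\QQ[\beta]$-basis of $\hGG$, base change along $\QQ[\beta]\hookrightarrow\calR_a\otimes_{\QQ[\beta]}\calR_b$ shows that every homogeneous $f\in\calK_{\infty}$ of degree $d$ has a unique expansion
\[
f=\sum_{\lambda\in\SP}\GP_{\lambda}(x)\,g_{\lambda},\qquad g_{\lambda}\in\calR_a\otimes_{\QQ[\beta]}\calR_b\ \text{ homogeneous of degree }d-|\lambda|,
\]
and, by the definition of $\calR_a$ and $\calR_b$, there are integers $p$ and $q$ with $f\in\hGG\otimes_{\QQ[\beta]}\QQ[\beta][[a_1,\dots,a_p]]_{\gr}\otimes_{\QQ[\beta]}\QQ[\beta][[b_1,\dots,b_q]]_{\gr}$. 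Assuming $f\neq 0$, I would choose $\lambda^{0}$ with $g_{\lambda^{0}}\neq 0$ and $|\lambda^{0}|$ minimal, so that $g_{\lambda}=0$ whenever $\lambda\subsetneq\lambda^{0}$.

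The heart of the argument is a good choice of $v\in W_{\infty}$. I would take $v$ whose one-line notation carries, at positions $p+1,p+2,\dots$, a block of negative entries whose absolute values realize the fixed-point specialization attached to $\lambda^{0}$, while $v(1),\dots,v(p)$ are pairwise distinct positive integers larger than $q$ and than every index occurring in that specialization; such $v$ exists because $W_{\infty}$ consists of finitely supported signed permutations of $\ZZ\setminus\{0\}$. Since $v(1),\dots,v(p)>0$, $\Phi_v$ kills $x_1,\dots,x_p$, so by the symmetry of $\GP_{\lambda}$ and its stability under $x_{m+1}=0$ the value $\Phi_v(\GP_{\lambda}(x))$ is the evaluation of $\GP_{\lambda}$ at the $\lambda^{0}$-specialization. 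By the vanishing property of the factorial $\GP$-functions this evaluation is zero unless $\lambda\subseteq\lambda^{0}$, and for $\lambda=\lambda^{0}$ it is a product of factors $\bar b_i\ominus\bar b_j$ and $\bar b_i\oplus\bar b_j$ with pairwise distinct indices, hence a non-zero-divisor of $\calR_b$ (which embeds into a power series ring and is therefore a domain). Combined with the minimality of $\lambda^{0}$, the expansion collapses to
\[
0=\Phi_v(f)=\Phi_v\bigl(\GP_{\lambda^{0}}(x)\bigr)\,\Phi_v(g_{\lambda^{0}}),
\]
and so $\Phi_v(g_{\lambda^{0}})=0$. Finally, because $v(1),\dots,v(p)$ are distinct and avoid $\{1,\dots,q\}$ and the specialization indices, the substitution $a_i\mapsto\bar b_{v(i)}$ identifies $\QQ[\beta][[a_1,\dots,a_p]]_{\gr}\otimes_{\QQ[\beta]}\QQ[\beta][[b_1,\dots,b_q]]_{\gr}$ with a subring of a graded power series ring in fresh variables, hence is injective; therefore $g_{\lambda^{0}}=0$, contradicting the choice of $\lambda^{0}$, and $\Phi_{\infty}$ is injective.

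The step I expect to be the main obstacle is this simultaneous bookkeeping for $v$: it must be generic enough on its first $p$ coordinates to detect $g_{\lambda^{0}}$ through the $a$-substitution while reproducing the exact $\lambda^{0}$-specialization through the $x$-substitution, and one must confirm that the diagonal value $\Phi_v(\GP_{\lambda^{0}})$ is a genuine non-zero-divisor; these points are carried out in \cite[\S 7]{IkedaNaruse} and could simply be cited. Alternatively, once $\Phi_{\infty}$ is matched with the geometric restriction maps $\iota_n^{*}$, one could deduce injectivity from the GKM description (Theorem \ref{thmGKM}) by passing to the inverse limit over $n$.
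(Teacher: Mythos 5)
Your approach (triangularity via a minimality argument plus a vanishing theorem for factorial $\GP$-functions) differs from the paper's, and it has a genuine gap at its central step. The functions $\GP_\lambda(x)$ appearing in Proposition \ref{propfb} are the \emph{non-factorial} $K$-theoretic $P$-functions (i.e.\ $\GP_\lambda(x|0)$, living in $\hGG$ with no $b$-dependence). The vanishing theorem you invoke — $\GP_\lambda(x|b)$ vanishes at the $\mu$-specialization unless $\lambda\subseteq\mu$ — is a property of the \emph{factorial} $\GP$-functions, driven by their factors $x_i\ominus b_j$, and it simply fails for $\GP_\lambda(x)$. A concrete counterexample: $\GP_{(2)}(x_1,x_2)=(x_1\oplus x_2)^2$, so under the $(1)$-specialization $x_1\mapsto\bar b_1,\ x_2\mapsto 0$ one gets $\bar b_1^{\,2}\ne 0$ even though $(2)\not\subseteq(1)$. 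Hence the expansion does not collapse to the single $\lambda^0$-term, and the argument does not close.

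The paper avoids vanishing theorems altogether. It takes a single $v$, independent of any $\lambda^0$: writing $f=\sum_\lambda c_\lambda(a;b)\GP_\lambda(x)$ with $c_\lambda$ depending only on $a_1,\dots,a_n,b_1,\dots,b_m$, it chooses
\[
v=(m+1,\dots,m+n,\ 1,\dots,m,\ \overline{m+n+1},\dots,\overline{m+n+N}),\qquad N\geq m+n+1.
\]
Under $\Phi_v$ the first $m+n$ of the $x_i$ go to $0$, the $a_i$ go to $\bar b_{m+1},\dots,\bar b_{m+n}$, and $x_{m+n+1},\dots,x_{m+n+N}$ go to $b_{m+n+1},\dots,b_{m+n+N}$ — fresh $b$-variables disjoint from those occurring in the coefficients. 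By symmetry and stability, each $\GP_\lambda(x)$ becomes $\GP_\lambda$ in these fresh variables, so $\Phi_v(f)=0$ is a single $\QQ[\beta][[b_1,\dots,b_{m+n}]]$-linear relation among $\GP_\lambda$, and Lemma \ref{lemfb} (the formal basis property) forces every coefficient to vanish at once. No triangularity, no non-zero-divisor check — just linear independence in a well-chosen, genuinely new set of variables.

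Your fallback suggestion — deduce injectivity of $\Phi_\infty$ from $\iota_n^*$ and Theorem \ref{thmGKM} by an inverse-limit argument — is circular in this paper's logical order: the injectivity of $\Phi_\infty^{(k)}$ (Proposition \ref{universal localization}, which rests on the present lemma) is itself used to build and prove the injectivity of $\Psi_\infty^{(k)}$ in Proposition \ref{propvarpi}, so one cannot invoke the geometric restriction maps to obtain this lemma.
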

%%%%%%%%%%%%%%%%%%%%%%%%%%%%%%%%%%
\begin{proof}
By the definition of $\calK_{\infty}$ and Proposition \ref{propfb}, a homogeneous element $f$ of $\calK_{\infty}$ of degree $d$ can be uniquely written as
\begin{equation}\label{eqf1}
f=\sum_{\lambda\in \SP} c_{\lambda}(a;b) \GP_{\lambda}(x), \ \ \ c_{\lambda}(a;b)\in(\calR_a \otimes_{\QQ[\beta]} \calR_b)_{d - |\lambda|}.
\end{equation}
By the definitions of $\calK_{\infty}$ and $\calR_a \otimes_{\QQ[\beta]} \calR_b$, there exist $m,n$ such that for all $\lambda \in \SP$
\[
c_{\lambda}(a;b)=c_{\lambda}(a_1,\dots,a_n;b_1,\dots, b_m) \in \QQ[\beta][[a_1,\dots,a_n, b_1,\dots,b_m]]_{d-|\lambda|}.
\]
Suppose that $\Phi_{\infty}(f)=0$. Choose an integer $N \geq m+n+1$ and consider an element $v$ of $W_{\infty}$ given by
\[
v=(m+1,\dots,m+n,1,\dots,m, \overline{m+n+1},\dots,\overline{m+n+N}). 
\]
%$v(i)=m+i$ for $1\leq i\leq n$, $v(n+i)=i$ for $1 \leq i \leq m$, $v(m+n+i)= \overline{m+n+i}$ for $1\leq i \leq N$, and $v(i)=i$ for $i\geq N$. 
Then by applying $\Phi_v$ to (\ref{eqf1}) we obtain
\[
\Phi_v(f)=\sum_{\lambda\in \SP} c_{\lambda}(b_{m+1},\dots,b_{m+n};b_1,\dots, b_m) \GP_{\lambda}(b_{m+n+1},\dots,b_{m+n+N},0,0,\dots)=0.
\]
By Lemma \ref{lemfb}, we can conclude that $c_{\lambda}(b_{m+1},\dots,b_{m+n};b_1,\dots, b_n)=0$ for all $\lambda \in \SP$.
\end{proof}
%%%%%%%%%%%%%%%%%%%%%%%%%%%%%%%%%%
%%%%%%%%%%%%%%%%%%%%%%%%%%%%%%%%%%
 \subsection{The action of $W_{\infty}$ and the ring $\calK_{\infty}^{(k)}$}
%%%%%%%%%%%%%%%%%%%%%%%%%%%%%%%%%%
%%%%%%%%%%%%%%%%%%%%%%%%%%%%%%%%%%
For $i\geq 0$ we define operators $s_i^{a}$ on $\hGG\otimes_{\QQ[\beta]}\calR_a$ as follows: if $i\geq 1$, $s_i^{a}$ switches $a_i$ and $a_{i+1}$, and for $i=0$, 
\[
s^{a}_0 (f(x_1,x_2,\dots ;a_1,a_2, \dots)) := f(a_1, x_1,x_2,\dots;\bar a_1,a_2,\dots).
\]
These define actions of the Weyl group $W_{\infty}$ on $\hGG\otimes_{\QQ[\beta]}\calR_a$ and on $\calK_{\infty}$. 
%%%%%%%%%%%%%%%%%%%%%%%%%%%%%%%%%%

On the other hand, we have  the following action of $W_{\infty}$ on $\Fun(W_{\infty}, \calR_b)$: let $s_i$ act on $\psi \in \Fun(W_{\infty}, \calR_b)$ by  $s_i^R(\psi)(v) := \psi(vs_i)$ for all $i\geq 0$.
%%%%%%%%%%%%%%%%%%%%%%%%%%%%%%%%%%

With these actions, the algebraic localization map $\Phi_{\infty}$ is $W_{\infty}$-equivariant, {\it i.e.} $s_i^R\Phi_{\infty} = \Phi_{\infty} s_i^a$. The proof of this fact is similar to the one for Proposition 7.3 \cite{IkedaMihalceaNaruse}. 
%%%%%%%%%%%%%%%%%%%%%%%%%%%%%%%%%%
\begin{defn}
Fix $k\geq 0$. Let $\calK_{\infty}^{(k)}$ be the subalgebra of $\calK_{\infty}$ invariant under the action of $W_{(k)}$. 
\end{defn}
%%%%%%%%%%%%%%%%%%%%%%%%%%%%%%%%%%
\begin{rem}
It is clear that $\calK_{\infty}^{(k)} = (\hGG\otimes_{\QQ[\beta]}\calR_a)^{W_{(k)}}\otimes_{\QQ[\beta]}\calR_b$ where $(\hGG\otimes_{\QQ[\beta]}\calR_a)^{W_{(k)}}$ is the subring of $\hGG\otimes_{\QQ[\beta]}\calR_a$ invariant under the action of $W_{(k)}$.
\end{rem}
%%%%%%%%%%%%%%%%%%%%%%%%%%%%%%%%%%
\begin{rem}
Since each element of $\calR_a$ involves only finitely many $a_i$'s, it follows that $(\hGG\otimes_{\QQ[\beta]}\calR_a)^{W_{(k)}}$ is contained in $\hGG[[a_1,\dots,a_k]]_{\gr}$ and we have $f \in (\hGG\otimes_{\QQ[\beta]}\calR_a)^{W_{(k)}}$ if and only if $s_i^{a} f = f$ for all $i=0,1,2,\dots,k-1$. 
\end{rem}
%%%%%%%%%%%%%%%%%%%%%%%%%%%%%%%%%%
We can identify $\Fun(\SP^k, \calR_b)$ with the subalgebra $\Fun(W_{\infty}, \calR_b)^{W_{(k)}}$ of $\Fun(W_{\infty}, \calR_b)$ invariant under the action of $W_{(k)}$ as follows. For each $\lambda\in \SP^k$, let $w_{\lambda}$ be the $k$-Grassmannian element in $W_{\infty}$. For each $\psi \in \Fun(\SP^k, \calR_b)$, we can define an element $\Fun(W_{\infty}, \calR_b)^{W_{(k)}}$ by 
\[
v \mapsto \psi(w_{\lambda}) \ \ \mbox{if} \ \ v\in w_{\lambda}W_{(k)}.
\]
Under this identification, the localization map $\Phi_{\infty}$ induces the map $\Phi_{\infty}^{(k)}: \calK_{\infty}^{(k)} \to \Fun(\SP^k, \calR_b)$. 
Thus we have the following commutative diagram:
\[
\xymatrix{
\calK_{\infty} \ar[rr]_{\Phi_{\infty}\ \ \ } && \Fun(W_{\infty}, \calR_b)\\
\calK_{\infty}^{(k)} \ar[rr]_{\Phi_{\infty}^{(k)}\ \ \ } \ar[u] && \Fun(\SP^k, \calR_b). \ar[u]
}
\]
%%%%%%%%%%%%%%%%%%%%%%%%%%%%%%%%%%
Observe that the $W_{\infty}$-action on $\Fun(W_{\infty}, \calR_b)$ preserves $\frakK_{\infty}$. Let $\frakK_{\infty}^{(k)}$ be the subring of $\frakK_{\infty}$ invariant under the action of $W_{(k)}$. Under the identification $\Fun(\SP^k, \calR_b) \cong \Fun(W_{\infty}, \calR_b)^{W_{(k)}}$,  we find that $\frakK^{(k)}_{\infty}$ is the subalgebra of $\Fun(\SP^{k}, \calR_b)$ consisting of functions $\psi$ such that
\[
\psi(s_{\alpha} \mu) - \psi(\mu) \in e(\alpha)\cdot \calR_b, \ \ \mbox{ for all } \mu\in \SP^{k}\ \ \mbox{ and } \alpha \in \Delta^+.
\]
%%%%%%%%%%%%%%%%%%%%%%%%%%%%%%%%%%

The next proposition follows from Lemma \ref{lemfullsurj} and \ref{lemfullinj}.
%%%%%%%%%%%%%%%%%%%%%%%%%%%%%%%%%%
\begin{prop}\label{universal localization}
The map $\Phi_{\infty}^{(k)}$ is injective and its image lies in $\frakK_{\infty}^{(k)}$.
\end{prop}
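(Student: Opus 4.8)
The plan is to obtain the statement as a formal consequence of Lemma~\ref{lemfullinj}, Lemma~\ref{lemfullsurj}, the $W_{\infty}$-equivariance of $\Phi_{\infty}$ recorded just above, and the commutative square relating $\Phi_{\infty}$ and $\Phi_{\infty}^{(k)}$; no genuinely new computation should be needed.

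First I would treat injectivity. Under the identification $\Fun(\SP^k,\calR_b)\cong\Fun(W_{\infty},\calR_b)^{W_{(k)}}\subset\Fun(W_{\infty},\calR_b)$, the commutativity of the diagram exhibits $\Phi_{\infty}^{(k)}$ as the restriction of $\Phi_{\infty}$ to the subalgebra $\calK_{\infty}^{(k)}\subset\calK_{\infty}$. Hence the injectivity of $\Phi_{\infty}^{(k)}$ follows at once from that of $\Phi_{\infty}$, which is Lemma~\ref{lemfullinj}.

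Next I would pin down the image. Let $f\in\calK_{\infty}^{(k)}$. By Lemma~\ref{lemfullsurj} we already have $\Phi_{\infty}(f)\in\frakK_{\infty}$. Since $f$ is fixed by $W_{(k)}$ and $\Phi_{\infty}$ intertwines the two actions, $s_i^R\Phi_{\infty}=\Phi_{\infty}s_i^{a}$ for $i=0,\dots,k-1$, the function $\Phi_{\infty}(f)$ is invariant under $W_{(k)}$, i.e. lies in $\Fun(W_{\infty},\calR_b)^{W_{(k)}}$. Therefore $\Phi_{\infty}(f)\in\frakK_{\infty}\cap\Fun(W_{\infty},\calR_b)^{W_{(k)}}=\frakK_{\infty}^{(k)}$. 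Transporting this back through the identification $\Fun(\SP^k,\calR_b)\cong\Fun(W_{\infty},\calR_b)^{W_{(k)}}$ and the explicit description of $\frakK_{\infty}^{(k)}$ as the set of functions $\psi: \SP^{k}\to\calR_b$ with $\psi(s_{\alpha}\mu)-\psi(\mu)\in e(\alpha)\cdot\calR_b$ for all $\mu\in\SP^{k}$ and $\alpha\in\Delta^+$, one gets $\Phi_{\infty}^{(k)}(f)\in\frakK_{\infty}^{(k)}$, as desired.

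The argument is essentially bookkeeping, so the one point demanding care — and the step I would single out as the main obstacle — is checking that the two incarnations of the $W_{(k)}$-action really match under $\Phi_{\infty}$: that the operators $s_i^{a}$ on $\calK_{\infty}$ correspond, via $\Phi_{\infty}$, to the right-translation operators $s_i^R$ on $\Fun(W_{\infty},\calR_b)$, so that $W_{(k)}$-invariance of $f$ translates exactly into $\Phi_{\infty}(f)$ being constant along left cosets $w_{\lambda}W_{(k)}$ and hence descending to a function on $\SP^k\cong W_{\infty}/W_{(k)}$. This equivariance is asserted in the paragraph preceding the proposition (its proof being parallel to that of Proposition~7.3 of \cite{IkedaMihalceaNaruse}), so in the write-up it suffices to invoke it cleanly; but it is the load-bearing fact behind the whole reduction.
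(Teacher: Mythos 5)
Your argument is correct and is exactly what the paper intends: the paper's "proof" is the one-line remark that the proposition follows from Lemma~\ref{lemfullsurj} and Lemma~\ref{lemfullinj}, and your write-up simply unpacks that reduction — injectivity by restriction, and membership in $\frakK_{\infty}^{(k)}$ by combining Lemma~\ref{lemfullsurj} with the $W_{\infty}$-equivariance of $\Phi_{\infty}$ already recorded in the text. You correctly flag the equivariance $s_i^R\Phi_{\infty}=\Phi_{\infty}s_i^{a}$ as the load-bearing step, and since the paper asserts it (with a pointer to the proof of Proposition~7.3 of \cite{IkedaMihalceaNaruse}), invoking it is all that is required.
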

%%%%%%%%%%%%%%%%%%%%%%%%%%%%%%%%%%
%%%%%%%%%%%%%%%%%%%%%%%%%%%%%%%%%%
%%%%%%%%%%%%%%%%%%%%%%%%%%%%%%%%%%
 
\section{Functions representing Schubert classes}\label{sec: GTheta}
%%%%%%%%%%%%%%%%%%%%%%%%%%%%%%%%%%
%%%%%%%%%%%%%%%%%%%%%%%%%%%%%%%%%%
%%%%%%%%%%%%%%%%%%%%%%%%%%%%%%%%%%
In this section, we introduce the functions $\GC_{\lambda}$ and $\GB_{\lambda}$ that represent the Schubert classes in the equivariant connective $K$-theory of symplectic and odd orthogonal Grassmannians in type C and B respectively. Recall that we have fixed a nonnegative integer $k$ for the rest of the paper.
%%%%%%%%%%%%%%%%%%%%%%%%%%%%%%%%%%
%%%%%%%%%%%%%%%%%%%%%%%%%%%%%%%%%%
%%%%%%%%%%%%%%%%%%%%%%%%%%%%%%%%%%
\subsection{The basic functions}
%%%%%%%%%%%%%%%%%%%%%%%%%%%%%%%%%%
%%%%%%%%%%%%%%%%%%%%%%%%%%%%%%%%%%
%%%%%%%%%%%%%%%%%%%%%%%%%%%%%%%%%%
\begin{defn}\label{df:GTGH}
Define ${}_k\GC_m(x,a), {}_k\tGB_m(x,a) \in \hGG[[a_1,\dots,a_k]]_{\gr}$ by
\begin{eqnarray*}
&&{}_k\GC(x,a;u):=\sum_{m\in \ZZ} {}_k\GC_m(x,a) u^m =\frac{1}{1+\beta u^{-1}} \prod_{i=1}^{\infty} \frac{1 + (u+\beta) x_i}{1 + (u+\beta)\bar x_i}\prod_{i=1}^{k} (1 + (u+\beta) a_i),\\
&&{}_k\tGB\,(x,a;u):=\sum_{m\in \ZZ} {}_k\tGB_{m}(x,a) u^m = \frac{1}{2+\beta u^{-1}}{}_k\GC(x,a;u).%=\tGP(x;u)\displaystyle\prod_{i=1}^{k} (1 + (u+\beta) a_i) 
\end{eqnarray*}
In particular, we have
\[
{}_k\tGB_m(x)= \frac{1}{2} \sum_{s\geq 0} \left(\displaystyle\frac{-\beta}{2}\right)^s{}_k\GC_{m+s}(x).
\]
We set
\[
{}_k\GB_m(x,a) = \begin{cases}
{}_k\GC_m(x,a) & (m\leq k)\\
{}_k\tGB_m(x,a) & (m>k).
\end{cases}
\]
\end{defn}
%%%%%%%%%%%%%%%%%%%%%%%%%%%%%%%%%%
\begin{lem}\label{lemGTinv} 
The functions ${}_k\GC_m(x,a)$ and ${}_k\tGB_m(x,a)$ are elements of $(\hGG\otimes_{\QQ[\beta]}\calR_a)^{W_{(k)}}$ for all $m\in \ZZ$.
\end{lem}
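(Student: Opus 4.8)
The plan is to verify $W_{(k)}$-invariance at the level of the generating functions ${}_k\GC(x,a;u)$ and ${}_k\tGB(x,a;u)$. Recall from the discussion of $\calK_{\infty}^{(k)}$ in Section~\ref{sec8} that an element of $\hGG\otimes_{\QQ[\beta]}\calR_a$ lies in $(\hGG\otimes_{\QQ[\beta]}\calR_a)^{W_{(k)}}$ precisely when it is fixed by $s_0^a,s_1^a,\dots,s_{k-1}^a$; the remaining generators $s_{k+1},s_{k+2},\dots$ of $W_{(k)}$ only permute $a_{k+1},a_{k+2},\dots$, and these variables do not occur in ${}_k\GC_m$ or ${}_k\tGB_m$. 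Since each $s_i^a$ is a $\QQ[\beta]$-algebra automorphism fixing the formal variable $u$, it is enough to show that the series ${}_k\GC(x,a;u)$ and ${}_k\tGB(x,a;u)$ are $s_i^a$-invariant for $0\le i\le k-1$; extracting the coefficient of $u^m$ then gives the lemma for every $m\in\ZZ$. Furthermore ${}_k\tGB(x,a;u)=\frac{1}{2+\beta u^{-1}}\,{}_k\GC(x,a;u)$ and the scalar $\frac{1}{2+\beta u^{-1}}$ involves neither the $x_i$ nor the $a_i$, so invariance of ${}_k\GC(x,a;u)$ implies that of ${}_k\tGB(x,a;u)$, and the whole lemma reduces to the case of ${}_k\GC(x,a;u)$.

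For $1\le i\le k-1$ the operator $s_i^a$ simply transposes $a_i$ and $a_{i+1}$; in
\[
{}_k\GC(x,a;u)=\frac{1}{1+\beta u^{-1}}\;\prod_{i=1}^{\infty}\frac{1+(u+\beta)x_i}{1+(u+\beta)\bar x_i}\;\prod_{i=1}^{k}\bigl(1+(u+\beta)a_i\bigr)
\]
only the last product involves the $a$'s and it is symmetric in $a_1,\dots,a_k$, so $s_i^a$-invariance is immediate. The substantive step is invariance under $s_0^a$, which sends $f(x_1,x_2,\dots;a_1,a_2,\dots)$ to $f(a_1,x_1,x_2,\dots;\bar a_1,a_2,\dots)$; here I would compute $s_0^a\,{}_k\GC(x,a;u)$ directly. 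Under this substitution the infinite $x$-product becomes $\frac{1+(u+\beta)a_1}{1+(u+\beta)\bar a_1}\,\prod_{i\ge1}\frac{1+(u+\beta)x_i}{1+(u+\beta)\bar x_i}$ --- the new first factor is pulled off and the remaining indices shift down by one --- while $\prod_{i=1}^{k}\bigl(1+(u+\beta)a_i\bigr)$ becomes $\bigl(1+(u+\beta)\bar a_1\bigr)\prod_{i=2}^{k}\bigl(1+(u+\beta)a_i\bigr)$. Multiplying the two, the factors $1+(u+\beta)\bar a_1$ in numerator and denominator cancel and ${}_k\GC(x,a;u)$ is recovered verbatim.

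I expect no genuine obstacle: the argument is this single cancellation together with routine bookkeeping. The two points to keep straight are the index shift in the infinite $x$-product under $s_0^a$, and the fact --- asserted in Definition~\ref{df:GTGH}, and in any case clear from the product form (symmetry in the $x_i$, the relation $f(t,\bar t,x_3,\dots)=f(0,0,x_3,\dots)$ of Definition~\ref{GGcond}, and dependence on only $a_1,\dots,a_k$) --- that ${}_k\GC_m(x,a)$ and ${}_k\tGB_m(x,a)$ genuinely lie in $\hGG\otimes_{\QQ[\beta]}\calR_a$ in the first place.
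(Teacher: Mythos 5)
Your proof is correct and follows essentially the same route as the paper's: invariance under $s_1^a,\dots,s_{k-1}^a$ from symmetry in $a_1,\dots,a_k$, and invariance under $s_0^a$ from the telescoping cancellation at the level of the generating function. Where the paper refers to \cite[Proposition 5.1]{IkedaMatsumura} for the $s_0^a$ step, you spell out the cancellation explicitly, and you also make explicit the reduction from ${}_k\tGB$ to ${}_k\GC$ and the membership check for $\hGG\otimes_{\QQ[\beta]}\calR_a$, both of which the paper leaves implicit.
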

\begin{proof}
The polynomial ${}_k\GC_m(x,a)$ is invariant under the actions of $s_1^{a}, \dots,s_{k-1}^{a}$ since by definition it is symmetric in $a_1,\dots, a_k$. As in \cite[Proposition 5.1]{IkedaMatsumura}, we can check that $s_0^{a}$ preserves the generating function in Definition \ref{df:GTGH} and so does ${}_k\GC_m(x,a)$. Therefore the claim holds.
\end{proof}
%%%%%%%%%%%%%%%%%%%%%%%%%%%%%%%%%%
%%%%%%%%%%%%%%%%%%%%%%%%%%%%%%%%%%
%%%%%%%%%%%%%%%%%%%%%%%%%%%%%%%%%%
\begin{defn}\label{df:fGTGH}
For each $\ell \in \ZZ$,  define ${}_k\GC_m^{(\ell)}(x,a|b)$ and ${}_k\GB_m^{\,(\ell)}(x,a|b)$ in $\calK_{\infty}^{(k)}$ by
\[
%{}_k\GC^{(\ell)}(x,a|b;u)=
\sum_{m\in \ZZ} {}_k\GC_m^{(\ell)}(x,a|b) u^m 
=\begin{cases}
{}_k\GC(x,a;u) \displaystyle\prod_{i=1}^{|\ell|}\dfrac{1}{1 + (u+\beta)\bar b_i}& (\ell< 0),\\
{}_k\GC(x,a;u) \displaystyle\prod_{i=1}^{\ell}(1 + (u+\beta) b_i) & (\ell\geq 0),
\end{cases}
\]
and
\[
%{}_k\GC^{(\ell)}(x,a|b;u)=
\sum_{m\in \ZZ} {}_k\GB_m^{\,(\ell)}(x,a|b) u^m 
=\begin{cases}
{}_k\GC(x,a;u)\displaystyle\prod_{i=1}^{|\ell|}\dfrac{1}{1 + (u+\beta)\bar b_i}& (\ell< 0),\\
{}_k\tGB\,(x,a;u)\displaystyle\prod_{i=1}^{\ell}(1 + (u+\beta) b_i) & (\ell \geq 0).
\end{cases}
\]
\end{defn}
%%%%%%%%%%%%%%%%%%%%%%%%%%%%%%%%%%
%%%%%%%%%%%%%%%%%%%%%%%%%%%%%%%%%%
\subsection{Factorial $\GC$ and $\GB$-functions for $k$-strict partitions}
%%%%%%%%%%%%%%%%%%%%%%%%%%%%%%%%%%
%%%%%%%%%%%%%%%%%%%%%%%%%%%%%%%%%%
Let $\lambda$ be a partition in $\SP^{k}$ of length $r$ and $\chi$ its characteristic index. Let $\calL^{\calK_{\infty}^{(k)}}$ be the ring of formal Laurent series in indeterminates $t_1,\dots, t_r$ defined in Definition \ref{def fls}. Consider the $\calK_{\infty}^{(k)}$-module homomorphism
\[
\phi_{\lambda}: \calL^{\calK_{\infty}^{(k)}} \to \calK_{\infty}^{(k)},
\]
defined by 
\[
\phi_{\lambda}\left(\sum_{\pmb{s}\in \ZZ^r} f_{\pmb{s}}\cdot t_1^{s_1}\cdots t_r^{s_r}\right)=\sum_{\pmb{s}\in \ZZ^r} f_{\pmb{s}} \cdot {}_k\GC_{s_1}^{(\chi_1)}\cdots {}_k\GC_{s_r}^{(\chi_r)}.
\]
It is well-defined in the view of the definition of $\calK_{\infty}^{(k)}$ as the subring of graded formal power series. Also note that we have 
\[
\phi_{\lambda}\left(\frac{t_i^{s_i}}{2+\beta t_i}\right) = {}_k\GB_{s_i}^{\,(\chi_i)}.
\]
%%%%%%%%%%%%%%%%%%%%%%%%%%%%%%%%%%
\begin{defn}
For each $\lambda\in \SP^k$ of length $r$, define the functions
\begin{eqnarray*}
{}_k\GC_{\lambda}(x,a|b) 
&:=& \phi_{\lambda}\left( t_1^{\lambda_1}\cdots t_r^{\lambda_r}\frac{\prod_{(i,j) \in \Delta_r} (1- \bar t_i/\bar t_j) }{\prod_{(i,j) \in C(\lambda)}(1- t_i/\bar t_j)}\right),\\
{}_k\GB_{\lambda}(x,a|b) 
&:=& \phi_{\lambda}\left( t_1^{\lambda_1}\cdots t_r^{\lambda_r}\prod_{1\leq i\leq r \atop{\chi_i\geq 0}} \frac{1}{2+\beta t_i}\frac{\prod_{(i,j) \in \Delta_r} (1- \bar t_i/\bar t_j) }{\prod_{(i,j) \in C(\lambda)}(1- t_i/\bar t_j)}\right).
\end{eqnarray*}
If the dependancy on $k$ is clear from the context, we will suppress $k$, {\it i.e.} we will write $\GC_{\lambda}(x,a|b):={}_k\GC_{\lambda}(x,a|b)$ and $\GB_{\lambda}(x,a|b):={}_k\GB_{\lambda}(x,a|b)$.
\end{defn}
%%%%%%%%%%%%%%%%%%%%%%%%%%%%%%%%%%
\begin{rem}\label{rem:GTminus}
A direct computation shows that ${}_k\GC_m(x,a) = (-\beta)^{-m}$ for each $m\leq 0$.
\end{rem}
%%%%%%%%%%%%%%%%%%%%%%%%%%%%%%%%%%
It is clear from the definition and Remark \ref{rem:GTminus} that both of $\GC_{\lambda}(x,a|b)$ and  $\GB_{\lambda}(x,a|b)$ are elements of $\calK_{\infty}^{(k)}$. Let us stress that $\GC_{\lambda}$ and $\GB_{\lambda}$ depend on $k$, since $\lambda$ is considered as a $k$-strict partition in $\SP^k$. By the result of Section \ref{secSPf}, $\GC_{\lambda}(x,a|b)$ and $\GB_{\lambda}(x,a|b)$ respectively have the Pfaffian expressions similarly to Theorem \ref{mainC} and \ref{mainB}.
%%%%%%%%%%%%%%%%%%%%%%%%%%%%%%%%%%
\begin{example}
For a partition $\lambda=(\lambda_1) \in \SP^k$ of length $1$, the corresponding characteristic index is $\chi=(\lambda_1-k-1)$. In this case, we have $\GC_{(\lambda_1)}(x,a|b)={}_k\GC_{\lambda_1}^{(\lambda_1-k-1)}(x,a|b)$ and 
$\GB_{(\lambda_1)}(x,a|b) = {}_k\GB_{\lambda_1}^{\,(\lambda_1-k-1)}(x,z|b)
%= \begin{cases}
%{}_k\GC_{\lambda_1}^{(\lambda_1-k-1)}(x,a|b) & (\lambda_1 \leq k)\\
% \dfrac{1}{2} \sum_{s\geq 0} \left(\displaystyle\frac{-\beta}{2}\right)^s{}_k\GC_{\lambda_1+s}^{(\lambda_1-k-1)}(x,a|b) &  (\lambda_1> k)
%\end{cases}
$.
\end{example}
%%%%%%%%%%%%%%%%%%%%%%%%%%%%%%%%%%
%%%%%%%%%%%%%%%%%%%%%%%%%%%%%%%%%%
\subsection{The map $\Psi_{\infty}^{(k)}$ from $\calK_{\infty}^{(k)}$ to $\CK^*_{T_{\infty}}(\calG_{\infty}^k)$}
%%%%%%%%%%%%%%%%%%%%%%%%%%%%%%%%%%
%%%%%%%%%%%%%%%%%%%%%%%%%%%%%%%%%%
Together with the restriction, the map $\calR_b \to \CK^*_{T_n}$ defined by setting $b_i=0$ for all $i\geq n+1$ defines the homomorphism $\widetilde{p_n}: \Fun(\SP^{k},\calR_b) \to \Fun(\SP^{k}(n),\CK^*_{T_n})$. By restriction, we obtain a morphism $\frakK^{(k)}_{\infty} \to \frakK^{(k)}_n$ also denoted by $\widetilde{p_n}$. 

%%%%%%%%%%%%%%%%%%%%%%%%%%%%%%%%%%
Theorem \ref{thmGKM} and Proposition \ref{universal localization} allow us to define the homomorphism  of $\calR_b$-algebras
\[
\Psi_n^{(k)}: \calK_{\infty}^{(k)} \to \CK^*_{T_n}(\calG_n^k)
\]
by the composition
\[
\xymatrix{
\calK_{\infty}^{(k)} \ar[rr]_{\Phi_\infty^{(k)}}&& \frakK^{(k)}_{\infty} \ar[rr]_{\widetilde p_n}&&\frakK^{(k)}_n \ar[rr]_{(\iota_n^*)^{-1}\ \ \ \ \ \ }^{\cong\ \ \ \ \ \ }& &  \CK^*_{T_n}(\calG_n^k).
}
\]
%%%%%%%%%%%%%%%%%%%%%%%%%%%%%%%%%%
\begin{prop}\label{propvarpi}
The morphisms $\Psi_n^{(k)}$ induce injective homomorphism of graded $\calR_b$-algebras
\[
\Psi_{\infty}^{(k)}:\calK_{\infty}^{(k)}   \to\CK^*_{T_{\infty}}(\calG_{\infty}^k).
\]
\end{prop}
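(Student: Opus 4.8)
The plan is to verify that the maps $\Psi_n^{(k)}$ are compatible with the transition maps $j_n^*$ defining the inverse limit $\CK^*_{T_\infty}(\calG_\infty^k)$, so that they glue to a well-defined morphism $\Psi_\infty^{(k)}$, and then to deduce injectivity from the injectivity statements already available at each finite level together with the fact that Schubert classes form a basis. First I would spell out the compatibility $j_n^*\circ \Psi_{n+1}^{(k)} = \Psi_n^{(k)}$. Unwinding the definition, $\Psi_n^{(k)}$ is the composite $(\iota_n^*)^{-1}\circ \widetilde{p_n}\circ \Phi_\infty^{(k)}$, so it suffices to check that under the identification of $\CK^*_{T_n}(\calG_n^k)$ with $\frakK_n^{(k)}$ via $\iota_n^*$ (Theorem~\ref{thmGKM}), the pullback $j_n^*$ corresponds to the restriction map $\widetilde{p_n}:\frakK_{n+1}^{(k)}\to\frakK_n^{(k)}$ that sets $b_i=0$ for $i\geq n+1$ and restricts the underlying function along $\SP^k(n)\hookrightarrow \SP^k(n+1)$. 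This is essentially the statement that the embedding $j_n:\calG_n^k\to\calG_{n+1}^k$ carries the $T_n$-fixed point $e_\lambda$ to the $T_{n+1}$-fixed point $e_\lambda$ for $\lambda\in\SP^k(n)$, compatibly with the inclusion $T_n\hookrightarrow T_{n+1}$; this is immediate from the explicit model of Section~\ref{sec: CKT(SG)} (the basis $\vece_{\bar i},\vece_i$ and the subspaces $F^\ell$ are defined consistently across $n$), and functoriality of localization. Granting this, the triangle $\widetilde{p_n}\circ\widetilde{p_{n+1}} = \widetilde{p_n}$ at the level of the GKM rings $\frakK_\bullet^{(k)}$ gives $j_n^*\circ\Psi_{n+1}^{(k)} = \Psi_n^{(k)}$, so the family $(\Psi_n^{(k)})_n$ determines a morphism into the inverse limit in each fixed degree; collecting over degrees and noting all maps are degree-preserving $\calR_b$-algebra homomorphisms yields $\Psi_\infty^{(k)}$ as a graded $\calR_b$-algebra homomorphism.

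For injectivity, suppose $f\in\calK_\infty^{(k)}$ is homogeneous with $\Psi_\infty^{(k)}(f)=0$, i.e.\ $\Psi_n^{(k)}(f)=0$ in $\CK^*_{T_n}(\calG_n^k)$ for every $n$. Since each $\iota_n^*$ is an isomorphism onto $\frakK_n^{(k)}$ and $\Phi_\infty^{(k)}$ lands in $\frakK_\infty^{(k)}$ (Proposition~\ref{universal localization}), this says that $\widetilde{p_n}(\Phi_\infty^{(k)}(f))=0$ for all $n$; that is, the function $\Phi_\infty^{(k)}(f)\in\Fun(\SP^k,\calR_b)$ has the property that for every $n$ its restriction to $\SP^k(n)$, followed by setting $b_i=0$ for $i>n$, vanishes. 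I would then argue that this forces $\Phi_\infty^{(k)}(f)=0$: by the definition of $\calK_\infty^{(k)}$ (and of $\calR_b$) only finitely many $b_i$ occur in the coefficients of $f$, say $b_1,\dots,b_m$; writing $f=\sum_{\lambda\in\SP^k}c_\lambda(a;b)\,\GP_\lambda(x)$ as in the proof of Lemma~\ref{lemfullinj}, for $n$ large the maps $\Phi_v$ realizing the values of $\widetilde{p_n}\Phi_\infty^{(k)}(f)$ at suitable $k$-Grassmannian elements $v\in W_n$ already involve enough variables to detect all the $c_\lambda(a;b)$; running the argument of Lemma~\ref{lemfullinj} (choosing $v$ with a long run of negative entries so that the $\GP_\lambda$-evaluations are separated by Lemma~\ref{lemfb}) gives $c_\lambda(a;b)=0$ for all $\lambda$, hence $f=0$. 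Thus $\Psi_\infty^{(k)}$ is injective.

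The main obstacle I anticipate is the bookkeeping in the injectivity step: one must choose the integer $n$ (and the signed permutation $v$) uniformly large enough to simultaneously resolve \emph{all} the finitely many variables $a_1,\dots,a_n, b_1,\dots,b_m$ appearing across the relevant coefficients, while still landing in $W_n$ and in a $k$-Grassmannian coset, and then invoke Lemma~\ref{lemfb} to separate the $\GP_\lambda$ contributions — exactly the mechanism already used in Lemma~\ref{lemfullinj}, now threaded through the finite-level truncations $\widetilde{p_n}$. The compatibility $j_n^*\leftrightarrow\widetilde{p_n}$ is conceptually routine once one records that the combinatorial model of the fixed points and of $c(U;u)$ (Proposition~\ref{propUloc}) is stable under $n\mapsto n+1$, but it does require care to match the identifications $(\iota_n^*)^{-1}$ on both sides; I would state it as a lemma and prove it by evaluating at fixed points. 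Everything else — that $\Psi_\infty^{(k)}$ is an $\calR_b$-algebra map preserving grading — is formal from the corresponding properties of $\Phi_\infty^{(k)}$, $\widetilde{p_n}$, and $\iota_n^*$.
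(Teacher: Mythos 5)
Your compatibility argument ($j_n^*\circ\Psi_{n+1}^{(k)}=\Psi_n^{(k)}$ via identifying $j_n^*$ with the truncation $p_n$ on GKM rings under $\iota_\bullet^*$) matches the paper's and is fine. The injectivity step, however, takes a genuinely different — and heavier — route than the paper's. The paper factors $\Psi_\infty^{(k)}$ as $\calK_\infty^{(k)}\xrightarrow{\Phi_\infty^{(k)}}\frakK_\infty^{(k)}\to\lim_{\longleftarrow}\frakK_n^{(k)}\xrightarrow{\cong}\CK^*_{T_\infty}(\calG_\infty^k)$ and reduces injectivity to two already-available facts: Proposition~\ref{universal localization} (injectivity of $\Phi_\infty^{(k)}$) and the elementary observation that the middle map is injective (since each value $\psi(\lambda)\in\calR_b$ involves only finitely many $b_i$, so vanishing of all truncations at level $n$ already gives $\psi(\lambda)=0$ once $n$ is large). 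You instead re-run the argument of Lemma~\ref{lemfullinj} from scratch inside the truncations, choosing a signed permutation $v\in W_N$ with $N$ large enough to carry all the $a_i$, $b_i$ occurring in the coefficients $c_\lambda(a;b)$ and then separating the $\GP_\lambda$ by Lemma~\ref{lemfb}. This does work — once $N$ dominates the variables that appear, $\widetilde{p_N}$ is the identity on the relevant values and your evaluation at $v$ recovers exactly the computation of Lemma~\ref{lemfullinj} — but it re-proves a combination of Proposition~\ref{universal localization} and the easy middle-map injectivity rather than invoking them; the paper's factorization is more modular and makes clear which input is doing what. Both routes are correct; yours is just more work, and you should be careful, as you yourself flag, that the chosen $v$ lies in $W_N$ and that $\Phi_v(f)$ involves only $b_1,\dots,b_N$ before you equate it with a value of $\widetilde{p_N}\Phi_\infty^{(k)}(f)$.
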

%%%%%%%%%%%%%%%%%%%%%%%%%%%%%%%%%%
\begin{proof}
We have the following commutative diagrams
\begin{equation}\label{eqcommvarpi}
\xymatrix{
\frakK^{(k)}_{\infty} \ar[rr]_{\widetilde p_{n+1}}\ar[rrd]_{\widetilde p_n}&&\frakK^{(k)}_{n+1} \ar[d]_{p_n}\ar[rr]_{(\iota_{n+1}^*)^{-1}\ \ \ \ \ \ }^{\cong\ \ \ \ \ \ }& &  \CK^*_{T_{n+1}}(\calG_{n+1}^k)\ar[d]_{j_n^*}\\
&&\frakK^{(k)}_n \ar[rr]_{(\iota_n^*)^{-1}\ \ \ \ \ \ }^{\cong\ \ \ \ \ \ }& &  \CK^*_{T_n}(\calG_n^k)
}
\end{equation}
where $p_n$ is defined by setting $b_{n+1}=0$ and restricting the domain of maps from $\SP^k(n+1)$ to $\SP^k(n)$. This defines the commutative diagram
\begin{equation}\label{commdiagvarpi}
\xymatrix{
\calK_{\infty}^{(k)} \ar[rr]^{\Psi_{n+1}^{(k)}} \ar[rrd]_{\Psi_{n}^{(k)}}&& \CK^*_{T_{n+1}}(\calG_{n+1}^k)\ar[d]^{j_n^*}\\
&&\CK^*_{T_n}(\calG_n^k).
}
\end{equation}
and hence we have the morphism $\Psi_{\infty}^{(k)}$. It is easy to see that the morphism $\frakK^{(k)}_{\infty} \to \displaystyle\lim_{\longleftarrow\atop{n}} \frakK^{(k)}_n$ induced by $p_n$ is injective, where $\displaystyle\lim_{\longleftarrow\atop{n}}$ denotes the direct sum of the projective limits of each graded piece. By the diagram (\ref{eqcommvarpi}), we find that $(\iota_n^*)^{-1}$ induces an isomorphism $\displaystyle\lim_{\longleftarrow\atop{n}} \frakK^{(k)}_n \to \CK^*_{T_{\infty}}(\calG_{\infty}^k)$. Hence, together with the injectivity of $\Phi_\infty^{(k)}$ stated in Proposition \ref{universal localization}, we can conclude that $\Psi_{\infty}^{(k)}$ is injective.
\end{proof}
%%%%%%%%%%%%%%%%%%%%%%%%%%%%%%%%%%
%%%%%%%%%%%%%%%%%%%%%%%%%%%%%%%%%%
%%%%%%%%%%%%%%%%%%%%%%%%%%%%%%%%%%
%%%%%%%%%%%%%%%%%%%%%%%%%%%%%%%%%%
%%%%%%%%%%%%%%%%%%%%%%%%%%%%%%%%%%
%%%%%%%%%%%%%%%%%%%%%%%%%%%%%%%%%%
%%%%%%%%%%%%%%%%%%%%%%%%%%%%%%%%%%
%%%%%%%%%%%%%%%%%%%%%%%%%%%%%%%%%%
%%%%%%%%%%%%%%%%%%%%%%%%%%%%%%%%%%
%%%%%%%%%%%%%%%%%%%%%%%%%%%%%%%%%%
%%%%%%%%%%%%%%%%%%%%%%%%%%%%%%%%%%
Recall that $\calG_n^k=\SG^k(\bbF^{2n})$ or $\OG^k(\bbF^{2n+1})$ and $\Omega_{\lambda}^X=\Omega_{\lambda}^C$ or $\Omega_{\lambda}^B$ for $X=C$ or $B$ respectively. Similarly we introduce the notations $\scX_m^{(\ell)}$, ${}_k\GX_m^{(\ell)}$ and ${}_k\GX_{\lambda}$ as in the table
\[
\begin{array}{c|c|c|c|c|c}
		& \calG_n^k & \scX_m^{(\ell)}&\Omega_{\lambda}^X & {}_k\GX_m^{(\ell)} & {}_k\GX_{\lambda} \\
\hline
X=C &\SG^k(\bbF^{2n})&\scC_m^{(\ell)}&\Omega_{\lambda}^C&{}_k\GC_m^{(\ell)}&{}_k\GC_{\lambda}\\
\hline
X=B &\OG^k(\bbF^{2n+1})& \scB_m^{(\ell)}&\Omega_{\lambda}^B&{}_k\GB_m^{\,(\ell)}&{}_k\GB_{\lambda}
\end{array}
\]
%%%%%%%%%%%%%%%%%%%%%%%%%%%%%%%%%%
\begin{prop}\label{propGT=Seg} 
For $-n \leq \ell \leq n$ and $m\in \ZZ$, we have $\Psi_n^{(k)}({}_k\GX_m^{(\ell)}(x,a|b)) = \scX_m^{(\ell)}$.
\end{prop}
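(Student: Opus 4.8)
The plan is to show that the two $\calR_b$-algebra homomorphisms $\Psi_n^{(k)}$ and the classes $\scX_m^{(\ell)}$ agree by exploiting the injectivity of the localization maps. Both sides live in $\CK^*_{T_n}(\calG_n^k)$, which by Theorem \ref{thmGKM} embeds via $\iota_n^*$ into $\Fun(\SP^k(n),\CK^*_{T_n})$. Hence it suffices to check the equality after localizing at each fixed point $e_\mu$, $\mu\in\SP^k(n)$. Thus the first step is to compute both $\iota_\mu^*(\Psi_n^{(k)}({}_k\GX_m^{(\ell)}(x,a|b)))$ and $\iota_\mu^*(\scX_m^{(\ell)})$ and match them.

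For the right-hand side, recall $\scX_m^{(\ell)}=\scS_m(U^\vee-(E/F^\ell)^\vee)$ (suitably modified by the factor $\tfrac12\sum_s(-\beta/2)^s$ in type B), so by the generating-function definition \eqref{segre vir} its localization is determined by $\iota_\mu^*(c(U;u))$ and $\iota_\mu^*(c(E/F^\ell;u))$. Proposition \ref{propUloc} gives $\iota_\mu^*(c(U;u))=\prod_{i=k+1}^n(1+b_{w(i)}u)$ where $w\in W_n$ is the signed permutation attached to $\mu$, and the explicit chosen flag in Section \ref{sec: CKT(SG)} gives $c(E/F^\ell;u)$ as an explicit product of factors $1+b_iu$ (or $1/(1+\bar b_iu)$). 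Plugging these into Remark \ref{rem beta+u}, i.e. $c(\EE;\beta)/c(\EE;-u)=1/c(\EE^\vee;u+\beta)$, yields a closed product formula for $\sum_m\iota_\mu^*(\scX_m^{(\ell)})u^m$.

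For the left-hand side, the map $\Psi_n^{(k)}$ is by construction $(\iota_n^*)^{-1}\circ\widetilde{p_n}\circ\Phi_\infty^{(k)}$, so $\iota_\mu^*\circ\Psi_n^{(k)}=\Phi_{w_\mu}$ composed with $b_i\mapsto 0$ for $i>n$, where $\Phi_{w_\mu}$ substitutes $x_i\mapsto b_{w(i)}$ (if $w(i)<0$; else $0$) and $a_i\mapsto\bar b_{w(i)}$. So I would apply this substitution to the generating function in Definition \ref{df:fGTGH} for ${}_k\GC_m^{(\ell)}(x,a|b)$ (resp. ${}_k\GB_m^{\,(\ell)}$). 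The infinite product $\prod_i\frac{1+(u+\beta)x_i}{1+(u+\beta)\bar x_i}\prod_{i=1}^k(1+(u+\beta)a_i)$ collapses under the substitution: the $a_i$'s become $\bar b_{w(i)}$ for $i=1,\dots,k$, the nonzero $x_i$'s become $b_{w(i)}$ exactly for the indices with $w(i)<0$, and the $\bar x_i$ in the denominator becomes $\bar b_{w(i)}$ on those same indices. After combining, the $k$-Grassmannian one-line notation of $w$ forces a cancellation so that only the factors over $\{k+1,\dots,n\}$ and the flag factors survive — and this must coincide with the product computed for the right-hand side. The key algebraic identity needed here is again $\frac{1+\beta x}{1-ux}=\frac1{1+(u+\beta)\bar x}$ from Remark \ref{rem beta+u}, together with $b\oplus\bar b=0$-type simplifications in $\CK^*_{T_n}$.

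The main obstacle will be the bookkeeping in this fixed-point comparison: one must carefully track which indices $i\in\{1,\dots,n\}$ have $w(i)>0$ versus $w(i)<0$, relate the set $\{w(i):w(i)<0\}$ and the "barred" entries $\overline{\zeta_1},\dots,\overline{\zeta_s}$ of the one-line notation \eqref{oneline for k grass} to the characteristic index $\chi$ and the flag $F^\ell$, and verify that the product over these indices in $\Phi_{w_\mu}({}_k\GX_m^{(\ell)})$ matches $1/c((E/F^\ell)^\vee;u+\beta)\cdot$ (the $U$-part). This is essentially the same kind of computation as in \cite[Proposition 10.1]{IkedaMihalceaNaruse} and \cite[\S5]{IkedaMatsumura}, so I would model the argument on those; the type B case requires only the extra observation that the factor $\frac1{2+\beta u^{-1}}$ appearing in ${}_k\tGB$ matches the corresponding factor in the definition of $\scB_m^{(\ell)}$, which is immediate from Definition \ref{dfBclass} and Definition \ref{df:GTGH}. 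Once the fixed-point values agree for all $\mu$, injectivity of $\iota_n^*$ (Theorem \ref{thmGKM}) gives the identity in $\CK^*_{T_n}(\calG_n^k)$, completing the proof.
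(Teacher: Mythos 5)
Your proposal is correct and follows essentially the same route as the paper: reduce to checking the equality of the generating functions after localization at each fixed point $e_\mu$ (using injectivity of $\iota_n^*$), compute $\Phi_{w_\mu}$ applied to the generating function in Definition \ref{df:fGTGH} and $\iota_\mu^*$ of the Segre generating function via Proposition \ref{propUloc} and the explicit flag, and match the resulting products. The paper's proof does exactly this and likewise cites \cite[Lemma 10.3]{IkedaMihalceaNaruse} as the model; it also treats type B by observing that the extra $1/(2+\beta u^{-1})$ factor matches, just as you note.
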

%%%%%%%%%%%%%%%%%%%%%%%%%%%%%%%%%%
\begin{proof}
%%%%%%%%%%%%%%%%%%%%%%%%%%%%%%%%%%
The claim is a generalization of \cite[Lemma 10.3]{IkedaMihalceaNaruse} and the proof is analogous. Indeed, it follows from the comparison of the localizations at $e_{\lambda}$. For example, if $\calG_n^k=\SG^k(\bbF^{2n})$, it suffices to show
\begin{equation}\label{eq:loc=loc}
\Phi_{\mu}\left(\sum_{m\in \ZZ}  {}_k\GC_m^{(\ell)} u^m\right)=\iota_{\mu}^*\left(\sum_{m\in \ZZ}  \scC_m^{(\ell)} u^m\right).
\end{equation}
One can check this identity as follows. 
%%%%%%%%%%%%%%%%%%%%%%%%%%%%%%%%%%
We have
\begin{eqnarray*}
&&\Phi_{\mu}\left(\sum_{m\in \ZZ} {}_k\GC_m^{(\ell)} u^m\right)\\
&=&\begin{cases}
\dfrac{1}{1+\beta u^{-1}}  \prod_{i=1}^{s} \dfrac{1 + (u+\beta)\bar b_{\zeta_i}}{1 + (u+\beta)b_{\zeta_i}}  \prod_{i=1}^{k} (1 + (u+\beta) \bar b_{u_i}) \prod_{i=1}^{\ell}(1 + (u+\beta) b_i) & (\ell\geq 0),\\
\dfrac{1}{1+\beta u^{-1}}  \prod_{i=1}^{s} \dfrac{1 + (u+\beta)\bar b_{\zeta_i}}{1 + (u+\beta)b_{\zeta_i}}  \prod_{i=1}^{k} (1 + (u+\beta) \bar b_{u_i}) \prod_{i=1}^{|\ell|}\dfrac{1}{1 + (u+\beta)\bar b_i} & (\ell \leq 0).
\end{cases}
\end{eqnarray*}
On the other hand, if  $\bfz_{k+1},\dots, \bfz_{n}$ are the Chern roots of $U$, we have 
\begin{eqnarray*}
\sum_{m\in \ZZ} \scC_m^{(\ell)}   u^m 
&=&
\begin{cases}
\dfrac{1}{1+\beta u^{-1}} \dfrac{\prod_{i=1}^n(1+(u+\beta)\bar b_i)}{\prod_{i=k+1}^n (1+(u+\beta) \mathbf{z}_i)} \prod_{i=1}^{\ell}(1+(u+\beta)  b_i)& (\ell\geq 0),\\
\dfrac{1}{1+\beta u^{-1}}\dfrac{\prod_{i=1}^n(1+(u+\beta)\bar b_i)}{\prod_{i=k+1}^n (1+(u+\beta) \mathbf{z}_i)} \prod_{i=1}^{|\ell|}\dfrac{1}{1+(u+\beta) \bar b_i} & (\ell\leq 0).
\end{cases}
\end{eqnarray*} 
Then Proposition \ref{propUloc} proves (\ref{eq:loc=loc}) and hence the claim holds. The proof for the case $\calG_n^k=\OG^k(\bbF^{2n+1})$ is similar.
\end{proof}
%%%%%%%%%%%%%%%%%%%%%%%%%%%%%%%%%%
\begin{thm}\label{thmGBGC}
For each $X=B,C$, we have
\begin{equation}\label{GT to Omega}
\Psi_n^{(k)}({}_k\GX_{\lambda}(x,a|b)) =
\begin{cases}
[\Omega_{\lambda}^X]_{T_n} & \mbox{if $\lambda \in \SP^k(n)$}\\
0 & \mbox{if $\lambda\not\in \SP^k(n)$.}
\end{cases}
\end{equation}
In particular, the homomorphism $\Psi_{\infty}^{(k)}:\calK_{\infty}^{(k)} \to\CK^*_{T_{\infty}}(\calG_{\infty}^k)$ sends ${}_k\GX_{\lambda}$ to the limit  $[\Omega_{\lambda}^X]_T$ of the Schubert classes.
\end{thm}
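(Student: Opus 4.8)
The plan is to deduce the theorem from the Pfaffian formulas of Sections~\ref{section:Pf non max} and \ref{sec:typeB} (which hold equivariantly, as recorded in Section~\ref{sec: CKT(SG)}), from Proposition~\ref{propGT=Seg}, and from the stability relation \eqref{stabSch}. The argument splits into three steps: first prove \eqref{GT to Omega} in the case $\lambda\in\SP^k(n)$ by directly comparing the defining formulas of ${}_k\GX_{\lambda}$ and of $[\Omega_{\lambda}^X]_{T_n}$; next handle the vanishing case $\lambda\notin\SP^k(n)$ by a descent through the tower $(j_m^{*})$; and finally read off the last assertion from the definition of $[\Omega_{\lambda}^X]_T$ and the construction of $\Psi_{\infty}^{(k)}$ in Proposition~\ref{propvarpi}.

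First, suppose $\lambda\in\SP^k(n)$ has length $r$, with characteristic index $\chi$. By definition ${}_k\GX_\lambda=\phi_\lambda(L_\lambda)$, where $L_\lambda$ is the Laurent series $t_1^{\lambda_1}\cdots t_r^{\lambda_r}\prod_{(i,j)\in\Delta_r}(1-\bar t_i/\bar t_j)\big/\prod_{(i,j)\in C(\lambda)}(1-t_i/\bar t_j)$ in type C (with the extra factor $\prod_{1\le i\le r,\,\chi_i\ge0}(2+\beta t_i)^{-1}$ in type B) and $\phi_\lambda$ sends $t_1^{s_1}\cdots t_r^{s_r}$ to ${}_k\GX_{s_1}^{(\chi_1)}\cdots{}_k\GX_{s_r}^{(\chi_r)}$. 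By the equivariant form of Proposition~\ref{propmainC} together with Lemma~\ref{lem1C} (resp.\ Proposition~\ref{prop1B} with Lemma~\ref{lem1B}), the class $[\Omega_{\lambda}^X]_{T_n}$ is the image of the \emph{same} series $L_\lambda$ under the geometric umbral map $t_1^{s_1}\cdots t_r^{s_r}\mapsto\scX_{s_1}^{(\chi_1)}\cdots\scX_{s_r}^{(\chi_r)}$ (cf.\ Remark~\ref{remHomotoCoho}). Since $\Psi_n^{(k)}$ is a homomorphism of graded $\calR_b$-algebras, continuous for the natural topology on these rings of graded formal power series, and since $\Psi_n^{(k)}({}_k\GX_m^{(\ell)})=\scX_m^{(\ell)}$ by Proposition~\ref{propGT=Seg} for each $\ell=\chi_i$ with $1\le i\le r$ (all such $\chi_i$ lie in $[-n,n]$, since $\lambda\in\SP^k(n)$ corresponds under $\SP^k(n)\cong W_n/W_{n,(k)}$ to an element whose one-line entries lie in $\{-n,\dots,-1,1,\dots,n\}$), the composite $\Psi_n^{(k)}\circ\phi_\lambda$ agrees with the geometric umbral map on monomials and hence on all of $L_\lambda$. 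Therefore $\Psi_n^{(k)}({}_k\GX_\lambda)=[\Omega_{\lambda}^X]_{T_n}$. Equivalently, one may expand both sides into the finite Pfaffian sums of Lemma~\ref{lem4C} and match them with Theorems~\ref{mainC} and \ref{mainB}; in the odd-length case the $(i,r+1)$-entries of both sides only involve $\chi_1,\dots,\chi_r$, so the out-of-range value $\chi_{r+1}=-n-1$ never appears, and Remark~\ref{rem:GTminus} reconciles the remaining conventions.

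Next, suppose $\lambda\notin\SP^k(n)$. Since $\SP^k(m)\subseteq\SP^k(m+1)$ and $\bigcup_m\SP^k(m)=\SP^k$, there is a least integer $N$ with $\lambda\in\SP^k(N)$, and necessarily $N>n$. By the previous step $\Psi_N^{(k)}({}_k\GX_\lambda)=[\Omega_{\lambda}^X]_{T_N}$, and the commutative diagrams \eqref{commdiagvarpi} give $\Psi_n^{(k)}({}_k\GX_\lambda)=(j_n^{*}\circ\cdots\circ j_{N-1}^{*})([\Omega_{\lambda}^X]_{T_N})$. As $\lambda\notin\SP^k(N-1)$, the stability relation \eqref{stabSch} yields $j_{N-1}^{*}([\Omega_{\lambda}^X]_{T_N})=0$, and pulling $0$ back through the remaining maps keeps it $0$; hence $\Psi_n^{(k)}({}_k\GX_\lambda)=0$. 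Finally, $[\Omega_{\lambda}^X]_T$ is by definition the element of $\CK^*_{T_{\infty}}(\calG_{\infty}^k)$ whose component in $\CK^*_{T_n}(\calG_n^k)$ is $[\Omega_{\lambda}^X]_{T_n}$ when $\lambda\in\SP^k(n)$ and $0$ otherwise, while by Proposition~\ref{propvarpi} the corresponding component of $\Psi_{\infty}^{(k)}({}_k\GX_\lambda)$ is $\Psi_n^{(k)}({}_k\GX_\lambda)$; the two preceding steps identify these, so $\Psi_{\infty}^{(k)}({}_k\GX_\lambda)=[\Omega_{\lambda}^X]_T$.

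The step that will require the most care is the first one: one must check that the algebraic umbral homomorphism $\phi_\lambda$ (into $\calK_{\infty}^{(k)}$) and its geometric counterpart (into $\CK^*_{T_n}(\calG_n^k)$) are genuinely intertwined by $\Psi_n^{(k)}$. This reduces to (a) the equality $\Psi_n^{(k)}({}_k\GX_m^{(\ell)})=\scX_m^{(\ell)}$ of Proposition~\ref{propGT=Seg} holding for every index $\chi_i$ ($1\le i\le r$) that actually occurs in $L_\lambda$, and (b) the permissibility of applying $\Psi_n^{(k)}$ term by term to the graded-finite infinite series defining ${}_k\GX_\lambda$, which is legitimate because $\Psi_n^{(k)}$ is assembled from degree-preserving maps into complete topological rings. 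Granting these, the remaining assertions are formal consequences of results already established in Sections~\ref{section:Pf non max}--\ref{sec8}.
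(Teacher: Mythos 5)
Your proposal is correct and follows essentially the same route as the paper's own proof: the case $\lambda\in\SP^k(n)$ is handled by noting that $[\Omega^X_\lambda]_{T_n}$ and ${}_k\GX_\lambda$ are images of the same Laurent series under parallel umbral maps that $\Psi_n^{(k)}$ intertwines via Proposition~\ref{propGT=Seg}, and the vanishing for $\lambda\notin\SP^k(n)$ is obtained from the commutative diagram \eqref{commdiagvarpi} together with the stability relation \eqref{stabSch}. The only difference is that you spell out the descent from $N$ (the least index with $\lambda\in\SP^k(N)$) to $n$ explicitly, whereas the paper compresses this to the remark that it suffices to treat $\lambda\in\SP^k(n+1)\setminus\SP^k(n)$; this is a matter of exposition, not of substance.
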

%%%%%%%%%%%%%%%%%%%%%%%%%%%%%%%%%%
\begin{proof} 
If $\lambda\in \SP^k(n)$, then $[\Omega_{\lambda}^X]_{T_n}$ and ${}_k\GX_{\lambda}$ are given by the same formula except that $\phi$ replaces $ t_i^{m_i}$ by $\scX_{m_i}^{(\chi_i)}$ or respectively by ${}_k\GX_{m_i}^{(\chi_i)}$ respectively. Therefore (\ref{GT to Omega}) follows from Proposition \ref{propGT=Seg}. For the vanishing, it suffices to show the case when $\lambda\in \SP^k(n+1)\backslash\SP^k(n)$. By the commutative diagram (\ref{commdiagvarpi}), we have $\Psi_{n}^{(k)}(\GX_{\lambda}(x,a|b))  = j_n^*[\Omega_{\lambda}^X]_{T_{n+1}}$ which is $0$ by (\ref{stabSch}). 
\end{proof}
%%%%%%%%%%%%%%%%%%%%%%%%%%%%%%%%%%
As a consequence of Theorem \ref{thmGBGC} we obtain the following fact.
\begin{cor}
The functions ${}_0\GC_{\lambda}(x|b)$ and ${}_0\GB_{\lambda}(x|b)$ respectively coincide with $\GQ_{\lambda}(x|b)$ and $\GP_{\lambda}(x|0,b)$ defined by Ikeda--Naruse in \cite{IkedaNaruse}. 
\end{cor}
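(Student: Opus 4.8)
The plan is to deduce the identification ${}_0\GC_{\lambda}(x|b) = \GQ_{\lambda}(x|b)$ and ${}_0\GB_{\lambda}(x|b) = \GP_{\lambda}(x|0,b)$ by comparing both sides under a sufficiently fine family of localization maps, using that $\Psi_{\infty}^{(k)}$ (for $k=0$) is injective. First I would recall that in the maximal Lagrangian case $k=0$ the Grassmannian $\calG_n^0$ is $\LG(n)=\SG^0(\bbF^{2n})$ (respectively the odd orthogonal $\OG^0(\bbF^{2n+1})$), and that Ikeda--Naruse \cite{IkedaNaruse} defined $\GQ_{\lambda}(x|b)$ and $\GP_{\lambda}(x|b)$ precisely as elements of (a completion of) their ring $\GG$, characterized by the property that they represent $[\scO_{\Omega_\lambda}]$ in $K_{T}(\LG(n))$ (resp. the odd orthogonal Grassmannian) in the large-$n$ limit, after the translation $a=0$. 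Since our ${}_0\GC_\lambda$ and ${}_0\GB_\lambda$ lie in $\calK_\infty^{(0)}$, and Definition \ref{df:GTGH} shows that for $k=0$ the variables $a$ do not appear (the product $\prod_{i=1}^{k}(1+(u+\beta)a_i)$ is empty), both sides live in the same ring $\hGG\otimes_{\QQ[\beta]}\calR_b$, so the comparison makes sense.

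The key steps, in order, would be: (1) by Theorem \ref{thmGBGC} with $k=0$, $\Psi_\infty^{(0)}({}_0\GC_\lambda)=[\Omega_\lambda^C]_T$ and $\Psi_\infty^{(0)}({}_0\GB_\lambda)=[\Omega_\lambda^B]_T$, i.e. our functions map to the stable limits of the equivariant $K$-theoretic Schubert classes of the maximal isotropic Grassmannians; (2) invoke the defining property of Ikeda--Naruse's $\GQ_\lambda(x|b)$ and $\GP_\lambda(x|b)$ --- namely that they also represent these same Schubert classes in $K_T(\LG(n))$ and $K_T(\OG(n))$ respectively, compatibly with the large-$n$ limit (this is the content of the relevant theorems in \cite{IkedaNaruse}, after matching the sign convention for $\beta$ as flagged in Section \ref{sec:preliminaryOnCK}); (3) observe that both families are elements of $\calK_\infty^{(0)}$ --- for the Ikeda--Naruse side one must check that $\GQ_\lambda(x|b)$ and $\GP_\lambda(x|0,b)$ satisfy the $\GG$-conditions of Definition \ref{GGcond} and land in our completed ring, which is immediate from their raising-operator / Pfaffian definitions; (4) apply Proposition \ref{propvarpi}, the injectivity of $\Psi_\infty^{(0)}:\calK_\infty^{(0)}\to \CK^*_{T_\infty}(\calG_\infty^0)$, to conclude that two elements with the same image are equal. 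For the odd orthogonal case one additionally uses Remark \ref{remIndBC} to be careful that the target rings are taken with $\ZZ[1/2]$ (equivalently $\QQ$) coefficients, which is where $\GP$ naturally lives anyway.

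A cleaner alternative, which I would present as the main line of argument, bypasses geometry on the Ikeda--Naruse side entirely: specialize $\beta=-1$ so that $\CK^*$ becomes ordinary $K$-theory and $\calK_\infty^{(0)}$ becomes (a completion of) the Ikeda--Naruse ring $\GG\otimes\calR_b$. Under this specialization the generating function of Definition \ref{df:GTGH} with $k=0$ becomes exactly the generating function Ikeda--Naruse use for their one-row functions $\GQ_m^{(\ell)}$ (resp. $\GP_m^{(\ell)}$), after the identification $b_i\leftrightarrow 1-e^{\varepsilon_i}$ recorded in the remark following \eqref{eq:CK(pt)}; then the Pfaffian expansion of ${}_0\GC_\lambda$ given by Section \ref{secSPf} / Theorem \ref{mainLag}, and the analogous one for ${}_0\GB_\lambda$, matches the Pfaffian definition of $\GQ_\lambda(x|b)$ (resp. $\GP_\lambda(x|0,b)$) in \cite{IkedaNaruse} term by term. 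Finally one lifts the equality back from $\beta=-1$ to general $\beta$: since both sides are homogeneous of the same degree with coefficients in $\ZZ[\beta]$ (for $\GC$) or $\ZZ[\tfrac12][\beta]$ (for $\GB$) and the degree-$d$ graded piece is a finite-rank module, the $\beta=-1$ specialization together with the grading pins down all coefficients.

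The main obstacle I anticipate is precisely step (3)/(the dictionary): reconciling the two sets of conventions --- the opposite sign of $\beta$ versus the references, the precise normalization of the one-row building blocks $\GQ_m^{(\ell)}$ and $\GP_m^{(\ell)}$ in \cite{IkedaNaruse} (including the factor of $\tfrac12$ and the geometric series in $-\beta/2$ appearing in Definition \ref{dfBclass} for type B), and the shift by $k=0$ in the characteristic index $\chi_i=\lambda_i-1$ (Remark after Definition in Section \ref{sec:chi}). Once the generating functions are matched on the nose, the rest is formal: injectivity of $\Psi_\infty^{(0)}$, or equivalently uniqueness of the Pfaffian/raising-operator expansion, does the work. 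This is why we omit the detailed verification and simply record the corollary, the substance being the already-established Theorem \ref{thmGBGC} together with the definitions in \cite{IkedaNaruse}.
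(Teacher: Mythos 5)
Your first line of argument (steps 1--4) is precisely what the paper means by ``as a consequence of Theorem~\ref{thmGBGC}'': both families of functions map under $\Psi_\infty^{(0)}$ to the same stable Schubert classes --- ours by Theorem~\ref{thmGBGC}, the Ikeda--Naruse $\GQ_\lambda(x|b)$ and $\GP_\lambda(x|0,b)$ by the main representation theorem of \cite{IkedaNaruse} --- and injectivity of $\Psi_\infty^{(0)}$ from Proposition~\ref{propvarpi} then forces coincidence, once one confirms (your step 3) that the Ikeda--Naruse polynomials lie in $\calK_\infty^{(0)}$. Your preferred alternative --- matching generating functions at $\beta=-1$ and lifting --- also works in principle: the lifting step is sound because each expansion coefficient $c_\lambda$ lives in a one-dimensional graded piece $\QQ[\beta]_{m-|\lambda|}=\QQ\cdot\beta^{|\lambda|-m}$, so a single evaluation at $\beta=-1$ pins it down. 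But it trades the localization argument for exactly the convention-reconciliation you yourself flag as the obstacle: the opposite sign of $\beta$, the factor $\tfrac12$ and geometric series in $-\beta/2$ on the $\GB/\GP$ side, the shift $\chi_i=\lambda_i-1$, and the observation that for $k=0$ strict partitions one has $D(\lambda)_r=\varnothing$ so the Pfaffian sum collapses to a single Pfaffian matching Ikeda--Naruse's. The first line is therefore not only the paper's intended argument but also the lighter one; the ``cleaner'' variant actually carries more bookkeeping burden.
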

%%%%%%%%%%%%%%%%%%%%%%%%%%%%%%%%%%
By Lemma \ref{lemLimBasis} and Theorem \ref{thmGBGC} we deduce the following corollary.
%%%%%%%%%%%%%%%%%%%%%%%%%%%%%%%%%%
\begin{cor}
Any element $f$ of $\calK_{\infty}^{(k)}$ can be expressed uniquely as a possibly infinite $\calR_b$-linear combination
\[
f = \sum_{\lambda \in \SP^k} c_{\lambda}^X(b) {}_k\GX_{\lambda}(x,a|b), \ \ \ \ c_{\lambda}^X(b) \in \calR_b.
\]
\end{cor}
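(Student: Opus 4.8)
The plan is to transport the formal-basis statement of Lemma~\ref{lemLimBasis} through the injective homomorphism $\Psi_{\infty}^{(k)}\colon\calK_{\infty}^{(k)}\hookrightarrow\CK^*_{T_{\infty}}(\calG_{\infty}^k)$ of Proposition~\ref{propvarpi}, which by Theorem~\ref{thmGBGC} sends ${}_k\GX_{\lambda}(x,a|b)$ to the stable Schubert class $[\Omega_{\lambda}^X]_T$. Uniqueness is then immediate: if $\sum_{\lambda}c_{\lambda}\,{}_k\GX_{\lambda}=\sum_{\lambda}c_{\lambda}'\,{}_k\GX_{\lambda}$ in $\calK_{\infty}^{(k)}$ with $c_{\lambda},c_{\lambda}'\in\calR_b$, then applying $\Psi_{\infty}^{(k)}$ gives $\sum_{\lambda}c_{\lambda}[\Omega_{\lambda}^X]_T=\sum_{\lambda}c_{\lambda}'[\Omega_{\lambda}^X]_T$, and since $\calR_b\hookrightarrow\CK^*_{T_{\infty}}$ and the expansion in Lemma~\ref{lemLimBasis} is unique we conclude $c_{\lambda}=c_{\lambda}'$ for every $\lambda$.

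For existence, given $f\in\calK_{\infty}^{(k)}$ I would expand $\Psi_{\infty}^{(k)}(f)=\sum_{\lambda\in\SP^k}c_{\lambda}[\Omega_{\lambda}^X]_T$ by Lemma~\ref{lemLimBasis}, with $c_{\lambda}\in\CK^*_{T_{\infty}}$ uniquely determined, and then show (i) that each $c_{\lambda}$ actually involves only finitely many $b_i$, so $c_{\lambda}\in\calR_b$, and (ii) that the series $\sum_{\lambda}c_{\lambda}\,{}_k\GX_{\lambda}(x,a|b)$ converges. Part (ii) is routine: ${}_k\GX_{\lambda}$ has order $|\lambda|$ in the variables $x,a,b$, so in each fixed degree only finitely many $\lambda$ contribute to a given monomial and the partial sums converge coefficientwise in the ambient graded power series ring. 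Granting (i) and (ii), the element $g:=\sum_{\lambda}c_{\lambda}\,{}_k\GX_{\lambda}$ lies in $\calK_{\infty}^{(k)}$, and applying the level-$n$ maps $\Psi_n^{(k)}$ (which annihilate ${}_k\GX_{\mu}$ for $\mu\notin\SP^k(n)$ by Theorem~\ref{thmGBGC}) one sees $\Psi_n^{(k)}(g)=\Psi_n^{(k)}(f)$ for all $n$, hence $\Psi_{\infty}^{(k)}(g)=\Psi_{\infty}^{(k)}(f)$, hence $g=f$ by injectivity of $\Psi_{\infty}^{(k)}$.

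The remaining point (i) --- that the $c_{\lambda}$ have finite $b$-support --- is the main obstacle. Since $f$ is a finite $\calR_b$-linear combination of elements of $(\hGG\otimes_{\QQ[\beta]}\calR_a)^{W_{(k)}}$, it involves only $b_1,\dots,b_M$ for some $M$. I would then pass to finite level: for $n\ge M$ the Schubert classes $[\Omega_{\mu}^X]_{T_n}$, $\mu\in\SP^k(n)$, form a $\CK^*_{T_n}$-basis, so $\Psi_n^{(k)}(f)=\sum_{\mu\in\SP^k(n)}c_{\mu,n}[\Omega_{\mu}^X]_{T_n}$ with unique $c_{\mu,n}\in\CK^*_{T_n}$, and the diagrams (\ref{eqcommvarpi}), (\ref{commdiagvarpi}) together with the stability (\ref{stabSch}) show $c_{\mu,n}=p_n(c_{\mu,n+1})$, so the $c_{\mu,n}$ stabilize to $c_{\mu}$. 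Now $c_{\mu,n}$ is recovered from the fixed-point localizations $\Phi_{w_{\nu}}(f)$ ($\nu\le\mu$ in Bruhat order) by the triangular system attached to the GKM isomorphism $\iota_n^*\colon\CK^*_{T_n}(\calG_n^k)\iso\frakK_n^{(k)}$; because $f$ has finite $b$-support so does each of these localizations, and tracking the $b$-support through this finite triangular solve --- and checking, via stability, that it stops growing with $n$ --- yields $c_{\mu}\in\calR_b$. This bookkeeping is precisely the $K$-theoretic, $k$-strict analogue of the corresponding step carried out for the factorial $Q$-functions in \cite{IkedaNaruse} and for the type C and B double Schubert polynomials in \cite{IkedaMihalceaNaruse}. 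Alternatively, one may bypass the geometry and produce the $c_{\lambda}\in\calR_b$ directly by successive approximation: verify that ${}_k\GX_{\lambda}(x,a|b)\equiv{}_k\GX_{\lambda}(x,a|0)\pmod{(b_1,b_2,\dots)}$ with $\{{}_k\GX_{\lambda}(x,a|0)\}_{\lambda\in\SP^k}$ a formal $\QQ[\beta]$-basis of $(\hGG\otimes_{\QQ[\beta]}\calR_a)^{W_{(k)}}$ in the sense of Lemma~\ref{lemfb}, and run the evident $(b)$-adic recursion, which never enlarges the set of occurring $b_i$ and so automatically lands in $\calR_b$.
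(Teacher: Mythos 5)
Your proof is correct and follows the same route as the paper, whose justification of this corollary is literally the one-line citation "By Lemma \ref{lemLimBasis} and Theorem \ref{thmGBGC} we deduce the following corollary": transport the formal basis $\{[\Omega_\lambda^X]_T\}$ of $\CK^*_{T_\infty}(\calG_\infty^k)$ through the injection $\Psi_\infty^{(k)}$ using $\Psi_\infty^{(k)}({}_k\GX_\lambda)=[\Omega_\lambda^X]_T$. What you add, usefully, is an explicit treatment of the only genuinely nontrivial point, which the paper leaves implicit: Lemma \ref{lemLimBasis} a priori gives coefficients $c_\lambda\in\CK^*_{T_\infty}=\QQ[\beta][[b]]_{\gr}$, whereas the corollary asserts $c_\lambda\in\calR_b=\bigcup_m\QQ[\beta][[b_1,\dots,b_m]]_{\gr}$, i.e. finite $b$-support; this is a strictly stronger statement since $\CK^*_{T_\infty}$ contains elements such as $b_1+b_2+\cdots$ that are not in $\calR_b$. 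Your two sketches for establishing this (stability with $n$ of the triangular GKM inversion, or a $(b)$-adic recursion hinging on $\{{}_k\GX_\lambda(x,a|0)\}$ being a formal $\QQ[\beta]$-basis of $(\hGG\otimes_{\QQ[\beta]}\calR_a)^{W_{(k)}}$) are both plausible, though each introduces an auxiliary claim not proved in the paper and would need to be carried out in full to make the argument complete. The uniqueness argument and the degree-count justifying convergence of the infinite sum are correct and are what the paper's proof tacitly relies on.
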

%%%%%%%%%%%%%%%%%%%%%%%%%%%%%%%%%%
%By specializing at $b_1=b_2=\cdots = 0$, we can also conclude the following.
%%%%%%%%%%%%%%%%%%%%%%%%%%%%%%%%%%
%\begin{cor}
%Any element $f$ of $\hGG^{(k)}$ can be expressed uniquely as a possibly infinite $\QQ[\beta]$-linear combination
%\[
%f = \sum_{\lambda \in \SP^k} f_{\lambda}^X  \GX_{\lambda}(x,a), \ \ \ f_{\lambda}^X \in \QQ[\beta].
%\]
%\end{cor}
%%%%%%%%%%%%%%%%%%%%%%%%%%%%%%%%%%
\begin{rem}
One can prove that the functions ${}_k\GX_{\lambda}(x,a|b), \lambda\in \SP^k$ coincide with the double Grothendieck polynomials $\calG_w^X(a,b;x), w\in W_{\infty}^{(k)}$. The details will be discussed in our future work. 
\end{rem}
%%%%%%%%%%%%%%%%%%%%%%%%%%%%%%%%%%
%%%%%%%%%%%%%%%%%%%%%%%%%%%%%%%%%%
%%%%%%%%%%%%%%%%%%%%%%%%%%%%%%%%%%
%%%%%%%%%%%%%%%%%%%%%%%%%%%%%%%%%%
%%%%%%%%%%%%%%%%%%%%%%%%%%%%%%%%%%
%%%%%%%%%%%%%%%%%%%%%%%%%%%%%%%%%%
%%%%%%%%%%%%%%%%%%%%%%%%%%%%%%%%%%
%%%%%%%%%%%%%%%%%%%%%%%%%%%%%%%%%%

%%%%%%%%%%%%%%%%%%%%%%%%%%%%%%%%%%
%%%%%%%%%%%%%%%%%%%%%%%%%%%%%%%%%%
%%%%%%%%%%%%%%%%%%%%%%%%%%%%%%%%%%
%%%%%%%%%%%%%%%%%%%%%%%%%%%%%%%%%%
%%%%%%%%%%%%%%%%%%%%%%%%%%%%%%%%%%
%%%%%%%%%%%%%%%%%%%%%%%%%%%%%%%%%%
%%%%%%%%%%%%%%%%%%%%%%%%%%%%%%%%%%
%%%%%%%%%%%%%%%%%%%%%%%%%%%%%%%%%%
%%%%%%%%%%%%%%%%%%%%%%%%%%%%%%%%%%
%%%%%%%%%%%%%%%%%%%%%%%%%%%%%%%%%%
%%%%%%%%%%%%%%%%%%%%%%%%%%%%%%%%%%
%%%%%%%%%%%%%%%%%%%%%%%%%%%%%%%%%%
%%%%%%%%%%%%%%%%%%%%%%%%%%%%%%%%%%
 
\vspace{10mm}

\textbf{Acknowledgements.} 
We would like to thank Leonardo Mihalcea and Anders Buch for their helpful comments on the earlier versions of the manuscript. A considerable part of this work developed while the first and third authors were affiliated to KAIST, which they would like to thank for the excellent working conditions.
Part of this work developed while the first author was affiliated to  POSTECH, which he would like to thank for the excellent working conditions. He would also like to gratefully acknowledge the support of the National Research Foundation of Korea (NRF) through the grants funded by the Korea government (MSIP) (ASARC, NRF-2007-0056093) and (MSIP)(No.2011-0030044).
The second author  is supported by Grant-in-Aid for Scientific Research (C) 24540032, 15K04832.
The third author is supported by 	Grant-in-Aid for Young Scientists (B) 16K17584.
The fourth author is supported by Grant-in-Aid for Scientific Research (C) 25400041, (B) 16H03921.

\bibliographystyle{acm}
\bibliography{references}   % name your BibTeX data base

\

\begin{small}
{\scshape
\noindent Thomas Hudson, Fachgruppe Mathematik
und Informatik, Bergische Universit\"{a}t Wuppertal, Gaußstrasse 20, 42119 Wuppertal, Germany
}
\end{small}

{\textit{email address}: \tt{hudson@math.uni-wuppertal.de}}

%\begin{small}
%{\scshape
%\noindent Thomas Hudson, Department of Mathematical Sciences, KAIST, 291 Daehak-ro, Yuseong-gu, Daejeon, 34141, Republic of %Korea (South)
%}
%\end{small}

%{\textit{email address}: \tt{hudson.t@kaist.ac.kr}}

\

\begin{small}
{\scshape
\noindent  Takeshi Ikeda, Department of Applied Mathematics, Okayama University of Science, Okayama 700-0005, Japan
}
\end{small}

{\textit{email address}: \tt{ike@xmath.ous.ac.jp}}

\

\begin{small}
{\scshape
\noindent Tomoo Matsumura, Department of Applied Mathematics, Okayama University of Science, Okayama 700-0005, Japan
}
\end{small}

{\textit{email address}: \tt{matsumur@xmath.ous.ac.jp}}

\

\begin{small}
{\scshape
\noindent Hiroshi Naruse, Graduate School of Education, University of Yamanashi, Yamanashi 400-8510, Japan
}
\end{small}

{\textit{email address}: \tt{hnaruse@yamanashi.ac.jp}}

\end{document}